\author{Stefan Stein\footnote{Stein is PhD student, Department of Statistics, University of Warwick, Email: s.stein@warwick.ac.uk.}, \and Rui Feng\footnote{Feng is PhD student, Department of Statistics, University of Warwick, Email: rui.feng.1@warwick.ac.uk}, 
 \and Chenlei Leng\footnote{Leng is Professor, Department of Statistics, University of Warwick, Email: c.leng@warwick.ac.uk. Corresponding author.}}
\title{A Sparse Beta Regression Model for Network Analysis}
\providecommand{\keywords}[1]
{
	\small	
	\textbf{\textit{Key words:}} #1
}
\newcommand{\R}{\mathbb{R}}
\newcommand\sbmc{S$\beta$RM }
\def\serc{ERC }
\newcommand{\E}{\mathbb{E}}
\DeclareMathOperator*{\argmin}{arg\,min}
	\newtheorem{Satz}{Theorem}
	\newtheorem{Prop}{Proposition}
	\newtheorem{Lem}{Lemma}
	\newtheorem{Kor}{Corollary}
	\theoremstyle{definition}
	\newtheorem{Def}[Prop]{Definition}
	\newtheorem{Assum}{Assumption}
	\newtheorem*{Rem}{Remark}
	\newtheorem*{Prop*}{Proposition}
	\newtheorem*{Satz*}{Theorem}
        \newtheorem{Assum*}{Assumption}[section]
\begin{document}
	\maketitle
	
	\begin{abstract}
	\begin{singlespace}
For statistical analysis of  {network} data, the $\beta$-model has emerged as a useful tool, thanks to its flexibility in incorporating nodewise heterogeneity and theoretical tractability. To generalize the $\beta$-model, this paper proposes the Sparse $\beta$-Regression Model (S$\beta$RM) that unites two research themes developed recently in modelling homophily and sparsity.  In particular, we employ differential heterogeneity that assigns weights only to important nodes and propose penalized  {likelihood} with an $\ell_1$ penalty for parameter estimation. While our estimation method is closely related to the LASSO method for logistic regression, we develop new {theory} emphasizing the use of our model for dealing with a parameter regime that can handle sparse networks usually seen in practice. More interestingly, the resulting inference on the homophily parameter demands no debiasing normally employed in LASSO type estimation. We provide extensive simulation and data analysis to illustrate the use of the model. As a special case of our model, we extend the Erd\H{o}s-R\'{e}nyi model by including covariates and develop the associated statistical inference for sparse networks, which may be of independent interest.
		\end{singlespace}
	\end{abstract}
\keywords{$\beta$-model, degree heterogeneity,  homophily,  sparse networks.}

\normalsize
\section{Introduction}
Network data are ubiquitous in today's society. Although they exhibit many characteristics, there are a few stylized features that most real-life networks share \citep{Kolaczyk:2009, Newman:2018}. First, nodes in a real-life network have different, sometimes drastically different, tendency to make  {connections}, leading to degree heterogeneity. Second, nodes similar in their attributes or nodal covariates are more likely to attach to each other than dissimilar ones, resulting in what is named homophily in the literature. Above all, it is known that most real-life networks are sparse, in the sense that the total number of connections scales sub-quadratically in the order of $o(n^2)$ with respect to $n$, the number of nodes. On {the} one hand, degree heterogeneity calls for models that are flexible in reflecting nodewise differences and homophily calls regression-type of models that can handle covariates. On the other, to model sparse networks, these models should refrain from over-parametrizing due to {the} scarcity of connections. This paper is about a new model that aims to balance both needs.

To fix ideas, assume that we have observed data organized as $\{A_{ij}, Z_{ij}\}_{i, j =1, i\not= j}^n$, where $A=(A_{ij})_{i,j=1}^n$ is the adjacency matrix with $A_{ij}=1$ if nodes $i$ and $j$ are connected and $A_{ij}=0$ otherwise, and $Z_{ij} \in \R^p$ are $p$-dimensional covariates associated with these two nodes. Given the covariates, undirected links are independently formed with the probability of a connection between nodes $i$ and $j$ being
\begin{equation}\label{SBetaM covariates}
P(A_{ij} = 1|Z_{ij}) = p_{ij} = \frac{\exp(\beta_i + \beta_j + \mu + Z_{ij}^T\gamma)}{1+\exp(\beta_i + \beta_j + \mu + Z_{ij}^T\gamma)},
\end{equation}
where $\beta=(\beta_1, ..., \beta_n)^T \in \R^n$  is the heterogeneity parameter, $\gamma\in \R^p$ is the homophily parameter, and $\mu \in \R$ is a global density parameter, for which we allow $\mu \rightarrow -\infty$ as $n \rightarrow \infty$. For identifiability, we assume $\min_i \beta_i = 0$, so that $\beta \in \R^n_+$ with $\R_+=[0, +\infty)$, because otherwise $\mu$ can be absorbed into $\beta_i$. While it may seem appealing to impose $\min_i \vert \beta_i \vert = 0$ instead of $\min_i \beta_i = 0$, restricting the degree heterogeneity parameters only in absolute value would result in an unidentifiable parameter. We study our model where only a single undirected network is observed with its number of nodes growing to infinity. This is arguably the most interesting setup for network models \citep{Kolaczyk:2009,Goldenberg:etal:2009,Fienberg:2012,Kolaczyk:2017}.

Central to our model is the assumption that $\beta$ is sparse. As such, we shall name our model the Sparse $\beta$-Regression Model (S$\beta$RM). In this model, $\beta_i$ specifies how node $i$ participates in network formation and thus reflects nodewise heterogeneity directly. We interpret $\beta_i=0$ as if node $i$ is a background node, with its propensity of making  {connections} only depending on $\mu$ and $\gamma$, the two global parameters in the S$\beta$RM. If $\beta_i\not= 0$, we say that node $i$ has its own characteristic of establishing ties. The sparsity assumption on $\beta$ makes sense intuitively, since the focus in modelling networks is usually on those hub or popular nodes having relatively many connections. Consigning other less important nodes to having zero heterogeneity parameters will reduce the dimensionality of the model and, as a result,  { allow} statistical inference for a wider range of networks. In particular, in our asymptotic analysis, we allow $\mu$ and $\beta$ to vary with $n$ so that the model handles sparse networks. 
The parameter $\gamma$  captures the effect of the covariate $Z_{ij}$ for initiating connections, where $Z_{ij}$ either represents node-similarity or encodes edge-covariates.

There are several models that are closely related to the S$\beta$RM.  When $\beta=0$ and $\gamma=0$, $p_{ij}=\exp(\mu)/(1+\exp(\mu))$ and the model becomes the Erd\H{o}s-R\'{e}nyi model, a foundational probabilistic model that has been extensively studied \citep{erdds1959random,erdos1960evolution,Gilbert:1959}. 
 When $\beta\not=0$ and $\gamma=0$, without the sparsity assumption on $\beta$, it becomes the $\beta$-model with the consistency of its maximum likelihood estimator (MLE) proved in \cite{Chatterjee:etal:2011}  and asymptotic normality in \cite{Yan:Xu:2013}. See also 
\cite{Rinaldo:etal:2013}, \cite{Karwa:Slavkovic:2016} and \cite{yan2016asymptotics} for further results, and 
\cite{Yan:etal:2016} for a directed version of the $\beta$-model. Since the $\beta$-model associates each node with its own parameter, it can only fit networks that are relatively dense  \citep{Yan:Xu:2013}. To overcome this, \cite{Chen:etal:19} proposed the sparse $\beta$-model (S$\beta$M) by making a similar parameter sparsity assumption to this paper, while \cite{shao20212} applied a ridge penalty on the parameters. These aforementioned papers did not consider covariates. The first study on the $\beta$-model accounting for covariates effects, that is, when $\beta\not= 0$ and $\gamma\not=0$,  was conducted by \cite{Graham:2017} with a dense $\beta$. See \cite{jochmans} for further results and \cite{Yan:etal:2019} for a generalization to directed networks. We note that in a parallel line of research, there are many efforts made in incorporating covariates especially in another popular class of models called the stochastic block model. We refer to \cite{zhang:2016}, \cite{Binkiewicz:2017}, \cite{Huang:Feng:2018},  \cite{zhao2019logistic}, and \cite{Yan:Sarkar:2020},  \cite{weng2022community}, among many others. {In addition, \cite{mama:2020} considered a latent space model with covariates and proposed two universal fitting algorithms.} 

Thus, in a certain sense, the model in \eqref{SBetaM covariates} can be seen as an attempt to unite the ideas in \cite{Chen:etal:19} in modelling sparse networks and \cite{Graham:2017} in modelling homophily. However, our work differs substantially from these two papers. Specifically, we employ a penalized likelihood method with an $\ell_1$ penalty on the heterogeneity parameter for estimating the parameters, in contrast to the $\ell_0$ penalized method developed in \cite{Chen:etal:19}. The use of the $\ell_1$ penalty connects our methodology to the LASSO framework \citep{Tibshirani:96,vandegeer2011}, enabling us to draw upon the vast literature on high-dimensional data analysis, especially for logistic regression. Despite the somewhat superficial similarity of our estimator to the penalized logistic regression with an $\ell_1$ penalty,  great care needs to be taken when applying LASSO theory to our estimator. Firstly, the design matrix of our model associated with $\beta$ is deterministic while that with $\gamma$ is random, making the common assumptions made on the eigenvalues of the design matrix typically seen in LASSO not applicable. 
 Furthermore, our approach differs from classical LASSO theory for logistic regression in that the linking probabilities $p_{ij}$ are not assumed to be uniformly bounded away from zero, because otherwise the network will be dense. This assumption is often made in LASSO theory; see \cite{vandegeer2011}, Theorem 6.4; \cite{Buena:2008}, Theorem 2.4; or \cite{vandegeer2008}, Theorem 2.1, for example, among many others. 
 {To the best of our knowledge}, we are not aware of similar conditions explicitly stated in the literature, at least not to a model similar to ours. Importantly, our analysis reveals an interesting insight about the effective sample sizes of different parameters. For each heterogeneity parameter $\beta_i$, its effective sample size depends on the number of connections that node $i$ has, while that of $\mu$ and $\gamma$ depends on the total number of edges. We find that the rate of convergence of our estimator for excess risk and $\ell_1$-error differ from that of the classical LASSO estimator only in an additional factor having an explicit relation to the expected edge density of a network. This delineates the role that the sparsity of a network plays in determining the rate of convergence.
 
 This paper contributes an innovation to the development of statistical inference not previously seen in the literature, by providing a central limit theorem for $\gamma$ in the face of vanishing link probabilities. Remarkably, we show that this theorem holds without the need to apply the kind of debiasing usually required for LASSO estimators due to shrinkage  \citep{zhang:zhang:2014,vandegeer2014} or the need to deal with the incidental parameter problem due to over-parametrization \citep{Graham:2017,Yan:etal:2019}. Crucially, inference for LASSO type estimators relies on finding a good approximation to the inverse of the population Gram matrix whose minimum eigenvalue is routinely assumed to be bounded away from zero, uniformly in $n$ \citep[e.g.]{vandegeer2014}.  In our case, however, this matrix depends on the link probabilities $p_{ij}$ and since we allow $p_{ij} \rightarrow 0$ for many $i$ and $j$, such a uniform lower bound assumption becomes invalid. We demonstrate how to overcome this difficulty as long as rates are chosen carefully. The ability to conduct inference with an asymptotically non-invertible Gram matrix and vanishing link probabilities is a significant improvement over many existing methods and a prerequisite for dealing with sparse networks. \color{black}{In addition to the inference of the homophily parameter $\gamma$, we also provide a debiased estimator for each heterogeneity parameter $\beta_i$ and obtain its asymptotic normality under mild conditions.}

Another contribution of the paper comes from the study of a simplified form of the \sbmc  when degree heterogeneity does not exist such that $\beta=0$ but homophily does in that $\gamma\not=0$. For this model, we allow the density parameter to diverge to $-\infty$ to model sparse networks, which distinguishes it from the usual logistic regression. 
We name it the Erd\H{o}s-R\'{e}nyi model with covariates (ERC) for obvious reasons. The implication of a model being able to handle sparse networks is revealed in \cite{Krivitsky:Kolaczyk:2015} that provided an insightful answer to the question of the effective sample size. The focus of \cite{Krivitsky:Kolaczyk:2015} is on the Erd\H{o}s-R\'{e}nyi model without covariates, a simpler model compared to ours. For the ERC, we develop the theory for the properties of the estimators of $\mu$ and $\gamma$ that can be used for statistical inference.

The rest of the paper is structured as follows. In Section \ref{Sec: SBetaM covariates}, we present the S$\beta$RM and derive the consistency of its penalized likelihood estimator in terms of excess risk, $\ell_1$-norm and $\ell_q$-norm. \color{black}{In Section \ref{Sec: Inference}, we derive a central limit theorem for our estimator of the homophily parameter $\gamma$ and  a debiased estimator of the heterogeneity parameter $\beta$}.  We then present the Erd\H{o}s-R\'{e}nyi model with covariates in Section \ref{subsection: generalized ER model} and provide the theory for its estimator. We present extensive simulation results in Section \ref{Sec: Simulation} and apply our model to a friendship network of a corporate law firm and the world trade network in Section \ref{Sec: Data Analysis}. Conclusion remarks are presented in Section \ref{section: conclusion}. An extensive comparison of our model with the work of \cite{Chen:etal:19} can be found in the Supplementary Material, together with all of our proofs and additional simulations. The code implementing the approach in the paper can be found on \href{https://github.com/ChrisFeng1998/Sparse-Beta-Regression-Model}{https://github.com/ChrisFeng1998/Sparse-Beta-Regression-Model}.

\subsection{Notation}
A network on $n$ nodes is represented as an undirected graph $G_n = (V, E)$, consisting of a node set $V=\{1, \dots, n\}$ and an edge set $E$. A graph $G_n$ is represented as a binary adjacency matrix $A \in \R^{n \times n}$, where $A_{i,j} = A_{j,i} = 1$, if $\{i,j\} \in E$ and $A_{i,j} = A_{j,i} = 0$ otherwise.  We write $d_i = \sum_{j = 1}^n A_{ij}$ as the degree of node $i$, $d=(d_1, ..., d_n)^T$ as the degree sequence, and $d_+ = \sum_{i = 1}^{n} d_i/2 = \sum_{ i < j} A_{ij}$ as the total number of edges. By $a_n\sim b_n$  we mean $0< \liminf_{n\to \infty} a_n/b_n \le \limsup_{n \to \infty} a_n/b_n <\infty$ for two sequences of positive numbers $a_n$ and $b_n$. We call a network sparse if $\mathbb{E}[d_+]\sim n^\zeta$ for some $\zeta \in (0,2)$, where $\mathbb{E}$ is the expectation with regard to the data generating process. A network is dense if $\mathbb{E}[d_+]\sim n^2$.  

For a vector $v \in \R^n$, we use $S(v)=\{i: v_i \not=0 \}$ to denote its support and $\|v\|_0=|S(v)|$ as the cardinality of $S(v)$.  Let $\Vert \, . \, \Vert_1, \Vert \, . \, \Vert_q, \Vert \, . \, \Vert_\infty$ denote the vector $\ell_1$-, $\ell_q$- and $\ell_\infty$-norm respectively where $q>1$.  For any subset $S \subset \{1, \dots, n\}$, denote $v_S \in \R^n$ such that $(v_S)_i=v_i$ if $i\in S$ and $(v_S)_i=0$ if $i \not\in S$.   When denoting a vector $v=(v_{i,j})_{1\le i<j \le n} \in \R^{\binom{n}{2}}$, we number its elements as  $v=(v_{12}, v_{13}, \ldots, v_{n-1,n})$. Likewise for a matrix $B \in \R^{\binom{n}{2}\times p}$, we number its rows as $B_{ij}^T, i<j$ in a similar manner. 
Thus, we can define $Z=(Z_{ij})_{i<j \leq n}$ with its $(ij)$th row being $Z_{ij}^T$. 
For brevity, we denote the set of parameters collectively as $\theta = (\beta^T, \mu, \gamma^{T})^T $  and its true value as $\theta_0 = (\beta^{T}_0, \mu_0, \gamma^{T}_0)^T$. We write $S_0=S(\beta_0)$ as the support of $\beta_0$. For ease of presentation, we introduce the shorthand notation $s_0 = \vert S_0 \vert$ and $S_{0,+} \coloneqq S_0 \cup \{n+1, n+2, \dots n+1+p\}$ with cardinality $s_{0,+} = \vert S_{0,+} \vert = s_0 + p+1$ to refer to all active indices including $\mu_0$ and $\gamma_0$. Finally, we denote $e_i\in \R^n$ as the $i$th basis vector with its $i$th element being one and zero elsewhere.

\section{Sparse \texorpdfstring{$\beta$}{beta}-Regression Model}\label{Sec: SBetaM covariates}

Given an observed adjacency matrix $A$ and the associated covariates $\{Z_{ij} \}$, the negative log-likelihood of the \sbmc  is
\begin{equation}\label{negative log-likelihood}
\mathcal{L}(\theta)=\mathcal{L}(\beta, \mu, \gamma) =  \sum_{i <j } -A_{ij}[(e_i+e_j)^T\beta+\mu+Z_{ij}^T\gamma] + \log(1 + e^{(e_i+e_j)^T\beta + \mu + Z_{ij}^T\gamma)}).
\end{equation}

It is easily seen by differentiating that
$\theta_0 = \argmin_{\theta \in \Theta} \mathbb{E}[\mathcal{L}(\theta)]$. 
Since $\beta$ is assumed sparse, one approach for estimation is to minimize the loss in \eqref{negative log-likelihood} subject to an $\ell_0$ penalty on $\beta$. For the sparse $\beta$-model without covariates, \cite{Chen:etal:19} indeed found that this non-convex optimization problem is computationally tractable, thanks to a key monotonicity lemma. Roughly speaking, they showed that in their setting, nodes with the same degree can be treated as equivalent, reducing the number of heterogeneity parameters that have to be estimated to the number of distinct observed degrees. This no longer holds once there are covariates attached to each node, meaning this lemma does not extend to the current setting.  This simple observation motivates the use of an $\ell_1$ penalty on $\beta$ to encourage a sparse solution, immediately connecting our approach to the LASSO methodology \citep{Tibshirani:96} developed for variable selection. This connection enables us to draw upon the vast literature on high-dimensional data analysis, especially for logistic regression. In particular, we can leverage existing algorithms developed for LASSO. For this work, we use the functions in the \texttt{glmnet} \textsf{R} package \citep{glmnet} by properly setting up the design matrix and the constraints on $\beta$.
  
The design matrix corresponding to $\theta$ in \eqref{negative log-likelihood}, denoted as $D\in\R^{\binom{n}{2}\times(n+1+p)}$ for the moment, admits a simple form in that its $(ij)$th row is $(X_{ij}^{T},1,Z_{ij}^T)$, where $X_{ij}^{T}:=e_i^T+e_j^T$. Here we see a crucial feature of this design matrix: While the parameters $\mu$ and $\gamma$ appear in the link probability of all $\binom{n}{2}$ node pairs, each $\beta_i$ only appears in $(n-1)$ such probabilities. 
That means, while the effective sample size for $\mu$ and $\gamma$ is $\binom{n}{2}$, it is only $n-1$ for each entry of $\beta$, i.e. it is of order $n$ smaller. This is also reflected in the different rates of convergence we obtain in Theorem \ref{Cor: no approximation error} below. Since the Gram matrix $D^TD$ plays a pivotal role in studying the estimation of $\theta$ as in logistic regression, we scale the columns of $D$ such that the effective sample size of $\beta$ is comparable to that of $\mu$ and $\gamma$. As we will see later, this scaling has the effect of making the population Gram matrix of the re-scaled design matrix well behaved in that its eigenvalues are bounded away from zero and infinity after normalization. In particular, we write our scaled design matrix as 
\begin{equation}\label{Eq: Def D}
	\bar{D} = [\bar{X}, \textbf{1}, Z]
	 \in \R^{\binom{n}{2}\times (n+p+1)},
\end{equation}
where $\textbf{1} \in \R^{\binom{n}{2}}$ is the vector containing only ones and the $(ij)$th row of $\bar{X}$ is $\bar{X}_{ij}^T=\frac{\sqrt{n}}{\sqrt{2}}X_{ij}^T$. 

Our proposed sparse $\beta$-regression model simply solves the following 
\begin{equation}\label{eq:l1pl}
 \min_{\beta\in \R_+^n,\mu\in \R, \gamma\in \R^p} ~\frac{1}{\binom{n}{2}}\mathcal{L}(\beta, \mu, \gamma) + \lambda \Vert \beta \Vert_1,
\end{equation}
where $\lambda$ is a tuning parameter.

\subsection{Theory}\label{Sec: Theory}
We focus on the finite-dimensional covariate case by assuming that $p$, the dimension of the covariates $Z_{ij}$, is fixed. 
We assume that $Z_{ij}$ are independent realizations from centered, uniformly bounded random variables. The random design assumption of $Z_{ij}$ is somewhat more interesting than a fixed design one and our results can be readily extended to the latter. 
We do not require $Z_{ij}$  to be i.i.d. and $Z_{ij}$ may have correlated entries. These assumptions imply in particular, that there exist constants $\kappa, c > 0$ such that $\vert Z_{ij}^T\gamma_0 \vert \le \kappa$ for all $1 \le i < j \le n$ and $\vert Z_{ij,k} \vert \le c$ for all $1 \le i < j \le n, k = 1, \dots, p$.
We assume further that $\gamma_0$, the homophily parameter associated with $Z_{ij}$, lies in a compact, convex set $\Gamma \subset \R^p$, which means we may choose a universal $\kappa$ independent of $\gamma_0$. Recalling the notation $\theta = (\beta^T, \mu, \gamma^{T})^T$, we let $\Theta \coloneqq \R_+^n \times \R \times \Gamma$ denote the parameter space. 

Since we aim to develop a theory for sparse networks, we allow $\mu_0 \rightarrow -\infty$ as $n \rightarrow \infty$. As a result, some link probabilities may go to zero as $n \rightarrow \infty$. In order to perform consistent estimation, it is clear that we need to restrict the rate at which this may happen. Therefore, we assume there is a non-random sequence $1/2 \ge \rho_{n,0} > 0$, $\rho_{n,0} \rightarrow 0$, as $n \rightarrow \infty$, such that almost surely for all $i, j$: $1 - \rho_{n,0} \ge p_{ij} \ge \rho_{n,0}.$ 
Since a smaller $\rho_{n,0}$ allows sparser networks, we  refer to $\rho_{n,0}$ as the \textit{network sparsity parameter}. It effectively characterizes the maximum permissible sparsity of our network.
Applying $\text{logit}(x) = \log(x/(1-x))$ to the inequality above we get for all $i,j$
\[- \text{logit}(\rho_{n,0}) = \text{logit}(1 - \rho_{n,0}) \ge \beta_{0,i} + \beta_{0,j} + \mu_0 + \gamma_0^TZ_{ij} \ge \text{logit}(\rho_{n,0}),\]
which is equivalent to 
$\vert  \beta_{0,i} + \beta_{0,j} + \mu_0 + \gamma_0^TZ_{ij} \vert \le - \text{logit}(\rho_{n,0}) \eqqcolon r_{n,0}, ~ \forall i,j.$ 
Note that since $\rho_n \le 1/2$, we have $r_{n,0} \ge 0$.
The previous inequality can also be expressed in terms of the design matrix $D$ associated with the corresponding logistic regression problem as $
\Vert D\theta_0 \Vert_\infty \le r_{n,0}$. 
This motivates the following procedure: Given a sufficiently large constant $r_n$, we define the local parameter space
$\Theta_{\text{loc}} = \Theta_{\text{loc}}(r_n) \coloneqq \left\{ \theta \in \Theta : \Vert D\theta \Vert_\infty \le r_n  \right\}$ 
and perform estimation via
\begin{equation}\label{Eq: Penalized llhd with covariates}
\hat{\theta} = (\hat{\beta}^T, \hat{\mu}, \hat{\gamma}^T)^T = \argmin_{\theta = (\beta^T,\mu, \gamma^T)^T \in \Theta_{\text{loc}}} \frac{1}{\binom{n}{2}}\mathcal{L}(\beta, \mu, \gamma) + \lambda \Vert \beta \Vert_1.
\end{equation}
We remark that the formulation above is needed for technical reasons only when it comes to prove the existence of the estimator as shown in the lemma below. In practice, \eqref{eq:l1pl} is used for computing. 
In \eqref{Eq: Penalized llhd with covariates}, we have replaced the condition $\min_i \beta_i = 0$ by the less strict condition $\beta \in \R^n_+$. The following Lemma shows that as long as the observed graph is neither empty nor complete, for any $\lambda > 0$, a solution $\hat \beta$ to (\ref{Eq: Penalized llhd with covariates}) always exists and automatically fulfills $\min_{1 \le i \le n} \hat{\beta}_i = 0$.
\begin{Lem}\label{Lem: Existence of solution and identifiability}
	Assume that $0 < d_+ < \binom{n}{2}$. Then, for any $0 < \lambda < \infty$ there exists a minimizer for the  optimization problem (\ref{Eq: Penalized llhd with covariates}) and any solution $\hat{\theta} = (\hat{\beta}^T, \hat{\mu}, \hat{\gamma}^T)^T$ of (\ref{Eq: Penalized llhd with covariates}) must satisfy $\min_{1 \le i \le n} \hat{\beta}_i = 0$.
\end{Lem}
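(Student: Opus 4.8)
The plan is to treat \eqref{Eq: Penalized llhd with covariates} as a convex program and obtain existence by a coercivity argument, and then prove the identifiability claim $\min_i\hat\beta_i=0$ by an explicit reparametrization. Throughout I would abbreviate the objective as $F(\theta)=\tfrac1{\binom n2}\mathcal{L}(\theta)+\lambda\Vert\beta\Vert_1$ and the linear predictors as $L_{ij}(\theta)=\beta_i+\beta_j+\mu+Z_{ij}^T\gamma$ for $i<j$, i.e. the coordinates of $D\theta$. The identity that drives everything comes from regrouping the terms in \eqref{negative log-likelihood}: since $\sum_i\beta_id_i=\sum_{i<j}A_{ij}(\beta_i+\beta_j)$ and $d_+=\sum_{i<j}A_{ij}$, one has $\mathcal{L}(\theta)=\sum_{i<j}\bigl[\log(1+e^{L_{ij}(\theta)})-A_{ij}L_{ij}(\theta)\bigr]=\sum_{i<j}\log\bigl(1+e^{(1-2A_{ij})L_{ij}(\theta)}\bigr)$. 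Thus $\mathcal{L}$ depends on $\theta$ only through the vector of linear predictors, each summand is $\ge\log 1=0$, so $\mathcal{L}\ge 0$ on all of $\Theta$ and $F\ge\lambda\Vert\beta\Vert_1\ge0$; in particular $\inf_{\Theta_{\text{loc}}}F$ is finite.

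For existence, I note that $F$ is convex and continuous and $\Theta_{\text{loc}}$ is closed and convex, so it suffices to show that the sublevel sets $\{\theta\in\Theta_{\text{loc}}:F(\theta)\le c\}$ are bounded; each is then compact and $F$ attains its minimum on it. On such a set, $\lambda\Vert\beta\Vert_1\le F(\theta)\le c$ gives $\Vert\beta\Vert_1\le c/\lambda$, and $\gamma$ ranges in the compact set $\Gamma$, so the only thing left is to control $\mu$. Fixing any pair $i<j$ and using $|L_{ij}|\le r_n$ (membership in $\Theta_{\text{loc}}$) together with $0\le\beta_i+\beta_j\le\Vert\beta\Vert_1\le c/\lambda$ and the uniform bound $|Z_{ij}^T\gamma|\le C$ on $\Gamma$, one reads off $|\mu|=|L_{ij}-\beta_i-\beta_j-Z_{ij}^T\gamma|\le r_n+c/\lambda+C$. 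This is the crux of the matter: it is exactly the place where the classical non-existence phenomenon for $\beta$-type models (where $\mu\to-\infty$ gets compensated by $\beta\to+\infty$ while the likelihood stays bounded) is ruled out, and it works because the constraint $\beta\ge0$ together with the upper bound $L_{ij}\le r_n$ caps $\mu$ from above, while the $\ell_1$ penalty — here is where $\lambda>0$ enters — caps $\Vert\beta\Vert_1$ and hence, with the lower bound $L_{ij}\ge-r_n$, caps $\mu$ from below.

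For identifiability, let $\hat\theta=(\hat\beta^T,\hat\mu,\hat\gamma^T)^T$ be any minimizer, put $m=\min_{1\le i\le n}\hat\beta_i\ge0$, and suppose $m>0$. I would then consider $\tilde\beta=\hat\beta-m\mathbf{1}_n$, $\tilde\mu=\hat\mu+2m$, $\tilde\gamma=\hat\gamma$, and $\tilde\theta=(\tilde\beta^T,\tilde\mu,\tilde\gamma^T)^T$. By the choice of $m$ we have $\tilde\beta\in\R_+^n$, and $\tilde\beta_i+\tilde\beta_j+\tilde\mu+Z_{ij}^T\tilde\gamma=L_{ij}(\hat\theta)$ for every $i<j$, so $\tilde\theta$ produces exactly the same linear predictors as $\hat\theta$; hence $\Vert D\tilde\theta\Vert_\infty=\Vert D\hat\theta\Vert_\infty\le r_n$ so $\tilde\theta\in\Theta_{\text{loc}}$, and $\mathcal{L}(\tilde\theta)=\mathcal{L}(\hat\theta)$ by the identity from the first paragraph. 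But $\Vert\tilde\beta\Vert_1=\Vert\hat\beta\Vert_1-nm$, so $F(\tilde\theta)=F(\hat\theta)-\lambda nm<F(\hat\theta)$, contradicting the optimality of $\hat\theta$. Therefore $m=0$, i.e. every solution satisfies $\min_i\hat\beta_i=0$.

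I expect the coercivity step to be the only genuine obstacle; the identifiability step is then essentially bookkeeping once the reparametrization is in hand. The hypothesis $0<d_+<\binom n2$ rules out the empty and the complete graph — precisely the degenerate cases for which the analogue of this statement for the \emph{unconstrained} problem \eqref{eq:l1pl} fails (for the empty graph the infimum over $\R_+^n\times\R\times\Gamma$ is only approached as $\mu\to-\infty$) — and I would keep it in force so that the constrained and unconstrained formulations agree and the conclusion is non-vacuous; it can also be invoked directly if one chooses to route the existence argument through \eqref{eq:l1pl} rather than through the sublevel-set argument above.
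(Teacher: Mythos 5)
Your proof is correct, and its identifiability half is essentially the paper's own argument: shift $\hat\beta$ down by its minimum $m$, add $2m$ to $\hat\mu$, observe that the linear predictors and hence $\mathcal{L}$ are unchanged while $\Vert\beta\Vert_1$ strictly decreases; you even supply the small check the paper leaves implicit, namely that $D\tilde\theta=D\hat\theta$ keeps $\tilde\theta$ inside $\Theta_{\text{loc}}$. The existence half, however, takes a genuinely different route. The paper passes by duality to an $\ell_1$-constrained problem, profiles out $\mu$ by solving $g_{\beta,\gamma}'(\mu)=0$ --- and this is exactly where the hypothesis $0<d_+<\binom{n}{2}$ is used, to force a sign change of $g_{\beta,\gamma}'$ so the profile minimizer $\mu^*(\beta,\gamma)$ exists --- then invokes the implicit function theorem for continuity of $\mu^*$ and minimizes the profiled objective over a compact set in $(\beta,\gamma)$. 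You instead run a direct Weierstrass argument: $\mathcal{L}\ge 0$, so on a sublevel set the penalty gives $\Vert\beta\Vert_1\le c/\lambda$, and the constraint $\Vert D\theta\Vert_\infty\le r_n$ together with $\beta\ge 0$ and the compactness of $\Gamma$ pins down $\mu$, making the sublevel set compact. Your version is more elementary (no duality, no implicit function theorem), makes transparent what each ingredient buys ($\lambda>0$ bounds $\beta$, the local parameter space bounds $\mu$), and shows that $0<d_+<\binom{n}{2}$ is not actually needed for the constrained problem (\ref{Eq: Penalized llhd with covariates}); as you observe, it matters only for the unconstrained formulation \eqref{eq:l1pl}, whereas in the paper's route it is indispensable because the inner minimization over $\mu$ degenerates for the empty or complete graph. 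The one point both proofs leave implicit is that $\Theta_{\text{loc}}$ is nonempty (e.g.\ $r_n\ge\kappa$ so that $\beta=0,\ \mu=0,\ \gamma\in\Gamma$ is feasible), which you need in order to have a finite sublevel value $c$ to start from; one sentence acknowledging this would make the write-up airtight.
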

Following the empirical risk literature (cf. \cite{GreenshteinRitov2004}, \cite{koltchinskii2011}) we will analyze the performance of our estimator in terms of {excess risk} which is defined as
$	\mathcal{E}(\theta) \coloneqq \frac{1}{\binom{n}{2}} \E[\mathcal{L}(\theta) - \mathcal{L}(\theta_0)]. $
For now, we will assume $r_n \ge r_{n,0}$ so that $\theta_0 = \argmin_{\theta \in \Theta_{\text{loc}}} \frac{1}{\binom{n}{2}}\mathbb{E}[\mathcal{L}(\theta)]$, which is the most interesting scenario. We make the following standard assumption of the random design covariates first. 

\begin{Assum}\label{Assum: minimum EW}\label{Assum: maximum EW}
	There is a universal constant $c_{\min}> 0$ such that for all $n \in \mathbb{N}$, 
	the minimum eigenvalue $\lambda_{\text{min}}$
	and the maximum eigenvalue $\lambda_{\max}$ of $\frac{1}{\binom{n}{2}}\E[Z^T Z]$ fulfil
	$c_{\min} \le \lambda_{\text{min}} \le \lambda_{\max} \le 1/c_{\min} < \infty$. 
\end{Assum}
This assumption is standard as it effectively states that the population covariance matrix of $Z$, which is fixed dimensional, is positive definite. A crucial assumption needed in LASSO theory is the so called {compatibility condition} \citep{vandegeer2011, vandegeer2014} by relating the quantities $\Vert (\hat{\theta} - \theta_0)_{S_{0,+}}\Vert_1$ and
\[
(\hat{\theta} - \theta_0)^T \left(\frac{1}{\binom{n}{2}} \E[\bar{D}^T\bar{D}] \right) (\hat \theta - \theta_0)
\]
in a suitable sense made precise below. Define 
\begin{equation}
\Sigma \coloneqq \frac{1}{\binom{n}{2}} \E [\bar{D}^T\bar{D}] = \frac{1}{\binom{n}{2}} \begin{bmatrix}
{ \bar{X}^T\bar{X} } &{  \bar{X}^T\textbf{1} } & \textbf{0} \\
{\textbf{1}^T\bar{X}} & {\textbf{1}^T\textbf{1} }& \textbf{0} \\
\textbf{0} & \textbf{0} & \E[{Z^TZ}].
\end{bmatrix}.\label{Eq: adjusted Gram fraction outside}
\end{equation}
We present the compatibility condition for our model in the following proposition.
\begin{Prop}\label{Prop: compatibility condition Sigma}
	Under Assumption \ref{Assum: minimum EW}, for $s_{0,+} = o(\sqrt{n})$ and $n$ large enough,
	it holds that for every ${\theta} \in \R^{n+1+p}$ with $\Vert {\theta}_{S^{c}_{0,+}} \Vert_1 \le 3 \Vert {\theta}_{S_{0,+}} \Vert_1$,
	\[
		\Vert {\theta}_{S_{0,+}} \Vert_1^2 \le \frac{2s_{0,+}}{c_{\min}} {\theta}^T \Sigma { \theta}.
	\]
\end{Prop}
Proposition \ref{Prop: compatibility condition Sigma} requires $s_{0,+} = o(\sqrt{n})$. The ``$n$ large enough''-condition is made precise in the proof and requires that $n$ be such that $1/ \sqrt{n} < 1/s_{0,+}$, which is implied by $s_{0,+} = o(\sqrt{n})$ and sufficiently large $n$.  Let us put this in the context of general LASSO theory in which to show the $\ell_1$-error going to zero in probability, it is imposed that the sparsity $s$ of the true parameter fulfils
\[
s \cdot \sqrt{\frac{\log(\text{ number of columns of design matrix })}{\text{effective sample size}}} ~{\rightarrow} ~0
\]
where $n \rightarrow \infty$; 
see for example \cite{vandegeer2011}, Chapter 6.
In our case the sparsity refers to $\beta$ and we thus should expect that the restrictions we have to impose on $s_0$  are based on the sample size associated with $\beta$.
We make the following assumption on $s_{0,+}$, the sparsity of $\theta_0$ when $r_{n, 0}\le r_n$.

\begin{Assum}\label{Assum: rate of s_0}
$s_{0,+}\frac{\sqrt{\log(n)}}{\sqrt{n}\rho_n} \rightarrow 0, n \rightarrow \infty$.
\end{Assum}
\noindent This assumption implies that, up to an additional factor $\rho_n^{-1}$ -- which is the price we have to pay for allowing our link probabilities to go to zero -- the permissible sparsity for $\beta_0$
is the permissible sparsity in classical LASSO theory for an effective sample size of order $n$. 
We state our first main theorem. 
\begin{Satz}\label{Cor: no approximation error}
	Assume Assumptions \ref{Assum: minimum EW} and \ref{Assum: rate of s_0}. 
	Fix a confidence level $t$ and let 
	\[
	a_n \coloneqq \sqrt{\frac{2\log(2(n+p+1))}{\binom{n}{2}}} (1 \vee c).
	\]
	Let $\bar{\lambda} =  \frac{\sqrt{n}}{\sqrt{2}}\lambda$ such that
	\[
		\bar{\lambda}  \ge 8 \cdot \left( 8a_n + 2 \sqrt{\frac{t}{\binom{n}{2}}( 11 (1 \vee (c^2p) ) + 8\sqrt{2}(1 \vee c) \sqrt{n} a_n  )} + \frac{2\sqrt{2}t(1 \vee c) \sqrt{n}}{3\binom{n}{2}}\right).
	\]
	Then, with probability at least $1 - \exp(-t)$ we have
	\begin{equation*}
	\mathcal{E}(\hat{\theta}) + \bar{{\lambda}}\left( \frac{\sqrt{2}}{\sqrt{n}} \Vert \hat{\beta} - \beta_0  \Vert_1 + \vert \hat{\mu} - \mu_0 \vert + \Vert \hat{\gamma} - \gamma_0 \Vert_1  \right) \le C  \frac{s_{0,+}{\bar{\lambda}}^2}{\rho_{n}}
	\end{equation*}
	with constant $C = 128/c_{\min}$.
	\end{Satz}

Theorem \ref{Cor: no approximation error} gives us an explicit formula for how the sparsity of our network will affect our rate of convergence, which is particularly insightful, since in many related works the conditions on network density enter the rate of convergence only indirectly as assumptions on the norm of the true parameter vector, see for example \cite{Chatterjee:etal:2011, Yan:Xu:2013}. Also, this is essentially the rate of convergence we would expect in the classical LASSO setting for logistic regression up to an additional factor {$\rho_{n}^{-1}$} (cf. \cite{vandegeer2011}). Recall that in the classical LASSO setting, when the model is correctly specified, probabilities stay bounded away from zero, and we have the same effective sample size for each parameter, we obtain the rates
\[
O_P\left( \text{sparsity} \cdot \frac{\log(\text{number of columns of design matrix})}{\text{effective sample size}} \right)
\]
for the excess risk and 
\[
O_P\left( \text{sparsity} \cdot \sqrt{\frac{\log(\text{number of columns of design matrix})}{\text{effective sample size}} }\right)
\]
for the $\ell_1$-error.
In the setting of Theorem \ref{Cor: no approximation error}, choosing  $\bar{\lambda}$ of the order $\sqrt{{\log(n)}/{\binom{n}{2}}}$, we obtain
\begin{align*}
	\mathcal{E}(\hat{\theta}) &= O_P\left(s_{0,+} \cdot \frac{1}{\rho_{n} } \cdot \frac{\log(n)}{\binom{n}{2}}\right), \\
	\vert \hat{\mu} - \mu_0 \vert + \Vert \hat{\gamma} - \gamma_0 \Vert_1 &= O_P\left(  s_{0,+} \cdot \frac{1}{\rho_{n} } \cdot \sqrt{\frac{\log(n)}{\binom{n}{2}}} \right), \\
	\Vert \hat{\beta} - \beta_0  \Vert_1 &=  O_P\left(  s_{0,+} \cdot \frac{1}{\rho_{n} } \cdot \frac{\sqrt{\log(n)}}{\sqrt{n-1}} \right).
\end{align*} 
That is, up to an additional factor {$\rho_{n}^{-1}$}, we obtain the LASSO rate of convergence for sample size $\binom{n}{2}$ for the global excess risk. The excess risk $\mathcal{E}(\hat{\theta})$ measures the predictive performance of an estimator which may be more meaningful in case estimating individual parameters is inaccurate due to collinearity. By the second line of the display above, we have immediately $\hat{\mu} \overset{P}{\rightarrow} \mu_0$ and $\hat{\gamma} \overset{P}{\rightarrow} \gamma_0$ at the rate expected from a LASSO type estimator with effective sample size $\binom{n}{2}$ (up to an additional factor). 
Furthermore, the third line implies that, again, up to an additional factor, for the error of $\hat{\beta}$, we obtain the rate of convergence we would expect for a LASSO type estimator with sample size $n-1$. 

In particular, the assumptions we have to impose to obtain $\ell_1$-consistency include the case $\Vert \beta_0 \Vert_\infty = o(\log(\log(n)))$, which is the condition that had to be imposed in the original $\beta$-model for their strong consistency result (cf. \cite{Yan:Xu:2013}, Theorem 1). Note that by setting $\gamma=0$, our proposed $\ell_1$ penalized likelihood method can also handle the model in \cite{Chen:etal:19} where they used the $\ell_0$ penalty for estimation. The comparison of our estimator with the one in \cite{Chen:etal:19} in the Supplementary Material indicates that the estimator proposed in this paper is preferable when the network is relatively dense. \color{black}{\cite{shao20212} considered an $\ell_2$ penalized MLE in the $\beta$-model, establishing the first estimation optimality results in the $\beta$-model literature that only require the network sparsity $\rho_n \gg n^{-1}$. We refer to \cite{shao20212} for a detailed comparison of different penalization methods for the $\beta$-model.} Next we give the $\ell_q$-error bound ($1 < q \leq 2$) for $\hat{\theta}$.

\begin{Prop}\label{Prop: q error bound}
	Under conditions of Theorem \ref{Cor: no approximation error}, for $1 < q \leq 2$ and a confidence level $t$, with probability at least $1 - \exp(-t)$ we have
	\begin{align*}
	 \Vert \frac{\sqrt{2}}{\sqrt{n}}(\hat{\beta} - \beta_0)  \Vert^{q}_q + \vert \hat{\mu} - \mu_0 \vert^{q} + \Vert \hat{\gamma} - \gamma_0 \Vert^{q}_q  \le \left(4^q+2^{q+1}\right)s_{0,+}\left(\frac{C{\bar{\lambda}}}{\rho_n}\right)^q,
	\end{align*}	
where $C =64/c_{\min}.$
\end{Prop}

We have discussed the scenario when $r_n \ge r_{n, 0}$. In practice however, it may happen that $r_n < r_{n,0}$. In this case, we define the {best local approximation} $\theta^*$ of the true $\theta_0$ as
\begin{equation*}
	\theta^* = (\beta^{*T}, \mu^*, \gamma^{*T})^T=\argmin_{\theta \in \Theta_{\text{loc}}} \frac{1}{\binom{n}{2}}\mathbb{E}[\mathcal{L}(\theta)].
\end{equation*}
If $r_{n} < r_{n,0}$, it may happen $\theta^* \not= \theta_0$ and thus estimating $\theta^*$ is the best we can achieve when solving (\ref{Eq: Penalized llhd with covariates}). To analyze the corresponding estimator, we resort to the notion of {local excess risk} as in \cite{Chen:etal:19}, which measures how close a parameter $\theta$ is to the best local approximation $\theta^*$ in terms of excess risk:
\begin{equation*}
	\mathcal{E}_{\text{loc}}(\theta) \coloneqq \mathcal{E}(\theta) - \mathcal{E}(\theta^*).
\end{equation*}
Clearly, $\theta^*$ also fulfils
$
\theta^* = \argmin_{\theta \in \Theta_{\text{loc}}} \mathcal{E}(\theta)
$
and we may consider the excess risk of the best local approximation, $\mathcal{E}(\theta^*)$, as the approximation error of our model. It accounts for the fact that our model might be misspecified, in the sense that the parameter $r_n$ is not large enough. As is usual in LASSO theory (cf. \cite{vandegeer2011}, Chapter 6), it is tacitly assumed that this approximation error is small, i.e. $r_n$ is sufficiently large. Note that the global excess risk of our estimator $\hat{\theta}$ decomposes as 
$
\mathcal{E}(\hat{\theta}) = \mathcal{E}(\theta^*) + \mathcal{E}_{\text{loc}}(\hat{\theta}),
$
where we can consider the approximation error $\mathcal{E}(\theta^*)$ as a deterministic bias. Define
\begin{equation}\label{Eq: Def K_n}
	K_n = \frac{2(1 + \exp( \max\{r_{n,0}  , r_{n} \}))^2}{\exp(\max\{r_{n,0}  ,  r_{n}   \})}.
\end{equation}
\begin{Assum}\label{Assum: rate of s^*}
$s^*_+\frac{\sqrt{\log(n)}K_n}{\sqrt{n}} \rightarrow 0$ as $n \rightarrow \infty$ where $s^*_+=|S(\beta^*)|+p+1$.
\end{Assum}
Direct calculation shows that when $r_n \ge r_{n, 0}$, $K_n$ is of the order $\rho_{n}^{-1}$, which means Assumption \ref{Assum: rate of s^*} can be seen as a general version of Assumption \ref{Assum: rate of s_0} under model misspecification. We have the following theorem in this scenario.
\begin{Satz}\label{Thm: consistency}
	Under Assumptions \ref{Assum: minimum EW} and \ref{Assum: rate of s^*}, with probability at least $1 - \exp(-t)$ we have
	\begin{align*}
	\mathcal{E}(\hat{\theta}) + {\bar{\lambda}} \left( \frac{\sqrt{2}}{\sqrt{n}} \Vert \hat{\beta} - \beta^*  \Vert_1 + \vert \hat{\mu} - \mu^* \vert + \Vert \hat{\gamma} - \gamma^* \Vert_1  \right) \le 6 \mathcal{E}(\theta^*) + 32  \frac{s^*_+K_n{\bar{\lambda}}^2}{c_{\min}}.
	\end{align*}
\end{Satz}

\section{\color{black}Asymptotic Normality}\label{Sec: Inference}

\color{black}{In this section, we consider the statistical inference of the homophily parameter $\gamma$ and the degree heterogeneity parameter $\beta$ when $r_n \ge r_{n,0}$ and thus $\theta^* = \theta_0$. We first study the limiting distribution for $\hat \gamma$.} We will see that the same arguments used for deriving the limiting distribution for $\hat \gamma$ also work for $\hat \mu$ and as a by-product of our proofs we also obtain an analogous limiting result for $\hat \mu$. To ease notation a little we will use $\vartheta = (\mu, \gamma^T)^T$ to refer to the unpenalized parameter subvector of $\theta$. 
Denote by $H(\hat{\theta}) \coloneqq \left.H_{\vartheta \times \vartheta}(\theta)\right|_{\theta = \hat{\theta}}$ the Hessian of $\frac{1}{\binom{n}{2}}\mathcal{L}(\theta)$ with respect to $\vartheta$ only, evaluated at $\hat{\theta}$. Let $\mathbb{E}[H(\theta_0)] $ be the corresponding population version. To be consistent with commonly used notation, call $\hat{\Sigma}_\vartheta = H(\hat \theta)$ and $\Sigma_\vartheta = \mathbb{E}[H(\theta_0)]$ and
$
\hat \Theta_\vartheta \coloneqq \hat \Sigma_\vartheta^{-1}, \Theta_\vartheta \coloneqq \Sigma_\vartheta^{-1}.
$

We will need to invert $\hat \Sigma_\vartheta$ and $\Sigma_\vartheta$ and show that these inverses are close to each other in an appropriate sense. It is commonly assumed in LASSO theory (cf.~\cite{vandegeer2014}) that the minimum eigenvalues of these matrices stay bounded away from zero. In our case, however, such an assumption is invalid, as we demonstrate in the Supplementary Material. Therefore, a careful argument is needed and we have to impose stricter assumptions than for our consistency result alone.
\begin{Assum}\label{Assum: new rate of s and rho_n}
	$s_{0,+}\frac{\sqrt{\log(n)}}{\sqrt{n}\rho_n^2} \rightarrow 0, n \rightarrow \infty$.
\end{Assum}

Assumption \ref{Assum: new rate of s and rho_n} is a slightly stricter version of the previously imposed Assumption \ref{Assum: rate of s_0}. Previously we only needed a factor of $\rho_n^{-1}$ to ensure that the $\ell_1$-error for $\hat \beta$ goes to zero. 
Notice, though, that these assumptions still allow sparsity rates for {$\rho_{n}$} of small polynomial order. More precisely, up to a $\log$-factor and depending on the speed of $s_{0,+}$, { $\rho_{n}$} may still go to zero at a speed of order up to $n^{-1/4}$. 

\begin{Satz}\label{Thm: inference}
	Under Assumptions \ref{Assum: minimum EW} and \ref{Assum: new rate of s and rho_n}, when $\theta^* = \theta_0$, we have for any $k = 1, \dots, p$, as $n \rightarrow \infty$,
	\begin{equation*}
	\sqrt{\binom{n}{2}}\frac{\hat \gamma_k - \gamma_{0,k}}{\sqrt{\hat \Theta_{\vartheta,k+1,k+1}}} \overset{d}{\longrightarrow} \mathcal{N}(0,1).
	\end{equation*}
	We also have for our estimator of the global sparsity parameter, $\hat \mu$, as $n \rightarrow \infty$,
	\begin{equation*}
		\sqrt{\binom{n}{2}}\frac{\hat \mu - \mu_0}{\sqrt{\hat \Theta_{\vartheta,1,1}}} \overset{d}{\longrightarrow} \mathcal{N}(0,1).
	\end{equation*} 
\end{Satz}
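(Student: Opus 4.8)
The approach is to invert the KKT conditions for \eqref{Eq: Penalized llhd with covariates} in the spirit of \cite{vandegeer2014}, using crucially that $\vartheta=(\mu,\gamma^T)^T$ is \emph{unpenalized}: by \eqref{Eq: KKT gamma} its stationarity equation is \emph{exact}, $\nabla_\vartheta\mathcal{L}(\hat\theta)=0$, so no de-biasing correction is required. First one notes that $\hat\theta$ is interior to $\Theta_{\text{loc}}$ with probability $\to 1$ once $r_n>r_{n,0}$, since $\Vert D(\hat\theta-\theta_0)\Vert_\infty\to 0$ by Theorem~\ref{Thm: consistency}. Expanding $\theta\mapsto\nabla_\vartheta\mathcal{L}(\theta)$ to first order about $\theta_0$ and dividing by $\binom{n}{2}$ gives
\[
\hat\vartheta-\vartheta_0=\bar H^{-1}\Big(\tfrac{1}{\binom{n}{2}}D_\vartheta^T(A-P_0)\Big)-\bar H^{-1}\bar G\,(\hat\beta-\beta_0),
\]
with $A=(A_{ij})_{i<j}$, $P_0=(p_{ij}(\theta_0))_{i<j}$, and $[\bar G\mid\bar H]=\frac{1}{\binom{n}{2}}\int_0^1\nabla^2_{\vartheta\times\theta}\mathcal{L}(\theta_0+t(\hat\theta-\theta_0))\,dt$ ($\beta$-block $\bar G$, $\vartheta$-block $\bar H$). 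The first summand is the ``noise'' term that will produce the Gaussian limit; the second is the only bias. Since here $\theta^*=\theta_0$ forces $\rho_{n,0}\ge\rho_n$, Corollary~\ref{Cor: no approximation error} gives $\Vert\hat\beta-\beta_0\Vert_1=O_P(s^*_+\rho_n^{-1}\sqrt{\log n/n})$ and, with $\tau_n:=\Vert D(\hat\theta-\theta_0)\Vert_\infty$, also $\tau_n=O_P(s^*_+\rho_n^{-1}\sqrt{\log n/n})$, so $\tau_n=o_P(\rho_n)$ \emph{precisely} under Assumption~\ref{Assum: new rate of s and rho_n}.

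The core of the argument is a perturbation analysis of the asymptotically singular Gram matrices. Write $W(\theta)_{ij}=p_{ij}(\theta)(1-p_{ij}(\theta))$ and let $\Sigma_\vartheta^{\circ}:=\frac{1}{\binom{n}{2}}D_\vartheta^TW_0^2D_\vartheta=H(\theta_0)$ be the oracle Gram, a function of $\{Z_{ij}\}$ alone (so $\Sigma_\vartheta=\E[\Sigma_\vartheta^{\circ}]$). I would establish \emph{two-sided} eigenvalue bounds $\lambda_{\min}(\hat\Sigma_\vartheta),\lambda_{\min}(\Sigma_\vartheta^{\circ})\gtrsim\rho_n$ (as in the display preceding Assumption~\ref{Assum: new rate of s and rho_n}) and $\lambda_{\max}(\hat\Sigma_\vartheta),\lambda_{\max}(\Sigma_\vartheta^{\circ})\lesssim\rho_n$; the upper bound holds because the $\mu$-coordinate of the KKT equations forces $\sum_{i<j}p_{ij}(\hat\theta)=d_+$, with $d_+\asymp\E[d_+]\asymp\binom{n}{2}\rho_{n,0}\asymp\binom{n}{2}\rho_n$ for the sparse networks considered (and similarly for $\sum_{i<j}p_{ij}(\theta_0)$). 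Hence the condition numbers of $\hat\Sigma_\vartheta$ and $\Sigma_\vartheta^{\circ}$ stay $O_P(1)$. Next, since $|W(\theta)_{ij}-W(\theta_0)_{ij}|\le C\,W(\theta_0)_{ij}\,|D_{ij}^T(\theta-\theta_0)|$ uniformly over $\Theta_{\text{loc}}$, the weights defining $\bar H$ and $\hat\Sigma_\vartheta$ equal those of $\Sigma_\vartheta^{\circ}$ up to a relative factor $1+O_P(\tau_n)$, giving $\Vert\bar H-\Sigma_\vartheta^{\circ}\Vert_{\mathrm{op}},\,\Vert\hat\Sigma_\vartheta-\Sigma_\vartheta^{\circ}\Vert_{\mathrm{op}}=O_P(\tau_n\rho_n)$ and therefore $\Vert\bar H^{-1}-(\Sigma_\vartheta^{\circ})^{-1}\Vert_{\mathrm{op}},\,\Vert\hat\Sigma_\vartheta^{-1}-(\Sigma_\vartheta^{\circ})^{-1}\Vert_{\mathrm{op}}=O_P(\tau_n/\rho_n)$. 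Together with $\Vert\frac{1}{\binom{n}{2}}D_\vartheta^T(A-P_0)\Vert_2=O_P(\sqrt{\rho_n}/n)$, $\Vert\bar G(\hat\beta-\beta_0)\Vert_2=O_P(n^{-1}\Vert\hat\beta-\beta_0\Vert_1)$ (each column of $\bar G$ has only $n-1$ nonzero entries of size $O(n^{-1})$), and $\hat\Theta_{\vartheta,k+1,k+1}\asymp\rho_n^{-1}$, a direct computation shows that the bias term and the errors from replacing $\bar H^{-1}$ and $\hat\Sigma_\vartheta^{-1}$ by $(\Sigma_\vartheta^{\circ})^{-1}$ are all $o_P(\sqrt{\hat\Theta_{\vartheta,k+1,k+1}/\binom{n}{2}})$ exactly under Assumption~\ref{Assum: new rate of s and rho_n}, and that $\hat\Theta_{\vartheta,k+1,k+1}/((\Sigma_\vartheta^{\circ})^{-1})_{k+1,k+1}\to 1$ in probability.

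It then remains to prove a CLT for the \emph{oracle} noise term $\frac{1}{\sqrt{\binom{n}{2}}}\sum_{i<j}\big((\Sigma_\vartheta^{\circ})^{-1}e_{k+1}\big)^TD_{\vartheta,ij}(A_{ij}-p_{ij}(\theta_0))$, whose coefficient vector is deterministic given $\{Z_{ij}\}$. Conditionally on $\{Z_{ij}\}$ this is a sum of independent, bounded, mean-zero random variables with conditional variance exactly $((\Sigma_\vartheta^{\circ})^{-1})_{k+1,k+1}\asymp\rho_n^{-1}$; the individual summands are $O(\rho_n^{-1}\binom{n}{2}^{-1/2})$, so the Lindeberg ratio is $O((\rho_n\binom{n}{2})^{-1/2})\to 0$ because $\binom{n}{2}\rho_n\to\infty$ under Assumption~\ref{Assum: new rate of s and rho_n}. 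Since this holds on an event of $\{Z_{ij}\}$ of probability $\to 1$ (on which $Z^TZ/\binom{n}{2}$ concentrates and the eigenvalue bounds are valid) and the Gaussian limit is free of $\{Z_{ij}\}$, dominated convergence upgrades it to an unconditional CLT; Slutzky's theorem applied to the decomposition of $\sqrt{\binom{n}{2}}(\hat\gamma_k-\gamma_{0,k})/\sqrt{\hat\Theta_{\vartheta,k+1,k+1}}$ into (oracle noise)/(oracle s.d.)~$\times$~(variance-ratio)~$+$~$o_P(1)$ completes the proof, and taking $e_1$ in place of $e_{k+1}$ yields the statement for $\hat\mu$. \emph{The hard part} is this perturbation bookkeeping: all relevant Gram matrices degenerate at rate $\rho_n$, so every approximation error is amplified by powers of $\rho_n^{-1}$, and Assumption~\ref{Assum: new rate of s and rho_n} --- strictly stronger than the $\rho_n^{-1}$-factor Assumption~\ref{Assum: rate of s^*} sufficient for consistency --- together with the \emph{matching} upper eigenvalue bound furnished by network sparsity is exactly what keeps all remainders negligible.
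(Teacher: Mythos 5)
Your overall strategy is the same as the paper's: invert the exact stationarity equation \eqref{Eq: KKT gamma} for the unpenalized block (so no debiasing), Taylor-expand $\nabla_\vartheta\mathcal{L}$ around $\theta_0$, show the $\beta$-bias and the quadratic remainder are negligible thanks to the sparse columns of $X$ and Assumption \ref{Assum: new rate of s and rho_n}, and finish with a Lindeberg--Feller CLT plus Slutzky. However, there is a genuine gap in your quantitative bookkeeping. The ``matching upper eigenvalue bound'' you call essential --- $\lambda_{\max}(\hat\Sigma_\vartheta),\lambda_{\max}(\Sigma^{\circ}_\vartheta)\lesssim\rho_n$, hence $\hat\Theta_{\vartheta,k,k}\asymp\rho_n^{-1}$, via $d_+\asymp \E[d_+]\asymp\binom{n}{2}\rho_{n,0}\asymp\binom{n}{2}\rho_n$ --- is not implied by the hypotheses of the theorem. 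The quantities $\rho_{n,0}$ and the user-chosen $\rho_n\le\rho_{n,0}$ are only \emph{lower} bounds on the link probabilities; nothing in Assumptions \ref{Assum: minimum EW} and \ref{Assum: new rate of s and rho_n} forces the average link probability to be of order $\rho_n$ (all $p_{ij}$ may be bounded away from zero while $\rho_n\to0$, and pairs touching $S_0$ can have constant-order probabilities). Consequently $\rho_{n,0}\asymp\rho_n$, $\hat\Theta_{\vartheta,k,k}\asymp\rho_n^{-1}$, the noise-norm claim $\Vert \frac{1}{\binom{n}{2}}D_\vartheta^T(A-P_0)\Vert_2=O_P(\sqrt{\rho_n}/n)$, the conditional variance $\asymp\rho_n^{-1}$, and the inverse-perturbation rate $O_P(\tau_n/\rho_n)$ are all unwarranted as stated.

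This matters because your argument does not merely lose sharpness without that bound: with only the valid one-sided bounds ($\lambda_{\min}\gtrsim\rho_n$, $\lambda_{\max}\le C$), your chain $\Vert\hat\Sigma_\vartheta^{-1}-(\Sigma_\vartheta^{\circ})^{-1}\Vert_{\mathrm{op}}\le\Vert\hat\Sigma_\vartheta^{-1}\Vert_{\mathrm{op}}\Vert(\Sigma_\vartheta^{\circ})^{-1}\Vert_{\mathrm{op}}\Vert\hat\Sigma_\vartheta-\Sigma_\vartheta^{\circ}\Vert_{\mathrm{op}}$, driven by the \emph{uniform} perturbation $\tau_n=\Vert D(\hat\theta-\theta_0)\Vert_\infty$, only yields relative errors of order $\tau_n/\rho_n^2=O_P\bigl(s^*_+\sqrt{\log(n)/n}\,\rho_n^{-3}\bigr)$, which Assumption \ref{Assum: new rate of s and rho_n} makes $o_P(\rho_n^{-1})$ but not $o_P(1)$, so the replacement of $\hat\Theta_\vartheta$ in the noise term does not close. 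The repair is what the paper does: (i) use $\lambda_{\max}(\Sigma_\vartheta)\le 1\vee\lambda_{\max}\le C$ to get the \emph{constant} lower bound $\Theta_{\vartheta,k,k}\ge C>0$, so that it suffices to prove every error term is $o_P\bigl(\binom{n}{2}^{-1/2}\bigr)$ outright, with no appeal to a $\rho_n^{-1}$-sized variance; and (ii) replace the uniform $\tau_n$ by the \emph{averaged} perturbation $\frac{1}{\binom{n}{2}}\sum_{i<j}\vert D_{ij}^T(\hat\theta-\theta_0)\vert\le C\bigl(\tfrac1n\Vert\hat\beta-\beta_0\Vert_1+\vert\hat\mu-\mu_0\vert+\Vert\hat\gamma-\gamma_0\Vert_1\bigr)$ (each column of $X$ has only $n-1$ nonzero entries, gaining a factor of order $\sqrt{n}$ over $\tau_n$), combined with entrywise H\"older pairings $\vert(\hat\Theta_{\vartheta,k}-\Theta_{\vartheta,k})P_n\nabla_\vartheta l_{\theta_0}\vert\le\Vert\hat\Theta_\vartheta-\Theta_\vartheta\Vert_\infty\Vert P_n\nabla_\vartheta l_{\theta_0}\Vert_\infty$ and Hoeffding, which gives $\Vert\hat\Theta_\vartheta-\Theta_\vartheta\Vert_\infty=O_P\bigl(s^*_+\sqrt{\log(n)/\binom{n}{2}}\,\rho_n^{-3}\bigr)=o_P(1)$ under Assumption \ref{Assum: new rate of s and rho_n}. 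With these two changes your decomposition and your conditional-Lindeberg argument go through; as written, however, the proof rests on sparsity-matching claims that are not assumptions of the theorem and, once removed, leave a step that fails.
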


Theorem \ref{Thm: inference} states that inference on $\hat \mu$ and $\hat \gamma$ can be conducted directly after fitting our model. It is in stark contrast to the usual LASSO estimates, where a separate debiasing step must be carried out for the correct inference of model parameters due to the bias incurred by shrinkage  \citep{zhang:zhang:2014,vandegeer2014}. 
This bias is made explicit in equation \eqref{Eq: subdifferential first order equation} in the Supplementary Material: The penalized parameter values do not fulfill the first-order estimating equations exactly, but rather a bias of the form $\lambda v$ is incurred as prescribed by subdifferential calculus. While the unpenalized parameter estimates $(\hat \mu, \hat \gamma^T)^T$ do fulfill the first-order estimating equations exactly, in standard settings, this alone would still not be enough to ensure the asymptotic normality of $\hat \vartheta$. \color{black}{However, in our special case, due to the differing sample sizes between $\beta$ and $\vartheta$,  the bias incurred from the part of the likelihood relating to $\beta$ vanishes in probability, which allows us to derive a limiting distribution for $(\hat \mu, \hat \gamma^T)^T$ without a debiasing step. We note that Assumption 4 in Theorem \ref{Thm: inference} can be relaxed to Assumption \ref{Assum: rate of s_0}, thus permitting network sparsity to be of the order $n^{-1/2}$, if we allow $p_{ij}$ to be of the same order.}

{\color{black} On the other hand, for the inference of the penalized parameter $\beta$, a debiasing step is required. Let $\hat{W} = \text{diag}(\sqrt{p_{ij}(\hat{\theta})(1-p_{ij}(\hat{\theta}))}, i < j)$ and $\hat{V}_{\beta} = X^{T} \hat{W}^2X$. Due to the specific structure of $\hat{V}_{\beta}$, our debiasing procedure is more straightforward and does not require nodewise regression. Let $\hat{U}_\beta=\text{diag}(1/\hat{V}_{\beta,1,1},1/\hat{V}_{\beta,2,2}, ..., 1/\hat{V}_{\beta,n,n})$. Our debiased estimator for $\beta$ is defined as $\hat{b} := \hat{\beta}-\hat{U}_\beta \nabla_{\beta} \mathcal{L}(\hat{\theta})$. 

\begin{Assum}\label{inference_beta} $s_{0,+}\frac{\log n}{\sqrt{n}\rho^{2}_n} \rightarrow 0$, $n \rightarrow \infty$.
\end{Assum}

Assumption \ref{inference_beta} is slightly stricter than Assumption \ref{Assum: new rate of s and rho_n} due to an additional factor of $\sqrt{\log n}$. Despite this, it still permits the network sparsity $\rho_n$ to approach zero at a rate up to $n^{-1/4}$.

\begin{Satz}\label{beta_AN} Under Assumptions \ref{Assum: minimum EW} and  \ref{inference_beta}, we have, for any 
$k = 1, \dots, n$, as $n \rightarrow \infty$,
\[
	\sqrt{\hat{V}_{\beta,k,k}}(\hat{b}_{k} - \beta_{0,k})\overset{d}{\longrightarrow} N(0,1).
\]
\end{Satz}
Since $(n-1)\rho_n/4 \le \hat{V}_{\beta,k,k} \le (n-1)/4$, Theorem \ref{beta_AN} implies that the convergence rate of each $\hat{b}_k$ is between $n^{-1/2}$ and $(n\rho_n)^{-1/2}$.}


\section{Erd\H{o}s-R\'{e}nyi with Covariates}\label{subsection: generalized ER model}
When $\beta=0$, that is, when we do not consider degree heterogeneity, the linking probability in \sbmc becomes 
\begin{equation}\label{eq:nullmodel2}
	P(A_{ij} = 1|Z_{ij}) = p_{ij} = \frac{\exp(\mu + Z_{ij}^T\gamma)}{1+\exp( \mu + Z_{ij}^T\gamma)},
\end{equation}
which can be seen as a generalized Erd\H{o}s-R\'{e}nyi model when covariates are incorporated. For this reason, we will abbreviate this model as ERC. 

Of course, the properties of the MLE of $\mu$ and $\gamma$ in \eqref{eq:nullmodel2} are standard if both parameters are fixed, but this regime gives rise to dense networks and thus is not very interesting. Instead, we study them under the sparse network regime by reparametrizing $\mu$ as
\[
\mu = -\xi \log(n) + \mu^\dagger,
\]
where $\xi \in [0,2)$ effectively takes the role of $\rho_{n,0}$ from the previous section and $\mu^\dagger \in [-M, M]$ for a fixed $M < \infty$ independent of $n$. This parametrization first appeared in \cite{Krivitsky:Kolaczyk:2015} when the notion of effective sample sizes for network models was discussed, and was further studied in  \cite{Chen:etal:19}. 
To appreciate this reformulation, we see that the expected total number of edges of ERC is of the order $O(n^{2-\xi})$. When $\xi=0$, ERC becomes a standard logistic regression model with fixed parameters.  It can generate arbitrarily sparse networks when $\xi>0$. To the best of our knowledge, a model of this type that accounts for covariates has not been studied in the literature before and thus the results below may be of independent interest.

Denote $\mu_0^\dagger$ and $\gamma_0$ as the true parameters  of $\mu^\dagger$ and $\gamma$ respectively. To present a consistent notation with the other sections, we abuse notation slightly and denote a generic parameter as $\theta = (\mu^\dagger, \gamma)$, the true parameter as $\theta_0 = (\mu_0^\dagger, \gamma_0)$ and our estimator (defined below) as $\hat \theta = (\hat \mu^\dagger, \hat \gamma)$. We make the following assumptions.
\begin{Assum}\label{Assum: MLE in interior}
	The true parameter $\theta_0 = (\mu_0^\dagger, \gamma_0^T)^T$ lies in the interior of $[-M,M] \times \Gamma$.
\end{Assum}

\begin{Assum}\label{Assum: min eval} The $Z_{ij}$ are i.i.d. realizations of the same random variable. 
	The covariance matrix of $Z_{12}$, that is the matrix $\E[Z_{12}Z_{12}^T]$, is strictly positive definite with minimum eigenvalue $\lambda_{\min} > 0$. 
\end{Assum}
Assumption \ref{Assum: min eval} is analogous to Assumption \ref{Assum: minimum EW} in the case with non-zero $\beta$. We remark that the i.i.d. condition is used to simplify parts of the proofs and can be relaxed at the expense of lengthier proofs.

We consider the following function which is proportional to the negative log-likelihood of the \serc up to a summand independent of the parameter 
\begin{equation}\label{Eq: ER-C neg llhd}
	\mathcal{L}^\dagger(\mu^\dagger, \gamma) = -d_+\mu^\dagger - \sum_{i < j} (\gamma^TZ_{ij})A_{ij} + \sum_{i < j} \log\left( 1 + n^{-\xi} \exp(\mu^\dagger + \gamma^TZ_{ij}) \right).
\end{equation}
In the ERC, the dimension of the parameter is fixed. Therefore, it is not necessary to employ a penalized likelihood approach as in the \sbmc and we estimate $\theta$ via maximum likelihood  
\begin{equation}\label{Eq: ER-C max llhd}
	\hat \theta = (\hat \mu^\dagger, \hat \gamma^T)^T = \argmin_{\theta = (\mu^\dagger, \gamma^T)^T} \mathcal{L}^\dagger(\mu^\dagger, \gamma),
\end{equation}
where the argmin is taken over $[-M, M] \times \Gamma$.
The design matrix $D$  takes the simplified form where its $(ij)$th row is $(1,Z_{ij}^T)$. Define the matrix $\Sigma \in \R^{(p+1) \times (p+1)}$ as
\[
\Sigma \coloneqq \E\left[  (D_{12}D_{12}^T) \exp(\mu_0^\dagger) \exp(\gamma_0^TZ_{12}) \right],
\]
which is invertible by Assumption \ref{Assum: min eval}.
We have the following central limit theorem for $\hat \theta$. Denote by $\mathcal{N}(0, \Sigma^{-1})$ the law of the multivariate normal distribution with zero mean vector and covariance matrix $\Sigma^{-1}$.
\begin{Satz}\label{Thm: asymptotic normality ER-C}
	Under Assumptions \ref{Assum: MLE in interior} and \ref{Assum: min eval}, it holds, as $n \rightarrow \infty$,
	\[
	\sqrt{\frac{\binom{n}{2}}{n^{\xi}}} (\hat \theta - \theta_0) \overset{d}{\longrightarrow} \mathcal{N}(0, \Sigma^{-1}).
	\]
\end{Satz}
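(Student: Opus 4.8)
The plan is to read Theorem~\ref{Thm: asymptotic normality ER-C} as a standard maximum-likelihood asymptotics statement, the one twist being that the link probabilities are of order $n^{-\xi}$, so that the effective sample size driving the estimation is $\binom{n}{2}/n^{\xi}\sim n^{2-\xi}$ rather than $\binom{n}{2}$. Writing $D_{ij}^{T}\theta=\mu^{\dagger}+\gamma^{T}Z_{ij}$ and $p_{ij}(\theta)=n^{-\xi}e^{D_{ij}^{T}\theta}/(1+n^{-\xi}e^{D_{ij}^{T}\theta})$, the objective \eqref{Eq: ER-C neg llhd} has score $\nabla\mathcal{L}^{\dagger}(\theta)=-\sum_{i<j}(A_{ij}-p_{ij}(\theta))D_{ij}$ and Hessian $\nabla^{2}\mathcal{L}^{\dagger}(\theta)=\sum_{i<j}p_{ij}(\theta)(1-p_{ij}(\theta))D_{ij}D_{ij}^{T}$. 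By Assumption~\ref{Assum: min eval} the pairs $\{(A_{ij},Z_{ij})\}_{i<j}$ are i.i.d., and since $|D_{ij}^{T}\theta|$ is uniformly bounded on $[-M,M]\times\Gamma$ we have $p_{ij}(\theta)=n^{-\xi}e^{D_{ij}^{T}\theta}(1+O(n^{-\xi}))$ uniformly. The argument proceeds in three movements: consistency of $\hat\theta$, convergence of the rescaled Hessian, and a central limit theorem for the rescaled score; a mean-value expansion and Slutsky's theorem then assemble these into the claim.

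\emph{Consistency.} The rescaled objective $\tfrac{n^{\xi}}{\binom{n}{2}}\mathcal{L}^{\dagger}(\theta)$ is convex in $\theta$, and using $n^{\xi}\log(1+n^{-\xi}e^{D_{ij}^{T}\theta})\to e^{D_{ij}^{T}\theta}$ together with an $L^{2}$ law of large numbers (the variance of the average is $O(n^{\xi-2})\to0$ because $\xi<2$) it converges pointwise in probability to
\[
m(\theta):=\E\bigl[e^{D_{12}^{T}\theta}\bigr]-\E\bigl[e^{D_{12}^{T}\theta_{0}}D_{12}^{T}\theta\bigr].
\]
This limit is strictly convex, since its Hessian $\E[e^{D_{12}^{T}\theta}D_{12}D_{12}^{T}]$ dominates $e^{-M-\kappa}\E[D_{12}D_{12}^{T}]$, which is positive definite by Assumption~\ref{Assum: min eval} and the centredness of $Z_{12}$; its gradient $\E[(e^{D_{12}^{T}\theta}-e^{D_{12}^{T}\theta_{0}})D_{12}]$ vanishes at $\theta_{0}$, so $\theta_{0}$ is the unique minimizer of $m$. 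Pointwise convergence of convex functions forces uniform convergence on the compact set $[-M,M]\times\Gamma$, so the standard argmin (convexity) argument gives $\hat\theta\overset{P}{\to}\theta_{0}$.

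\emph{Expansion, Hessian, and score CLT.} Since $\theta_{0}$ is interior (Assumption~\ref{Assum: MLE in interior}) and $\hat\theta\overset{P}{\to}\theta_{0}$, with probability tending to one $\hat\theta$ is interior and $\nabla\mathcal{L}^{\dagger}(\hat\theta)=0$; a mean-value expansion gives $0=\nabla\mathcal{L}^{\dagger}(\theta_{0})+\nabla^{2}\mathcal{L}^{\dagger}(\tilde\theta)(\hat\theta-\theta_{0})$ with $\tilde\theta$ on the segment joining $\hat\theta$ and $\theta_{0}$, hence
\[
\sqrt{\tfrac{\binom{n}{2}}{n^{\xi}}}(\hat\theta-\theta_{0})=-\Bigl(\tfrac{n^{\xi}}{\binom{n}{2}}\nabla^{2}\mathcal{L}^{\dagger}(\tilde\theta)\Bigr)^{-1}\sqrt{\tfrac{n^{\xi}}{\binom{n}{2}}}\,\nabla\mathcal{L}^{\dagger}(\theta_{0}).
\]
For the Hessian factor, $\tfrac{n^{\xi}}{\binom{n}{2}}\nabla^{2}\mathcal{L}^{\dagger}(\theta)=\tfrac{1}{\binom{n}{2}}\sum_{i<j}\tfrac{e^{D_{ij}^{T}\theta}}{(1+n^{-\xi}e^{D_{ij}^{T}\theta})^{2}}D_{ij}D_{ij}^{T}$, so a uniform law of large numbers on a neighbourhood of $\theta_{0}$ (using convexity, or boundedness and equicontinuity of the summands) together with $\tilde\theta\overset{P}{\to}\theta_{0}$ yields $\tfrac{n^{\xi}}{\binom{n}{2}}\nabla^{2}\mathcal{L}^{\dagger}(\tilde\theta)\overset{P}{\to}\E[e^{D_{12}^{T}\theta_{0}}D_{12}D_{12}^{T}]=\Sigma$. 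For the score factor, write $\sqrt{\tfrac{n^{\xi}}{\binom{n}{2}}}\nabla\mathcal{L}^{\dagger}(\theta_{0})=-\sum_{i<j}W_{ij}^{(n)}$ with $W_{ij}^{(n)}=\sqrt{n^{\xi}/\binom{n}{2}}\,(A_{ij}-p_{ij})D_{ij}$, a row-wise i.i.d.\ triangular array of mean-zero vectors whose covariances sum to $n^{\xi}\E[p_{12}(1-p_{12})D_{12}D_{12}^{T}]=\E[\tfrac{e^{D_{12}^{T}\theta_{0}}}{(1+n^{-\xi}e^{D_{12}^{T}\theta_{0}})^{2}}D_{12}D_{12}^{T}]\to\Sigma$; since $|A_{ij}-p_{ij}|\le1$ and $\|D_{ij}\|$ is bounded, $\|W_{ij}^{(n)}\|\le C\sqrt{n^{\xi}/\binom{n}{2}}\to0$ (this is precisely where $\xi<2$ is used), so the Lindeberg condition holds trivially for $n$ large and the multivariate Lindeberg--Feller theorem, via Cram\'er--Wold, gives $\sqrt{\tfrac{n^{\xi}}{\binom{n}{2}}}\nabla\mathcal{L}^{\dagger}(\theta_{0})\overset{d}{\to}\mathcal{N}(0,\Sigma)$. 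Slutsky applied to the display then delivers $\sqrt{\binom{n}{2}/n^{\xi}}(\hat\theta-\theta_{0})\overset{d}{\to}-\Sigma^{-1}\mathcal{N}(0,\Sigma)=\mathcal{N}(0,\Sigma^{-1})$, the sandwich collapsing because the model is correctly specified.

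The step I expect to be the main obstacle is consistency: in this vanishing-probability regime the naturally normalized log-likelihood $\binom{n}{2}^{-1}\mathcal{L}^{\dagger}$ degenerates to a constant, so one must first identify the correct rescaling by $n^{\xi}$ and the resulting ``Poisson-type'' limiting criterion $m$, and verify that $m$ has a unique interior minimizer equal to $\theta_{0}$. Once that is in place the remaining steps are routine, because the $\binom{n}{2}$ pairs are genuinely i.i.d.; the sparsity exponent $\xi$ re-enters only through $\sqrt{n^{\xi}/\binom{n}{2}}\to0$, which simultaneously makes the $L^{2}$ law of large numbers and the Lindeberg condition automatic.
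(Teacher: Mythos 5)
Your proposal is correct in substance and follows the same overall skeleton as the paper's proof: rescale the criterion by $n^{\xi}/\binom{n}{2}$, identify the limiting ``Poisson-type'' criterion (your $m(\theta)$ is exactly the paper's limit function $M(\mu^\dagger,\gamma)$ after expanding $D_{12}^{T}\theta=\mu^\dagger+\gamma^{T}Z_{12}$), prove consistency, then combine a Lindeberg--Feller CLT for the triangular array $\sqrt{n^{\xi}/\binom{n}{2}}\,(A_{ij}-p_{ij})D_{ij}$ with convergence of the $n^{\xi}$-rescaled Hessian to $\Sigma$ and Slutsky. Two differences are worth noting. First, for consistency the paper does not use the convexity lemma: it verifies uniform convergence by hand (a stochastic-equicontinuity bound on $\sup_{\Vert\theta_1-\theta_2\Vert\le\delta}\vert M_n(\theta_1)-M_n(\theta_2)\vert$ plus compactness), checks that $\theta_0$ is a well-separated minimizer of the strictly convex limit, and invokes van der Vaart's Theorem 5.7; your route via pointwise convergence of convex functions is a legitimate and arguably cleaner shortcut, since $\mathcal{L}^{\dagger}$ is convex in $\theta$ and the limit is finite and convex on all of $\R^{p+1}$. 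Second, your expansion $0=\nabla\mathcal{L}^{\dagger}(\theta_0)+\nabla^{2}\mathcal{L}^{\dagger}(\tilde\theta)(\hat\theta-\theta_0)$ with a single intermediate point $\tilde\theta$ is not literally available (there is no mean-value theorem for vector-valued maps); the paper avoids this by Taylor-expanding the scalar derivative $\dot l_n(y,a)$ in $a=D_{ij}^{T}\theta$ separately for each dyad, producing a weight matrix $\Sigma_\alpha=\binom{n}{2}^{-1}\sum_{i<j}\ddot l_n(A_{ij},\alpha_{ij})D_{ij}D_{ij}^{T}$ with dyad-specific intermediate points $\alpha_{ij}$, and then shows $n^{\xi}\Sigma_\alpha=\Sigma+o_P(1)$ (via Dini's theorem, Lipschitz continuity, and consistency) together with an explicit rate step $\hat\theta-\theta_0=O_P(\sqrt{n^{\xi}/\binom{n}{2}})$. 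Your subsequent uniform-LLN-plus-continuity argument for the rescaled Hessian accommodates this dyad-wise patch without change, so the slip is cosmetic rather than structural; with that repair your proof is complete and equivalent in strength to the paper's.
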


Since the expected number of observed edges in the \serc is of order $n^{2-\xi}$, the factor $\sqrt{\binom{n}{2}/n^{\xi}}$ in Theorem \ref{Thm: asymptotic normality ER-C} corresponds to the square root of the effective sample size. This means, having the link probabilities go to zero  reduces the information we gain about $\theta_0$ and this information loss is made explicit in a rate of convergence slower than what we would obtain in a classical parametric setting. 

While we consider Theorem \ref{Thm: asymptotic normality ER-C} to be interesting from a theoretical point of view, in practice, the sparsity rate parameter $\xi$ will not be known, which makes solving \eqref{Eq: ER-C max llhd} and finding the MLE $(\hat \mu^\dagger, \hat \gamma)$ impossible. Remarkably, it is possible, though, to circumvent this problem with the following argument.  Let $\hat \mu$ be the MLE of the global sparsity parameter before reparametrization and hence does not require knowledge of $\xi$. Define the matrix
\[
\hat \Sigma = \frac{1}{\binom{n}{2}} D^T\text{diag}\left( \frac{\exp(\hat \mu + \hat \gamma^TZ_{ij})}{(1 + \exp(\hat \mu + \hat \gamma^TZ_{ij}))^2} , i < j \right) D.
\]
Then without having to know $\xi$, we have the following corollary.

\begin{Kor}\label{Cor: Corollary ER-C}
	Under Assumptions \ref{Assum: MLE in interior} and \ref{Assum: min eval}, the following componentwise asymptotic normality results hold as $n \rightarrow \infty$ for $k = 1, \dots, p$,
	\[
	\sqrt{\binom{n}{2}} \cdot \frac{\hat \gamma_k - \gamma_{0,k}}{\sqrt{\hat\Sigma^{-1}_{k+1,k+1}}} \overset{d}{\longrightarrow} \mathcal{N}(0,1).
	\]
\end{Kor}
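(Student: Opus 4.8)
The plan is to read Corollary~\ref{Cor: Corollary ER-C} as a studentized version of Theorem~\ref{Thm: asymptotic normality ER-C} and obtain it by a Slutsky argument; the only genuine content is the replacement of the unknown quantity $\Sigma^{-1}_{k+1,k+1}$ by the estimator $n^{\xi}\hat\Sigma^{-1}_{k+1,k+1}$. First I would record the elementary algebraic identity
\[
\sqrt{\binom{n}{2}} \cdot \frac{\hat\gamma_k - \gamma_{0,k}}{\sqrt{\hat\Sigma^{-1}_{k+1,k+1}}}
= \sqrt{\frac{\binom{n}{2}}{n^{\xi}}} \cdot \frac{\hat\gamma_k - \gamma_{0,k}}{\sqrt{(n^{\xi}\hat\Sigma)^{-1}_{k+1,k+1}}},
\]
which uses $\hat\Sigma^{-1}_{k+1,k+1} = n^{-\xi}(n^{\xi}\hat\Sigma)^{-1}_{k+1,k+1}$. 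Reading off coordinate $k+1$ of Theorem~\ref{Thm: asymptotic normality ER-C} (recall $\theta = (\mu^\dagger,\gamma^T)^T$, so $\gamma_k$ occupies position $k+1$) gives $\sqrt{\binom{n}{2}/n^{\xi}}(\hat\gamma_k - \gamma_{0,k}) \overset{d}{\longrightarrow} \mathcal{N}(0,\Sigma^{-1}_{k+1,k+1})$, so it remains to show $(n^{\xi}\hat\Sigma)^{-1}_{k+1,k+1} \overset{P}{\to} \Sigma^{-1}_{k+1,k+1} > 0$ and then invoke Slutsky.

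The core step is $n^{\xi}\hat\Sigma \overset{P}{\to} \Sigma$ entrywise. Using the reparametrization identity \eqref{Eq: ER-C mu unknown xi}, $\hat\mu = -\xi\log n + \hat\mu^\dagger$, so $\exp(\hat\mu + \hat\gamma^TZ_{ij}) = n^{-\xi}\exp(\hat\mu^\dagger + \hat\gamma^TZ_{ij})$, and the logistic weight in $\hat\Sigma$ equals $n^{-\xi}\exp(\hat\mu^\dagger + \hat\gamma^TZ_{ij})/\bigl(1 + n^{-\xi}\exp(\hat\mu^\dagger + \hat\gamma^TZ_{ij})\bigr)^2$. Since $\hat\mu^\dagger \in [-M,M]$, $\hat\gamma\in\Gamma$ compact, and the $Z_{ij}$ are uniformly bounded, $\exp(\hat\mu^\dagger + \hat\gamma^TZ_{ij})$ is bounded by a constant uniformly in $i<j$ and $n$; multiplying by $n^{\xi}$ cancels the leading $n^{-\xi}$ and drives the denominator to $1$ uniformly, so $n^{\xi}\hat\Sigma = \binom{n}{2}^{-1}\sum_{i<j} D_{ij}D_{ij}^T\exp(\hat\mu^\dagger + \hat\gamma^TZ_{ij}) + o_P(1)$. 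Next, Theorem~\ref{Thm: asymptotic normality ER-C} itself delivers consistency $\hat\theta\overset{P}{\to}\theta_0$ for free, because $\binom{n}{2}/n^{\xi}\to\infty$ for $\xi\in[0,2)$. A uniform law of large numbers over the compact parameter set $[-M,M]\times\Gamma$ then finishes this step: the summands are bounded and the integrand is Lipschitz in $(\mu^\dagger,\gamma)$ uniformly over the bounded $Z_{ij}$, while Assumption~\ref{Assum: min eval} supplies the i.i.d.\ structure needed for the pointwise limit, yielding $\binom{n}{2}^{-1}\sum_{i<j} D_{ij}D_{ij}^T\exp(\hat\mu^\dagger + \hat\gamma^TZ_{ij}) \overset{P}{\to} \E[D_{12}D_{12}^T\exp(\mu_0^\dagger + \gamma_0^TZ_{12})] = \Sigma$.

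With $n^{\xi}\hat\Sigma \overset{P}{\to}\Sigma$ established, invertibility of $\Sigma$ (Assumption~\ref{Assum: min eval}) together with continuity of matrix inversion on the open set of nonsingular matrices shows that $n^{\xi}\hat\Sigma$ is nonsingular with probability tending to one and $(n^{\xi}\hat\Sigma)^{-1}\overset{P}{\to}\Sigma^{-1}$ entrywise, hence $\sqrt{(n^{\xi}\hat\Sigma)^{-1}_{k+1,k+1}}\overset{P}{\to}\sqrt{\Sigma^{-1}_{k+1,k+1}} > 0$. Substituting this into the displayed identity and applying Slutsky's lemma gives the stated componentwise normality. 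I expect the uniform law of large numbers for the weighted Gram matrix $n^{\xi}\hat\Sigma$ to be the main obstacle—one must carefully exploit the boundedness and Lipschitz structure to pass from the deterministic argument $\theta_0$ to the random estimator $\hat\theta$—whereas the reduction via the algebraic identity, the consistency remark, the continuous-mapping step, and the final Slutsky application are routine.
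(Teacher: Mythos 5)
Your proposal is correct and follows essentially the same route as the paper: read off the componentwise limit from Theorem \ref{Thm: asymptotic normality ER-C}, show $n^{\xi}\hat\Sigma = \Sigma + o_P(1)$ using boundedness of the compact parameter set and consistency of $\hat\theta$, pass to inverses, and conclude by Slutsky via the identity $\hat\Sigma^{-1}_{k+1,k+1} = n^{-\xi}(n^{\xi}\hat\Sigma)^{-1}_{k+1,k+1}$. The only differences are cosmetic: the paper establishes $n^{\xi}\hat\Sigma \to \Sigma$ by splitting into a Lipschitz-in-parameter term plus a law of large numbers at $\theta_0$ (rather than your uniform LLN over $[-M,M]\times\Gamma$), and controls the inverses with the explicit bound $\Vert A^{-1}-B^{-1}\Vert_\infty \le \Vert A^{-1}\Vert_\infty\Vert B^{-1}\Vert_\infty\Vert A-B\Vert_\infty$ rather than continuity of matrix inversion, which for fixed $p$ amounts to the same thing.
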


\noindent Simulation results corroborating the claims in Corollary \ref{Cor: Corollary ER-C} are shown in Section \ref{Sec: serc simulation}. {We remark that all the theoretical results in this section can be generalized to the scenario where there are a diverging number of covariates. See the Supplementary Material for more details.}

\section{Simulation}\label{Sec: Simulation}

\subsection{S\texorpdfstring{$\beta$}{Beta}RM: Sparse \texorpdfstring{$\beta$}{beta}-regression model}

We illustrate the finite sample performance of our penalized likelihood estimator with an extensive set of Monte Carlo simulations. We only show results for \sbmc with the estimator defined in (\ref{eq:l1pl}), as where applicable the results in the case without covariates are very similar. We check both the $\ell_1$-convergence of our parameter estimates to the true parameter, \color{black}{as well as the asymptotic normality of $\hat \gamma$ and $\hat{b}$} by documenting the empirical coverage of $95\%$ confidence intervals.

Since our estimation involves the choice of a tuning parameter, we explored the use of the Bayesian Information Criterion (BIC) for model selection as well as a heuristic based on the theory developed in the previous sections to specify its value. While the former criterion is purely data-driven, the use of the latter is to ensure that our theoretical results are about right in terms of the rates. 
To make the dependence of our estimator (\ref{eq:l1pl}) on the penalty parameter explicit, we denote the solution of (\ref{eq:l1pl}) when using penalty $\lambda$ by $\hat{ \theta}(\lambda) = (\hat \beta(\lambda)^T, \hat \mu (\lambda), \hat \gamma (\lambda)^T)^T$ and write $s(\lambda) = \vert \{ i : \hat \beta_i (\lambda)\} > 0 \vert$ for its sparsity. The value of the BIC at $\lambda$ is given by
\[\text{BIC} = 2 \mathcal{L}(\hat \theta (\lambda)) + s(\lambda) \log(n(n-1)/2)  \]
and the penalty $\lambda$ was chosen to minimize BIC. For the heuristic approach for tuning parameter selection, based on Theorem \ref{Cor: no approximation error}, we set $c$ to the maximum observed covariate value and $t = 2$. With the lower bound on $\bar{\lambda}$ in Theorem \ref{Cor: no approximation error}, we choose 
\[\bar{\lambda} = 8 \cdot \left(8a_n + 2 \sqrt{\frac{t}{\binom{n}{2}}( 11 (1 \vee (c^2p) ) + 8\sqrt{2}(1 \vee c) \sqrt{n} a_n  )} + \frac{2\sqrt{2}t(1 \vee c) \sqrt{n}}{3\binom{n}{2}}\right),\] and let $\lambda = \frac{\sqrt{2}}{\sqrt{n}} \bar{\lambda}$.

For our simulation, we fixed $p=2$ by setting the covariate weights as $\gamma_0 = (1, 0.8)^T$
and generated the covariates from a centered $\text{ Beta }(2,2)$ distribution as $Z_{ij, k} \sim \text{Beta}(2,2) - 1/2$. We consider networks of sizes $n=300, 500, 800$ and $1000$ in which the sparsity of \(\beta_0\)  is set as 7, 9, 10, and 12 respectively. We tested our estimator on three different model configurations with different combinations of $\beta_0$ and $\mu_0$, resulting in networks with varying degrees of sparsity. For each simulation configuration, 1000 data sets are simulated. Specifically,

\noindent
\textbf{Model 1}: We pick \(\beta_0 = (1.2, 0.8, 1, \dots, 1, 0, \dots, 0)^T\), where the number of ones increases with the network size to match the aforementioned sparsity level, and set $\mu_0 = -0.5 \cdot \log(\log(n))$;

\noindent
\textbf{Model 2}: We pick \(\beta_0 = \log(\log(n)) \cdot (1.2, 0.8, 1, \dots, 1, 0, \dots, 0)^T\) and set $\mu_0 = -1.2\cdot\log(\log(n))$;

\noindent
\textbf{Model 3}: We pick \(\beta_0 = \log(\log(n)) \cdot (2, 0.8, 1, \dots, 1, 0, \dots, 0)^T\) and set {\(\mu_0 = -0.2\cdot\log(n)\)}. 

In these three models, we allow $\mu_0$ to get progressively more negative to generate networks that are increasingly sparse, and allow the sparsity of $\beta$ to increase with network size $n$. 
All three models get progressively sparser with increasing $n$ {and at the same time satisfy Assumption \ref{Assum: new rate of s and rho_n}. We also conducted additional simulations for sparser networks beyond the limit of Assumption \ref{Assum: new rate of s and rho_n}. The results can be found in the Supplementary Material.}  

\noindent\textbf{Consistency:}
We calculated the mean absolute error (MAE) for estimating $\beta_0$, the absolute error for estimating $\mu_0$ and the $\ell_1$-error for estimating $\gamma_0$.
For Model 1 the results are shown in Figures \ref{Fig: beta MAE}--\ref{Fig: l1 gamma}. While BIC performs slightly better for estimating $\beta_0$ and $\mu_0$ for smaller network sizes, our heuristic performs better for larger network sizes. The $\ell_1$-error for estimating $\gamma$ is almost the same between both model selection schemes across all network sizes. For both methods we can see that the various errors decrease with increasing network size. {Model 2 and Model 3 give similar results as Model 1. The various errors for parameter estimation are shown in Figures \ref{Fig: beta MAE model 2}--\ref{Fig: l1 gamma model 2} and Figures \ref{Fig: beta MAE model 3}--\ref{Fig: l1 gamma model 3} respectively.}

\begin{figure}[H]
	\centering
	\begin{subfigure}{0.32\textwidth}
		\centering
		\includegraphics[scale=0.22]{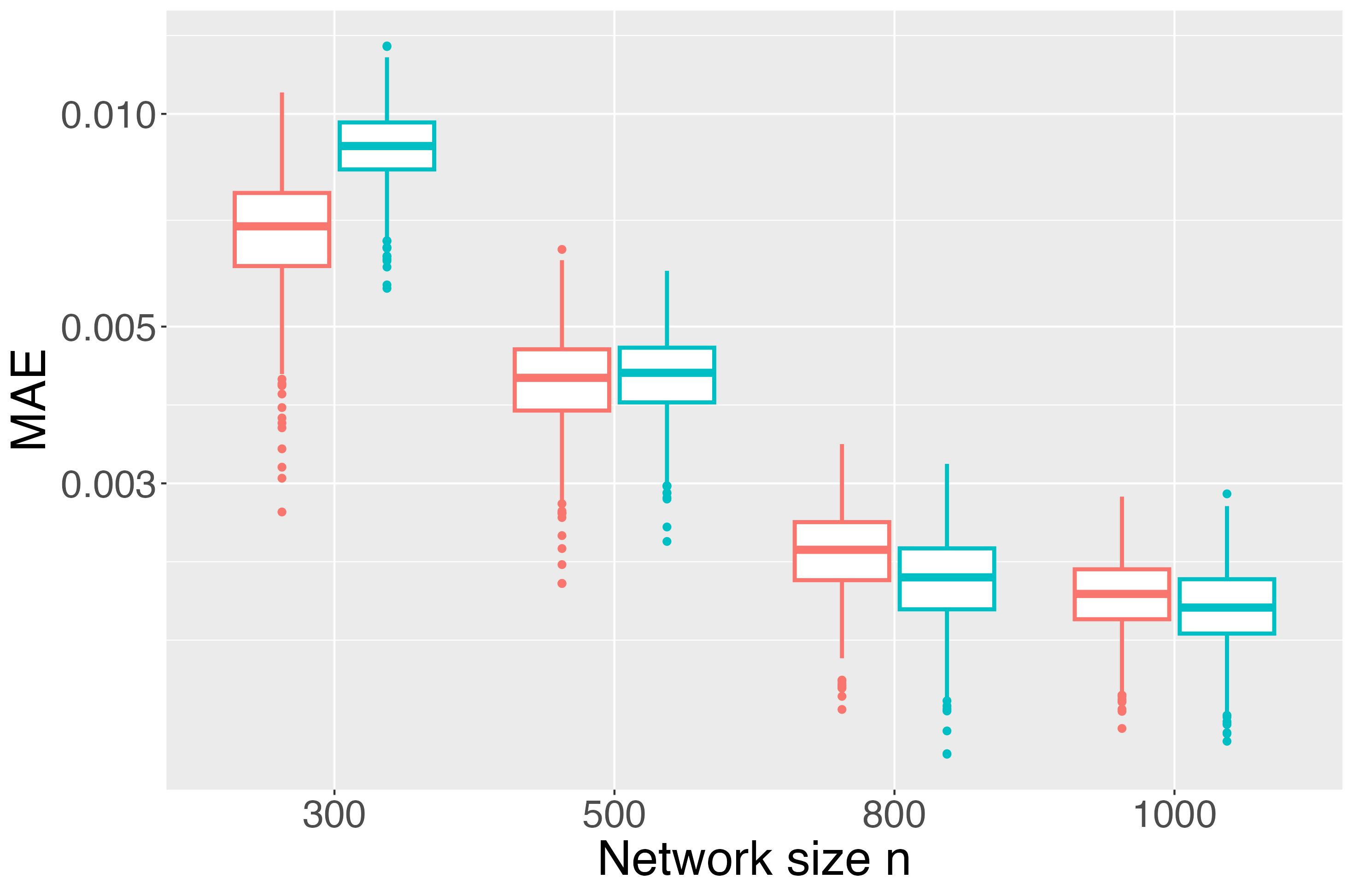}
		\caption{MAE for $\beta_0$}
		\label{Fig: beta MAE}
	\end{subfigure}%
	\begin{subfigure}{0.32\textwidth}
		\centering
		\includegraphics[scale=0.22]{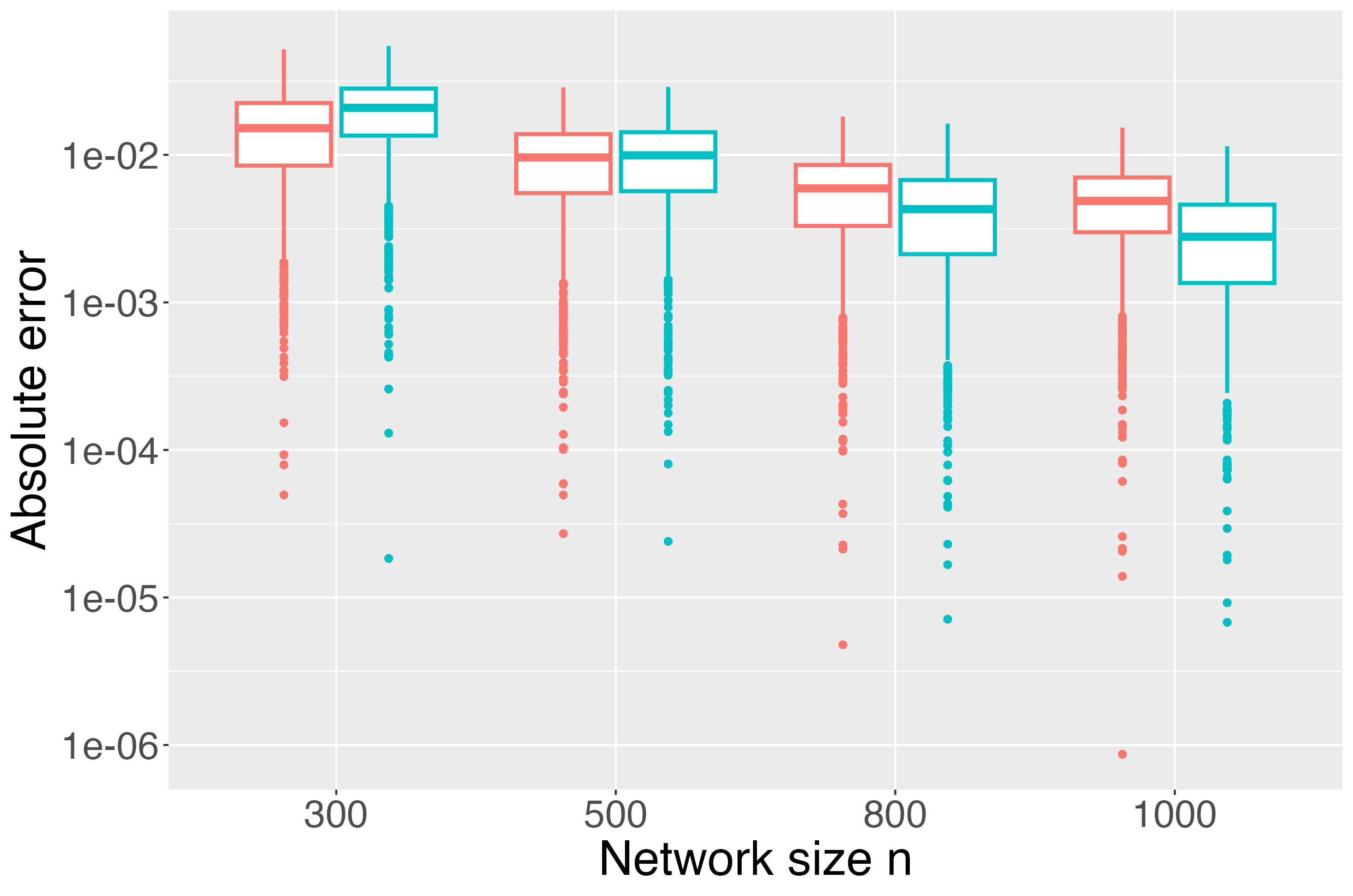}
		\caption{Absolute error for  $\mu_0$.}
		\label{Fig: abs mu}
	\end{subfigure}
	\begin{subfigure}{0.32\textwidth}
		\centering
		\includegraphics[scale=0.22]{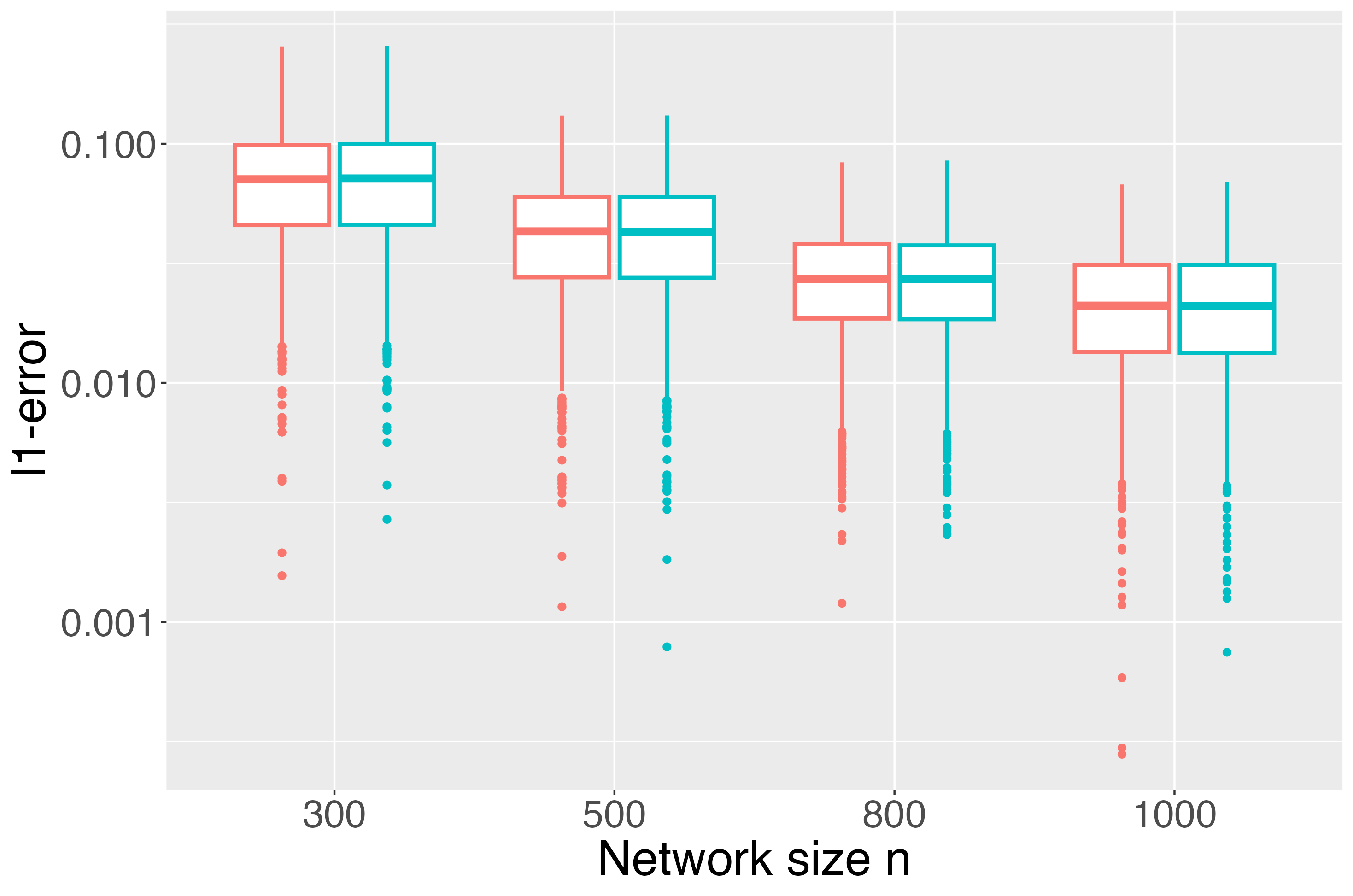}
		\caption{$\ell_1$-error for  $\gamma_0$.}
		\label{Fig: l1 gamma}
	\end{subfigure}
	\caption{Errors for estimating the true parameter $\theta_0$ in Model 1 across various network sizes and 1000 repetitions. Comparison between model selection via BIC and a heuristic approach. The results when model selection is done with BIC are displayed in red (left boxes), those for the pre-determined $\lambda$ in green (right boxes). The $y$-axis uses a log scale.}
\end{figure}

\begin{figure}[!htbp]
	\centering
	\begin{subfigure}{0.32\textwidth}
		\centering
		\includegraphics[scale=0.22]{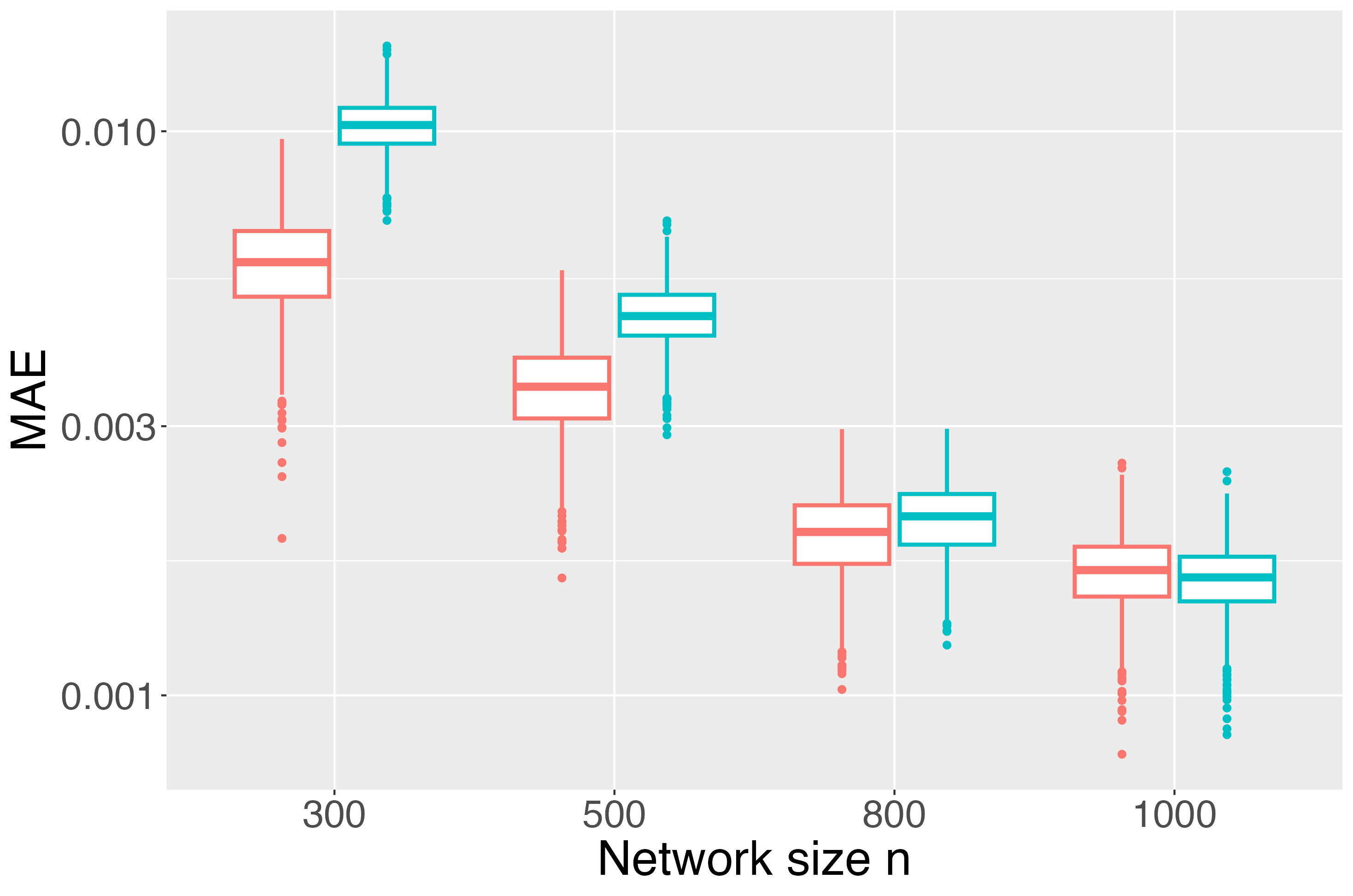}
		\caption{MAE for $\beta_0$}
		\label{Fig: beta MAE model 2}
	\end{subfigure}%
	\begin{subfigure}{0.32\textwidth}
		\centering
		\includegraphics[scale=0.22]{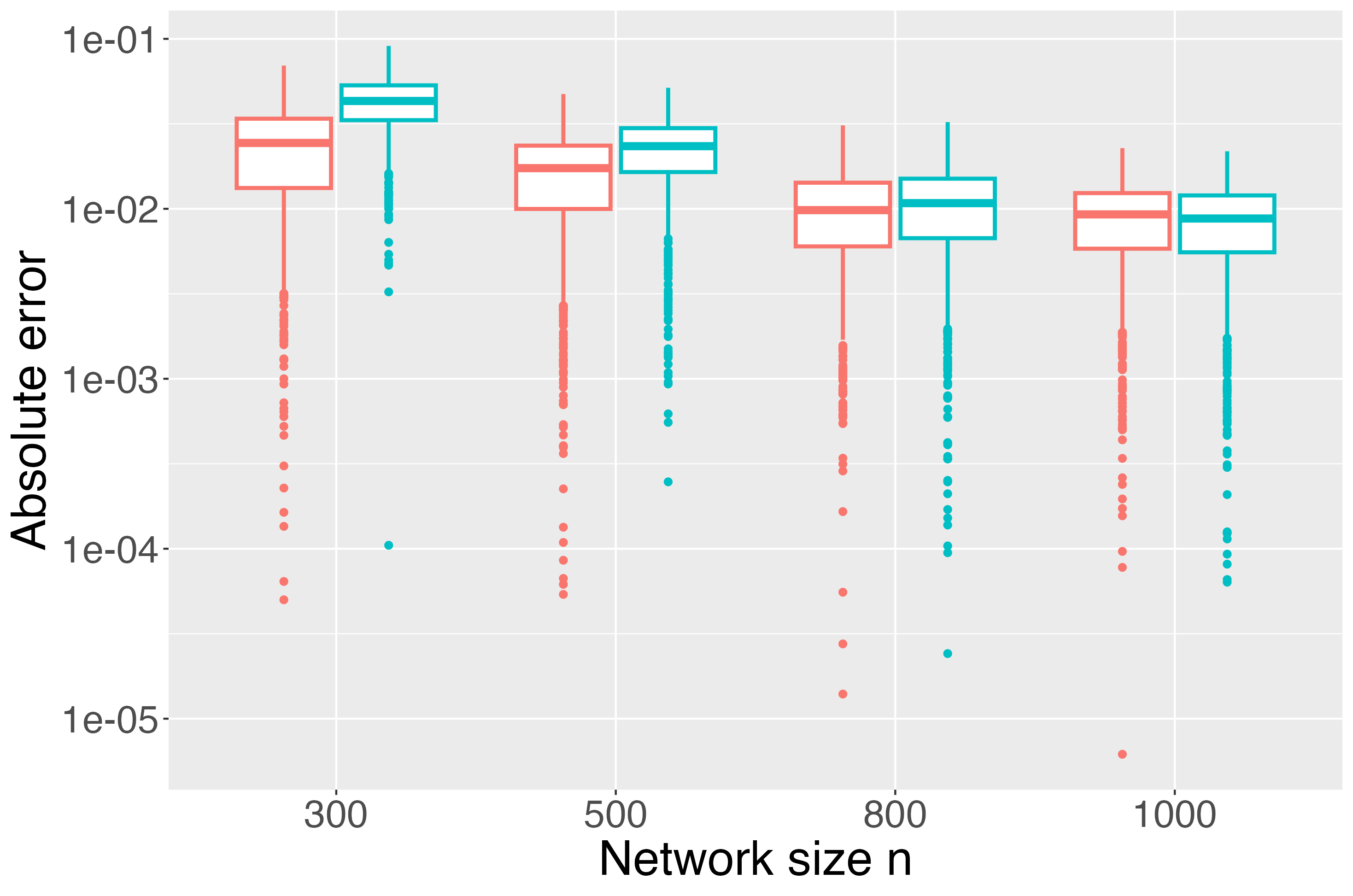}
		\caption{Absolute error for $\mu_0$.}
		\label{Fig: abs mu model 2}
	\end{subfigure}
	\begin{subfigure}{0.32\textwidth}
		\centering
		\includegraphics[scale=0.22]{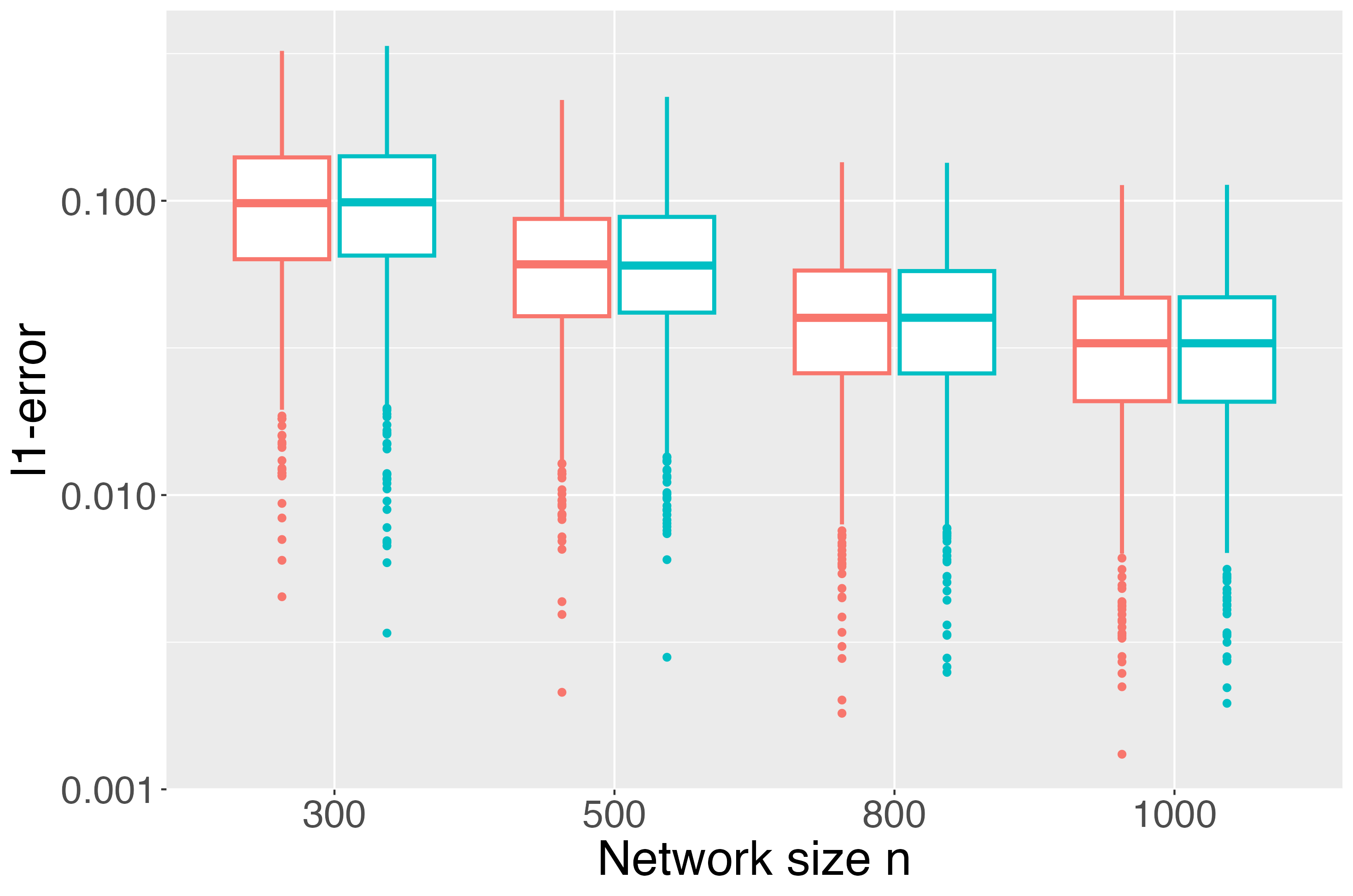}
		\caption{$\ell_1$-error for $\gamma_0$.}
		\label{Fig: l1 gamma model 2}
	\end{subfigure}
	\caption{Errors for estimating the true parameter $\theta_0$ in Model 2.}
\end{figure}

\begin{figure}[!htbp]
	\centering
	\begin{subfigure}{0.32\textwidth}
		\centering
		\includegraphics[scale=0.22]{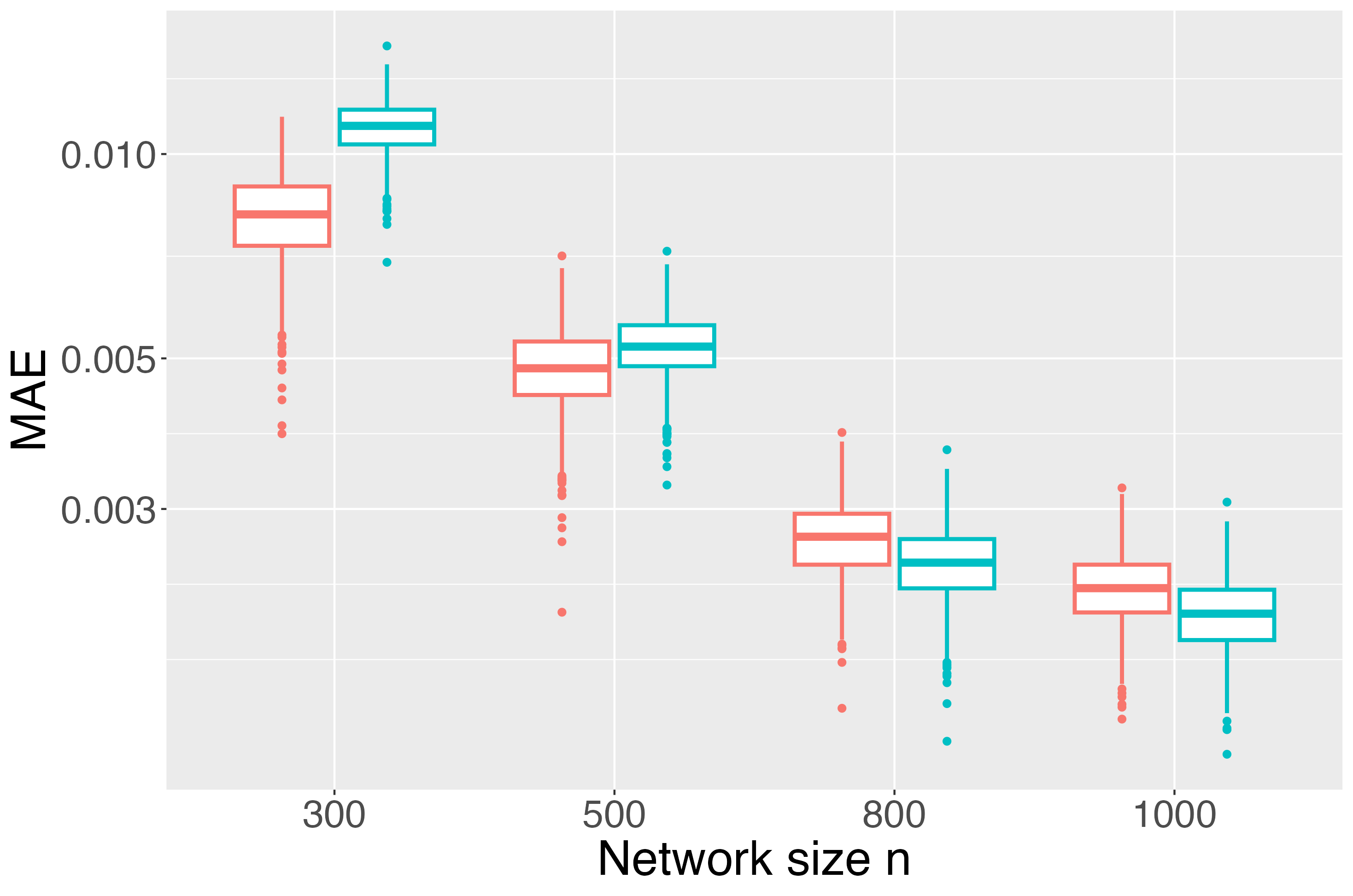}
		\caption{MAE for  $\beta_0$}
		\label{Fig: beta MAE model 3}
	\end{subfigure}%
	\begin{subfigure}{0.32\textwidth}
		\centering
		\includegraphics[scale=0.22]{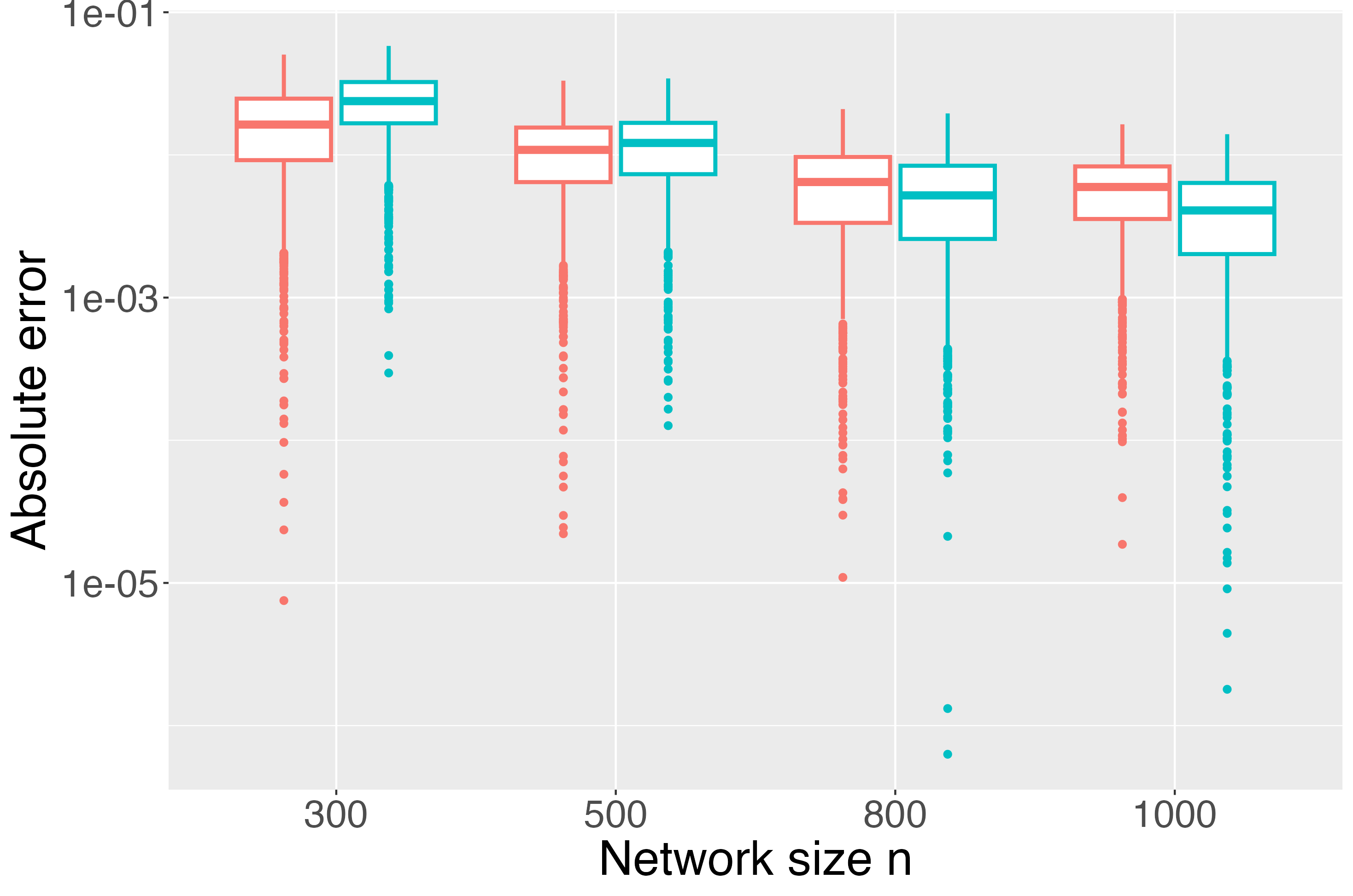}
		\caption{Absolute error for  $\mu_0$.}
		\label{Fig: abs mu model 3}
	\end{subfigure}
	\begin{subfigure}{0.32\textwidth}
		\centering
		\includegraphics[scale=0.22]{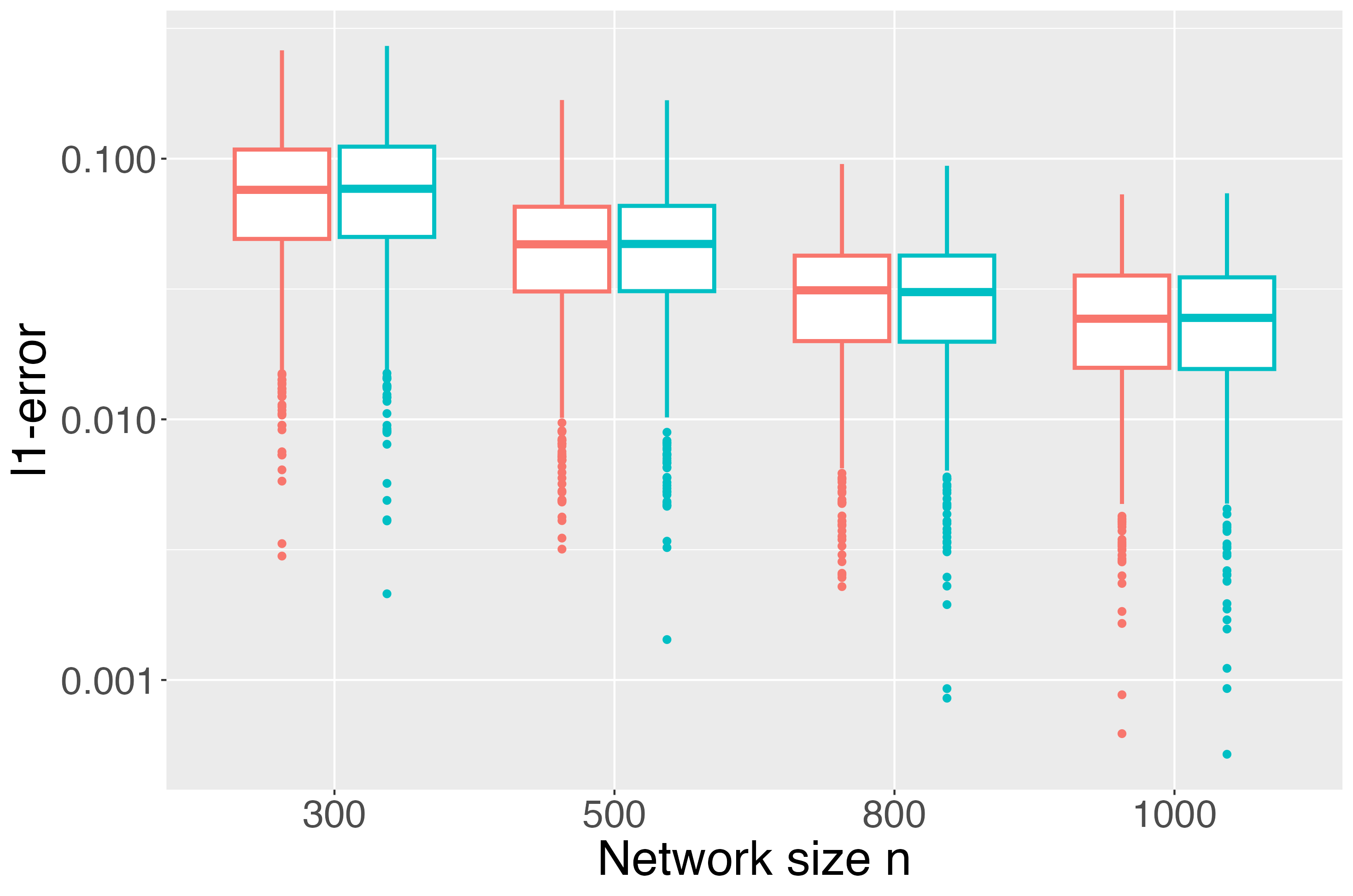}
		\caption{$\ell_1$-error for $\gamma_0$.}
		\label{Fig: l1 gamma model 3}
	\end{subfigure}
	\caption{Errors for estimating the true parameter $\theta_0$ in Model 3. 
	}
\end{figure}

\noindent\textbf{Asymptotic normality:}
Next, we consider the normal approximation for our estimator. We calculate the standardized \(\gamma\)-values 
\[
\sqrt{\binom{n}{2}}\frac{\hat \gamma_k - \gamma_{0,k}}{\sqrt{\hat \Theta_{\vartheta,k+1,k+1}}}, \ k = 1,2,
\] 
which by Theorem \ref{Thm: inference} asymptotically follow a \(\mathcal{N}(0,1)\) distribution. This allows us to construct approximate $95\%$-confidence intervals for $\gamma_{0,k}$ as
\[
CI_k = \left(\hat \gamma_k - z_{1 - \alpha/2} \cdot \sqrt{\frac{\hat \Theta_{\vartheta,k+1,k+1}}{\binom{n}{2}}}, \hat \gamma_k + z_{1 - \alpha/2} \cdot \sqrt{\frac{\hat \Theta_{\vartheta,k+1,k+1}}{\binom{n}{2}}} \right), \ k = 1,2,
\]
where $z_{1 - \alpha/2}$ is the $1 - \alpha/2$ quantile of the standard-normal distribution and we use $\alpha = 0.05$. \color{black}{Similarly we can construct approximate $95\%$-confidence intervals for $\beta_{0,k}$ ($k = 1, \dots, n$) by Theorem \ref{beta_AN}.
We present the empirical coverage of these intervals and their median length for the different network sizes.
Table \ref{table: CV and CI} shows the results for $\gamma_{0,1}$ and $\beta_0$ (where we take an average of all $\beta_i$'s) across the different models and sample sizes. The results for $\gamma_{0,2}$ and $\mu_0$ are omitted to save space.
The coverage is very close to the $95\%$-level across all network sizes and all models and independent of which model selection criterion we use. This empirically illustrates the validity of the asymptotic results derived in Theorem \ref{Thm: inference} and \ref{beta_AN}. The median length of the confidence interval decreases with increasing network size and is similar between BIC and the heuristic. This is what we would expect since the estimates are very similar between both methods as shown in Figures \ref{Fig: l1 gamma}, \ref{Fig: l1 gamma model 2}, and \ref{Fig: l1 gamma model 3}. In each scenario, the median length of the confidence intervals for $\beta_0$ is relatively larger than $\gamma_{0,1}$ since the effective sample size of $\beta_0$ is smaller. Comparing the length of the confidence intervals between Models 1 and 2, we see that as the models become sparser, the median length increases, which is also to be expected. }

\begin{table}[!htbp]
\begin{center}
\begin{tabular}{rrrrrrrrr}
			\toprule
			& \multicolumn{2}{c}{{Pre-determined $\lambda$}} & \multicolumn{2}{c}{{BIC}}& \multicolumn{2}{c}{{Pre-determined $\lambda$}} & \multicolumn{2}{c}{{BIC}}\\
			\midrule
		\multicolumn{1}{c}{$n$} & Coverage & Width & Coverage & Width& Coverage & Width & Coverage & Width\\
			\midrule
			\multicolumn{9}{c}{Model 1}\\ 
			300 & 0.949 & 0.182 &  0.950 & 0.182 &0.952 & 0.496 &  0.952 & 0.496\\
			500 & 0.952 & 0.110 & 0.949 & 0.110& 0.951 & 0.388 & 0.951 & 0.388\\
			800 & 0.949 & 0.069&0.949 & 0.069& 0.951 & 0.310&0.951 & 0.309\\
			1000 & 0.945 & 0.056 & 0.943 & 0.056& 0.951 & 0.278 & 0.950 & 0.278\\
			\addlinespace[0.3em]
			\multicolumn{9}{c}{Model 2}\\
			300 & 0.928 & 0.251 & 0.934 & 0.252& 0.954 & 0.693 & 0.953 & 0.697\\
		500 & 0.943 & 0.158 & 0.945 & 0.158& 0.952 & 0.563 & 0.952 & 0.564\\
		800 & 0.947 & 0.103 & 0.948 & 0.103& 0.952 & 0.464 & 0.952 & 0.464\\
		1000 & 0.943 & 0.083& 0.943 & 0.083& 0.951 & 0.422& 0.951 & 0.422\\
		\addlinespace[0.3em]
			\multicolumn{9}{c}{Model 3}\\
			300 & 0.931 & 0.192 & 0.940 & 0.192& 0.952 & 0.525 & 0.952 & 0.526\\
		500 & 0.940 & 0.118 & 0.943 & 0.118& 0.950 & 0.418 & 0.950 & 0.418\\
		800 & 0.936 & 0.076 & 0.933 & 0.076& 0.951 & 0.340 & 0.951 & 0.340\\
		1000 & 0.947 & 0.062& 0.944& 0.061& 0.951 & 0.308 & 0.951 & 0.308\\
			\bottomrule

\end{tabular}
\end{center}
\caption{Empirical coverage under nominal 95\% coverage and median lengths of confidence intervals for $\gamma_{0,1}$ (columns 2-5) and $\beta_0$ (last 4 columns). }
\label{table: CV and CI}
\end{table}%

\subsection{ERC: The Erd\H{o}s-R\'{e}nyi model with covariates}\label{Sec: serc simulation}

In this section, we illustrate the finite sample performance of the MLE in \eqref{Eq: ER-C max llhd} in the ERC \eqref{eq:nullmodel2}. We focus on inference for the covariate weights, $\gamma$, in the more realistic case of unknown $\xi$, that is, we only estimate $\mu_0$ rather than $\mu_0^\dagger$. Our emphasis is on illustrating that the MLE can be used to perform inference in extremely sparse network settings. To that end, we fixed the covariate dimension $p$ and a true parameter vector $(\mu_0^\dagger, \gamma_0^T)^T$ and varied the sparsity parameter $\xi$. The exact model setup was as follows. We set $p=20$ and sampled the covariate values $Z_{ij,k}, k = 1, \dots, p, i < j$ from a centered $\text{Beta}(2,2)$ distribution. We used $\mu_0^\dagger = 1$ and  \(\gamma_0 = (1.5, 1.2, 0.8, 1, \dots, 1)^T\). For the sparsity parameter $\xi$ we used the values \(\xi = 0.3, 1.0,\) or  \(1.5\). Note that the larger $\xi$, the sparser the resulting network. 
As before, we sampled networks of sizes $n = 300, 500, 800, 1000$, and for each configuration we drew $1000$ realizations of the \serc and analyzed the performance of the MLE defined in \eqref{Eq: ER-C max llhd}. 
The sparsest case $\xi = 1.5$ is close to the maximum theoretically permissible sparsity and results in extremely sparse networks. For example, when $n = 1000$, on average, only $73$ out of the almost half million possible edges are observed in this setting.

The asymptotic normality for each component of $\hat \gamma$ allows us to construct confidence intervals at the 95\%-level as prescribed by Corollary \ref{Cor: Corollary ER-C} and we assess the performance of our MLE by calculating the empirical coverage for each component. There is no significant difference in the empirical coverage or the average length of the confidence intervals between the various components of $\gamma$, which is why we only present them for $\gamma_1$ in Table \ref{Table: ER-C inference gamma}. As we can see, coverage is very close to the nominal confidence level of 95\% and the length of the confidence intervals decreases with increasing network size. As expected, confidence intervals are larger for sparse networks. For $\xi = 1.5$ we observe very wide confidence intervals, which is due to the very low effective sample size.

\begin{table}[!htbp]
	\centering
	\begin{tabular}[t]{rrrrrrrrr}
		\toprule
			$n$ & \multicolumn{2}{c}{{$\xi = 0.3$}} && \multicolumn{2}{c}{{$\xi = 1.0$}} && \multicolumn{2}{c}{{$\xi = 1.5$}}\\
		\midrule
		& Coverage & Width && Coverage & Width && Coverage & Width\\
		\midrule
		\addlinespace[0.3em]
		300 & 0.941 & 0.193 && 0.956 & 0.711 && 0.944 & 2.892 \\
		500 & 0.955 & 0.118 && 0.938 & 0.541 && 0.967 & 2.505\\
		800 & 0.943 & 0.075 && 0.950 & 0.424 && 0.951 & 2.235 \\
		1000 & 0.949 & 0.061 && 0.935 & 0.379 && 0.948 & 2.107 \\
		\bottomrule
	\end{tabular}
	\caption{Empirical coverage under nominal 95\% coverage and median lengths of confidence intervals for $\gamma_1$. The results are similar for the other components of $\gamma$.}
	\label{Table: ER-C inference gamma}
\end{table}

\section{Data Analysis}\label{Sec: Data Analysis}

We illustrate our results further by applying our method to two real-world data sets.

\noindent
\textbf{Lazega's lawyer friendship data}. In this data set, 
 71 lawyers of a New England Law Firm were asked
to indicate with whom in the firm they regularly socialized outside of
work \citep{lawyernetwork}. This is a frequently used network data set that was also analyzed, for example,
in \cite{Yan:etal:2019}, \cite{jochmans} and \cite{Snijders:etal:2006}. For our analysis we focus on mutual friendships
between lawyers as in \cite{Snijders:etal:2006}, 
 that is, we consider the network in which an undirected edge
is placed between two lawyers when they both indicated to socialize with one another. 
The degrees of the resulting network range from \(0\) to \(16\), with
eight isolated nodes. The average degree is \(4.96\) and the edge
density is $7\%$. 
It is important to note that we did not remove the isolated nodes before conducting
inference, as opposed to some existing exercises \citep[cf.]{Yan:etal:2019}. Omitting nodes prior to model fitting in the latter suffers from the issue of data selective inference as discussed in \cite{stein2022fallacy} leading to biased estimators. 
 Alongside the network, the following variables were
collected: The status of the lawyer (partner or associate), their gender
(man or woman), which of three offices they worked in, the years they
had spent with the firm, their age, their practice (litigation or
corporate) and the law school they had visited (Harvard and Yale, UConn
or other).

We fitted the \sbmc to this data set, by using the positive absolute difference of the nodewise covariates as $Z_{ij}$, where for categorical variables the
difference is defined as the indicator whether the values are equal. Since our simulation studies suggest that BIC performs better for smaller networks, we only present its results. Model selection with the heuristic gives in a slightly larger penalty and slightly different estimates, but overall very similar results. In both cases, \sbmc identified six lawyers with non-zero $\beta$-value, four partners and two associates, with degrees ranging from 11 to 16.  While those non-zero $\beta$-values generally do correspond to lawyers with larger than average degrees, it is interesting to note that there is one other lawyer with degree 11 and two more lawyers with degree 10 with zero $\beta$, suggesting that \sbmc is able to pick up subtleties in network formation that go beyond simply assigning non-zero $\beta$s to the nodes with the highest degree. We elaborate on this further in our second example below.

\begin{table}[!htp]
	\centering
	\begin{tabular}{lrr}
		\toprule
		Covariate & Estimate & Confidence Interval\\
		\midrule
		Same status & $0.91$ & $(0.54, 1.28)$ \\
		Same gender & $0.46$ & $(0.12, 0.81)$ \\
		Same office & $2.21$ & $(1.81, 2.60)$\\
		Years with firm difference & $-0.073$ & $(-0.11, -0.040)$\\
		Age difference & $-0.031$ & $(-0.060, -0.0023)$ \\
		Same practice & $0.57$ & $(0.25, 0.89)$\\
		Same law school & $0.30$ & $(-0.030, 0.62)$\\
		\bottomrule
	\end{tabular}
	\caption{\label{tab:lawyer results} Covariate weights for Lazega's Lawyer friendship network and $95\%$ confidence intervals.}
\end{table}

We constructed confidence intervals for the estimated parameters at the $95\%$-level as shown in Table \ref{tab:lawyer results}. The findings in this table are in line, both in terms of magnitude of
estimated paramters as well as, more importantly, the sign of each parameter, 
with what we would expect and with the results in the aforementioned
papers. In order of importance, working in the same office, having the
same status, being of the same practice and having the same gender have a positive effect on friendship
formations, whereas a big difference in age or tenure has a negative
effect on friendship formation. While our point estimate for having gone to the same law school is positive, its confidence interval extends to the negative real line and we thus cannot make a definite statement about its effect on friendship formation. This effect is also present when doing model selection with our heuristic. To appreciate how the covariates influence the connection pattern, we visualize the network in Figure \ref{Fig: Lawyer} by examining the effect of office in Figure \ref{Fig: Lawyer office} and that of status in Figure \ref{Fig: Lawyer status} respectively. We can see indeed that these two covariates have played important roles in shaping how connections were made.
\begin{figure}[!htbp]
	\centering
	\begin{subfigure}{0.45\textwidth}
		\centering
		\includegraphics[scale=0.3]{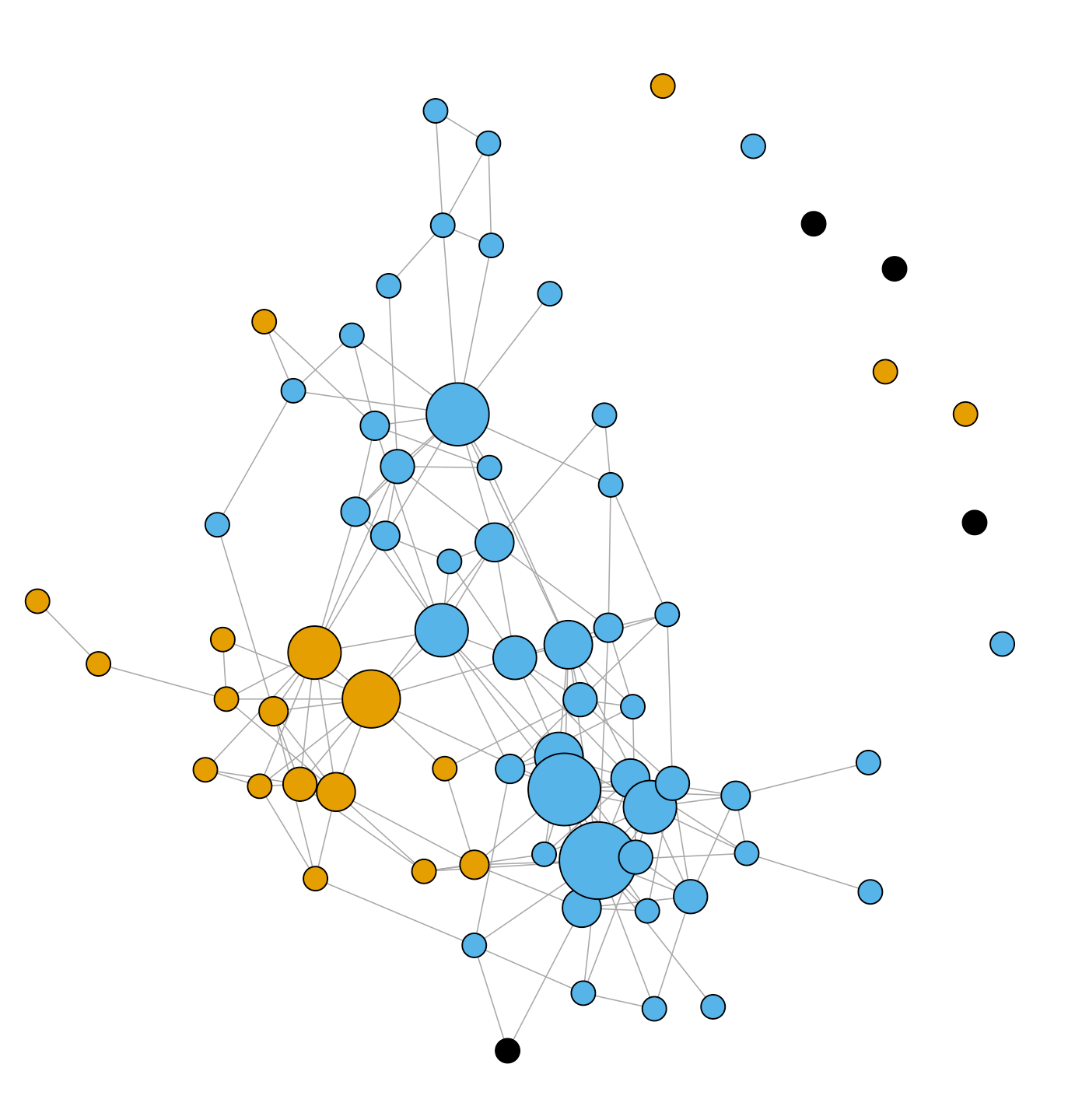}
		\caption{Lawyer network by office.}
		\label{Fig: Lawyer office}
	\end{subfigure}%
	\begin{subfigure}{0.45\textwidth}
		\centering
		\includegraphics[scale=0.3]{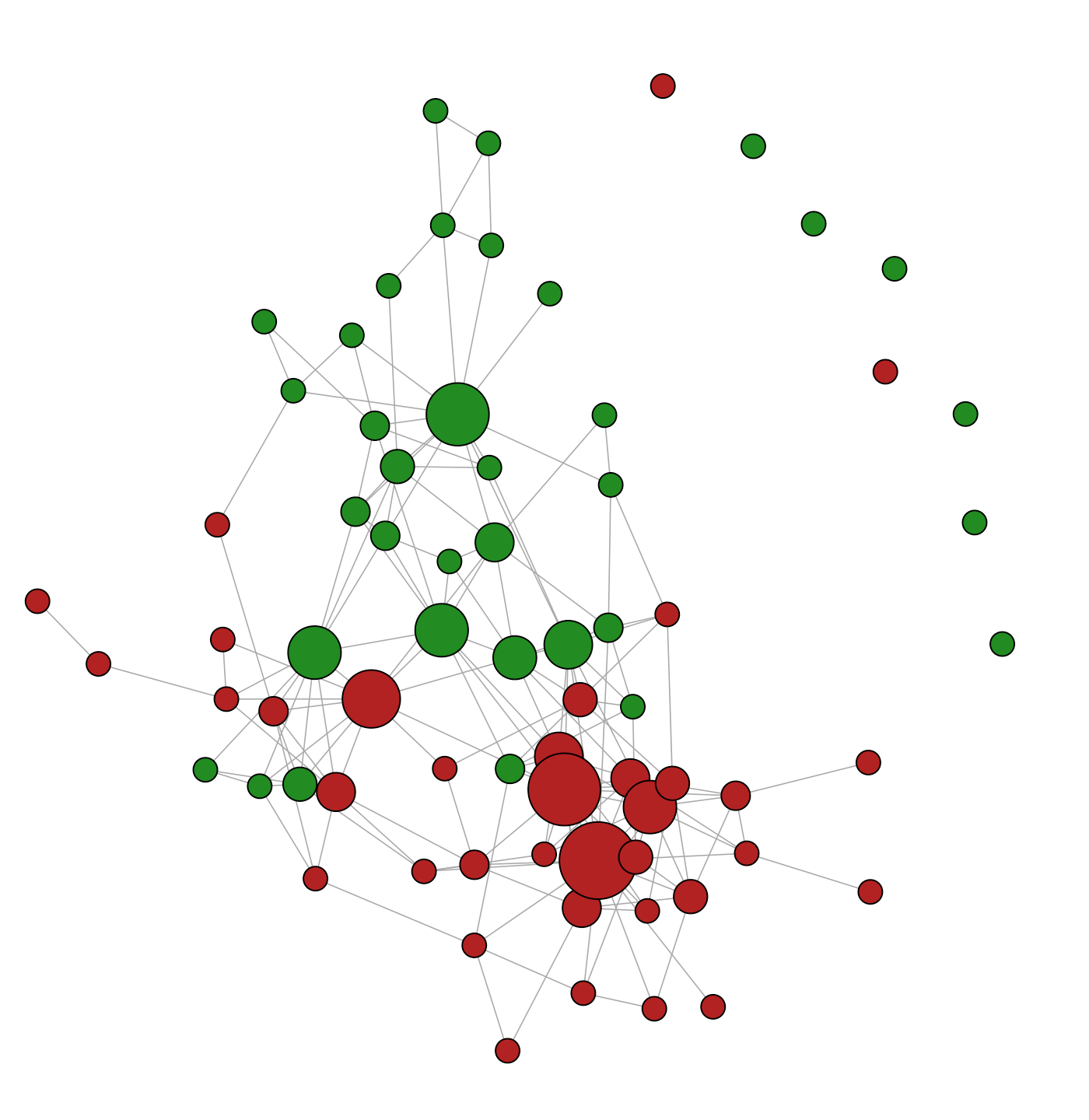}
		\caption{Lawyer network by status.}
		\label{Fig: Lawyer status}
	\end{subfigure}
	\caption{Lazega's friendship network among 71 lawyers. The size of the nodes is proportional to their degree. For better visibility we set the size of all nodes with a degree of five or lower to the size corresponding to a degree of five. In \ref{Fig: Lawyer office} the different colors indicate different offices (blue: Boston, yellow: Hartford, black: Providence; notice that only four lawyers are based in the small Providence office) and in \ref{Fig: Lawyer status} different statuses (red: partner, green: associate). The positions of the vertices are the same in both plots.}
	\label{Fig: Lawyer}
\end{figure}

\noindent
\textbf{Trade partnerships network}. 
For our second data set, we analyzed mutually important trade partnerships between 136 countries/regions in 1990. This data was originally analyzed by \cite{Silva:Tenreyro:2006} and further analyzed in \cite{jochmans}. Even back in 1990 almost every country would trade with every other country, resulting in a very dense network. To be able to make the underlying network formation mechanisms visible, we decided to only focus on important trade partnerships in which the trade volume exceeds a certain limit. More precisely, we place an undirected edge between two countries if the trade volume makes up at least \(3\%\) of the importing {countries'} total imports or if it makes up at least \(3\%\) of the exporting { countries'} total exports.
This leaves us with an undirected network with 136 nodes and 1279 edges, meaning that we have an edge density of \(13.9\%\). The minimum degree of the resulting network was \(3\) (Dominican Republic), the maximum degree was \(126\) (USA), and the median degree was \(13\). 

We analyze the same covariates as \cite{jochmans}. That is, we have indicator variables {common language} and {common border} that take the value one if countries \(i\) and \(j\) share a common language or border and zero otherwise, {log distance} which is the log of the geographic distance between the countries, {colonial ties} which is one if at some point \(i\) colonized \(j\) or vice versa and zero otherwise, and {preferential trade agreement} which is an indicator whether or not a preferential trade agreement exists between the countries. 
Again, we chose BIC for model selection for the reasons outlined above. The results are summarized in Table \ref{tab:trade}. These results are in line with what one would expect. Having a preferential trade agreement has the strongest positive effect on mutual trade between countries. Speaking the same language, sharing a border or having colonial ties also has a positive effect, while a large geographical distance has a strong negative effect.

\begin{table}[!htbp]
	\centering
	\begin{tabular}{lrr}
		\toprule
		Covariate & Estimate & Confidence Interval \\
		\midrule
		Log distance & $-1.03$ & $(-1.04, -1.02)$\\
		Common border & $0.45$ & $(0.10, 0.79)$ \\
		Common language & $0.31$ & $(0.086, 0.54)$ \\
		Colonial ties & $0.42$  & $(0.17, 0.66)$ \\
		Preferential trade agreement & $0.81$  & $(0.36, 1.27)$\\
		\bottomrule
	\end{tabular}
	\caption{\label{tab:trade}Covariate estimation for world trade data and $95\%$ confidence intervals.}
\end{table}

Notice that the confidence intervals for the categorical variables are all much larger than the one for the continuous variable log distance between countries. This is due to the fact that all the columns corresponding to categorical covariates are quite sparse, while the column corresponding to log distance contains only non-zero entries. Only 142 dyads are part of a preferential trade agreement and only 180 share a common border. Consequently, the confidence intervals corresponding to these covariates are largest. Note that 1565 node pairs have colonial ties with one another and 1925 speak a common language. While the columns corresponding to these covariates are thus much more populated, they are still relatively sparse when compared to the total number of dyads.

BIC selected \(32\) active \(\beta\)-entries, which are visualized on a map in Figure \ref{Fig: world map}. We presented the top half of these countries/regions with their degree and GDP in Table \ref{tab:betas}. The ranking of the $\beta$ values correlates with our intuition of the economic power of the countries. However, we also pick up underlying network formation mechanisms that go beyond sheer economic power and that are neither explainable by only looking at network summary statistics (such as degree of a node) nor by only looking at economic metrics such as a country's GDP.
More precisely, we note that the top six positions are occupied by six of the seven G7 countries, which serves to show that the \sbmc works well for identifying the most important nodes in a network. Note however, that Japan has the largest \(\beta\), albeit having a smaller degree (122) and a significantly smaller GDP than the USA (degree = 126), which comes in second place. In general, the order of degrees no longer aligns exactly with the order of the \(\beta\)-values as would have been predicted by the S\(\beta\)M without covariates in \cite{Chen:etal:19}. 
Norway, for example, has a \(\beta\)-value of zero, even though its degree of \(17\) and GDP of US\$\ensuremath{1.22\times 10^{11}} exceeds the degree and the GDP of several nodes with an active \(\beta\)-value. An examination of Norway's neighboring nodes reveals that it was trading mostly with countries that either are close  geographically or have a large \(\beta\)-value themselves (such as USA and Japan), meaning that the observed covariates are sufficient to explain the linking behavior of Norway.
This illustrates that the S\(\beta\)M with covariates is able to pick up subtleties in network formation that one might miss if one relied solely on network summary statistics such as the degree of a node or solely on non-relational summary statistics such as a country's GDP. 

\begin{figure}[!htbp]
	\centering
	\includegraphics[scale=0.5]{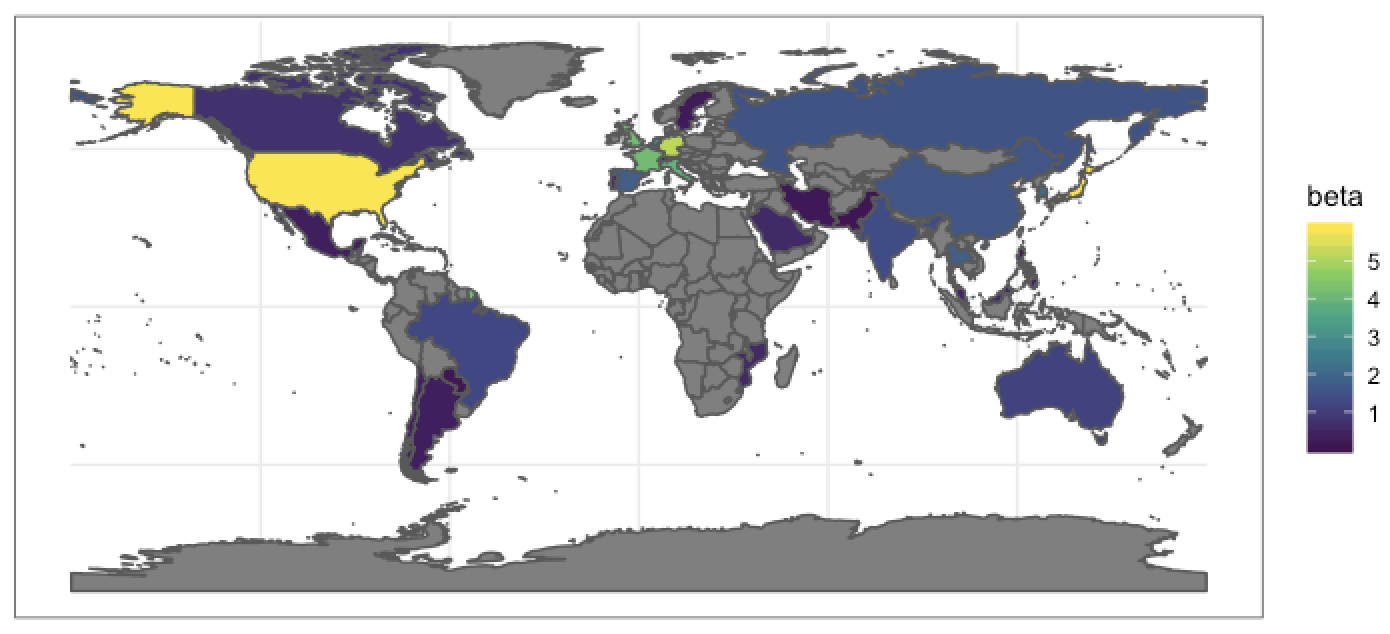}
	\caption{Visualization of the estimated $\beta$ values in the world trade network in 1990 between 136 countries/regions. The color of the country/region corresponds to the magnitude of the estimated $\beta$. Countries in grey either have an estimated $\beta$ value of zero or were not present in the data set}
	\label{Fig: world map}
\end{figure}

\begin{table}[!htbp]
	\centering
	\begin{tabular}{lrrrrrrr}
		\toprule
		 & $\hat{\beta}$ & Degree & GDP (US\$)&& $\hat{\beta}$ & Degree & GDP (US\$)\\
		\midrule
		Japan & 5.85 & 122 & 4.95e+12 & Korea & 2.11 & 34 & 3.42e+11\\
		USA & 5.82 & 126 & 6.51e+12 &Singapore & 2.06 & 37 & 5.39e+10\\
		Germany & 5.17 & 120 & 2.27e+12&Hong Kong & 2.05 & 40 & 1.07e+11\\
		France & 4.16 & 103 & 1.47e+12&Spain & 1.80 & 41 & 5.46e+11\\
		UK & 4.15 & 104 & 1.04e+12&Thailand & 1.78 & 33 & 1.11e+11\\
		Italy & 3.92 & 95 & 1.03e+12&China & 1.58 & 30 & 3.98e+11\\
		Netherlands & 3.15 & 73 & 3.75e+11&Russia & 1.53 & 28 & 5.43e+11\\
		Belgium-Lux & 2.60 & 59 & 2.56e+11&India & 1.33& 32 & 2.75e+11\\
		\bottomrule
	\end{tabular}
	\caption{The top 16 active $\beta$-values for the world trade network.}
	\label{tab:betas}
\end{table}
Our fitted model indicates that it may be interesting to explore the interplay between hub and background countries in a formally way, by for example treating hub and background nodes differently according to their degrees. Towards this, we have fitted an alternative model encoding the interaction between $Z_{ij}$ and a dummy variable whether the corresponding link is between two background nodes. The resulting fitted model gave qualitatively similar parameter estimates of the covariate effect, while having a much sparser estimate for the heterogeneity parameter $\beta$. The nonzero fitted $\beta$ parameters in this alternative model nevertheless corresponding to the most important countries in Table \ref{tab:betas}, implying that our model is able to capture heterogeneity effects. Note that in the alternative model, the network information is used as part of the covariates as the dummy variable is indicative of the degree of a node. Thus, our proposed model can reveal similar findings to this alternative without invoking network information as covariates.

\section{Conclusion}\label{section: conclusion}

We have presented a new model named \sbmc that simultaneously captures homophily and degree heterogeneity in a network. We have shown that \sbmc is well suited to model sparse networks, thanks to the sparsity assumption on the nodal parameter that can effectively reduce the dimensionality of the model. We have presented {a} theory for the penalized likelihood estimator based on an $\ell_1$ penalty on the nodal parameter, including consistency of the excess risk and \color{black}{the central limit theorem for the estimators.} Our theoretical contributions go beyond existing theory for LASSO as we must deal with a parameter regime where networks are sparse. The computation of our estimator leverages the recent vast algorithmic development on solving LASSO type problems. Thus, \sbmc represents an attractive model for networks with statistical guarantees and computational feasibility. Along this line, \cite{stein2021sparse} has extended the S$\beta$RM in this article to directed networks but with a theoretical emphasis on the selection consistency of the estimation of the heterogeneity parameter.

There are many important issues for future research. 
First, it will be interesting to incorporate a {low-rank} component in \sbmc in order to capture transitivity, the phenomenon that nodes with common neighbors are more likely to connect, as is done in \cite{mama:2020}. Equally interestingly, it will be  {important}  to relax the independence assumption on the dyads, similar in spirit to the progress made in \cite{stewart2020pseudo}. Second, it will be interesting to see how \sbmc can be used to model networked data under privacy  {constraints}, along the line of research initiated by  \cite{Karwa:Slavkovic:2016} for the $\beta$-model. Third, we note that our results still require the network to be relatively dense. Although promising numerical results shown in the Supplementary Material demonstrate that our approach may still work for very sparse networks,  investigating this theoretically remains an important problem. 
 Lastly, a growing list of networked data  {is} observed along a temporal dimension  \citep{jiang2020autoregressive} and it will be interesting to extend 
 our model to a time series context. 
These issues are beyond the scope of the current paper and will be explored elsewhere.

 \bibliographystyle{agsm}
\bibliography{bib}

 \newpage

 \begin{center}
 Supplementary Materials
 \end{center}	

The supplementary material contains the main proofs, a comparison of the S$\beta$RM without covariates using the $\ell_1$ penalty with that in \cite{Chen:etal:19} using the $\ell_0$ penalty, and some additional simulations for sparser networks. Without loss of generality, in Assumption \ref{Assum: minimum EW}, we assume that $c_{\min}<1/2$. 
\appendix

\section{Proofs of Lemma \ref{Lem: Existence of solution and identifiability} and Proposition \ref{Prop: compatibility condition Sigma}}

\subsection{Proof of Lemma \ref{Lem: Existence of solution and identifiability}}\label{Appendix: Proof of Lemma 1}

\begin{proof}[Proof of Lemma \ref{Lem: Existence of solution and identifiability}]
	We first show that a solution exists. Using duality theory from convex optimization (cf. \cite{Bertsekas:1995}, Chapter 5), we know that for any $\lambda > 0$ there exists a finite $s > 0$ such that the penalized likelihood problem is equivalent to the primal optimization problem
	\begin{align}\label{primal problem}
	\begin{split}
	&\min_{\beta, \mu, \gamma} \frac{1}{\binom{n}{2}}\mathcal{L}(\beta, \mu, \gamma), \\
	& \text{subject to:} \; (\beta^T, \mu, \gamma^T)^T \in \Theta_{\text{loc}}, \sum_{i = 1}^n \vert \beta_i \vert \leq s.	
	\end{split}
	\end{align}
	Let $\beta = (\beta_1, \dots, \beta_n)^T, \gamma = (\gamma_1, \dots, \gamma_p)^T$ be fixed. To obtain an estimate for $\mu$, we minimize the function
	\begin{align*}
	g_{\beta, \gamma}(\mu) &= \frac{1}{\binom{n}{2}}\mathcal{L}(\beta, \mu, \gamma) \\
	& = \frac{1}{\binom{n}{2}}\left(- \sum_{i = 1}^n \beta_id_i - d_+\mu - \sum_{i < j} (Z_{ij}^T\gamma)A_{ij} + \sum_{i < j} \log(1 + \exp(\beta_i + \beta_j + \mu + Z_{ij}^T\gamma))\right).
	\end{align*}
	It has derivative
	\[
	g_{\beta, \gamma}'(\mu) = \frac{1}{\binom{n}{2}} \left(- d_+ +  \sum_{i < j} \frac{e^{\beta_i + \beta_j + \mu + Z_{ij}^T\gamma}}{1 + e^{\beta_i + \beta_j + \mu + Z_{ij}^T\gamma}}\right).
	\]
	We observe that
	\[
	\lim_{\mu \rightarrow \infty} g'_{\beta, \gamma}(\mu) = \frac{1}{\binom{n}{2}} \left(- d_+ + \binom{n}{2}\right) > 0
	\]
	and
	\[
	\lim_{\mu \to -\infty} g'_{\beta, \gamma} = -d_+\frac{1}{\binom{n}{2}} < 0.
	\]
	Furthermore, $g_{\beta, \gamma}'$ is continuous and strictly increasing in $\mu$. Hence, there exists a unique value $\mu^* = \mu^*(\beta, \gamma)$, such that $g_{\beta, \gamma}'(\mu^*) = 0$. Since 
	\[
	g_{\beta,\gamma}''(\mu) = \sum_{i < j} \frac{e^{\beta_i + \beta_j + \mu + Z_{ij}^T\gamma}}{(1 + e^{\beta_i + \beta_j + \mu + Z_{ij}^T\gamma})^2} > 0
	\]
	for all $\mu$, $\mu^*$ is a minimizer of $g_{\beta, \gamma}$. Since $g_{\beta, \gamma}''$
	is invertible, we can apply the implicit function theorem with function $F(\beta,\gamma, \mu) = g_{\beta, \gamma}'(\mu)$, which gives us that the corresponding function $\mu^* = \mu^*(\beta, \gamma)$ is continuously differentiable.
	Plugging in $\mu^*(\beta, \gamma)$ for $\mu$ in (\ref{primal problem}), we are left with the minimization problem
	\begin{align}\label{primal problem with mu0}
	\begin{split}
	&\min_{\beta, \gamma} \mathcal{L}(\beta, \mu^*(\beta, \gamma), \gamma), \\
	& \text{s.t.:} \; (\beta^T, \mu^*(\beta, \mu), \gamma^T)^T \in \Theta_{\text{loc}}, \sum_{i = 1}^n \vert \beta_i \vert \leq s.	
	\end{split}
	\end{align}
	Since $\Gamma$ is compact, we are minimizing a continuous function over a compact set in (\ref{primal problem with mu0}). Hence it attains a minimum $\mathcal{L}^*$. By the definition of $\mu^*$, $\mathcal{L}^*$ must also be a solution of (\ref{primal problem}). 
	
	For the second claim of the Lemma,
	suppose there is an $1 \le i_0 \le n$ such that $\hat{\beta}_{i_0} = \min_{1 \le i \le n} \hat{\beta}_i > 0$. Consider the following vector $\tilde{\theta} = (\tilde{\beta}^T, \tilde{\mu}, \tilde{\gamma}^T)^T$: for all $k$ let $\tilde{\beta}_k = \hat{\beta}_k - \hat{\beta}_{i_0}$ and $\tilde{\mu} = \hat{\mu} + 2 \hat{\beta}_{i_0}$, while keeping $\tilde{\gamma} = \hat{\gamma}$. Then, $\tilde{\beta}_k \ge 0$ for all $k$, i.e. $\tilde{\theta}$ is a feasible point for the penalized likelihood problem (\ref{Eq: Penalized llhd with covariates}). Furthermore $\min_k \tilde{\beta}_k = 0$ and $\mathcal{L}(\tilde{\theta}) = \mathcal{L}(\hat{\theta}).$ 
	However,
	\[
	\Vert \tilde{\beta} \Vert_1 = \sum_{ i = 1}^n \vert \hat{\beta}_i - \hat{\beta}_{i_0} \vert < \Vert \hat{\beta} \Vert_1,
	\]
	where the inequality follows from the minimality of $\hat\beta_{i_0}$. This gives 
	\[
	\frac{1}{\binom{n}{2}}\mathcal{L}(\tilde{\theta}) + \lambda\Vert \tilde{\beta} \Vert_1 < \frac{1}{\binom{n}{2}}\mathcal{L}(\hat{\theta}) + \lambda\Vert \hat{\beta} \Vert_1.
	\]
	A contradiction to the optimality of $\hat{\theta}$.
\end{proof}

\subsection{Proof of Proposition \ref{Prop: compatibility condition Sigma}}\label{Sec: Proof of compatibility condition}

Notice that the compatibility condition is clearly equivalent to the condition that
\[
\kappa^2(\Sigma, s_{0,+}) \coloneqq \min_{\substack{\theta \in \R^{n+1+p} \backslash\{0\} \\ \Vert \theta_{S^{c}_{0,+}} \Vert_1 \le 3 \Vert \theta_{S_{0,+}} \Vert_1}} \frac{\theta^T\Sigma\theta}{\frac{1}{s_{0,+}} \Vert \theta_{S_{0,+}} \Vert_1^2}
\]
stays uniformly bounded away from zero. To prove that $\Sigma$ fulfills the compatibility condition, we generalize the techniques used in \cite{kock_tang_2019}. More precisely, we will first show that the compatibility condition holds for the matrix
\begin{equation*}
	\Sigma_A \coloneqq \begin{bmatrix}
		\frac{1}{n-1}X^TX & \textbf{0} & \textbf{0} \\
		\textbf{0} & 1 & \textbf{0} \\
		\textbf{0} & \textbf{0} &  \mathbb{E} [ Z^TZ/{\binom{n}{2}}]
	\end{bmatrix} \in \R^{(n+1+p) \times (n+1+p)},
\end{equation*}
that is, we will show that $\Sigma_A$ fulfills $\kappa^2(\Sigma_A, s_{0,+}) \ge C > 0$ for all $n$ and some universal $C > 0$.
We then show that $\Sigma$ and $\Sigma_A$ are close to each other in an appropriate sense and that $\kappa^2(\Sigma, s_{0,+})$ is bounded away from zero. 
Let us analyze the top left block matrix of $\Sigma_A$, i.e.~$1/(n-1) \cdot X^TX$, first:
\[
\frac{1}{n-1}X^TX = \begin{bmatrix}
1 & \frac{1}{n-1} & \frac{1}{n-1} & \dots & \frac{1}{n-1} \\
\frac{1}{n-1} & 1 & \frac{1}{n-1}  & \dots & \frac{1}{n-1} \\
\vdots & \ddots & \ddots & \dots & \vdots \\
\frac{1}{n-1} & \frac{1}{n-1} & \dots & \dots  & 1
\end{bmatrix},
\]
that is, $1/(n-1) \cdot X^TX$ has all ones on the diagonal and $1/(n-1)$ everywhere else. This is a special kind of Toeplitz matrix; a circulant matrix to be precise. It is known (see for example \cite{kra_circulantmatrices}), that every circulant matrix $M$ has an associated polynomial $p$ and that the eigenvalues of $M$ are given by $p(\xi_j), j = 0, \dots, n-1$, where $\xi_j, j = 0, \dots, n-1$, denote the $n$th roots of unity, i.e.
$
	\xi_j = \exp\left( \iota {2\pi j}/{n}\right),
$
where $\iota$ is the imaginary unit and $\xi_0 = 1$.
The associated polynomial of the matrix $1/(n-1)X^TX$ is 
\[
	p(x) = 1 + \frac{1}{n-1} (x + x^2 + \dots + x^{n-1})
\]
and thus the eigenvalues of $1/(n-1)X^TX$ are
\begin{align*}
p(1) = 2, \quad p(\xi_j) = 1 + \frac{1}{n-1} (-1) = \frac{n-2}{n-1}, \; j = 1, \dots, n-1,
\end{align*}
where the eigenvalue $(n-2)/(n-1)$ has multiplicity $n-1$.
Hence, we observe the following: For any vector $\theta = (\beta^T, \mu, \gamma^T)^T$,
\[
\theta^T\Sigma_A\theta = \beta^T\left(\frac{1}{n-1}X^TX\right)\beta + \mu^2 + \frac{1}{\binom{n}{2}} \gamma^T \mathbb{E} [Z^TZ]\gamma \ge \frac{n-2}{n-1} \beta^T\beta + \mu^2 + \frac{1}{\binom{n}{2}} \gamma^T \mathbb{E}[Z^TZ]\gamma,
\]
where for the inequality we have used that for any semi-positive definite, symmetric matrix $M$ with smallest eigenvalue $\lambda$ and any vector $x \neq 0$ of appropriate dimension, we have $x^TMx \ge \lambda x^Tx$.
Thus, for any $\theta = (\beta^T, \mu, \gamma^T)^T$,
\begin{align*}
\frac{\theta^T\Sigma_A\theta}{\frac{1}{s_{0,+}} \Vert \theta_{S_{0,+}} \Vert_1^2} &\ge \frac{ \frac{n-2}{n-1} \beta^T\beta + \mu^2 + \frac{1}{\binom{n}{2}} \gamma^T \mathbb{E}[Z^TZ]\gamma}{\frac{1}{s_{0,+}} \Vert \theta_{S_{0,+}} \Vert_1^2} \\
&\ge \frac{ \frac{n-2}{n-1} \Vert \beta \Vert_2^2 + \mu^2 + \frac{1}{\binom{n}{2}} \gamma^T \mathbb{E}[Z^TZ]\gamma}{\Vert \beta \Vert_2^2 + \mu^2 + \Vert \gamma \Vert_2^2}, \quad \text{ by Cauchy-Schwarz} \\
&\ge \frac{\frac{n-2}{n-1} (\Vert \beta \Vert_2^2 + \mu^2) + \frac{1}{\binom{n}{2}} \gamma^T \mathbb{E}[Z^TZ]\gamma}{\Vert \beta \Vert_2^2 + \mu^2 + \Vert \gamma \Vert_2^2}, \quad \text{ since } 1 \ge (n-2)/(n-1) \\
&= \frac{n-2}{n-1} \cdot \frac{\Vert \beta \Vert_2^2 + \mu^2 + \frac{n-1}{n-2} \frac{1}{\binom{n}{2}} \gamma^T \mathbb{E}[Z^TZ]\gamma}{\Vert \beta \Vert_2^2 + \mu^2 + \Vert \gamma \Vert_2^2}.
\end{align*}
Now, notice that for any $a,b,c \in \R$, we have $\frac{a + b}{a+c} \ge \min\{1, b/c\}$. This is easily seen by considering the cases $\min\{1, b/c\} = 1$ and $\min\{1, b/c\} = b/c$ separately and rearranging. Thus,
\begin{align*}
\frac{\theta^T\Sigma_A\theta}{\frac{1}{s_{0,+}} \Vert \theta_{S_{0,+}} \Vert_1^2} &\ge \frac{n-2}{n-1} \min \left\{ 1 , \frac{n-1}{n-2} \frac{1}{\binom{n}{2}} \frac{\gamma^T \mathbb{E}[Z^TZ]\gamma}{\Vert \gamma \Vert^2_2} \right\} = \min \left\{ \frac{n-2}{n-1} ,  \frac{\gamma^T \left(\frac{1}{\binom{n}{2}} \mathbb{E}[Z^TZ]\right)\gamma}{\Vert \gamma \Vert_2^2} \right\} \\
& \ge \min \left\{ \frac{n-2}{n-1} ,  \lambda_{\text{min}} \right\},
\end{align*}
where $\lambda_{\text{min}}$ is the minimum eigenvalue of $\frac{1}{\binom{n}{2}} \mathbb{E}[Z^TZ]$. By Assumption \ref{Assum: minimum EW}, we now have for $n \ge 3$,
\begin{equation}
\kappa^2(\Sigma_A, s_{0,+}) = \min_{\substack{\theta \in \R^{n+1+p} \backslash\{0\} \\ \Vert \theta_{S^{c}_{0,+}} \Vert_1 \le 3 \Vert \theta_{S_{0,+}} \Vert_1}} \frac{\theta^T\Sigma_A\theta}{\frac{1}{s_{0,+}} \Vert \theta_{S_{0,+}} \Vert_1^2} \ge c_{\text{min}} > 0.
\end{equation}
Now, we need to show that with high probability $\kappa(\Sigma, s_{0,+}) \ge \kappa(\Sigma_A, s_{0,+})$, which would imply that the compatibility condition holds with high probability for the sample size adjusted Gram matrix $\Sigma$ and the associated sample size adjusted design matrix. To that end, we have the following auxiliary Lemma found in \cite{kock_tang_2019}. For completeness, we give the short proof of it. The notation is adapted to our setting.

\begin{Lem}[Lemma 6 in \cite{kock_tang_2019}]\label{Lem: Lemma 6 in KockTang}
	Let $A$ and $B$ be two positive semi-definite $(n+1+p) \times (n+1+p)$ matrices and $\delta = \max_{ij} \vert A_{ij} - B_{ij} \vert$. For any set $S_{0,+} \subset \{1, \dots, n+1+p\}$ with cardinality $s_{0,+}$, one has
	\[
	\kappa^2(B, s_{0,+} ) \ge \kappa^2(A, s_{0,+}) - 16\delta s_{0,+}.
	\]
\end{Lem}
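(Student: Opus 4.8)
The plan is to argue directly from the variational characterization of $\kappa^2(\cdot, s^*)$, comparing the quadratic forms of $A$ and $B$ on the cone that appears in its definition. I would fix an arbitrary $\theta \in \R^{n+1+p}\setminus\{0\}$ with $\Vert \theta_{S^{*c}_+}\Vert_1 \le 3\Vert\theta_{S^*_+}\Vert_1$ and first record two consequences of this cone constraint: it forces $\Vert\theta_{S^*_+}\Vert_1 > 0$ (otherwise $\theta = 0$, a contradiction), so the ratio defining $\kappa^2$ is well posed along $\theta$; and it gives $\Vert\theta\Vert_1 = \Vert\theta_{S^*_+}\Vert_1 + \Vert\theta_{S^{*c}_+}\Vert_1 \le 4\Vert\theta_{S^*_+}\Vert_1$, whence $\Vert\theta\Vert_1^2 \le 16\Vert\theta_{S^*_+}\Vert_1^2$.

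Next I would use the elementary perturbation estimate $|\theta^T(B-A)\theta| \le \sum_{i,j}|\theta_i||\theta_j|\,|A_{ij}-B_{ij}| \le \delta\Vert\theta\Vert_1^2$, which follows from the definition of $\delta$ together with $\Vert\theta\Vert_1^2 = \sum_{i,j}|\theta_i||\theta_j|$. Since $\theta$ lies in the cone, the definition of $\kappa^2(A, s^*)$ gives $\theta^T A\theta \ge \kappa^2(A,s^*)\,\tfrac{1}{s^*_+}\Vert\theta_{S^*_+}\Vert_1^2$, and combining the two bounds yields
\[
\theta^T B\theta \;\ge\; \theta^T A\theta - \delta\Vert\theta\Vert_1^2 \;\ge\; \kappa^2(A,s^*)\,\tfrac{1}{s^*_+}\Vert\theta_{S^*_+}\Vert_1^2 - 16\delta\Vert\theta_{S^*_+}\Vert_1^2.
\]
Dividing through by $\tfrac{1}{s^*_+}\Vert\theta_{S^*_+}\Vert_1^2 > 0$ and recalling $s^*_+ = s^* + p + 1$ gives
\[
\frac{\theta^T B\theta}{\tfrac{1}{s^*_+}\Vert\theta_{S^*_+}\Vert_1^2} \;\ge\; \kappa^2(A,s^*) - 16\delta(s^*+p+1).
\]
Taking the infimum over all admissible $\theta$ on the left recovers $\kappa^2(B,s^*)$ and finishes the argument.

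I do not expect a genuine obstacle here; the only places calling for a little care are checking that the cone constraint makes the denominator strictly positive (so no division by zero occurs and the infimum is over a nonempty set), and noting that the constant $16$ is simply the square of the factor $4$ in $\Vert\theta\Vert_1 \le 4\Vert\theta_{S^*_+}\Vert_1$. A useful sanity check is that the bound is symmetric under swapping $A$ and $B$; in the intended application one takes $A = \Sigma_A$ and $B = \Sigma$, where the entrywise closeness parameter $\delta$ will subsequently be shown to be small enough that $16\delta(s^*+p+1)$ is negligible compared to $\kappa^2(\Sigma_A, s^*) \ge c_{\min}$.
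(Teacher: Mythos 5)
Your argument is correct and follows essentially the same route as the paper's proof: bound $|\theta^T(A-B)\theta| \le \delta\Vert\theta\Vert_1^2$, use the cone constraint to get $\Vert\theta\Vert_1^2 \le 16\Vert\theta_{S^*_+}\Vert_1^2$, divide by $\tfrac{1}{s^*_+}\Vert\theta_{S^*_+}\Vert_1^2$, and take the infimum over the cone. The only cosmetic difference is that the paper reaches $\delta\Vert\theta\Vert_1^2$ via H\"older ($\Vert\theta\Vert_1\Vert(A-B)\theta\Vert_\infty$) rather than your direct entrywise sum, which is an equivalent step.
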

\begin{proof}
	Let $\theta = (\beta^T, \mu, \gamma^T)^T \in \R^{n+1+p}\backslash\{0\}$, with $\Vert \theta_{S^{c}_{0,+}} \Vert_1 \le 3\Vert \theta_{S_{0,+}} \Vert_1$. Then,
	\begin{align*}
	\vert \theta^TA\theta - \theta^TB\theta \vert &= \vert \theta^T(A-B) \theta \vert \le \Vert \theta \Vert_1 \Vert (A-B) \theta \Vert_\infty \le \delta \Vert \theta \Vert_1^2 \\
	&= \delta (\Vert \theta_{S_{0,+}} \Vert_1 + \Vert \theta_{S^{c}_{0,+}} \Vert_1)^2 \le \delta (\Vert \theta_{S_{0,+}} \Vert_1 + 3 \Vert \theta_{S_{0,+}} \Vert_1)^2 \\
	&\le 16\delta\Vert \theta_{S_{0,+}} \Vert_1^2.
	\end{align*}
	Hence, $\theta^TB\theta \ge \theta^TA\theta - 16\delta\Vert \theta_{S_{0,+}} \Vert_1^2$ and thus
	\begin{align*}
	\frac{\theta^TB\theta }{\frac{1}{s_{0,+}}\Vert \theta_{S_{0,+}} \Vert_1^2} \ge \frac{\theta^TA\theta }{\frac{1}{s_{0,+}}\Vert \theta_{S_{0,+}} \Vert_1^2} - 16\delta s_{0,+} \ge \kappa^2(A, s_{0,+}) - 16\delta s_{0,+}.
	\end{align*}
	Minimizing the left-hand side over all $\theta \neq 0$ with $\Vert \theta_{S^{c}_{0,+}} \Vert_1 \le 3 \Vert \theta_{S_{0,+}} \Vert_1$ proves the claim.
\end{proof}

This shows that to control $\kappa^2(\Sigma, s_{0,+})$, we need to control the maximum element-wise distance between $\Sigma$ and $\Sigma_A$: $\max_{ij} \vert \Sigma_{ij} - \Sigma_{A,ij} \vert$.
We will now show that in the setting of Proposition \ref{Prop: compatibility condition Sigma},
\[
	\max_{ij} \vert \Sigma_{ij} - \Sigma_{A,ij} \vert \le \frac{c_{\text{min}}}{32s_{0,+}},
\]
and thus, by Lemma \ref{Lem: Lemma 6 in KockTang}, we have 
$
\kappa^2(\Sigma, s_{0,+}) \ge \kappa^2(\Sigma_A, s_{0,+}) - \frac{c_{\text{min}}}{2} \ge \frac{c_{\text{min}}}{2} > 0
$
and i.e.~the compatibility condition holds for $\Sigma$.

\begin{proof}[Proof of Proposition \ref{Prop: compatibility condition Sigma}]
	To make referencing of sections of $\Sigma$ easier, we number its blocks as follows
	\begin{equation*}
		\Sigma = \frac{1}{\binom{n}{2}} \begin{bmatrix}
			\underbrace{ \frac{n}{2} X^TX }_{\text{\textcircled{1}}} & \underbrace{ \frac{\sqrt{n}}{\sqrt{2}} X^T\textbf{1} }_{\text{\textcircled{2}}} & \underbrace{\textbf{0} }_{\text{\textcircled{3}}} \\
			\underbrace{\frac{\sqrt{n}}{\sqrt{2}}\textbf{1}^TX}_{\text{\textcircled{4}}} & \underbrace{\textbf{1}^T\textbf{1} }_{\text{\textcircled{5}}}& \underbrace{\textbf{0}}_{\text{\textcircled{6}}} \\
			\underbrace{\textbf{0}}_{\text{\textcircled{7}}} & \underbrace{ \textbf{0} }_{\text{\textcircled{8}}}& \underbrace{\E [Z^TZ]}_{\text{\textcircled{9}}}
		\end{bmatrix}
	\end{equation*}
	For $i,j = 1, \dots, n$, we have $\Sigma_{ij} = \Sigma_{A,ij}$ (block \textcircled{1}). The entry at position $(n+1),(n+1)$ (block \textcircled{5}) is also equal and so are blocks \textcircled{3}, \textcircled{6}, \textcircled{7}, \textcircled{8} and \textcircled{9}.
	For the entries at positions $i,j$ with $i = n+1$ and $j = 1, \dots, n$ as well as positions with $i = 1, \dots, n$ and $j = n+1$ (blocks \textcircled{2} and \textcircled{4}), we have:
	\[
	\Sigma_{ij} - \Sigma_{A,ij}= \Sigma_{ij}= \frac{(n-1)\sqrt{2}}{(n-1) \sqrt{n}} = \frac{\sqrt{2}}{\sqrt{n}} \le \frac{c_{\min}}{32s_{0,+}}
	\]
	for $n \gg 0$, since we assume that $s_{0,+} = o(\sqrt{n})$. The claim now follows from Lemma \ref{Lem: Lemma 6 in KockTang}.
\end{proof}

Analog to Proposition \ref{Prop: compatibility condition Sigma}, when our model might be misspecified, we have

\begin{Prop}\label{Misspecification: compatibility condition Sigma}
	Under Assumption \ref{Assum: minimum EW}, for $s_{+}^* = o(\sqrt{n})$ and $n$ large enough,
	it holds that for every ${\theta} \in \R^{n+1+p}$ with $\Vert {\theta}_{S^{*c}_+} \Vert_1 \le 3 \Vert {\theta}_{S^*_+} \Vert_1$,
	\[
		\Vert {\theta}_{S^*_+} \Vert_1^2 \le \frac{2s^*_+}{c_{\min}} {\theta}^T \Sigma { \theta}.
	\]
\end{Prop}

\begin{proof}[Proof of Proposition \ref{Misspecification: compatibility condition Sigma}] The proof follows step by step as above.
\end{proof}

\section{Consistency with covariates}\label{Appendix: Consistency with covariates}

In this section, we don't assume $r_n \ge r_{n,0}$ in the beginning and prove Theorem \ref{Thm: consistency} at first. Then Theorem \ref{Cor: no approximation error} follows. Finally, we prove Proposition \ref{Prop: q error bound}.

\subsection{A rescaled penalized likelihood problem}\label{Sec: rescaled problem}

We already mentioned in Section \ref{Sec: SBetaM covariates} that it is possible to present an equivalent formulation of the problem \eqref{Eq: Penalized llhd with covariates} in terms of a rescaled likelihood problem using the sample-size adjusted design matrix $\bar D$.
We will rely heavily on this formulation which we now make precise.

Recall that in the definition of $\bar D$ we effectively blew up the entries belonging to $\beta$. The blow-up factor was chosen precisely such that we can now reformulate our problem as a problem in which each parameter effectively has sample size $\binom{n}{2}$. That is, our original penalized likelihood problem can be rewritten as
\begin{align}\label{Eq: Penalized llhd with covariates bar}
\begin{split}
\hat{\bar{ \theta}} = (\hat{\bar{\beta}}, \hat{\mu}, \hat{\gamma}) = \argmin_{\bar{\beta},\mu, \gamma} &\frac{1}{\binom{n}{2}} \Bigg( - \sum_{i = 1}^n  \frac{\sqrt{n}}{\sqrt{2}}\bar{\beta_i} d_i - d_+\mu - \sum_{i < j} (Z_{ij}^T\gamma)A_{ij} \\
&+ \sum_{i < j} \log\left(1 + \exp\left( \frac{\sqrt{n}}{\sqrt{2}}\bar{\beta_i} +  \frac{\sqrt{n}}{\sqrt{2}}\bar{\beta_j} + \mu + Z_{ij}^T\gamma\right)\right) \Bigg) \\
&+ \bar{\lambda} \Vert \bar{\beta} \Vert_1,
\end{split}
\end{align}
where $\bar{\lambda} =  \frac{\sqrt{n}}{\sqrt{2}}\lambda$ and the argmin is taken over $\bar{\Theta}_{\text{loc}} = \{\bar{\theta} \in \Theta: \Vert \bar{ D}\bar{\theta} \Vert_\infty \le r_n \}$. Note that by the same arguments as before, $\bar{\Theta}_{\text{loc}}$ is convex.
Then, given a solution $(\hat{\bar{\beta}}, \hat{\mu}, \hat{\gamma})$ for a given penalty parameter $\bar{\lambda}$ to this modified problem (\ref{Eq: Penalized llhd with covariates bar}), we can obtain a solution to our original problem (\ref{Eq: Penalized llhd with covariates}) with penalty parameter $\lambda = \bar{\lambda}\sqrt{2}/\sqrt{n}$, by setting
\[
(\hat{\beta}, \hat{\mu}, \hat{\gamma}) = \left( \frac{\sqrt{n}}{\sqrt{2}}\hat{\bar{\beta}}, \hat{\mu}, \hat{\gamma} \right).
\]
For a compacter way of writing, introduce the following notation:
For any parameter $\theta = (\beta^T, \mu, \gamma^T)^T \in \Theta$, we introduce the notation
\[
\bar{\theta} = \left( \frac{\sqrt{2}}{\sqrt{n}} \beta, \mu, \gamma \right)
\]
and also write $\bar{\beta} =  \frac{\sqrt{2}}{\sqrt{n}} \beta$. In particular we use the notation $\bar{\theta}_0 = (\bar{\beta}_0^T, \mu_0, \gamma_0^T)^T, \bar{\beta}_0 =  \frac{\sqrt{2}}{\sqrt{n}} \beta_0 $, to denote the re-parametrized truth and $\bar{\theta}^* = (\bar{\beta}^{*T}, \mu^*, \gamma^{*T})^T, \bar{\beta}^* =  \frac{\sqrt{2}}{\sqrt{n}} \beta^*$ to denote the re-parametrized best local approximation. Note that for any $\theta \in \Theta$, $D\theta = \bar{ D}\bar{\theta}$ and hence the bound $r_n$ is the same in the definitions of $\Theta_{\text{loc}}$ and $\bar{ \Theta}_{\text{loc}}$.
Also, since rescaling the set $\R_+^n$ still results in $\R_+^n$, there is no need to introduce a set $\bar{ \Theta}$. Note that $\theta \in \Theta_{\text{loc}}$ if and only if $\bar{\theta} \in \bar{ \Theta}_{\text{loc}}$.

For any $\bar{\theta} = (\bar{\beta}^T, \mu, \gamma)^T$, denote the negative log-likelihood function corresponding to the rescaled problem (\ref{Eq: Penalized llhd with covariates bar}) as
\begin{align*}
	\bar{\mathcal{L}}(\bar{\theta}) &= - \sum_{i = 1}^n  \frac{\sqrt{n}}{\sqrt{2}}\bar{\beta_i} d_i - d_+\mu - \sum_{i < j} (Z_{ij}^T\gamma)A_{ij} \\
	&\quad + \sum_{i < j} \log\left(1 + \exp\left( \frac{\sqrt{n}}{\sqrt{2}}\bar{\beta_i} +  \frac{\sqrt{n}}{\sqrt{2}}\bar{\beta_j} + \mu + Z_{ij}^T\gamma\right)\right).
\end{align*}
Then, clearly $\bar{\mathcal{L}}(\bar{\theta}) = \mathcal{L}(\theta)$ and
\begin{equation*}
\bar{\mathcal{E}}(\bar{\theta}) \coloneqq \frac{1}{\binom{n}{2}} (\mathbb{E}[\bar{\mathcal{L}}(\bar{\theta})] - \mathbb{E}[\bar{\mathcal{L}}(\bar{\theta}^*)] ) = \mathcal{E}(\theta).
\end{equation*}
Thus, $\bar{\theta}^*$ fulfills
\begin{equation*}
\bar{\theta}^* = \argmin_{\theta \in \bar{\Theta}_{\text{loc}}} \bar{\mathcal{E}}(\bar{\theta}), 
\end{equation*}
i.e. $\bar{\theta}^*$ is the best local re-parametrized solution.
\newline
\newline
\indent
To give us a more compact way of writing, for any $\theta \in \Theta$ we introduce functions $f_\theta: \R^{n+1+p} \rightarrow \R, f_\theta(v) = v^T\theta$ and denote the function space of all such $f_\theta$ by $\mathbb{F} \coloneqq \{ f_{\theta} : \theta \in \Theta \}$. We endow  $\mathbb{F}$ with two norms as follows. 
Denote the law of the rows of $\bar{D}$ on $\R^{n+1+p}$,~i.e. the probability measure induced by $(\bar{X}_{ij}^T, 1, Z_{ij}^T)^T, i <j$, by $\bar{Q}$. That is, for a measurable set $A = A_1 \times A_2 \subset \R^{n+1} \times \R^p$, 
\[
	\bar Q(A) = \frac{1}{\binom{n}{2}} \sum_{ i < j} P(\bar D_{ij} \in A) = \frac{1}{\binom{n}{2}} \sum_{ i < j} \bar{\delta}_{ij}(A_1) \cdot P(Z_{ij} \in A_2),
\]
where $\bar{\delta}_{ij}(A_1) = 1$ if $(\bar{X}_{ij}^T,1)^T \in A_1$ and zero otherwise, is the Dirac-measure. We are interested in the $L_2$ and $L_\infty$ norm on $\mathbb{F}$ with respect to the measure $\bar{Q}$ on $\R^{n+1}\times\R^p$. Denote the $L_2(\bar{Q})$-norm of $f \in \mathbb{F}$ simply by $\Vert \, . \, \Vert_{\bar{Q}}$ and let $\E_Z$ be the expectation with respect to $Z$:
\[
\Vert f \Vert^2_{\bar{Q}} \coloneqq \Vert f \Vert_{L_2(\bar{Q} )}^2 = \int_{\R^{n+1}\times\R^p} f(v)^2 \bar{Q}(dv) = \frac{1}{\binom{n}{2}}\sum_{ i < j} \E_Z[ f((\bar{X}_{ij}^T, 1, Z_{ij}^T)^T)^2]
\]
and define the $L_\infty(\bar{Q})$-norm as usual as the $\bar{Q}$-a.s. smallest upper bound of $f$:
\begin{align*}
\Vert f \Vert_{\bar{Q}, \infty} &= \inf\{ C \ge 0 : \vert f(v) \vert \le C \text{ for } \bar{Q} \text{-almost every } v \in \R^{n+1+p} \}.
\end{align*}
Notice in particular, that for any $f_\theta \in \mathbb{F}, \theta = (\beta^T, \mu, \gamma^T)^T \in \Theta_{\text{loc}}$: $\Vert f_\theta \Vert_\infty \le \sup_{Z_{ij}}\Vert D\theta \Vert_\infty \le r_n$.

We make the analogous definitions for the unscaled design matrix. 
Define the probability measure induced by the rows of $D$ on $\R^{n+1+p}$ as $Q$.
It is easy to see that we can switch between these norms as follows. Given a parameter $\theta$ and its rescaled version $\bar{\theta}$, then clearly
\[
\Vert f_{\bar{\theta}} \Vert_{\bar{Q}} = \Vert f_\theta \Vert_Q, \quad	\Vert f_{\bar{\theta}} \Vert_{\bar{Q}, \infty} = \Vert f_\theta \Vert_{Q, \infty}.
\]
Also note that for any $\bar{\theta}$
\begin{equation}\label{Eq: Q bat norm identity}
	\Vert f_{\bar{\theta}} \Vert_{\bar{Q}}^2 = \E_Z \left[\frac{1}{\binom{n}{2}}\sum_{ i < j}  (\bar D_{ij}^T\bar \theta)^2\right] = \bar \theta^T \Sigma \bar \theta.
\end{equation}
Recall that we want to apply the compatibility condition to vectors of the form $\bar{\theta} = \bar{\theta}_1 - \bar{\theta}_2, \bar{\theta}_1, \bar{\theta}_2 \in \bar{\Theta}_{\text{loc}}$. We have the following corollary which follows immediately from Proposition \ref{Misspecification: compatibility condition Sigma}.

\begin{Kor}\label{Lem: compatibility condition}
	Under Assumption \ref{Assum: minimum EW}, for $s_{+}^* = o(\sqrt{n})$ and $n$ large enough,
	it holds that for every $\bar{\theta} = \bar{\theta}_1 - \bar{\theta}_2, \bar{\theta}_1, \bar{\theta}_2 \in \bar{\Theta}_{\text{loc}}$ with $\Vert \bar{\theta}_{S^{*c}_+} \Vert_1 \le 3 \Vert \bar{\theta}_{S^*_+} \Vert_1$,
	\[
	\Vert \bar{\theta}_{S^*_+} \Vert_1^2 \le \frac{2s^*_+}{c_{\min}} \Vert f_{\bar{\theta}_1} - f_{\bar{\theta}_2} \Vert_{\bar{Q}}^2.
	\]
\end{Kor}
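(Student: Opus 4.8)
The plan is to deduce the corollary directly from Proposition \ref{Prop: compatibility condition Sigma}, using only the linearity of the map $\theta \mapsto f_\theta$ and the norm identity \eqref{Eq: Q bat norm identity}. First, since $f_\theta(v) = v^T\theta$, for every $v \in \R^{n+1+p}$ we have $f_{\bar\theta_1}(v) - f_{\bar\theta_2}(v) = v^T(\bar\theta_1 - \bar\theta_2)$, so $f_{\bar\theta_1} - f_{\bar\theta_2} = f_{\bar\theta}$ where $\bar\theta = \bar\theta_1 - \bar\theta_2$. Consequently the quantity on the right-hand side of the claimed inequality is (up to the constant) $\Vert f_{\bar\theta} \Vert_{\bar Q}^2$, and by \eqref{Eq: Q bat norm identity} this is exactly the quadratic form $\bar\theta^T\Sigma\bar\theta$.

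Second, observe that although $\bar\Theta_{\text{loc}}$ is convex but not a linear subspace, the difference $\bar\theta = \bar\theta_1 - \bar\theta_2$ is still an arbitrary element of $\R^{n+1+p}$, and by hypothesis it satisfies the cone condition $\Vert \bar\theta_{S^{*c}_+} \Vert_1 \le 3 \Vert \bar\theta_{S^*_+} \Vert_1$. The remaining hypotheses of the corollary --- Assumption \ref{Assum: minimum EW}, $s^* = o(\sqrt n)$, and $n$ large enough --- are precisely those of Proposition \ref{Prop: compatibility condition Sigma}. Applying that proposition to $\bar\theta$ therefore yields
\[
\Vert \bar\theta_{S^*_+} \Vert_1^2 \le \frac{2 s^*_+}{c_{\min}}\, \bar\theta^T \Sigma \bar\theta = \frac{2 s^*_+}{c_{\min}}\, \Vert f_{\bar\theta_1} - f_{\bar\theta_2} \Vert_{\bar Q}^2,
\]
which is the assertion.

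There is essentially no obstacle here: the substantive work was already done in Proposition \ref{Prop: compatibility condition Sigma} (via Lemma \ref{Lem: Lemma 6 in KockTang} and the circulant-matrix eigenvalue computation for $X^TX/(n-1)$). The one point that needs to be made explicit is the translation between the matrix compatibility statement and the function-space formulation used throughout the subsequent risk analysis, namely that the population $L_2(\bar Q)$-norm of $f_{\bar\theta}$ equals $\bar\theta^T\Sigma\bar\theta$; this is exactly \eqref{Eq: Q bat norm identity}, where one uses $\Sigma = \frac{1}{\binom n2}\E[\bar D^T\bar D]$ and integrates over the random covariate part while treating the deterministic $\bar X$-part as fixed. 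Once that identification is recorded, the corollary is a verbatim restatement of the proposition in the language of $\mathbb{F}$ and $\bar Q$.
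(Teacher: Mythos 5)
Your proposal is correct and follows exactly the paper's own route: the corollary is deduced by applying Proposition \ref{Prop: compatibility condition Sigma} to the difference vector $\bar\theta = \bar\theta_1 - \bar\theta_2$ (which satisfies the cone condition) and translating the quadratic form via the identity $\Vert f_{\bar\theta}\Vert_{\bar Q}^2 = \bar\theta^T\Sigma\bar\theta$ from \eqref{Eq: Q bat norm identity}, using the linearity of $\theta \mapsto f_\theta$. Nothing is missing.
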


\begin{proof}
		The proof follows from Proposition \ref{Misspecification: compatibility condition Sigma} and identity \eqref{Eq: Q bat norm identity}.
\end{proof}

\subsection{Two basic inequalities}

A key result in the consistency proofs in classical LASSO settings is the so called \textit{basic inequality} (cf. \cite{vandegeer2011}, Chapter 6). We give two formulations of it, one for the original penalized likelihood problem (\ref{Eq: Penalized llhd with covariates}) and one, completely analogous result, for the rescaled problem (\ref{Eq: Penalized llhd with covariates bar}). To that end, let $P_n$ denote the empirical measure with respect to our observations $(A_{ij}, Z_{ij})$, that is, for any suitable function $g$,
$
P_ng \coloneqq \sum_{i < j} g(A_{ij}, Z_{ij})/\binom{n}{2}.
$
In particular, if we let for each $\theta \in \Theta$, 
$$l_\theta(A_{ij}, Z_{ij}) = -A_{ij} (\beta_i + \beta_j + \mu + \gamma^TZ_{ij}) + \log(1 + \exp(\beta_i + \beta_j + \mu + \gamma^TZ_{ij})),$$
then
$
P_nl_{\theta} =  \mathcal{L}(\theta)/\binom{n}{2}.
$
Similarly, we define  $P = \mathbb{E}P_n$. In particular,
$
Pl_{\theta} = \mathbb{E}P_n l_\theta = \mathbb{E}[\mathcal{L}(\theta)/\binom{n}{2}],
$
where we suppress the dependence on $n$ in our notation. 
We define the \textit{empirical process} as
\[
\left\{ v_n(\theta) = (P_n - P)l_{\theta} : \theta \in \Theta \right\},
\]
which can also be written in re-parametrized form as
\[
\bar{v}_n(\bar{\theta}) \coloneqq \frac{1}{\binom{n}{2}} (\bar{\mathcal{L}}(\bar{\theta}) - \mathbb{E}[\bar{\mathcal{L}}(\bar{\theta})] ) = v_n(\theta).
\]

\begin{Lem}\label{Lem: basic inequality}
	For any $\theta = (\beta^T, \mu, \gamma^T)^T \in \Theta_{\textup{loc}}$, it holds
	\[
	\mathcal{E}(\hat{\theta}) + \lambda \Vert \hat{\beta} \Vert_1 \le - [v_n(\hat{\theta}) - v_n(\theta)] + \mathcal{E}(\theta) + \lambda \Vert \beta \Vert_1.
	\]
\end{Lem}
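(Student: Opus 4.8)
The plan is to deploy the defining optimality property of $\hat{\theta}$ directly, rewrite the empirical average of the loss as its population mean plus an empirical‑process fluctuation, and observe that every quantity referring to the truth $\theta_0$ cancels.

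First I would recall that a minimizer $\hat{\theta}$ of the penalized objective over $\Theta_{\text{loc}}$ exists by Lemma~\ref{Lem: Existence of solution and identifiability}, and that $\hat{\theta} \in \Theta_{\text{loc}}$ by construction. Since the fixed competitor $\theta$ in the statement also lies in $\Theta_{\text{loc}}$, and since $\frac{1}{\binom{n}{2}}\mathcal{L}(\cdot) = P_n l_{(\cdot)}$ — this identity holds exactly, using $\sum_{i<j} A_{ij}(\beta_i+\beta_j) = \sum_i \beta_i d_i$ and $\sum_{i<j} A_{ij} = d_+$, so that $\mathcal{L}(\theta) = \sum_{i<j} l_\theta(A_{ij},Z_{ij})$ with no leftover constant — the optimality of $\hat{\theta}$ reads
\[
P_n l_{\hat{\theta}} + \lambda \Vert \hat{\beta} \Vert_1 \le P_n l_{\theta} + \lambda \Vert \beta \Vert_1 .
\]

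Next I would substitute $P_n l_{\vartheta} = P l_{\vartheta} + v_n(\vartheta)$ for both $\vartheta = \hat{\theta}$ and $\vartheta = \theta$, which is merely the definition $v_n(\vartheta) = (P_n - P) l_{\vartheta}$. Rearranging isolates the empirical‑process increment:
\[
P l_{\hat{\theta}} + \lambda \Vert \hat{\beta} \Vert_1 \le -\big[v_n(\hat{\theta}) - v_n(\theta)\big] + P l_{\theta} + \lambda \Vert \beta \Vert_1 .
\]
Finally, subtracting $P l_{\theta_0}$ from both sides and using $\mathcal{E}(\vartheta) = P l_{\vartheta} - P l_{\theta_0} = \frac{1}{\binom{n}{2}}\E[\mathcal{L}(\vartheta) - \mathcal{L}(\theta_0)]$ converts both $P l_{\hat{\theta}}$ and $P l_{\theta}$ into the corresponding excess risks, the $\theta_0$‑terms dropping out, which is exactly the claimed inequality. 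A completely parallel computation with $\bar{\mathcal{L}}$, $\bar{v}_n$, $\bar{\lambda}$ in place of $\mathcal{L}$, $v_n$, $\lambda$ yields the rescaled version, since $\bar{\mathcal{L}}(\bar{\theta}) = \mathcal{L}(\theta)$, $\bar{v}_n(\bar{\theta}) = v_n(\theta)$ and $\bar{\mathcal{E}}(\bar{\theta}) = \mathcal{E}(\theta)$.

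I do not expect a genuine analytic obstacle here: the only two points that deserve a line of care are the verification that $\mathcal{L}(\theta)$ equals $\sum_{i<j} l_\theta(A_{ij},Z_{ij})$ exactly (no additive constant), and the remark that $\theta \in \Theta_{\text{loc}}$ is precisely what makes $\theta$ an admissible competitor in the minimization defining $\hat{\theta}$; everything else is algebraic bookkeeping around the insertion of the empirical process and the cancellation of the $\theta_0$ terms.
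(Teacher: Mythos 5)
Your argument is correct and is essentially the paper's own proof: the paper states the claimed inequality, expands the definitions of $\mathcal{E}$ and $v_n$, and rearranges to see it is equivalent to $\frac{1}{\binom{n}{2}}\mathcal{L}(\hat{\theta}) + \lambda \Vert \hat{\beta} \Vert_1 \le \frac{1}{\binom{n}{2}}\mathcal{L}(\theta) + \lambda \Vert \beta \Vert_1$, which holds by definition of $\hat{\theta}$; you simply run the same algebra in the forward direction, starting from the minimizing property and inserting $P_n l_{\vartheta} = P l_{\vartheta} + v_n(\vartheta)$. The two points you flag (the exact identity $\frac{1}{\binom{n}{2}}\mathcal{L} = P_n l$ and admissibility of the competitor $\theta \in \Theta_{\text{loc}}$) are indeed the only things to check, and your cancellation of the $\theta_0$-terms matches the paper's cancellation of the reference term on both sides.
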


\begin{proof}
	Plugging in the definitions, the above equation is equivalent to
	\begin{align*}
	\frac{1}{\binom{n}{2}}\left(\mathbb{E}[\mathcal{L}(\hat{\theta})] - \mathbb{E}[\mathcal{L}(\theta^*)]\right) + \lambda \Vert \hat{\beta} \Vert_1 &\le 
	-\frac{1}{\binom{n}{2}} \mathcal{L}(\hat{\theta}) + \frac{1}{\binom{n}{2}} \mathbb{E}[\mathcal{L}(\hat{\theta})] +\frac{1}{\binom{n}{2}} \mathcal{L}(\theta) - \frac{1}{\binom{n}{2}} \mathbb{E}[\mathcal{L}(\theta)] + \lambda \Vert \beta \Vert_1 \\
	&\quad \quad + \frac{1}{\binom{n}{2}} \left(\mathbb{E}[\mathcal{L}(\theta)] - \mathbb{E}[\mathcal{L}(\theta^*)]\right).
	\end{align*}
	Rearranging shows that this is true if and only if
	\[
	\frac{1}{\binom{n}{2}} \mathcal{L}(\hat{\theta}) + \lambda \Vert \hat{\beta} \Vert_1
	\le \frac{1}{\binom{n}{2}} \mathcal{L}(\theta) + \lambda \Vert \beta \Vert_1,
	\]
	which is true by definition of $\hat{\theta}$.
\end{proof}

\begin{Rem}
	For any $0 < t < 1$ and $\theta \in \Theta_{\text{loc}}$, let $\tilde{\theta} = t\hat{\theta} + (1 - t)\theta$. Since $\Gamma$ is convex, $\tilde{\theta} \in \Theta_{\text{loc}}$ and since $\theta \rightarrow l_{\theta}$ and $\Vert \, . \, \Vert_1$ are convex functions, we can replace $\hat{\theta}$ by $\tilde{\theta}$ in the basic inequality and still obtain the same result. Plugging in the definitions, we see that the basic inequality is equivalent to the following:
	\begin{gather*}
	\mathcal{E}(\tilde{\theta}) + \lambda \Vert \tilde{\beta} \Vert_1 \le - [v_n(\tilde{\theta}) - v_n(\theta)] + \lambda \Vert \beta \Vert_1 + \mathcal{E}(\theta)\\
	\iff \frac{1}{\binom{n}{2}} \mathcal{L}(\tilde{\theta}) + \lambda \Vert \tilde{\beta} \Vert_1
	\le \frac{1}{\binom{n}{2}} \mathcal{L}(\theta) + \lambda \Vert \beta \Vert_1
	\end{gather*}
	and by convexity
	\[
	\frac{1}{\binom{n}{2}} \mathcal{L}(\tilde{\theta}) + \lambda \Vert \tilde{\beta} \Vert_1
	\le  \frac{1}{\binom{n}{2}} t \mathcal{L}(\hat{\theta}) + \frac{1}{\binom{n}{2}} (1-t) \mathcal{L}(\theta) + t \lambda \Vert \hat{\beta} \Vert_1 + (1-t) \lambda \Vert \beta \Vert_1 \le \frac{1}{\binom{n}{2}} \mathcal{L}(\theta) + \lambda \Vert \beta \Vert_1,
	\]
	where the last inequality follows by definition of $\hat{\theta}$. In particular, for any $M > 0$, choosing
	\[
	t = \frac{M}{M + \Vert \hat{\theta} - \theta \Vert_1},
	\]
	gives $\Vert \tilde{\theta} - \theta \Vert_1 \le M$.
\end{Rem}

\begin{Lem}\label{Lem: basic inequality bar}
	For any $\bar{\theta} \in \bar{\Theta}_{\textup{loc}}$ it holds
	\[
	\bar{\mathcal{E}}(\hat{\bar{\theta}}) + \bar{\lambda} \Vert \hat{\bar{\beta}} \Vert_1 \le - [\bar{v}_n(\hat{\bar{\theta}}) - \bar{v}_n(\bar{\theta})] + \bar{\mathcal{E}}(\bar{\theta}) + \bar{\lambda} \Vert \bar{\beta} \Vert_1.
	\]
\end{Lem}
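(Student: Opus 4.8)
The plan is to mirror the proof of Lemma~\ref{Lem: basic inequality} line by line, but now in the rescaled coordinates, using the fact that the rescaled penalized likelihood problem~\eqref{Eq: Penalized llhd with covariates bar} is by construction equivalent to the original problem~\eqref{Eq: Penalized llhd with covariates}. The starting point is the defining property of $\hat{\bar\theta}$: it minimizes $\tfrac{1}{\binom n2}\bar{\mathcal L}(\bar\theta) + \bar\lambda\|\bar\beta\|_1$ over $\bar\Theta_{\textup{loc}}$, so that for every $\bar\theta\in\bar\Theta_{\textup{loc}}$,
\[
\tfrac{1}{\binom n2}\bar{\mathcal L}(\hat{\bar\theta}) + \bar\lambda\|\hat{\bar\beta}\|_1 \;\le\; \tfrac{1}{\binom n2}\bar{\mathcal L}(\bar\theta) + \bar\lambda\|\bar\beta\|_1 .
\]
Here one uses that $\bar\theta\in\bar\Theta_{\textup{loc}}$ iff the corresponding $\theta\in\Theta_{\textup{loc}}$, since $\bar D\bar\theta = D\theta$, so feasibility is preserved under the reparametrization (this was already noted in Appendix~\ref{Sec: rescaled problem}).

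From there I would unwind the definitions $\bar{\mathcal E}(\bar\theta) = \tfrac1{\binom n2}\big(\E[\bar{\mathcal L}(\bar\theta)] - \E[\bar{\mathcal L}(\bar\theta^*)]\big)$ and $\bar v_n(\bar\theta) = \tfrac1{\binom n2}\big(\bar{\mathcal L}(\bar\theta) - \E[\bar{\mathcal L}(\bar\theta)]\big)$ on both sides of the claimed inequality, add and subtract the matching $\tfrac1{\binom n2}\E[\bar{\mathcal L}(\hat{\bar\theta})]$ and $\tfrac1{\binom n2}\E[\bar{\mathcal L}(\bar\theta)]$ terms, and observe that the common reference value $\tfrac1{\binom n2}\E[\bar{\mathcal L}(\bar\theta^*)]$ cancels. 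After this rearrangement the asserted inequality collapses exactly to the minimizer inequality displayed above, completing the argument — this is precisely the algebra in the proof of Lemma~\ref{Lem: basic inequality} with a bar placed on every symbol. An even shorter route is to invoke Lemma~\ref{Lem: basic inequality} directly together with the identities from Appendix~\ref{Sec: rescaled problem}, namely $\bar{\mathcal L}(\bar\theta)=\mathcal L(\theta)$, $\bar{\mathcal E}(\bar\theta)=\mathcal E(\theta)$, $\bar v_n(\bar\theta)=v_n(\theta)$, and $\bar\lambda\|\bar\beta\|_1=\lambda\|\beta\|_1$ (the last since $\bar\lambda=\tfrac{\sqrt n}{\sqrt2}\lambda$ and $\bar\beta=\tfrac{\sqrt2}{\sqrt n}\beta$), applied to the truth, the best local approximation, the estimator, and a generic $\theta$ simultaneously; substituting these termwise into the conclusion of Lemma~\ref{Lem: basic inequality} yields the statement.

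There is essentially no genuine obstacle here: the entire content sits in the bookkeeping of the reparametrization already set up in Appendix~\ref{Sec: rescaled problem}, and once that correspondence is in hand the lemma is a restatement of Lemma~\ref{Lem: basic inequality}. The only point deserving a word of care is the feasibility equivalence $\bar\theta\in\bar\Theta_{\textup{loc}}\iff\theta\in\Theta_{\textup{loc}}$ noted above, which guarantees that the minimization underlying~\eqref{Eq: Penalized llhd with covariates bar} is over the ``right'' set. I would also record, exactly as in the remark following Lemma~\ref{Lem: basic inequality}, that by convexity of $\bar\theta\mapsto\bar{\mathcal L}(\bar\theta)$ and of $\|\cdot\|_1$ one may replace $\hat{\bar\theta}$ in the inequality by any convex combination $t\hat{\bar\theta}+(1-t)\bar\theta$ with $t\in(0,1)$ without weakening it — a device that will be used when combining this basic inequality with the compatibility condition from Corollary~\ref{Lem: compatibility condition} in the main consistency proof.
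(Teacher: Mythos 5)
Your proposal is correct and matches the paper's treatment: the paper proves the lemma by noting that the argument for Lemma \ref{Lem: basic inequality} relies only on the argmin property of the estimator and hence carries over line by line to the rescaled problem, exactly as you do (your alternative route via the identities $\bar{\mathcal L}(\bar\theta)=\mathcal L(\theta)$, $\bar v_n(\bar\theta)=v_n(\theta)$, $\bar\lambda\Vert\bar\beta\Vert_1=\lambda\Vert\beta\Vert_1$ is the same observation packaged slightly differently). Your closing remark about convex combinations $t\hat{\bar\theta}+(1-t)\bar\theta$ is also precisely the remark the paper records after the lemma.
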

Since the proof of Lemma \ref{Lem: basic inequality} only relies on the argmin property of $\hat{\theta}$, the proof of Lemma \ref{Lem: basic inequality bar} is line by line the same as for Lemma \ref{Lem: basic inequality}. We also get the same property for convex combinations of $\hat{\bar{\theta}}$ and $\bar{ \theta}$: For any $t \in (0,1)$ the rescaled basic inequality Lemma \ref{Lem: basic inequality bar} holds for $\hat{\bar{\theta}}$ replaced by $\tilde{\theta} = t\hat{\bar{\theta}} + (1-t) \bar{\theta}$. Note in particular, that $\tilde{\theta} \in \bar{ \Theta}_{\text{loc}}$.
Take note that in the basic inequalities we are controlling the \textit{global} excess risk of any \textit{local} parameters $\theta \in \Theta_{\text{loc}}$.

\subsection{Lower quadratic margin for \texorpdfstring{$\mathcal{E}$}{E}}\label{Sec: quadratic margin covariates}

In this section, we will derive a lower quadratic bound on the excess risk $\mathcal{E}(\theta)$ if the parameter $\theta$ is close to the truth $\theta_0$. This is a necessary property for the proof to come and is referred to as the \textit{margin condition} in classical LASSO theory (cf. \cite{vandegeer2011}). We will conduct our derivations for the original parameter space $\Theta_{\text{loc}}$. Since $\mathcal{L}(\theta) = \bar{\mathcal{L}}(\bar{ \theta})$ and $\mathcal{E}(\theta) = \bar{\mathcal{E}}(\bar{ \theta})$, we will find that the same results hold in the rescaled model.

The proof mainly relies on a second order Taylor expansion of the function $l_\theta$ of introduced in Section \ref{Sec: Theory}.
Given a fixed $\theta$, we treat $l_\theta$ as a function in $\theta^Tx$ and define new functions $l_{ij}: \R \rightarrow \R, i < j,$
\[
l_{ij}(a) = \mathbb{E}[l_\theta(A_{ij}, a) \vert Z_{ij}] = -p_{ij}a + \log(1+\exp(a)),
\]
where $p_{ij} = P(A_{ij} = 1 \vert Z_{ij})$ and by slight abuse of notation we use $l_\theta(A_{ij}, a) \coloneqq -A_{ij} a + \log(1 + \exp(a))$. Taking derivations, it is easy to see that
\[
f_{\theta_0} ((X_{ij}^T, 1, Z_{ij}^T)^T) \in \arg\min_a l_{ij}(a).
\]
Note that we are using the actual truth $\theta_0$ in the above equation, not the best local approximation $\theta^*$. Write $f_0 = f_{\theta_0}$.

All $l_{ij}$ are clearly twice continuously differentiable with derivative
\[
\frac{\partial^2}{\partial a^2}l_{ij}(a) = \frac{\exp(a)}{(1 + \exp(a))^2} > 0, \forall a \in \R.
\]
Using a second order Taylor expansion around $a_0 = f_0((X_{ij}^T, 1, Z_{ij}^T)^T)$ we get
\[
l_{ij}(a) = l_{ij}(a_0) + l'(a_0) (a - a_0) + \frac{l''(\bar{a})}{2}(a - a_0)^2 =  l_{ij}(a_0) + \frac{l''(\bar{a})}{2}(a - a_0)^2,
\]
with an $\bar{a}$ between $a$ and $a_0$. Note that $\frac{\exp(a)}{(1 + \exp(a))^2}$ is symmetric and monotone decreasing for $a \ge 0$.

\noindent Then we have
\begin{align}\label{Eq: derivation lower bound}
\begin{split}
l_{ij}(a) - l_{ij}(a_0) &= \frac{\exp(\bar{a})}{(1 + \exp(\bar{a}))^2} \frac{(a - a_0)^2}{2} \\
&= \frac{\exp(\vert\bar{a}\vert)}{(1 + \exp(\vert\bar{a}\vert))^2} \frac{(a - a_0)^2}{2} \\
&\ge \frac{\exp(\max\{\vert a_0 \vert, \vert a \vert \})}{(1 + \exp(\max\{\vert a_0 \vert, \vert a \vert \}))^2} \frac{(a - a_0)^2}{2} \quad (\text{since } \vert \bar{a} \vert \le \max\{\vert a_0 \vert, \vert a \vert \})
\end{split}
\end{align}
In particular, for any $\theta \in \Theta_{\text{loc}}$, $\vert f_\theta((X_{ij}^T, 1, Z_{ij}^T)^T) \vert \le r_{n}$, we have
\begin{align*}
l_{ij}(f_{\theta} ((X_{ij}^T, 1, Z_{ij}^T)^T)) &- l_{ij}(f_0((X_{ij}^T, 1, Z_{ij}^T)^T)) \\
\begin{split}
		 &\ge \frac{\exp(\max\{\vert f_0((X_{ij}^T, 1, Z_{ij}^T)^T\vert ,\vert f_{\theta}((X_{ij}^T, 1, Z_{ij}^T)^T\vert\})}{(1 + \exp(\max\{\vert f_0((X_{ij}^T, 1, Z_{ij}^T)^T\vert ,\vert f_{\theta}((X_{ij}^T, 1, Z_{ij}^T)^T\vert\}))^2} \\ 
		 &\quad \quad \quad \cdot \frac{(f_\theta((X_{ij}^T, 1, Z_{ij}^T)^T) - f_0((X_{ij}^T, 1, Z_{ij}^T)^T))^2}{2} 
\end{split} \\
&\ge  \frac{\exp(\max\{ r_{n,0}  , r_{n}   \})}{(1 + \exp( \max\{ r_{n,0}  ,  r_{n}  \}))^2} \frac{(f_\theta((X_{ij}^T, 1, Z_{ij}^T)^T) - f_0((X_{ij}^T, 1, Z_{ij}^T)^T))^2}{2}.
\end{align*}
Define the function
\[
	\tau =  \frac{\exp(\max\{r_{n,0}  ,  r_{n}   \})}{2(1 + \exp( \max\{r_{n,0}  , r_{n} \}))^2} 
\]
and notice that for $K_n$ defined in \eqref{Eq: Def K_n} we now have
\begin{equation*}
K_n =  \frac{1}{\tau}.
\end{equation*}
Now, for any $\theta \in \Theta_{\text{loc}}$
\begin{align*}
\mathcal{E}(\theta)
&= \frac{1}{\binom{n}{2}} \sum_{ i < j} \mathbb{E} [ l_{\theta}(A_{ij}, (X_{ij}^T, Z_{ij}^T)^T) - l_{\theta_0}(A_{ij},  (X_{ij}^T, Z_{ij}^T)^T) ] \\
&= \frac{1}{\binom{n}{2}} \sum_{ i < j} \E[( l_{ij}(f_\theta ( (X_{ij}^T, Z_{ij}^T)^T)) - l_{ij}(f_0( (X_{ij}^T, Z_{ij}^T)^T)))] \\
&\ge \frac{1}{\binom{n}{2}} \sum_{ i < j} \tau \E[(f_{\theta}( (X_{ij}^T, Z_{ij}^T)^T) - f_0( (X_{ij}^T, Z_{ij}^T)^T))^2] \\
&\ge \frac{1}{K_n} \Vert f_{\theta} - f_0 \Vert_Q^2.
\end{align*}
Thus, we have obtained a lower bound for the excess risk given by the quadratic function $G_n(\Vert f_{\theta} - f_0 \Vert)$ where $G_n(u) = 1/K_n \cdot u^2$.
Since $\mathcal{E}(\theta) = \bar{\mathcal{E}}(\bar{ \theta})$ and $\Vert f_{\theta} \Vert_Q^2 = \Vert f_{\bar{\theta}} \Vert_{\bar{Q}}^2$ and $\Vert f_{\theta} \Vert_{Q, \infty} = \Vert f_{\bar{\theta}} \Vert_{\bar{Q}, \infty}$, we obtain the same result for the rescaled problem (\ref{Eq: Penalized llhd with covariates bar}): For any $\bar{ \theta} \in \bar{ \Theta}_{\text{loc}}$, we have
\begin{equation*}
\bar{\mathcal{E}}(\bar{ \theta}) \ge \frac{1}{K_n} \Vert f_{\bar{\theta}} - f_{\bar{\theta}_0} \Vert_{\bar{Q}}^2.
\end{equation*}
That is, the quadratic margin condition holds for any $\bar \theta \in \bar{\Theta}_{\text{loc}}$.

Recall that the convex conjugate of a strictly convex function $G$ on $[0, \infty)$ with $G(0) = 0$ is defined as the function
\[
H(v) = \sup_{u} \{ uv - G(u) \}, \quad v > 0
\]
and in particular, if $G(u) = cu^2$ for a positive constant $c$, we have $H(v) = v^2/(4c)$. 
Hence, the convex conjugate of $G_n$ is
\begin{equation*}
H_n(v) = \frac{v^2 K_n}{4}.
\end{equation*}
Keep in mind that by definition for any $u,v$
\[
uv \le G(u) + H(v).
\]

\subsection{Consistency on a special set}\label{Sec: Consistency on a special set}

In this section we will show that the penalized likelihood estimator is consistent in the sense that it converges to the best possible approximation $\theta^*$. We will first define a set $\mathcal{I}$ and show that consistency holds on $\mathcal{I}$. It will then suffice to show that the probability of $\mathcal{I}$ tends to one as well. The proof follows in spirit \cite{vandegeer2011}, Theorem 6.4.

Define
\[
\epsilon^* =  \frac{3}{2}\bar{ \mathcal{E}}(\bar{\theta}^*) + H_n\left(   \frac{4\sqrt{2s^*_+}\bar{\lambda}}{\sqrt{c_{\min}}}\right).
\]
Remember that $\bar{ \mathcal{E}}(\bar{\theta}^*) =  \mathcal{E}(\theta^*)$ corresponds to the approximation error of our model. 
Let for any $M > 0$
\[
Z_M \coloneqq \sup_{\substack{\bar{\theta} \in \bar{\Theta}_{\text{loc}}, \\  \Vert \bar{\theta} - \bar{\theta}^* \Vert_1 \le M}} \vert \bar{v}_n(\bar{\theta}) - \bar{v}_n(\bar{\theta}^*) \vert,
\]
where $\bar{v}_n$ denotes the re-parametrized empirical process. Recall that for any rescaled $\bar{\theta}$ we have 
$
\bar{v}_n(\bar{\theta}) = v_n(\theta).
$
Also, by construction $\bar{\theta} \in \bar{\Theta}_{\text{loc}}$ if and only if $\theta \in \Theta_{\text{loc}}$. Hence, the set over which we are maximizing in the definition of $Z_M$ can be expressed in terms of parameters $\theta$ on the original scale as
\[
\left\{ \theta = (\beta^T, \mu, \gamma^T)^T \in \Theta_{\text{loc}}: \frac{\sqrt{2}}{\sqrt{n}} \Vert \beta - \beta^* \Vert_1 + \vert \mu - \mu^* \vert + \Vert \gamma - \gamma^* \Vert_1 \le M \right\}.
\]
To ease notation, define
\[
 \lambda_0 = 8a_n + 2 \sqrt{\frac{t}{\binom{n}{2}}( 11 (1 \vee (c^2p) ) + 8\sqrt{2}(1 \vee c) \sqrt{n} a_n  )} + \frac{2\sqrt{2}t(1 \vee c) \sqrt{n}}{3\binom{n}{2}}
\]
and set
\[
M^* \coloneqq \epsilon^* / \lambda_0,
\]
where $\lambda_0$ is a lower bound on $\bar{\lambda}$ that will be made precise in the proof showing that $\mathcal{I}$ has large probability. Define
\begin{equation}\label{Def: I covariates}
\mathcal{I} \coloneqq \{ Z_{M^*} \le \lambda_0 M^* \} = \{ Z_{M^*} \le \epsilon^* \}.
\end{equation}

\begin{Satz}\label{Thm: Consistency covariates}
	Assume that assumptions \ref{Assum: minimum EW} and \ref{Assum: rate of s^*} hold and that
	$
	\bar{\lambda} \ge 8 \lambda_0.
	$
	Then, on the set $\mathcal{I}$, we have
	\begin{align*}
	\mathcal{E}(\hat{\theta}) + \bar{ \lambda} \left( \frac{\sqrt{2}}{\sqrt{n}} \Vert \hat{\beta} - \beta^*  \Vert_1 + \vert \hat{\mu} - \mu^* \vert + \Vert \hat{\gamma} - \gamma^* \Vert_1  \right) \le 4\epsilon^*
	 = 6 \mathcal{E}(\theta^*) + 4 H_n\left(   \frac{4\sqrt{2s^*_+}\bar{ \lambda}}{\sqrt{c_{\min}}}\right).
	\end{align*}
\end{Satz}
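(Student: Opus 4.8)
The plan is to run the penalized $M$-estimation argument in the rescaled coordinates of Section~\ref{Sec: rescaled problem}, where the prepared ingredients fit together cleanly: the rescaled basic inequality (Lemma~\ref{Lem: basic inequality bar}), the compatibility condition (Corollary~\ref{Lem: compatibility condition}), the quadratic margin $\bar{\mathcal{E}}(\bar\theta)\ge \tfrac{1}{K_n}\Vert f_{\bar\theta}-f_{\bar\theta_0}\Vert_{\bar Q}^2$ valid for every $\bar\theta\in\bar\Theta_{\text{loc}}$ (Appendix~\ref{Sec: quadratic margin covariates}), and the Young inequality $uv\le G_n(u)+H_n(v)$ with $G_n(u)=u^2/K_n$ and conjugate $H_n(v)=v^2K_n/4$. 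Since $\mathcal{E}(\hat\theta)=\bar{\mathcal{E}}(\hat{\bar\theta})$, $\tfrac{\sqrt2}{\sqrt n}\Vert\hat\beta-\beta^*\Vert_1+\vert\hat\mu-\mu^*\vert+\Vert\hat\gamma-\gamma^*\Vert_1=\Vert\hat{\bar\theta}-\bar\theta^*\Vert_1$, and $4\epsilon^*=6\mathcal{E}(\theta^*)+4H_n\!\big(\tfrac{4\sqrt{2s^*_+}\bar\lambda}{\sqrt{c_{\min}}}\big)$, the assertion is equivalent to proving $\bar{\mathcal{E}}(\hat{\bar\theta})+\bar\lambda\Vert\hat{\bar\theta}-\bar\theta^*\Vert_1\le 4\epsilon^*$ on $\mathcal{I}$.

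First I would introduce the convex-combination device: with $t=M^*/(M^*+\Vert\hat{\bar\theta}-\bar\theta^*\Vert_1)$ and $\tilde\theta=t\hat{\bar\theta}+(1-t)\bar\theta^*$ one has $\tilde\theta\in\bar\Theta_{\text{loc}}$ and $\Vert\tilde\theta-\bar\theta^*\Vert_1\le M^*$. Applying Lemma~\ref{Lem: basic inequality bar} in its convex-combination form with reference point $\bar\theta^*$ to $\tilde\theta$, and bounding $-[\bar v_n(\tilde\theta)-\bar v_n(\bar\theta^*)]\le Z_{M^*}\le\lambda_0M^*=\epsilon^*$ on $\mathcal{I}$ (legitimate since $\Vert\tilde\theta-\bar\theta^*\Vert_1\le M^*$), gives $\bar{\mathcal{E}}(\tilde\theta)+\bar\lambda\Vert\tilde\beta\Vert_1\le\epsilon^*+\bar{\mathcal{E}}(\bar\theta^*)+\bar\lambda\Vert\bar\beta^*\Vert_1$. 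Because $\bar\beta^*$ is supported on $S^*$ and $\mu^*,\gamma^*$ are unpenalized, the usual triangle-inequality split over $S^*_+$ yields, writing $v=\tilde\theta-\bar\theta^*$,
\[
\bar{\mathcal{E}}(\tilde\theta)+\bar\lambda\Vert v_{S^{*c}_+}\Vert_1\le\epsilon^*+\bar{\mathcal{E}}(\bar\theta^*)+\bar\lambda\Vert v_{S^*_+}\Vert_1,
\]
and adding $\bar\lambda\Vert v_{S^*_+}\Vert_1$ to both sides,
\[
\bar{\mathcal{E}}(\tilde\theta)+\bar\lambda\Vert v\Vert_1\le\epsilon^*+\bar{\mathcal{E}}(\bar\theta^*)+2\bar\lambda\Vert v_{S^*_+}\Vert_1 .
\]

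Then comes the dichotomy on whether $2\bar\lambda\Vert v_{S^*_+}\Vert_1\le\epsilon^*+\bar{\mathcal{E}}(\bar\theta^*)$ or not. In the first case the last display gives $\bar{\mathcal{E}}(\tilde\theta)+\bar\lambda\Vert v\Vert_1\le 2(\epsilon^*+\bar{\mathcal{E}}(\bar\theta^*))\le 4\epsilon^*$, using $\bar{\mathcal{E}}(\bar\theta^*)\le\tfrac23\epsilon^*$ (immediate from $\epsilon^*=\tfrac32\bar{\mathcal{E}}(\bar\theta^*)+H_n(\cdot)$ and $\bar{\mathcal E}(\bar\theta^*)\ge 0$). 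In the second case, $2\bar\lambda\Vert v_{S^*_+}\Vert_1>\epsilon^*+\bar{\mathcal{E}}(\bar\theta^*)$ together with the first displayed inequality and $\bar{\mathcal{E}}(\tilde\theta)\ge 0$ forces $\Vert v_{S^{*c}_+}\Vert_1<3\Vert v_{S^*_+}\Vert_1$, so Corollary~\ref{Lem: compatibility condition} applies to $v$ and gives $\Vert v_{S^*_+}\Vert_1\le\sqrt{2s^*_+/c_{\min}}\,\Vert f_{\tilde\theta}-f_{\bar\theta^*}\Vert_{\bar Q}$. I would then split $\Vert f_{\tilde\theta}-f_{\bar\theta^*}\Vert_{\bar Q}\le\Vert f_{\tilde\theta}-f_{\bar\theta_0}\Vert_{\bar Q}+\Vert f_{\bar\theta^*}-f_{\bar\theta_0}\Vert_{\bar Q}$, control the second summand by the margin through $\Vert f_{\bar\theta^*}-f_{\bar\theta_0}\Vert_{\bar Q}^2\le K_n\bar{\mathcal{E}}(\bar\theta^*)$, and the first through the $G_n$--$H_n$ Young inequality together with $G_n(\Vert f_{\tilde\theta}-f_{\bar\theta_0}\Vert_{\bar Q})\le\bar{\mathcal{E}}(\tilde\theta)$, choosing the split parameters so that only a fraction of $\bar{\mathcal{E}}(\tilde\theta)$ and of $\bar{\mathcal{E}}(\bar\theta^*)$ is produced; the constant $4\sqrt2$ in the argument of $H_n$ inside $\epsilon^*$ is exactly what makes the resulting $H_n$-terms recombine. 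Substituting into the last display, transposing the absorbed fraction of $\bar{\mathcal{E}}(\tilde\theta)$, and eliminating $H_n$ and $\bar{\mathcal{E}}(\bar\theta^*)$ in favour of $\epsilon^*$ via $H_n(\cdot)=\epsilon^*-\tfrac32\bar{\mathcal{E}}(\bar\theta^*)$, again yields $\bar{\mathcal{E}}(\tilde\theta)+\bar\lambda\Vert v\Vert_1\le 4\epsilon^*$.

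In either case $\bar\lambda\Vert\tilde\theta-\bar\theta^*\Vert_1\le 4\epsilon^*=4\lambda_0M^*$, so $\bar\lambda\ge 8\lambda_0$ forces $\Vert\tilde\theta-\bar\theta^*\Vert_1\le M^*/2$; since $\Vert\tilde\theta-\bar\theta^*\Vert_1=M^*\Vert\hat{\bar\theta}-\bar\theta^*\Vert_1/(M^*+\Vert\hat{\bar\theta}-\bar\theta^*\Vert_1)$ is increasing in $\Vert\hat{\bar\theta}-\bar\theta^*\Vert_1$ with supremum $M^*$, this gives $\Vert\hat{\bar\theta}-\bar\theta^*\Vert_1\le M^*$. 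Hence $\hat{\bar\theta}$ itself lies in $\{\bar\theta:\Vert\bar\theta-\bar\theta^*\Vert_1\le M^*\}$ and, being the minimizer, satisfies Lemma~\ref{Lem: basic inequality bar} directly, so repeating the whole chain with $\hat{\bar\theta}$ in place of $\tilde\theta$ gives $\bar{\mathcal{E}}(\hat{\bar\theta})+\bar\lambda\Vert\hat{\bar\theta}-\bar\theta^*\Vert_1\le 4\epsilon^*$, which is the claim after translating back to the original coordinates and rewriting $4\epsilon^*=6\mathcal{E}(\theta^*)+4H_n\!\big(\tfrac{4\sqrt{2s^*_+}\bar\lambda}{\sqrt{c_{\min}}}\big)$. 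The main obstacle is the second branch of the dichotomy: besides deriving the cone inequality for $v$, one must tune the two Young splits (for the $f_{\tilde\theta}-f_{\bar\theta_0}$ term and for the approximation-error term) so that the absorbed piece of $\bar{\mathcal{E}}(\tilde\theta)$, the leftover $\bar{\mathcal{E}}(\bar\theta^*)$, and the $H_n$-terms recombine into exactly $4\epsilon^*$; everything else is bookkeeping.
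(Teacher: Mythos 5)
Your proposal is correct and follows essentially the same route as the paper's proof: the convex-combination device with $M^*$, the rescaled basic inequality bounded by $Z_{M^*}\le\epsilon^*$ on $\mathcal{I}$, the triangle-inequality split over $S^*_+$, a two-case argument in which the cone condition unlocks the compatibility condition, the margin/conjugate ($G_n$--$H_n$) absorption with the factor $4\sqrt{2s^*_+}\bar\lambda/\sqrt{c_{\min}}$, and finally rerunning the chain with $\hat{\bar\theta}$ once $\Vert\hat{\bar\theta}-\bar\theta^*\Vert_1\le M^*$ is secured. The only deviation is cosmetic: your dichotomy threshold is $2\bar\lambda\Vert v_{S^*_+}\Vert_1\gtrless\epsilon^*+\bar{\mathcal{E}}(\bar\theta^*)$ (keeping $\bar{\mathcal{E}}(\bar\theta^*)$ explicit) rather than the paper's $\bar\lambda\Vert v_{S^*_+}\Vert_1\gtrless\epsilon^*$, and both yield the same $4\epsilon^*$ bound.
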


\begin{proof}[Proof of Theorem \ref{Thm: Consistency covariates}]
	We assume that we are on the set $\mathcal{I}$ throughout. Set
	\[
	t = \frac{M^*}{M^* + \Vert \hat{\bar{ \theta}} - \bar{ \theta}^* \Vert_1}
	\]
	and $\tilde{\theta} = (\tilde{\beta}^T, \tilde{\mu}, \tilde{\gamma}^T)^T = t \hat{\bar{ \theta}} + (1 - t)\bar{\theta}^*$. Then, 
	\[
	\Vert \tilde{\theta} - \bar{\theta}^* \Vert_1 = t \Vert \hat{\bar{ \theta}} - \bar{\theta}^* \Vert \le M^*.
	\]
	Since $\hat{\bar{ \theta}}, \bar{ \theta}^* \in \bar{\Theta}_{\text{loc}}$ and by the convexity of $\bar{\Theta}_{\text{loc}}$, $\tilde{ \theta} \in \bar{\Theta}_{\text{loc}}$, and by the remark after Lemma \ref{Lem: basic inequality bar}, the basic inequality holds for $\tilde{\theta}$:
	\begin{align*}
	\bar{ \mathcal{E}}(\tilde{ \theta}) + \bar{ \lambda} \Vert \tilde{\beta} \Vert_1 &\le - (\bar{v}_n(\tilde{ \theta}) - \bar{v}_n(\bar{\theta}^*) ) + \bar{ \mathcal{E}}(\bar{ \theta}^*) + \bar{ \lambda} \Vert \bar{\beta}^* \Vert_1 \\
	&\le Z_{M^*} + \bar{ \lambda} \Vert \bar{\beta}^* \Vert_1 + \bar{ \mathcal{E}}(\bar{ \theta}^*) \\
	&\le \epsilon^* + \bar{ \lambda} \Vert \bar{\beta}^* \Vert_1 + \bar{ \mathcal{E}}(\bar{ \theta}^*).
	\end{align*}
	From now on write $\tilde{\mathcal{E}} = \bar{\mathcal{E}}(\tilde{\theta})$ and $\mathcal{E}^* = \bar{ \mathcal{E}}(\bar{ \theta}^*)$.
	Note, that $\Vert \tilde{\beta} \Vert_1 = \Vert \tilde{\beta}_{S^{*c}} \Vert_1 + \Vert \tilde{\beta}_{S^*} \Vert_1$ and thus, by the triangle inequality,
	\begin{align}\label{Eq: 6.29 covariates}
	\begin{split}
	\tilde{\mathcal{E}} + \bar{\lambda} \Vert \tilde{\beta}_{S^{*c}} \Vert_1 &\le \epsilon^* + \bar{ \lambda} (\Vert \bar{\beta}^* \Vert_1 - \Vert \tilde{\beta}_{S^{*}} \Vert_1) + \mathcal{E}^*\\
	&\le \epsilon^* + \bar{ \lambda} (\Vert \bar{\beta}^* - \tilde{\beta}_{S^{*}} \Vert_1) + \mathcal{E}^* \\
	&\le \epsilon^* + \bar{ \lambda} (\Vert \bar{\beta}^* - \tilde{\beta}_{S^{*}} \Vert_1 + \Vert (\mu^*, \gamma^{*T})^T - (\tilde{\mu}, \tilde{\gamma}^T)^T \Vert_1) + \mathcal{E}^* \\
	&= \epsilon^* + \bar{ \lambda} \Vert (\tilde{\theta} - \bar{ \theta}^*)_{S^*_+} \Vert_1 + \mathcal{E}^* \\
	&\le 2\epsilon^* + \bar{ \lambda} \Vert (\tilde{\theta} - \bar{\theta}^*)_{S^*_+} \Vert_1.
	\end{split}
	\end{align}
	Where for the equality we have used that by assumption $\mu^*$ and $\gamma^*$ are active and hence $\Vert (\tilde{\theta} - \bar{\theta}^*)_{S^*_+} \Vert_1 = \Vert \bar{\beta}^* - \tilde{\beta}_{S^*} \Vert_1 + \Vert (\mu^*, \gamma^{*T})^T - (\tilde{\mu}, \tilde{\gamma}^T)^T \Vert_1$.
	\,
	\newline
	\newline
	\textbf{Case i)} If $\bar{ \lambda} \Vert (\tilde{\theta} - \bar{\theta}^*)_{S^*_+} \Vert_1 \ge \epsilon^*$, then
	\begin{equation}\label{Eq: 6.30 covariates}
	\bar{ \lambda} \Vert \tilde{\beta}_{S^{*c}} \Vert_1 \le \tilde{\mathcal{E}} + \bar{ \lambda} \Vert \tilde{\beta}_{S^{*c}} \Vert_1 \le 3\bar{ \lambda} \Vert (\tilde{\theta} - \bar{\theta}^*)_{S^*_+} \Vert_1.
	\end{equation}
	Since $\Vert (\tilde{\theta} - \bar{\theta}^*)_{S^{*c}_+} \Vert_1 = \Vert \tilde{\beta}_{S^{*c}} \Vert_1$, we may thus apply the compatibility condition corollary \ref{Lem: compatibility condition} (note that $\bar{\beta}^* = \bar{\beta}^*_{S^*}$) to obtain
	\[
	\Vert (\tilde{\theta} - \bar{ \theta}^*)_{S^*_+} \Vert_1 \le \frac{\sqrt{2s^*_+}}{\sqrt{c_{\min}}} \Vert f_{\tilde{ \theta}} - f_{\bar{ \theta}^*} \Vert_{\bar{Q}},
	\]
	where we have used that $\theta \mapsto f_\theta$ is linear and hence $f_{\tilde{ \theta} - \bar{\theta}^* } = f_{\tilde{ \theta}} - f_{\bar{ \theta}^*} $
	Observe that
	\begin{equation}\label{Eq: theta norm decomposition}
	\Vert \tilde{\theta} - \theta^*\Vert_1 = \Vert \tilde{\beta}_{S^{*c}} \Vert_1 + \Vert (\tilde{\theta} - \theta^*)_{S^*_+} \Vert_1.
	\end{equation}
	Hence, 
	\begin{align*}
	\tilde{\mathcal{E}} + \bar{ \lambda} \Vert \tilde{\theta} - \bar{ \theta}^* \Vert_1  &= \tilde{\mathcal{E}} + \bar{ \lambda} ( \Vert \tilde{\beta}_{S^{*c}} \Vert_1 + \Vert (\tilde{\theta} - \bar{\theta}^*)_{S^*_+} \Vert_1)\\
	&\le \epsilon^* + 2\bar{ \lambda} \Vert (\tilde{\theta} - \bar{\theta}^*)_{S^*_+} \Vert_1 + \mathcal{E}^*\\
	&\le \epsilon^* + \mathcal{E}^* + 2\bar{ \lambda}\frac{\sqrt{2s^*_+}}{\sqrt{c_{\min}}} \Vert f_{\tilde{\theta}} - f_{\bar{ \theta}^*} \Vert_{\bar{Q}}.
	\end{align*}
	Recall that for a convex function $G$ and its convex conjugate $H$ we have $uv \le G(u) + H(v)$. Since $\tilde{ \theta}, \bar{ \theta}^* \in \bar{ \Theta}_{\text{loc}}$, we obtain
	\begin{align*}
	2\bar{\lambda}\frac{\sqrt{s^*_+}}{\sqrt{c_{\min}}} \Vert f_{\tilde{\theta}} - f_{\bar{ \theta}^*} \Vert_{\bar{Q}} &= 4 \bar{\lambda} \frac{\sqrt{2s^*_+}}{\sqrt{c_{\min}}} \frac{\Vert f_{\tilde{\theta}} - f_{\bar{ \theta}^*} \Vert_{\bar{Q}}}{2} \\
	&\le 4 \bar{\lambda} \frac{\sqrt{2s^*_+}}{\sqrt{c_{\min}}} \frac{\Vert f_{\tilde{ \theta}} - f_{\bar{ \theta}_0} \Vert_{\bar{Q}} + \Vert f_{\bar{ \theta}^*} - f_{\bar{ \theta}_0} \Vert_{\bar{Q}}}{2} \\
	&\le H_n\left(4 \bar{\lambda} \frac{\sqrt{2s^*_+}}{\sqrt{c_{\min}}}\right) + G_n\left( \frac{\Vert f_{\tilde{ \theta}} - f_{\bar{ \theta}_0} \Vert_{\bar{Q}} + \Vert f_{\bar{ \theta}^*} - f_{\bar{ \theta}_0} \Vert_{\bar{Q}}}{2}\right) \\
	&\overset{G_n \text{ convex}}{\le}H_n\left(4 \bar{ \lambda} \frac{\sqrt{2s^*_+}}{\sqrt{c_{\min}}}\right)+ \frac{G_n(\Vert f_{\tilde{\theta}} - f_{\bar{ \theta}_0} \Vert_{\bar{Q}})}{2} + \frac{G_n(\Vert f_{\bar{\theta}^*} - f_{\bar{ \theta}_0} \Vert_{\bar{Q}})}{2}  \\
	&\overset{\text{margin condition}}{\le} H_n\left(4 \bar{ \lambda} \frac{\sqrt{2s^*_+}}{\sqrt{c_{\min}}}\right)+ \frac{\tilde{\mathcal{E}}}{2} + \frac{\mathcal{E}^*}{2}.
	\end{align*}
	It follows
	\begin{equation*}
	\tilde{\mathcal{E}} + \bar{ \lambda} \Vert\tilde{\theta} - \bar{\theta}^*\Vert_1 \le \epsilon^* + \frac{3}{2}\mathcal{E}^*+ H_n\left(4 \bar{\lambda} \frac{\sqrt{2s^*_+}}{\sqrt{c_{\min}}}\right)+ \frac{\tilde{\mathcal{E}}}{2} = 2 \epsilon^* + \frac{\tilde{\mathcal{E}}}{2}
	\end{equation*}
	and therefore
	\begin{equation}\label{Eq: 6.31 covariates}
	\frac{\tilde{\mathcal{E}}}{2} +\bar{\lambda} \Vert\tilde{\theta} - \bar{\theta}^*\Vert_1 \le 2 \epsilon^*.
	\end{equation}
	Finally, this gives
	\[
	\Vert\tilde{\theta} - \bar{\theta}^*\Vert_1 \le \frac{2 \epsilon^*}{\bar{\lambda}} = \frac{2 \lambda_0 M^*}{\bar{\lambda}} \underbrace{\le}_{\bar{ \lambda} \ge 4 \lambda_0} \frac{M^*}{2}.
	\]
	From this, by using the definition of $\tilde{\theta}$, we obtain
	\begin{align*}
	\Vert \tilde{\theta} - \bar{\theta}^* \Vert_1 = t \Vert \hat{\bar{\theta}} - \bar{\theta}^* \Vert_1 = \frac{M^*}{M^* + \Vert \hat{\bar{\theta}} - \bar{\theta}^* \Vert_1} \Vert \hat{\bar{\theta}} - \bar{\theta}^* \Vert_1 \le \frac{M^*}{2}.
	\end{align*}
	Rearranging gives
	\[
	\Vert \hat{\bar{\theta}} - \bar{\theta}^* \Vert_1 \le M^*.
	\]
	\,
	\newline
	\textbf{Case ii)} If $\bar{\lambda} \Vert (\bar{\theta}^* - \tilde{\theta})_{S^*_+} \Vert_1 \le \epsilon^*$, then from (\ref{Eq: 6.29 covariates})
	\[
	\tilde{\mathcal{E}} + \bar{\lambda} \Vert \tilde{\beta}_{S^{*c}} \Vert_1 \le 3\epsilon^*.
	\]
	Using once more (\ref{Eq: theta norm decomposition}), we get
	\begin{equation}\label{Eq: 6.32 covariates}
	\tilde{\mathcal{E}} + \bar{\lambda} \Vert \tilde{\theta} - \bar{\theta}^* \Vert_1 = \tilde{\mathcal{E}} + \bar{\lambda} \Vert \tilde{\beta}_{S^{*c}} \Vert_1 + \bar{\lambda}\Vert (\tilde{\theta} - \bar{\theta}^*)_{S^*_+} \Vert_1 \le 4\epsilon^*.
	\end{equation}
	Thus,
	\[
	\Vert \tilde{\theta} - \bar{\theta}^* \Vert_1 \le 4 \frac{\epsilon^*}{\bar{\lambda}} = 4 \frac{\lambda_0}{\bar{\lambda}}M^* \le \frac{M^*}{2}
	\]
	by choice of $\lambda \ge 8 \lambda_0$. Again, plugging in the definition of $\tilde{\theta}$, we obtain
	\[
	\Vert \hat{\bar{\theta}} - \bar{\theta}^* \Vert_1 \le M^*.
	\]
	Hence, in either case we have $\Vert \hat{\bar{\theta}} - \bar{\theta}^* \Vert_1 \le M^*$. That means, we can repeat the above steps with $\hat{\bar{\theta}}$ instead of $\tilde{ \theta}$: Writing $\hat{\mathcal{E}} \coloneqq \bar{\mathcal{E}}(\hat{\bar{\theta}})$, following the same reasoning as above we arrive once more at (\ref{Eq: 6.29 covariates}):
	\begin{equation}\label{Eq: 2.14}
	\hat{\mathcal{E}} + \bar{\lambda} \Vert \hat{\bar{\beta}}_{S^{*c}} \Vert_1 \le 2\epsilon^* + \bar{\lambda} \Vert \bar{\beta}^* - \hat{\bar{\beta}}_{S^*} \Vert_1 \le 2\epsilon^* + \bar{\lambda}\Vert (\hat{\bar{\theta}} - \bar{\theta}^*)_{S^*_+} \Vert_1.
	\end{equation}
	From this, in \textbf{case i)} we obtain (\ref{Eq: 6.30 covariates}) which allows us to use the compatibility assumption to arrive at (\ref{Eq: 6.31 covariates}):
	\[
	\frac{\hat{\mathcal{E}}}{2} + \bar{\lambda} \Vert \hat{\bar{\theta}} - \bar{\theta}^* \Vert_1 \le 2 \epsilon^*,
	\]
	resulting in
	\[
	\hat{\mathcal{E}} + \bar{\lambda} \Vert \hat{\bar{\theta}} - \bar{\theta}^* \Vert_1 \le 4 \epsilon^*.
	\]
	In \textbf{case ii)} on the other hand, we arrive directly at (\ref{Eq: 6.32 covariates}), and hence
	\[
	\hat{\mathcal{E}} + \bar{\lambda} \Vert \hat{\bar{\theta}} - \bar{\theta}^* \Vert_1 \le 3 \epsilon^*.
	\]
	Plugging in the definitions of $\hat{\bar{\theta}}$ and $\bar{\theta}^*$ and using the fact that $\hat{\mathcal{E}} = \bar{\mathcal{E}}(\hat{\bar{\theta}}) = \mathcal{E}(\hat{\theta})$ proves the claim.
\end{proof}

\subsection{Controlling the special set \texorpdfstring{$\mathcal{I}$}{I}}\label{Sec: Controlling I}

We now show that $\mathcal{I}$ has probability tending to one. Recall some results on concentration inequalities.

\subsubsection{Concentration inequalities}
We first recall some probability inequalities that we will need. This is based on Chapter 14 in \cite{vandegeer2011}. Throughout let $Z_1, \dots, Z_n$ be a sequence of independent random variables in some space $\mathcal{Z}$ and $\mathcal{G}$ be a class of real valued functions on $\mathcal{Z}$.
\begin{Def}
	A \textit{Rademacher sequence} is a sequence $\epsilon_1, \dots, \epsilon_n$ of i.i.d. random variables with $P(\epsilon_i = 1) = P(\epsilon_i = -1) = 1/2$ for all $i$.
\end{Def}
\begin{Satz}[Symmetrization theorem as in \cite{vandervaartwellner1996}, abridged]\label{Thm: symmetrization theorem}
	Let $\epsilon_1, \dots, \epsilon_n$ be a Rademacher sequence independent of $Z_1, \dots, Z_n$. Then
	\[
	\mathbb{E}\left( \sup_{g \in \mathcal{G}} \left| \sum_{ i = 1}^n \{ g(Z_i) - \mathbb{E}[g(Z_i)] \} \right| \right) \le 2 \mathbb{E}\left( \sup_{g \in \mathcal{G}} \left| \sum_{ i = 1}^n \epsilon_i g(Z_i) \right|  \right).
	\]
\end{Satz}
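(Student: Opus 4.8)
The plan is to use the classical \emph{ghost sample} argument (symmetrization by an independent copy). First I would introduce an independent copy $Z_1', \dots, Z_n'$ of the sample $Z_1, \dots, Z_n$, defined on the same probability space and independent of both $(Z_i)_i$ and the Rademacher sequence $(\epsilon_i)_i$. Writing $\mathbb{E}'$ for the conditional expectation given $(Z_i)_i$, we have $\mathbb{E}[g(Z_i)] = \mathbb{E}'[g(Z_i')]$ for each $i$, so that
\[
\sum_{i=1}^n \{ g(Z_i) - \mathbb{E}[g(Z_i)]\} = \mathbb{E}'\!\left[ \sum_{i=1}^n \{ g(Z_i) - g(Z_i')\}\right].
\]
Taking the supremum over $g \in \mathcal{G}$ and absolute values, then applying the conditional Jensen inequality together with the bound $\sup_g |\mathbb{E}'[\,\cdot\,]| \le \mathbb{E}'[\sup_g |\,\cdot\,|]$, and finally taking the outer expectation, yields
\[
\mathbb{E}\left( \sup_{g\in\mathcal{G}} \left| \sum_{i=1}^n \{g(Z_i) - \mathbb{E}[g(Z_i)]\}\right|\right) \le \mathbb{E}\left( \sup_{g\in\mathcal{G}} \left| \sum_{i=1}^n \{g(Z_i) - g(Z_i')\}\right|\right).
\]

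Second, I would exploit the exchangeability of the pairs $(Z_i, Z_i')$. Since $Z_i$ and $Z_i'$ are identically distributed, the difference $g(Z_i) - g(Z_i')$ is symmetric, and the pairs are independent across $i$; hence for any deterministic sign vector $(s_1, \dots, s_n) \in \{-1, +1\}^n$ the process $g \mapsto \sum_i s_i\{g(Z_i) - g(Z_i')\}$ has the same law as $g \mapsto \sum_i \{g(Z_i) - g(Z_i')\}$. In particular the expectation of the supremum of its absolute value is unchanged, and averaging over $(s_i) = (\epsilon_i)$ — which is legitimate because $(\epsilon_i)$ is independent of $(Z_i, Z_i')$ — gives
\[
\mathbb{E}\left( \sup_{g} \left| \sum_{i} \{g(Z_i) - g(Z_i')\}\right|\right) = \mathbb{E}\left( \sup_{g} \left| \sum_{i} \epsilon_i\{g(Z_i) - g(Z_i')\}\right|\right).
\]

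Third, I would split by the triangle inequality, $\sup_g|\sum_i \epsilon_i\{g(Z_i) - g(Z_i')\}| \le \sup_g|\sum_i \epsilon_i g(Z_i)| + \sup_g|\sum_i \epsilon_i g(Z_i')|$. Taking expectations and noting that the two summands are identically distributed (since $(Z_i)_i$ and $(Z_i')_i$ have the same law and both are independent of $(\epsilon_i)$), each contributes $\mathbb{E}(\sup_g|\sum_i \epsilon_i g(Z_i)|)$, which produces the factor $2$ and closes the argument. The only genuine subtlety — and the reason the cited statement is labelled ``abridged'' — is measurability: when $\mathcal{G}$ is uncountable the suprema above need not be measurable, so one should either work with outer expectations throughout or impose a separability/countable-approximation condition on $\mathcal{G}$. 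In our applications $\mathcal{G}$ is effectively finite-dimensional (indexed by $\theta$ ranging over a subset of Euclidean space with continuous dependence), so this point causes no difficulty and I would merely remark on it rather than belabor it.
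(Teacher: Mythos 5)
Your ghost-sample argument is correct: the introduction of an independent copy, the conditional Jensen step, the sign-symmetry of the differences $g(Z_i)-g(Z_i')$ averaged over the Rademacher signs, and the final triangle-inequality split producing the factor $2$ is exactly the classical proof of this symmetrization inequality, and it works for independent (not necessarily identically distributed) $Z_i$ as required, with the measurability caveat correctly flagged. The paper itself does not prove this statement --- it quotes it from \cite{vandervaartwellner1996} --- so your proof simply reproduces the standard argument behind the cited result and is in order.
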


\begin{Satz}[Contraction theorem as in \cite{Ledoux:Talagrand:1991}]\label{Thm: contraction theorem Ledoux Talagrand}
	Let $z_1, \dots, z_n$ be non-random elements of $\mathcal{Z}$ and let $\mathcal{F}$ be a class of real-valued functions on $\mathcal{Z}$. Consider Lipschitz functions $g_i: \R \rightarrow \R$ with Lipschitz constant $L =1$, i.e. for all $i$
	\[
	\vert g_i(s) - g_i(s') \vert \le \vert s - s' \vert, \forall s,s' \in \R.
	\]
	Let $\epsilon_1, \dots, \epsilon_n$ be a Rademacher sequence. Then for any function $f^*: \mathcal{Z} \rightarrow \R$ we have
	\[
	\mathbb{E}\left( \sup_{f \in \mathcal{F}} \left| \sum_{ i = 1}^n \epsilon_i \{ g_i(f(z_i)) - g_i(f^*(z_i)) \} \right| \right) \le 2 \mathbb{E}\left( \sup_{f \in \mathcal{F}} \left| \sum_{ i = 1}^n \epsilon_i \{ f(z_i) - f^*(z_i) \} \right| \right).
	\]
\end{Satz}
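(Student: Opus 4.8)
The plan is to use the classical induction-on-$n$ (``peeling'') argument for Rademacher processes, after first reducing the two-sided supremum to one-sided ones. Write $U(f) \coloneqq \sum_{i=1}^{n} \epsilon_i\{g_i(f(z_i)) - g_i(f^*(z_i))\}$ and $V(f) \coloneqq \sum_{i=1}^{n}\epsilon_i\{f(z_i) - f^*(z_i)\}$. Since every summand of $U$ vanishes at $f = f^*$, we have $U(f^*) = 0$, so $\sup_{f}U(f) \ge 0$ and $\sup_f(-U(f)) \ge 0$, whence
\[
\sup_{f \in \mathcal{F}} |U(f)| = \max\Bigl\{\sup_{f}U(f),\ \sup_{f}(-U(f))\Bigr\} \le \sup_{f}U(f) + \sup_{f}(-U(f)).
\]
Taking expectations, it is enough to bound each of $\mathbb{E}\sup_f U(f)$ and $\mathbb{E}\sup_f(-U(f))$ by $\mathbb{E}\sup_f |V(f)|$; the constant $2$ in the statement is precisely the cost of this split. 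For the second term, note $-U(f) = \sum_i \epsilon_i\{(-g_i)(f(z_i)) - (-g_i)(f^*(z_i))\}$ with each $-g_i$ still $1$-Lipschitz, and that $(\epsilon_i)_{i}$ and $(-\epsilon_i)_{i}$ have the same distribution; hence the bound for $\mathbb{E}\sup_f(-U(f))$ follows from the one for $\mathbb{E}\sup_f U(f)$, and it suffices to treat the latter.

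To bound $\mathbb{E}\sup_f U(f)$ I would prove, by induction on the number of Rademacher variables, the one-sided contraction $\mathbb{E}\sup_f U(f) \le \mathbb{E}\sup_f V(f)$. The inductive step peels off a single variable, say $\epsilon_n$: conditioning on $(\epsilon_1,\dots,\epsilon_{n-1})$, the process takes the form $\sup_f\{\Psi(f) + \epsilon_n(g_n(f(z_n)) - g_n(f^*(z_n)))\}$, where $\Psi(f)$ collects all $\epsilon_n$-free terms and may be an arbitrary function of $f$ (the supremum runs over all of $\mathcal{F}$), and the key estimate is
\[
\mathbb{E}_{\epsilon_n}\sup_{f}\bigl\{\Psi(f) + \epsilon_n\bigl(g_n(f(z_n)) - g_n(f^*(z_n))\bigr)\bigr\} \le \mathbb{E}_{\epsilon_n}\sup_{f}\bigl\{\Psi(f) + \epsilon_n\bigl(f(z_n) - f^*(z_n)\bigr)\bigr\}.
\]
Since this holds pointwise in $(\epsilon_1,\dots,\epsilon_{n-1})$, averaging over them and iterating over all $n$ coordinates replaces each $g_i$ by the identity and yields $\mathbb{E}\sup_f U(f) \le \mathbb{E}\sup_f V(f) \le \mathbb{E}\sup_f|V(f)|$.

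The real content is the one-variable estimate displayed above. I would establish it by direct computation: averaging over $\epsilon_n \in \{\pm 1\}$ turns its left-hand side into $\tfrac12\bigl[\sup_f(\Psi(f) + g_n(f(z_n))) + \sup_f(\Psi(f) - g_n(f(z_n)))\bigr]$ (the constant $g_n(f^*(z_n))$ cancels), and picking points $f_1, f_2$ attaining the two suprema (or $\delta$-maximizers, with $\delta \downarrow 0$, if they are not attained) reduces the inequality to $g_n(f_1(z_n)) - g_n(f_2(z_n)) \le |f_1(z_n) - f_2(z_n)|$, which is exactly the Lipschitz hypothesis; a two-line case split on the sign of $f_1(z_n) - f_2(z_n)$ then matches the estimate with the right-hand side $\tfrac12\bigl[\sup_f(\Psi(f) + f(z_n)) + \sup_f(\Psi(f) - f(z_n))\bigr]$. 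The only step that requires genuine care is the bookkeeping in this case analysis together with the $\sup|\cdot|$-splitting that produces the factor $2$; there is no deeper obstacle, as the result is classical. I would cite \cite{Ledoux:Talagrand:1991} and \cite{vandegeer2011}, where this argument appears in the form used here.
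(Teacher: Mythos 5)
The paper itself offers no proof of this statement: it is quoted as a known auxiliary result and attributed directly to \cite{Ledoux:Talagrand:1991}, so there is nothing internal to compare your argument against. What you give is the standard textbook proof of the contraction principle — a one-sided contraction established by peeling off one Rademacher variable at a time, with the single-variable step reduced via (near-)maximizers and a sign case-split to the Lipschitz hypothesis, and the factor $2$ produced by the split $\sup_f\vert U(f)\vert\le\sup_f U(f)+\sup_f(-U(f))$ — and the core of it is correct: the cancellation of the constants $g_n(f^*(z_n))$ and $f^*(z_n)$ under the average over $\epsilon_n=\pm1$, the case analysis, the pointwise-in-the-other-$\epsilon_i$ iteration, and the reduction of $\sup_f(-U(f))$ to the one-sided bound via $-g_i$ all check out.

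One genuine (though easily repaired) gap: the theorem allows an arbitrary $f^*:\mathcal{Z}\rightarrow\R$, not necessarily a member of $\mathcal{F}$, whereas your justification of the splitting uses $U(f^*)=0$ to conclude $\sup_{f\in\mathcal{F}}U(f)\ge 0$ and $\sup_{f\in\mathcal{F}}(-U(f))\ge 0$; this presupposes $f^*\in\mathcal{F}$, and without nonnegativity the pathwise inequality $\max\{a,b\}\le a+b$ can fail. The fix is immediate: run the entire argument over the enlarged class $\mathcal{F}\cup\{f^*\}$. This only increases the left-hand side, makes both one-sided suprema nonnegative as you claim, and leaves the right-hand side unchanged, since $V(f^*)=0$ and $\sup_{f\in\mathcal{F}}\vert V(f)\vert\ge 0$ in any case. (In the only place the paper invokes the theorem, the reference function $f_{\bar\theta^*}$ does lie in the class $\mathcal{F}_M$, so the application is unaffected either way.) Apart from this, and the routine caveat that the $\delta$-maximizer device should also cover suprema that are infinite, your proof is fine and is essentially the argument in \cite{Ledoux:Talagrand:1991}.
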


The last theorem we need is a concentration inequality due to \cite{Bousquet:2002}. We give a version as presented in \cite{vandegeer2008}.

\begin{Satz}[Bousequet's concentration theorem]\label{Thm: concentration theorem Bousquet}
	Suppose $Z_1, \dots, Z_n$ and all $g \in \mathcal{G}$ satisfy the following conditions for some real valued constants $\eta_n$ and $\tau_n$
	\[
	\Vert g \Vert_{\infty} \le \eta_n, \; \forall g \in \mathcal{G}
	\]
	and
	\[
	\frac{1}{n} \sum_{ i = 1}^n \textnormal{Var}(g(Z_i)) \le \tau_n^2, \; \forall g \in \mathcal{G}.
	\]
	Define
	\[
	\textbf{Z} \coloneqq \sup_{g \in \mathcal{G}} \left| \frac{1}{n}\sum_{ i = 1}^n g(Z_i) - \mathbb{E}[g(Z_i)]  \right|.
	\]
	Then for any $z>0$
	\[
	P\left(\textbf{Z} \ge \mathbb{E}[\textbf{Z}] + z \sqrt{2(\tau_n^2 + 2 \eta_n\mathbb{E}[\textbf{Z}])} + \frac{2z^2\eta_n}{3} \right) \le \exp(-nz^2).
	\]
\end{Satz}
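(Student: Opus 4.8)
The statement is a form of Talagrand's concentration inequality for suprema of empirical processes in the sharpened version due to \cite{Bousquet:2002}; in a statistics paper of this kind one normally just invokes it as stated in \cite{vandegeer2008}, and that is the path I expect the authors to take. Were I to prove it, the plan is to use the entropy method. First I would put the statement in canonical form: replacing each $g$ by $g-\mathbb{E}[g(Z_i)]$ and enlarging $\mathcal{G}$ to $\mathcal{G}\cup(-\mathcal{G})$ removes the absolute value and lets me assume $\mathbb{E}[g(Z_i)]=0$ for all $g,i$; rescaling by $n$ then reduces the claim to a tail bound for $S\coloneqq\sup_{g\in\mathcal{G}}\sum_{i=1}^n g(Z_i)$ under $\|g\|_\infty\le\eta_n$ and $\sum_{i=1}^n\mathbb{E}[g(Z_i)^2]\le n\tau_n^2$. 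Let $\hat g$ be a measurable maximizer attaining $S$ and let $S^{(i)}$ be the analogous supremum with the $i$-th summand deleted, so that $0\le S-S^{(i)}\le\hat g(Z_i)\le\eta_n$ pointwise.

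The core of the argument is a differential inequality for the centered log-moment generating function $H(\lambda)\coloneqq\log\mathbb{E}[e^{\lambda(S-\mathbb{E}S)}]$, $\lambda>0$. Introducing the entropy functional $\mathrm{Ent}(Y)=\mathbb{E}[Y\log Y]-\mathbb{E}[Y]\log\mathbb{E}[Y]$ and the identity $\mathrm{Ent}(e^{\lambda S})=(\lambda H'(\lambda)-H(\lambda))\,\mathbb{E}[e^{\lambda S}]$, the sub-additivity (tensorization) of entropy gives $\mathrm{Ent}(e^{\lambda S})\le\sum_{i=1}^n\mathbb{E}[\mathrm{Ent}_i(e^{\lambda S})]$, where $\mathrm{Ent}_i$ is the entropy in the $i$-th coordinate conditional on the others. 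Each conditional entropy is controlled by a modified logarithmic Sobolev inequality that exploits the supremum structure together with $S-S^{(i)}\le\hat g(Z_i)$; summing over $i$ and bounding the resulting local-variance term leads to a Bennett-type inequality $\lambda H'(\lambda)-H(\lambda)\le\psi(\eta_n\lambda)\,v$, where $\psi$ is an explicit function vanishing to second order at $0$ and $v=n\tau_n^2+2\eta_n\,\mathbb{E}[S]$ is the \emph{Bousquet variance}. Integrating this ODE from $0$ yields $H(\lambda)\le\frac{v}{\eta_n^2}\bigl(e^{\eta_n\lambda}-1-\eta_n\lambda\bigr)$.

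I would then close with a Chernoff bound, $P(S-\mathbb{E}S\ge u)\le\exp(-\sup_{\lambda>0}(\lambda u-H(\lambda)))$, whose right-hand side is governed by the Legendre transform of the Bennett function; relaxing the resulting Bennett tail with the elementary estimate $h^{-1}(s)\le\sqrt{2s}+s/3$ for $h(x)=(1+x)\log(1+x)-x$ converts it into the sub-gamma form $P(S\ge\mathbb{E}S+\sqrt{2vs}+\tfrac{1}{3}\eta_n s)\le e^{-s}$. Substituting $v=n\tau_n^2+2\eta_n\mathbb{E}[S]$, $S=n\textbf{Z}$, $\mathbb{E}[S]=n\mathbb{E}[\textbf{Z}]$, undoing the $1/n$ rescaling, and finally taking the free parameter equal to $nz^2$ reproduces the displayed inequality $P(\textbf{Z}\ge\mathbb{E}[\textbf{Z}]+z\sqrt{2(\tau_n^2+2\eta_n\mathbb{E}[\textbf{Z}])}+\tfrac{2}{3}z^2\eta_n)\le e^{-nz^2}$, up to the standard bookkeeping of constants incurred by the centering reduction.

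The main obstacle is the sharp constant in the modified log-Sobolev step: a naive Efron–Stein or bounded-differences argument only delivers a variance proxy of order $n\tau_n^2+\eta_n^2 n$ (or even $\eta_n^2 n$ alone), whereas the stated result needs the strictly smaller, self-referential proxy $n\tau_n^2+2\eta_n\mathbb{E}[\textbf{Z}]$. Securing it requires bounding $\sum_i\mathbb{E}\bigl[(S-S^{(i)})^2\mid (Z_j)_{j\neq i}\bigr]$ by $n\tau_n^2$ up to a residual of order $\eta_n\mathbb{E}[S]$ coming from the deviation of $\hat g$ across coordinates — precisely the delicate accounting carried out in \cite{Bousquet:2002}. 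For the purposes of the present paper it suffices to cite the theorem as stated in \cite{vandegeer2008}.
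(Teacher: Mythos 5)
Your proposal is correct in substance, but it takes a different (and much heavier) route than the paper, which offers no proof at all: the paper simply states the result and, in the remark following it, shows how it is obtained from Theorem 2.1 of \cite{Bousquet:2002} by a change of variables --- setting $f(Z_i)=(g(Z_i)-\mathbb{E}[g(Z_i)])/(2\eta_n)$, $u=1$ and $\sigma^2=\tau_n^2/(4\eta_n^2)$ --- exactly the ``bookkeeping of the centering reduction'' you allude to (the division by $2\eta_n$ rather than $\eta_n$ is how the paper absorbs the fact that centering can double the sup-norm bound). Your sketch of a from-scratch proof is a faithful outline of Bousquet's actual argument: tensorization of entropy, a modified logarithmic Sobolev inequality exploiting the supremum structure, the Bennett-type differential inequality for the log-moment generating function with variance proxy $v=n\tau_n^2+2\eta_n\mathbb{E}[S]$, and the relaxation $h^{-1}(s)\le\sqrt{2s}+s/3$ to reach the sub-gamma tail; you also correctly identify the genuinely delicate step, namely obtaining the self-referential proxy $n\tau_n^2+2\eta_n\mathbb{E}[\mathbf{Z}]$ rather than the cruder $n\tau_n^2+Cn\eta_n^2$ that bounded-differences arguments give. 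Two small caveats: the pointwise bound $0\le S-S^{(i)}$ you assert for the leave-one-out suprema need not hold when the $g$ may take negative values (only $S-S^{(i)}\le\hat g(Z_i)$ and $S^{(i)}-S\le\eta_n$ are available, which is what the standard proof actually uses), and of course none of this machinery is needed for the paper's purposes --- as you yourself note, citing \cite{Bousquet:2002} (or the version in \cite{vandegeer2008}) and checking the notational translation, as the paper's remark does, is the appropriate level of detail here.
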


\begin{Rem}
	Looking at the original paper of \cite{Bousquet:2002}, their result looks quite different at first. To see that the above falls into their framework, set the variables in \cite{Bousquet:2002} as follows
	\begin{align*}
	f(Z_i) &= (g(Z_i) - \mathbb{E}[g(Z_i)])/(2\eta_n), &\tilde{Z}_k = \sup_f \vert \sum_{ i \neq k} f(Z_i) \vert, \\
	f_k &= \arg\sup_f \vert \sum_{ i \neq k} f(Z_i) \vert, &\tilde{Z}_k' = \vert \sum_{ i = 1}^n f_k(Z_i) \vert - \tilde{Z}_k \\
	\tilde{Z} &= \frac{2\eta_n}{n}\textbf{Z}.
	\end{align*}
	Now apply Theorem 2.1 in \cite{Bousquet:2002}, choosing for their $(Z, Z_1, \dots, Z_n)$ the above defined $(\tilde{Z}, \tilde{Z}_1, \dots, \tilde{Z}_n)$, for their $(Z_1', \dots, Z_n')$ the above defined $(\tilde{Z}_1', \dots, \tilde{Z}_n')$ and setting $u = 1$ and $\sigma^2 = \frac{\tau_n^2}{4\eta_n^2}$ in their theorem: The result is exactly Theorem \ref{Thm: concentration theorem Bousquet} above.
\end{Rem}
Finally we have a Lemma derived from Hoeffding's inequality. The proof can be found in \cite{vandegeer2011}, Lemma 14.14 (here we use the special case of their Lemma for $m=1$).
\begin{Lem}\label{Lem: 14.14}
	Let $\mathcal{G} = \{g_1, \dots, g_p \}$ be a set of real valued functions on $\mathcal{Z}$ satisfying for all $i= 1, \dots, n$ and all $j=1, \dots, p$
	\[
	\mathbb{E}[g_j(Z_i)] = 0, \; \vert g_j(Z_i) \vert \le c_{ij}
	\]
	for some positive constants $c_{ij}$.
	Then
	\[
	\mathbb{E}\left[ \max_{1 \le j \le p} \left| \sum_{ i = 1}^n g_j(Z_i) \right| \right] \le \left[ 2\log(2p)\right]^{1/2} \max_{1 \le j \le p} \left[ \sum_{ i = 1}^n c_{ij}^2   \right]^{1/2}.
	\]
\end{Lem}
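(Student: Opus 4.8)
The plan is to prove this via the classical exponential-moment (Cramér--Chernoff) method for maximal inequalities, which is how it appears as Lemma~14.14 in \cite{vandegeer2011}. Fix $t>0$, to be optimized at the end, and abbreviate $S_j = \sum_{i=1}^n g_j(Z_i)$, $\sigma_j^2 = \sum_{i=1}^n c_{ij}^2$, and $\sigma^2 = \max_{1\le j \le p}\sigma_j^2$, so that the claimed bound reads $\mathbb{E}[\max_j |S_j|] \le \sigma\sqrt{2\log(2p)}$.

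The first step is to control the moment generating function of each $S_j$. Since $\mathbb{E}[g_j(Z_i)]=0$ and $g_j(Z_i)$ takes values in the interval $[-c_{ij},c_{ij}]$ of length $2c_{ij}$, Hoeffding's lemma gives $\mathbb{E}[\exp(t\,g_j(Z_i))] \le \exp(t^2 c_{ij}^2/2)$. By independence of $Z_1,\dots,Z_n$, multiplying over $i$ yields $\mathbb{E}[\exp(tS_j)] \le \exp(t^2\sigma_j^2/2) \le \exp(t^2\sigma^2/2)$, and the identical bound holds with $S_j$ replaced by $-S_j$; hence $\mathbb{E}[\exp(t|S_j|)] \le \mathbb{E}[\exp(tS_j)] + \mathbb{E}[\exp(-tS_j)] \le 2\exp(t^2\sigma^2/2)$. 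The second step passes from the individual bounds to the maximum: by Jensen's inequality applied to the convex map $x\mapsto e^{tx}$, followed by a union bound on the exponentials, $\exp\!\big(t\,\mathbb{E}[\max_j|S_j|]\big) \le \mathbb{E}\big[\max_j \exp(t|S_j|)\big] \le \sum_{j=1}^p \mathbb{E}[\exp(t|S_j|)] \le 2p\exp(t^2\sigma^2/2)$. Taking logarithms and dividing by $t$ gives $\mathbb{E}[\max_j|S_j|] \le \log(2p)/t + t\sigma^2/2$. The final step is to optimize in $t$: the choice $t = \sqrt{2\log(2p)}/\sigma$ makes the right-hand side equal to $\sigma\sqrt{2\log(2p)} = \sqrt{2\log(2p)}\,\max_{1\le j\le p}\big(\sum_{i=1}^n c_{ij}^2\big)^{1/2}$, which is exactly the assertion.

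The only genuinely non-routine ingredient is Hoeffding's lemma itself, i.e.\ the sub-Gaussian MGF estimate $\mathbb{E}[\exp(tU)] \le \exp(t^2(b-a)^2/8)$ for a mean-zero $U\in[a,b]$; once that is granted, the rest is Jensen's inequality, a union bound, and a one-line optimization. So I would expect the write-up to simply cite Hoeffding's lemma (or \cite{vandegeer2011}, Lemma~14.14 directly, as the excerpt does) rather than reprove it. If a self-contained argument for $|g_j(Z_i)|\le c_{ij}$ rather than a two-point support is desired, note that this symmetric boundedness is a special case of $U\in[a,b]$ with $a=-c_{ij}$, $b=c_{ij}$, so no extra work is needed.
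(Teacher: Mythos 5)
Your proof is correct, and since the paper gives no argument of its own for this lemma---it simply cites \cite{vandegeer2011}, Lemma 14.14---your write-up supplies exactly the standard proof of that cited result: Hoeffding's lemma for the moment generating function, independence to multiply over $i$, Jensen plus a union bound over the $2p$ exponentials $e^{\pm t S_j}$, and the optimization $t=\sqrt{2\log(2p)}/\sigma$, all of which check out (including the constant, since $g_j(Z_i)\in[-c_{ij},c_{ij}]$ gives $\exp(t^2c_{ij}^2/2)$). In substance this is the same route as the reference the paper relies on, so there is nothing to add.
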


\subsubsection{The expectation of \texorpdfstring{$Z_M$}{Z\_M}}

Recall the definition of $Z_M$ 
\[
Z_M \coloneqq \sup_{\substack{\bar{\theta} \in \bar{\Theta}_{\text{loc}}, \\  \Vert \bar{\theta} - \bar{\theta}^* \Vert_1 \le M}} \vert \bar{v}_n(\bar{\theta}) - \bar{v}_n(\bar{\theta}^*) \vert,
\]
where $\bar{v}_n$ denotes the re-parametrized empirical process. 
Recall, that there is a constant $c \in \R$ such that uniformly $\vert Z_{ij,k} \vert \le c, 1 \le i < j \le n, k = 1, \dots, p$.

\begin{Lem}\label{Lem: 14.20}\label{Lem: 14.20 covariates} For any $M > 0$ we have in \sbmc
	\[
	\mathbb{E}[Z_M] \le 8 M(1 \vee c) \sqrt{\frac{2\log(2(n+p+1))}{\binom{n}{2}}}
	\]
	 and in the S$\beta$M without covariates
	\[
	\mathbb{E}[Z_M] \le 8 M \sqrt{\frac{\log(2(n+1))}{\binom{n}{2}}}.
	\]
\end{Lem}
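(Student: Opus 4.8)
The plan is a standard empirical-process argument tailored to the rescaled problem: symmetrization, then contraction to linearise the logistic loss, then a Hoeffding-type maximal inequality, the only substantive computation being the column norms of the rescaled design matrix $\bar D$. Throughout I use the correspondence $\bar v_n(\bar\theta)=v_n(\theta)=(P_n-P)l_\theta$ with $l_\theta(A_{ij},Z_{ij})=-A_{ij}(D_{ij}^T\theta)+\log(1+e^{D_{ij}^T\theta})$ and $D_{ij}^T\theta=\bar D_{ij}^T\bar\theta$, where $\bar\theta$ is the rescaling of $\theta$. Since the $\binom n2$ pairs $(A_{ij},Z_{ij})$, $i<j$, are independent, $Z_M$ is the supremum of an empirical process over the class $\{l_\theta-l_{\theta^*}:\bar\theta\in\bar\Theta_{\text{loc}},\ \Vert\bar\theta-\bar\theta^*\Vert_1\le M\}$, so the Symmetrization Theorem \ref{Thm: symmetrization theorem} (with sample size $\binom n2$) gives
\[
\mathbb{E}[Z_M]\le\frac{2}{\binom n2}\,\mathbb{E}\left[\sup_{\bar\theta}\left\vert\sum_{i<j}\epsilon_{ij}\left(l_\theta(A_{ij},Z_{ij})-l_{\theta^*}(A_{ij},Z_{ij})\right)\right\vert\right],
\]
where $(\epsilon_{ij})_{i<j}$ is a Rademacher sequence independent of the data.

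Next I would linearise via contraction. Put $g_{ij}(u)=-A_{ij}u+\log(1+e^u)$, so that $l_\theta(A_{ij},Z_{ij})=g_{ij}(\bar D_{ij}^T\bar\theta)=g_{ij}(f_{\bar\theta}(\bar D_{ij}))$; since $g_{ij}'(u)=\tfrac{e^u}{1+e^u}-A_{ij}\in[-1,1]$, each $g_{ij}$ is $1$-Lipschitz. Conditioning on $\{(A_{ij},Z_{ij})\}_{i<j}$ and applying the Contraction Theorem \ref{Thm: contraction theorem Ledoux Talagrand} with $f^*=f_{\bar\theta^*}$ removes the $g_{ij}$ at the cost of a further factor $2$, leaving a linear process which Hölder's inequality reduces to a maximum over the $n+1+p$ columns of $\bar D$:
\[
\mathbb{E}[Z_M]\le\frac{4}{\binom n2}\,\mathbb{E}\left[\sup_{\Vert\bar\theta-\bar\theta^*\Vert_1\le M}\left\vert\sum_{i<j}\epsilon_{ij}\,\bar D_{ij}^T(\bar\theta-\bar\theta^*)\right\vert\right]\le\frac{4M}{\binom n2}\,\mathbb{E}\left[\max_{1\le k\le n+1+p}\left\vert\sum_{i<j}\epsilon_{ij}\bar D_{ij,k}\right\vert\right].
\]

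It then remains to bound the maximum of the $n+1+p$ Rademacher sums, which I would do with Lemma \ref{Lem: 14.14} applied conditionally on $Z$: the summands $\epsilon_{ij}\bar D_{ij,k}$ have mean zero and satisfy $|\epsilon_{ij}\bar D_{ij,k}|\le c_{ij,k}:=|\bar D_{ij,k}|$. The key computation is the column norms. Each of the $n$ columns attached to $\bar\beta$ has exactly $n-1$ nonzero entries of magnitude $\sqrt{n/2}$, so $\sum_{i<j}\bar D_{ij,k}^2=(n-1)\cdot\tfrac n2=\binom n2$; the intercept column has $\sum_{i<j}\bar D_{ij,k}^2=\binom n2$; and each of the $p$ columns of $Z$ satisfies $\sum_{i<j}\bar D_{ij,k}^2\le c^2\binom n2$ by $|Z_{ij,k}|\le c$. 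Hence $\max_k(\sum_{i<j}c_{ij,k}^2)^{1/2}\le(1\vee c)\sqrt{\binom n2}$, a deterministic bound, so Lemma \ref{Lem: 14.14} (and then the harmless outer expectation over $Z$) gives $\mathbb{E}[\max_k|\sum_{i<j}\epsilon_{ij}\bar D_{ij,k}|]\le(1\vee c)\sqrt{2\log(2(n+1+p))}\sqrt{\binom n2}$. Substituting,
\[
\mathbb{E}[Z_M]\le 4M(1\vee c)\sqrt{\frac{2\log(2(n+p+1))}{\binom n2}}\le 8M(1\vee c)\sqrt{\frac{2\log(2(n+p+1))}{\binom n2}},
\]
which is the claim (the factor $8$ is a loose round constant). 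The no-covariate case is identical upon setting $p=0$ and dropping every $c$: only the $n$ columns of $\bar X$ and the intercept survive, each of squared norm $\binom n2$, and $4\sqrt2\le 8$ absorbs the constant, yielding $\mathbb{E}[Z_M]\le 8M\sqrt{\log(2(n+1))/\binom n2}$.

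No step here is genuinely deep. The one deserving attention is the column-norm bookkeeping, since it is precisely there that the rescaling pays off: the blow-up factor $\sqrt{n/2}$ on the $\bar\beta$-columns exactly compensates for those columns having only $n-1$ nonzero entries, so that after rescaling all $n+1+p$ columns have $\ell_2$-norm comparable to $\sqrt{\binom n2}$ and a single maximal inequality suffices; without the rescaling the $\beta$- and $\gamma$-parts would enter at incompatible scales. The remaining care points are verifying that $g_{ij}$ has Lipschitz constant exactly $1$ (so contraction introduces no hidden constant) and applying Lemma \ref{Lem: 14.14} conditionally on $Z$, the design matrix being random only through $Z$.
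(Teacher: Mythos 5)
Your proof is correct and follows essentially the same route as the paper's: symmetrization, the Ledoux--Talagrand contraction applied conditionally on the data, a H\"older reduction to the maximum over the columns of $\bar D$, and Lemma \ref{Lem: 14.14} with the same column-norm bookkeeping. The only genuine difference is that you note $u\mapsto -A_{ij}u+\log(1+e^u)$ is itself $1$-Lipschitz (its derivative lies in $(-1,1)$), which yields the sharper constant $4$ (and $4\sqrt{2}$ without covariates) that you then relax to the stated $8$, whereas the paper pays an extra factor $2$ by splitting the loss into two Lipschitz pieces.
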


\begin{proof}
	We only give the proof for the S$\beta$RM. The proof for the case without covariates is exactly the same with the corresponding parts set to zero.
	Let $\epsilon_{ij}, i < j,$ be a Rademacher sequence independent of $A_{ij}, Z_{ij}, i < j$. We first want to use the symmetrization Theorem \ref{Thm: symmetrization theorem}: For the random variables $Z_1, ..., $ we choose $T_{ij} = (A_{ij}, \bar{X}_{ij}^T,1, Z_{ij}^T)^T \in \{0,1\} \times \R^{n+1+p}$. 
	For any $\bar{\theta} \in \bar{\Theta}_{\text{loc}}$ we consider the functions
	\[
		g_{\bar{\theta}}(T_{ij}) = \frac{1}{\binom{n}{2}} \left\{-A_{ij} \bar D_{ij}^T(\bar \theta - \bar \theta^*) + \log(1 + \exp(\bar D_{ij}^T\bar \theta)) - \log(1 + \exp(\bar D_{ij}^T \bar \theta^*)) \right\}
	\]
	and the function set $\mathcal{G} = \mathcal{G}(M) \coloneqq \{ g_{\bar{\theta}} : \bar{\theta} \in \bar{ \Theta}_{\text{loc}}, \Vert \bar{\theta} - \bar{\theta}^* \Vert_1 \le M \}$.
	Note, that
	\[
	\bar{v}_n(\bar{\theta}) - \bar{v}_n(\bar{\theta}^*) = \sum_{ i < j} \{g_{\bar{\theta}}(T_{ij}) - \mathbb{E}[g_{\bar{\theta}}(T_{ij}) ]\}.
	\]	
	Then, the symmetrization theorem gives us
	\begin{align*}
	\E[Z_M ] &= \mathbb{E}\left[ \sup_{g_{\bar{\theta}} \in \mathcal{G}} \left\vert \sum_{ i < j} g_{\bar{\theta}}(T_{ij}) - \mathbb{E}[g_{\bar{\theta}}(T_{ij}) ] \right\vert  \right] \\
	&\le 2 \mathbb{E}\left[ \sup_{g_{\bar{\theta}} \in \mathcal{G}} \left\vert \sum_{ i < j} \epsilon_{ij} g_{\bar{\theta}}(T_{ij}) \right\vert   \right].
	\end{align*}
	Next, we want to apply Theorem \ref{Thm: contraction theorem Ledoux Talagrand}. Denote $T = (T_{ij})_{i<j}$ and let $\mathbb{E}_{T}$ be the conditional expectation given $T$. We need the conditional expectation at this point, because Theorem \ref{Thm: contraction theorem Ledoux Talagrand} requires non-random arguments in the functions. This does not hinder us, as later we will simply take iterated expectations, canceling out the conditional expectation, see below. For the functions $g_i$ in Theorem \ref{Thm: contraction theorem Ledoux Talagrand} we choose
	\[
		g_{ij}(x) = \frac{1}{2} \{-A_{ij}x + \log(1 + \exp(x))\}
	\]
	Note, that $ \log(1 + \exp(x))$ has derivative bounded by one and thus is Lipschitz continuous with constant one by the Mean Value Theorem. Thus, all $g_{ij}$ are also Lipschitz continuous with constant $1$:
	\[
	\vert g_{ij}(x) - g_{ij}(x') \vert \le \frac{1}{2} \{ \vert A_{ij} (x - x') \vert  + \vert  \log(1 + \exp(x)) -  \log(1 + \exp(x')) \vert \} \le \vert x - x' \vert.
	\]
	For the function class $\mathcal{F}$ in Theorem $\ref{Thm: contraction theorem Ledoux Talagrand}$ we choose $\mathcal{F} = \mathcal{F}_M \coloneqq \{ f_{\bar{\theta}} : \bar{\theta} \in \bar{\Theta}_{\text{loc}}, \Vert \bar{\theta} - \bar{\theta}^* \Vert_1 \le M \}$ and pick $f^* = f_{\bar{\theta}^*}$. Then, by Theorem \ref{Thm: contraction theorem Ledoux Talagrand}
	\begin{align*}
	\mathbb{E}_T &\left[ \sup_{\substack{\bar{\theta} \in \bar{\Theta}_{\text{loc}}, \\ \Vert \bar{\theta} -\bar{\theta}^* \Vert_1 \le M}} \left\vert \frac{1}{\binom{n}{2}} \sum_{ i < j} \epsilon_{ij}(g_{ij}(f_{\bar{\theta}}((\bar{X}_{ij}^T, 1, Z_{ij}^T)^T) )- g_{ij}(f_{\bar{\theta}^*}((\bar{X}_{ij}^T, 1, Z_{ij}^T)^T))) \right\vert  \right] \\
	&\le 2 \mathbb{E}_T \left[ \sup_{\substack{\bar{\theta} \in \bar{\Theta}_{\text{loc}}, \\ \Vert \bar{\theta} -\bar{\theta}^* \Vert_1 \le M}} \left\vert \frac{1}{\binom{n}{2}} \sum_{ i < j} \epsilon_{ij}(f_{\bar{\theta}}((\bar{X}_{ij}^T, 1, Z_{ij}^T)^T) - f_{\bar{\theta}^*}((\bar{X}_{ij}^T, 1, Z_{ij}^T)^T)) \right\vert  \right].
	\end{align*}
	Recall that we can express the functions $f_{\bar{\theta}} = f_{\bar{\beta}, \mu, \gamma}$ as
	\[
	f_{\bar{\beta}, \mu, \gamma} (\, . \,)= \mu e_{n+1}(\, . \,) + \sum_{i = 1}^n \bar{\beta}_i e_i(\, . \,) + \sum_{ i = 1}^p \gamma_ie_{n+1+i}(\,.\,),
	\]
	where $e_i(\,.\,)$ is the projection on the $i$th coordinate. Consider any $\bar{\theta} = (\bar{\beta}^T, \mu, \gamma^T)^T \in \bar{\Theta}_{\text{loc}}$ with $\Vert \bar{\theta} - \bar{\theta}^* \Vert_1 \le M$. For the sake of a compact representation we use our shorthand notation $\bar{ \theta} = (\bar{\theta}_i)_{i=1}^{n+1+p}$ where the components $\theta_i$ are defined in the canonical way and we also simply write $e_k(X_{ij}, 1, Z_{ij})$ for the projection of the 
	the vector $(X_{ij}^T, 1, Z_{ij}^T)^T \in \R^{n+p+1}$ to its $k$th component, i.e. instead of $e_k((X_{ij}^T, 1, Z_{ij}^T)^T)$. Then,
	\begin{align*}
	&\left\vert \frac{1}{\binom{n}{2}} \sum_{ i < j} \epsilon_{ij}(f_{\bar{\theta}}((\bar{X}_{ij}^T, 1, Z_{ij}^T)^T) - f_{\bar{\theta}^*}((\bar{X}_{ij}^T, 1, Z_{ij}^T)^T)) \right\vert \\
	&= \left\vert \frac{1}{\binom{n}{2}} \sum_{ i < j} \epsilon_{ij} \left( \sum_{k = 1}^{n+p+1} (\bar{\theta}_k - \bar{\theta}_k^*) e_k(\bar{X}_{ij}, 1, Z_{ij})\right)\right\vert \\
	&\le \frac{1}{\binom{n}{2}} \sum_{k = 1}^{n+p+1} \left\lbrace \vert \bar{\theta}_k - \bar{\theta}_k^*\vert
	\max_{1 \le l \le n+p+1} \left| \sum_{ i < j} \epsilon_{ij} e_l(\bar{X}_{ij}, 1, Z_{ij}) \right| \right\rbrace \\
	&\le M \max_{1 \le l \le n+p+1} \left|\frac{1}{\binom{n}{2}} \sum_{ i < j} \epsilon_{ij} e_l(\bar{X}_{ij}, 1, Z_{ij}) \right|.
	\end{align*}
	Note, that the last expression no longer depends on $\bar{\theta}$. To bind the right hand side in the last expression we use Lemma \ref{Lem: 14.14}: In the language of the Lemma, choose $Z_1, \dots, Z_n$ as $T_{ij} = (\epsilon_{ij}, \bar{X}_{ij}^T, 1, Z_{ij}^T)^T$. We choose for the $p$ in the formulation of the Lemma $n+p+1$ and pick for our functions
	\[
	g_{k}(T_{ij}) = \frac{1}{\binom{n}{2}}\epsilon_{ij}e_k(\bar{X}_{ij}, 1, Z_{ij}), k=1,\dots, n+p+1.
	\]
	Note, that then $\mathbb{E}[g_{k}(T_{ij})] = 0$. We want to employ Lemma \ref{Lem: 14.14} which requires us to bound $\vert g_k(T_{ij}) \vert \le c_{ij,k}$ for all $i < j$ and $k = 1, \dots, n+1+p$.
	
	For any fixed $1 \le k \le n$ we have
	\[
	\vert g_k (T_{ij}) \vert \le 
	\begin{cases}
	\frac{\sqrt{n}}{\sqrt{2}\binom{n}{2}} = \frac{\sqrt{2}}{(n-1)\sqrt{n}}, & i \text{ or } j = k \\
	0, & \text{otherwise}.
	\end{cases}
	\]
	Note that the first case occurs exactly $(n-1)$ times for each $k$. Thus, for any $k \le n$,
	\[
	\sum_{ i < j} c_{ij,k}^2 = \left( \frac{\sqrt{2}}{(n-1)\sqrt{n}}  \right)^2 (n-1) = \frac{1}{\binom{n}{2}}.
	\]
	If $k = n+1$, $\vert g_k (T_{ij}) \vert = 1/\binom{n}{2}$ and hence
	\[
	\sum_{ i < j} c_{ij,n+1}^2 = \frac{1}{\binom{n}{2}}.
	\]
	Finally, if $k > n+1$, $\vert g_k(T_{ij}) \vert \le c /\binom{n}{2}$ and therefore,
	\[
	\sum_{ i < j} c_{ij, k}^2 \le \frac{c^2}{\binom{n}{2}}.
	\]
	In total, this means
	\[
	\max_{1 \le k \le n+1+p} \sum_{ i < j} c_{ij,k}^2 \le \frac{1 \vee c^2}{\binom{n}{2}}.
	\]
	Therefore, an application of Lemma \ref{Lem: 14.14} results in
	\begin{align*}
	\mathbb{E}\left[ \max_{1 \le l \le n+p+1} \left|\frac{1}{\binom{n}{2}} \sum_{ i < j} \epsilon_{ij} e_l(\bar{X}_{ij}, Z_{ij}) \right| \right] &\le \sqrt{2\log(2(n+1+p))} \max_{1 \le k \le n+1+p} \left[\sum_{ i < j} c_{ij,k}^2\right]^{1/2} \\
	&\le \sqrt{2\log(2(n+1+p))} \sqrt{\frac{1 \vee c^2}{\binom{n}{2}}} \\
	&= \sqrt{\frac{2\log(2(n+1+p))}{\binom{n}{2}}} (1 \vee c).
	\end{align*}
	Putting everything together, we obtain
	\begin{align*}
	\mathbb{E}[Z_M] &\le 2 \mathbb{E}\left[ \sup_{\substack{\bar{\theta} \in \bar{\Theta}_{\text{loc}}, \\ \Vert \bar{\theta} -\bar{\theta}^* \Vert_1 \le M}}  \left\vert \frac{1}{\binom{n}{2}} \sum_{ i < j} \epsilon_{ij}(-A_{ij}(f_{\bar{\theta}}(\bar{X}_{ij}, Z_{ij}) - f_{\bar{\theta}^*}  (\bar{X}_{ij}, Z_{ij}))) \right\vert  \right] \\
	&= 2 \mathbb{E}\left[ \mathbb{E}_T \left[ \sup_{\substack{\bar{\theta} \in \bar{\Theta}_{\text{loc}}, \\ \Vert \bar{\theta} -\bar{\theta}^* \Vert_1 \le M}}  \left\vert \frac{1}{\binom{n}{2}} \sum_{ i < j} \epsilon_{ij}(-A_{ij}(f_{\bar{\theta}}(\bar{X}_{ij}, Z_{ij}) - f_{\bar{\theta}^*}  (\bar{X}_{ij}, Z_{ij}))) \right\vert  \right]\right] \\
	&\le 8 \mathbb{E}\left[ \mathbb{E}_T \left[ \sup_{\substack{\bar{\theta} \in \bar{\Theta}_{\text{loc}}, \\ \Vert \bar{\theta} -\bar{\theta}^* \Vert_1 \le M}}  \left\vert \frac{1}{\binom{n}{2}} \sum_{ i < j} \epsilon_{ij}(f_{\bar{\theta}}(\bar{X}_{ij}, Z_{ij}) - f_{\bar{\theta}^*}  (\bar{X}_{ij}, Z_{ij})) \right\vert  \right]\right] \\
	&\le 8M \mathbb{E} \left[ \mathbb{E}_T \left[ \max_{1 \le l \le n+p+1} \left|\frac{1}{\binom{n}{2}} \sum_{ i < j} \epsilon_{ij} e_l(\bar{X}_{ij}, Z_{ij}) \right| \right]\right] \\
	&\le 8M\sqrt{\frac{2\log(2(n+1+p))}{\binom{n}{2}}} (1 \vee c).
	\end{align*}
	This concludes the proof.
\end{proof}

We now want to show that $Z_M$ does not deviate too far from its expectation. The proof relies on the concentration theorem due to Bousquet, Theorem \ref{Thm: concentration theorem Bousquet}.

\begin{Kor}\label{Kor: Probability bound Z_M}\label{Kor: Probability bound Z_M covariates}
	Pick any confidence level $t > 0$. 
	Let
	\[
	a_n \coloneqq \sqrt{\frac{2\log(2(n+p+1))}{\binom{n}{2}}} (1 \vee c).
	\]
	and choose $\lambda_0 = \lambda_0(t,n)$ as
	\[
		\lambda_0 = 8a_n + 2 \sqrt{\frac{t}{\binom{n}{2}}( 11 (1 \vee (c^2p) ) + 8\sqrt{2}(1 \vee c) \sqrt{n} a_n  )} + \frac{2\sqrt{2}t(1 \vee c) \sqrt{n}}{3\binom{n}{2}}	
	\]
	Then, we have the inequality
	\[
		P\left( Z_M \ge M\lambda_0  \right) \le \exp(-t).
	\]
\end{Kor}

\begin{proof}
	Again, we only give the proof for the case with covariates. The case without covariates is completely analogous by setting the corresponding parts to zero.
	We want to apply Bousquet's concentration theorem \ref{Thm: concentration theorem Bousquet}. For the random variables $Z_i$ in the formulation of the theorem we choose once more $T_{ij} = (A_{ij}, \bar{X}_{ij},1, Z_{ij}), i<j,$ and as functions we consider
	\begin{align*}
	g_{\bar{\theta}}(T_{ij}) &= -A_{ij} \bar D_{ij}^T(\bar \theta - \bar \theta^*) + \log(1 + \exp(\bar D_{ij}^T\bar \theta)) - \log(1 + \exp(\bar D_{ij}^T \bar \theta^*)), \\
	\mathcal{G} &= \mathcal{G}_M \coloneqq \{ g_{\bar{\theta}} : \bar{\theta} \in \bar{\Theta}_{\text{loc}}, \Vert \bar{\theta} -\bar{\theta}^* \Vert_1 \le M \}.
	\end{align*}
	Then, by definition we have
	\[
	Z_M = \sup_{g_{\bar{\theta}} \in \mathcal{G}} \frac{1}{\binom{n}{2}} \left\vert \sum_{ i < j} \{g_{\bar{\theta}}(T_{ij}) - \mathbb{E}[g_{\bar{\theta}}(T_{ij}) ] \} \right\vert.
	\]
	To apply Theorem \ref{Thm: concentration theorem Bousquet}, we need to bound the infinity norm of $g_{\bar{\theta}}$. Recall that we denote the distribution of $[\bar{X} | 1| Z]$ by $\bar{Q}$ and the infinity norm is defined as the $\bar{Q}$-almost sure smallest upper bound on the value of $g_{\bar{\theta}}$. We have for any $g_{\bar{\theta}} \in \mathcal{G}$, using the Lipschitz continuity of $\log(1 + \exp(x))$:
	\begin{align*}
		\vert g_{\bar{\theta}}(T_{ij}) \vert &\le \vert \bar D_{ij}^T(\bar \theta - \bar \theta^*) \vert + \vert  \log(1 + \exp(\bar D_{ij}^T\bar \theta)) - \log(1 + \exp(\bar D_{ij}^T \bar \theta^*)) \vert \\
		&\le 2 \vert \bar D_{ij}^T(\bar \theta - \bar \theta^*) \vert \\
		&\le 2 \Vert \beta - \beta^* \Vert_1 + \vert \mu - \mu^* \vert + c \Vert \gamma - \gamma^* \Vert_1.
\shortintertext{Thus,}
	\Vert g_{\bar{\theta}} \Vert_{\infty} &\le 2 \Vert \beta - \beta^* \Vert_1 + \vert \mu - \mu^* \vert + c \Vert \gamma - \gamma^* \Vert_1 \\
	&\le 2(1 \vee c) \Vert \theta - \theta^* \Vert_1 \\
	&\le \sqrt{2}(1 \vee c) \sqrt{n} M \eqqcolon \eta_n.
	\end{align*}
	For the last inequality we used that for any $\theta$ with $\Vert \bar{\theta} - \bar{\theta}^* \Vert_1 \le M$ it follows that $\Vert \theta - \theta^* \Vert_1 \le \sqrt{n}/\sqrt{2}M$, which is possibly a very generous upper bound. This does not matter, however, as the term associated with the above bound will be negligible, as we shall see.
	
	The second requirement of Theorem \ref{Thm: concentration theorem Bousquet} is that the average variance of $g_{\bar{\theta}}(T_{ij})$ has to be uniformly bounded. To that end we calculate
	\begin{align*}
		\frac{1}{\binom{n}{2}} \sum_{ i < j}\text{Var}(g_{\bar{\theta}}(T_{ij})) &= \frac{1}{\binom{n}{2}} \sum_{ i < j} \text{Var}(-A_{ij}D_{ij}^T(\theta - \theta^*)) \\
		& + \frac{1}{\binom{n}{2}} \sum_{ i < j}\text{Var}(\log(1 + \exp(\bar D_{ij}^T\bar \theta)) - \log(1 + \exp(\bar D_{ij}^T \bar \theta^*))) \\
		& +\frac{2}{\binom{n}{2}} \sum_{ i < j} \text{Cov}(-A_{ij}D_{ij}^T(\theta - \theta^*), \log(1 + \exp(\bar D_{ij}^T\bar \theta)) - \log(1 + \exp(\bar D_{ij}^T \bar \theta^*))).
	\end{align*}
	Let us look at these terms in term. For the first term, we obtain
	\begin{align*}
		\frac{1}{\binom{n}{2}} \sum_{ i < j} \text{Var}(-A_{ij}D_{ij}^T(\theta - \theta^*)) &\le \frac{1}{\binom{n}{2}} \sum_{ i < j} \E[(-A_{ij}D_{ij}^T(\theta - \theta^*))^2] \le \E \left[ \frac{1}{\binom{n}{2}} \sum_{ i < j} (D_{ij}^T(\theta - \theta^*))^2 \right].
	\end{align*}
	For the second term we get
	\begin{align*}
		\frac{1}{\binom{n}{2}} \sum_{ i < j}\text{Var}&(\log(1 + \exp(\bar D_{ij}^T\bar \theta)) - \log(1 + \exp(\bar D_{ij}^T \bar \theta^*)))\\
		 &\le \frac{1}{\binom{n}{2}} \sum_{ i < j} \E[(\log(1 + \exp(\bar D_{ij}^T\bar \theta)) - \log(1 + \exp(\bar D_{ij}^T \bar \theta^*)))^2]\\
		&\le  \E \left[ \frac{1}{\binom{n}{2}} \sum_{ i < j} (D_{ij}^T(\theta - \theta^*))^2 \right].
	\end{align*}
	The last term decomposes as
	\begin{align*}
		\frac{2}{\binom{n}{2}} \sum_{ i < j} & \text{Cov}(-A_{ij}D_{ij}^T(\theta - \theta^*), \log(1 + \exp(\bar D_{ij}^T\bar \theta)) - \log(1 + \exp(\bar D_{ij}^T \bar \theta^*))) \\
		&= \frac{2}{\binom{n}{2}} \sum_{ i < j}  \E[-A_{ij}D_{ij}^T(\theta - \theta^*) \cdot (\log(1 + \exp(\bar D_{ij}^T\bar \theta)) - \log(1 + \exp(\bar D_{ij}^T \bar \theta^*)))] \\
		&\quad - \frac{2}{\binom{n}{2}} \sum_{ i < j}  \E[-A_{ij}D_{ij}^T(\theta - \theta^*)] \cdot \E[ \log(1 + \exp(\bar D_{ij}^T\bar \theta)) - \log(1 + \exp(\bar D_{ij}^T \bar \theta^*))]
	\end{align*}
	For the first term in that decomposition we have
	\begin{align*}
		\frac{2}{\binom{n}{2}} &\sum_{ i < j}  \left\vert\E[-A_{ij}D_{ij}^T(\theta - \theta^*) \cdot (\log(1 + \exp(\bar D_{ij}^T\bar \theta)) - \log(1 + \exp(\bar D_{ij}^T \bar \theta^*)))] \right\vert \\
		&\le \frac{2}{\binom{n}{2}} \sum_{ i < j}  \E[\vert D_{ij}^T(\theta - \theta^*)\vert \cdot \vert\log(1 + \exp(\bar D_{ij}^T\bar \theta)) - \log(1 + \exp(\bar D_{ij}^T \bar \theta^*))\vert] \\
		&\le \frac{2}{\binom{n}{2}} \sum_{ i < j}  \E[\vert D_{ij}^T(\theta - \theta^*)\vert^2] 
	\end{align*}
	and for the second term, using the same arguments, we get
	\begin{align*}
		\frac{2}{\binom{n}{2}} \sum_{ i < j}  \E[-A_{ij}D_{ij}^T(\theta - \theta^*)] &\cdot \E[ \log(1 + \exp(\bar D_{ij}^T\bar \theta)) - \log(1 + \exp(\bar D_{ij}^T \bar \theta^*))] \\
		& \le \frac{2}{\binom{n}{2}} \sum_{ i < j}  \E[\vert D_{ij}^T(\theta - \theta^*)\vert]^2, 
	\end{align*}
	meaning that in total
	\begin{align*}
		\frac{2}{\binom{n}{2}} \sum_{ i < j} &\left\vert \text{Cov}(-A_{ij}D_{ij}^T(\theta - \theta^*), \log(1 + \exp(\bar D_{ij}^T\bar \theta)) - \log(1 + \exp(\bar D_{ij}^T \bar \theta^*))) \right\vert \\
		&\le \frac{2}{\binom{n}{2}} \sum_{ i < j}  \E[\vert D_{ij}^T(\theta - \theta^*)\vert^2] + \frac{2}{\binom{n}{2}} \sum_{ i < j}  \E[\vert D_{ij}^T(\theta - \theta^*)\vert]^2.
	\end{align*}
	In total, we thus get
	\begin{align}\label{Eq: variance upper bound}
		\frac{1}{\binom{n}{2}} \sum_{ i < j}\text{Var}(g_{\bar{\theta}}(T_{ij})) \le 4 \cdot \E \left[ \frac{1}{\binom{n}{2}} \sum_{ i < j} (D_{ij}^T(\theta - \theta^*))^2 \right] + \frac{2}{\binom{n}{2}} \sum_{ i < j}  \E[\vert D_{ij}^T(\theta - \theta^*)\vert]^2.
	\end{align}
	Furthermore,
	\begin{align*}
	\frac{1}{\binom{n}{2}} \sum_{ i < j} (D_{ij}^T(\theta - \theta^*))^2 &= \frac{1}{\binom{n}{2}} \sum_{ i < j} (\beta_i + \beta_j + \mu - \beta_i^* - \beta_j^* - \mu^* + (\gamma - \gamma^*)^TZ_{ij})^2 \\
	&\le \frac{4}{\binom{n}{2}} \sum_{ i < j} \left\{ (\beta_i - \beta_i^*)^2 + (\beta_j - \beta_j^*)^2 + (\mu - \mu^*)^2 + ((\gamma - \gamma^*)^TZ_{ij})^2\right\},
	\end{align*}
	where the inequality follows from the Cauchy-Schwarz inequality.
	Recall that for any $x \in \R^p, \Vert x \Vert_2 \le \Vert x \Vert_1 \le \sqrt{p}\Vert x \Vert_2$ and note that
	\[
	\vert(\gamma - \gamma^*)^TZ_{ij}\vert \le c \Vert \gamma - \gamma^* \Vert_1 \le c\sqrt{p}\Vert \gamma  - \gamma^* \Vert_2.
	\]	
	Then, from the above
	\begin{align}\label{Eq: bound on sum D_ij theta -theta*}
	\begin{split}
		\frac{1}{\binom{n}{2}} \sum_{ i < j} (D_{ij}^T(\theta - \theta^*))^2 &\le \frac{4}{\binom{n}{2}} \sum_{ i < j} \left\{ (\beta_i - \beta_i^*)^2 + (\beta_j - \beta_j^*)^2 + (\mu - \mu^*)^2 + c^2 p \Vert \gamma - \gamma^* \Vert_2^2 \right\} \\
		&= 4 \left( (\mu - \mu^*)^2 + c^2p \Vert \gamma - \gamma^* \Vert_2^2 + \frac{1}{\binom{n}{2}}   \sum_{ i < j} \left\{ (\beta_i - \beta_i^*)^2 + (\beta_j - \beta_j^*)^2 \right\} \right) \\
		&= 4 \left( (\mu - \mu^*)^2 + c^2p \Vert \gamma - \gamma^* \Vert_2^2 + \frac{1}{\binom{n}{2}}   (n-1) \Vert \beta - \beta^* \Vert_2^2 \right) \\
		&= 4 \left( (\mu - \mu^*)^2 + c^2p \Vert \gamma - \gamma^* \Vert_2^2 + \left\Vert \frac{\sqrt{2}}{\sqrt{n}} (\beta - \beta^*) \right\Vert_2^2 \right) \\
		&= 4 \left( (\mu - \mu^*)^2 + c^2p \Vert \gamma - \gamma^* \Vert_2^2 + \Vert  \bar{\beta} - \bar{\beta}^* \Vert_2^2 \right) \\
		&\le 4(1 \vee (c^2p)) \Vert \bar{\theta} - \bar{\theta}^* \Vert_2^2 \\
		&\le 4(1 \vee (c^2p)) \Vert \bar{\theta} - \bar{\theta}^* \Vert_1^2 \\
		&\le 4(1 \vee (c^2p)) M^2.
	\end{split}
	\end{align}
	Notice that for the second term in \eqref{Eq: variance upper bound} we have
	\begin{align*}
		\frac{2}{\binom{n}{2}} \sum_{ i < j}  \E[\vert D_{ij}^T(\theta - \theta^*)\vert]^2 &= \frac{2}{\binom{n}{2}} \sum_{ i < j} (\beta_i - \beta_i^* + \beta_j - \beta_j^* + \mu - \mu^* + (\gamma - \gamma^*)^T\E[Z_{ij}])^2 \\
		&= \frac{2}{\binom{n}{2}} \sum_{ i < j} (\beta_i - \beta_i^* + \beta_j - \beta_j^* + \mu - \mu^*)^2 \\
		&\le \frac{6}{\binom{n}{2}} \sum_{ i < j} \left\{ (\beta_i - \beta_i^*)^2 + (\beta_j - \beta_j^*)^2 + (\mu - \mu^*)^2 \right\},
	\end{align*}
	so that we may use the same steps as in \eqref{Eq: bound on sum D_ij theta -theta*} to conclude that
	\begin{equation*}
			\frac{2}{\binom{n}{2}} \sum_{ i < j}  \E[\vert D_{ij}^T(\theta - \theta^*)\vert]^2 \le 6 M^2 \le 6 (1 \vee (c^2p))M^2.
	\end{equation*}
	Such that in total,
	\begin{align*}
		\frac{1}{\binom{n}{2}} \sum_{ i < j}\text{Var}(g_{\bar{\theta}}(T_{ij})) \le 22(1 \vee (c^2p)) M^2 \eqqcolon \tau_n^2.
	\end{align*}
	Applying Bousquet's concentration theorem \ref{Thm: concentration theorem Bousquet} with $\eta_n, \tau_n$ defined above, we obtain for all $z > 0$
	\begin{align}\label{Eq: prob inequality I covariates}
	\begin{split}
	\exp\left(-\binom{n}{2}z^2\right) &\ge P\left( Z_M \ge \mathbb{E}[Z_M] + z \sqrt{2(\tau_n^2 + 2\eta_n \mathbb{E}[Z_M])} + \frac{2z^2\eta_n}{3} \right) \\
	&= P\Biggl( Z_M \ge \mathbb{E}[Z_M] + z \sqrt{2(22(1 \vee (c^2p)) M^2 + 2\sqrt{2}(1 \vee c) \sqrt{n} M  \mathbb{E}[Z_M])} \\
	&\quad \quad \quad + \frac{2\sqrt{2}z^2(1 \vee c) \sqrt{n} M }{3} \Biggr).
	\end{split}
	\end{align}
	From Lemma \ref{Lem: 14.20 covariates}, we know
	\[
	\mathbb{E}[Z_M] \le 8 M \sqrt{\frac{2\log(2(n+p+1))}{\binom{n}{2}}}(1 \vee c) = 8Ma_n.
	\]
	Using this, we obtain from (\ref{Eq: prob inequality I covariates})
	\begin{align*}
	\exp\left(-\binom{n}{2}z^2\right)& \ge P\Biggl( Z_M \ge 8Ma_n + z \sqrt{2(22(1 \vee (c^2p)) M^2 + 16\sqrt{2}(1 \vee c) \sqrt{n} M^2 a_n)} \\
	&\quad \quad \quad + \frac{2\sqrt{2}z^2(1 \vee c) \sqrt{n} M }{3} \Biggl)  \\
	&= P\left( Z_M \ge M \left( 8a_n + 2z \sqrt{11(1 \vee (c^2p) ) + 8\sqrt{2}(1 \vee c) \sqrt{n}a_n } + \frac{2\sqrt{2}z^2(1 \vee c) \sqrt{n}}{3} \right)\right).
	\end{align*}
	Now, pick $z = \sqrt{t/\binom{n}{2}}$ to get
	\begin{align*}
		\exp(-t) &\ge \\
		&P\left( Z_M \ge M \left( 8a_n + 2 \sqrt{\frac{t}{\binom{n}{2}}( 11 (1 \vee (c^2p) ) + 8\sqrt{2}(1 \vee c) \sqrt{n} a_n  )} + \frac{2\sqrt{2}t(1 \vee c) \sqrt{n}}{3\binom{n}{2}} \right) \right),
	\end{align*}
	which is the claim.
\end{proof}

\subsection{Proofs of Theorem \ref{Cor: no approximation error} and Theorem \ref{Thm: consistency}}\label{Sec: Proof of main results}
\begin{proof}[Proof of Theorem \ref{Thm: consistency}]
	The proof follows immediately from Theorem \ref{Thm: Consistency covariates} and Corollary \ref{Kor: Probability bound Z_M covariates}.
\end{proof}
\begin{proof}[Proof of Theorem \ref{Cor: no approximation error}]
We are in the case where no approximation error is committed, that is in the case $r_{n,0} \le r_n$. In that case $\theta^* = \theta_0$ and hence $\mathcal{E}(\theta^*) = 0$. Let $\rho_n$ be the lower bound on the link probabilities corresponding to $r_n$. $K_n$ then simplifies to
	\begin{align} \label{K_n inequality}
	K_n &= 2 \frac{(1 + \exp(r_{n}))^2}{\exp(r_{n})} = 2 \frac{\left( 1 + \exp\left( - \text{logit}(\rho_{n})  \right) \right)^2}{\exp\left( - \text{logit}(\rho_{n,0}) \right)} \le \frac{4}{\rho_{n}}.
	\end{align}
	Thus, under the conditions of Theorem \ref{Cor: no approximation error}, we have with high probability
	\begin{equation*}
	\mathcal{E}(\hat{\theta}) + \bar{ \lambda} \left( \frac{\sqrt{2}}{\sqrt{n}} \Vert \hat{\beta} - \beta_{0}  \Vert_1 + \vert \hat{\mu} - \mu_{0} \vert + \Vert \hat{\gamma} - \gamma_{0} \Vert_1  \right) \le C  \frac{s_{0, +}\bar{ \lambda}^2}{\rho_{n}}.
	\end{equation*}
	with constant $C = 128/c_{\min}$.
\end{proof}

\subsection{Proof of Proposition \ref{Prop: q error bound}}

Similarly, we don't assume $r_n \ge r_{n,0}$ (i.e. $\theta^*=\theta_0$) in the beginning. To derive the $\ell_q$ ($1 < q \le 2$) error bounds for $\hat{\theta}$, we need a stronger compatibility condition. Here we use the minimal adaptive restricted eigenvalue condition as in \cite{vandegeer2011}, Section 6.8 and rewrite it in our notation. 

Let $S$ be an index set with cardinality $s$ and $N \geq s$ be an integer. We say that the adaptive $(L, S, N)$-restricted eigenvalue condition is satisfied, with constant $\phi_{\text {adap }}(L, S, N)>0$, if for all $\mathcal{N} \supset S$, with $|\mathcal{N}|=N$, and all $\theta \in \mathcal{R}_{\text {adap }}(L, S, \mathcal{N})$, it holds that
$$
\Vert \theta_{\mathcal{N}}\Vert_2 \leq \frac{\Vert f_\theta\Vert_2}{\phi_{\text {adap}}(L, S, N)},
$$
where the restricted set of $\theta$ is defined as
$$
\mathcal{R}_{\text {adap}}(L, S, \mathcal{N}):=\left\{\Vert\theta_{S^c}\Vert_1 \leq L \sqrt{s}\Vert \theta_S\Vert_2,\Vert \theta_{\mathcal{N}^c}\Vert_{\infty} \leq \min _{j \in \mathcal{N} \backslash S}\vert \theta_j\vert\right\} .
$$
The minimal adaptive restricted eigenvalue is
$$
\phi_{\min}^2(L, S, N)=\min _{\mathcal{N} \supset S,|\mathcal{N}|=N} \phi_{\text {adap}}^2(L, \mathcal{N}, N).
$$

As in Proposition \ref{Prop: compatibility condition Sigma}, we first verify that $\Sigma_{A}$ satisfies the adaptive $(3, S, N)$-restricted eigenvalue condition for any $S$ and $|\mathcal{N}|=N \ge |S|$.
Notice that the compatibility condition is equivalent to the condition that
$$
\min _{\substack{\theta \in \mathcal{R}_{\text {adap}}(3, S, \mathcal{N}) \\\theta \in \R^{n+1+p} \backslash\{0\}}} \frac{\theta^T \Sigma \theta}{\Vert \theta_{\mathcal{N}}\Vert_2^2} > 0
$$
for any $|\mathcal{N}|=N \geq |S|$.
$$
\begin{aligned}
\frac{\theta^T \Sigma_A \theta}{\Vert \theta_{\mathcal{N}}\Vert_2^2} & \geq \frac{\frac{n-2}{n-1} \beta^T \beta+\mu^2+\frac{1}{\binom{n}{2}} \gamma^T \mathbb{E}\left[Z^T Z\right] \gamma}{\Vert\theta_{\mathcal{N}}\Vert_2^2} \\
& \geq \frac{\frac{n-2}{n-1}\|\beta\|_2^2+\mu^2+\frac{1}{\binom{n}{2}} \gamma^T \mathbb{E}\left[Z^T Z\right] \gamma}{\|\beta\|_2^2+\mu^2+\|\gamma\|_2^2}, \quad  (\text{since }\Vert\theta_{\mathcal{N}}\Vert_2^2 \leq  \Vert \theta\Vert_2^2) \\
& \geq \frac{\frac{n-2}{n-1}\left(\Vert \beta\Vert_2^2+\mu^2\right)+\frac{1}{\binom{n}{2}} \gamma^T \mathbb{E}\left[Z^T Z\right] \gamma}{\Vert\beta\Vert_2^2+\mu^2+\Vert \gamma\Vert_2^2}, \quad \text { since } 1 \geq(n-2) /(n-1) \\
& =\frac{n-2}{n-1} \cdot \frac{\|\beta\|_2^2+\mu^2+\frac{n-1}{n-2} \frac{1}{\binom{n}{2}} \gamma^T \mathbb{E}\left[Z^T Z\right] \gamma}{\|\beta\|_2^2+\mu^2+\|\gamma\|_2^2}  \\
& \geq c_{\min }>0.
\end{aligned}
$$
Therefore, for $\Sigma_{A}$, 
$$
\phi_{\min }^2(3, S_{+}^{*}, 2s_{+}^{*})=\min _{\mathcal{N} \supset S_{+}^{*},|\mathcal{N}|=2s_{+}^{*}} \phi_{\text {adap}}^2(3, \mathcal{N}, 2s_{+}^{*}) \ge c_{\min}> 0.
$$

We then show that for $\Sigma$, we still have $\phi_{\min }^2(3, S_{+}^{*}, 2s_{+}^{*})>0$. Let $\delta=\max _{i j}\left|(\Sigma_A)_{i j}-\Sigma_{i j}\right|$, for any $\mathcal{N} \supset S_{+}^{*},|\mathcal{N}|=2s_{+}^{*}$ and any $\theta$ satisfies $\left\|\theta_{\mathcal{N}^{c}}\right\|_1 \leq 3\sqrt{2s_{+}^{*}}\left\|\theta_{\mathcal{N}}\right\|_2$, similarly as Lemma \ref{Lem: Lemma 6 in KockTang}, we have
$$
\begin{aligned}
\left|\theta^T \Sigma_A \theta-\theta^T \Sigma \theta\right| & =\left|\theta^T(\Sigma_A-\Sigma) \theta\right| \leq\|\theta\|_1\|(\Sigma_A-\Sigma) \theta\|_{\infty} \leq \delta\|\theta\|_1^2 \\
& =\delta\left(\left\|\theta_{\mathcal{N}}\right\|_1+\left\|\theta_{\mathcal{N}^{c}}\right\|_1\right)^2 \leq \delta\left(\sqrt{2s_{+}^{*}}\left\|\theta_{\mathcal{N}}\right\|_2+3\sqrt{2s_{+}^{*}}\Vert\theta_{\mathcal{N}}\Vert_2\right)^2 \\
& \leq 32s_{+}^{*} \delta\Vert\theta_\mathcal{N}\Vert_2^2.
\end{aligned}
$$
Then, 
$$
\frac{\theta^T \Sigma \theta}{\Vert \theta_{\mathcal{N}}\Vert_2^2} \geq \frac{\theta^T \Sigma_A \theta}{\Vert\theta_{\mathcal{N}}\Vert_2^2}-32 \delta s_{+}^* \geq c_{\min}-32 \delta s_{+}^* \geq \frac{c_{\min}}{2} >0
$$
since $\delta < \frac{c_{\min}}{64 s_{+}^*}$. Therefore $\phi_{\min }^2(3, S_{+}^{*}, 2s_{+}^{*})\ge \frac{c_{\min}}{2} > 0$ for $\Sigma$.

Next, let $\mathcal{N} = S_{+}^{*} \cup \{\text{the set indices of the largest } s_{+}^{*} \text{ elements of }\hat{\bar{\theta}}_{j} \text{ for } j \in S_{+}^{*c} \}$, where $S_{+}^{*c} = \{1,2,3, ..., n+1+p\} \backslash S^{*}_{+}$. Let $\mathcal{N}_{1} = \mathcal{N} \cap \{1,2,3, ..., n\}$ and $\mathcal{N}_{1}^{c} =  \{1,2,3, ..., n\} \backslash \mathcal{N}_{1}$. $\mathcal{N}^{c} =  \{1,2,3, ..., n+1+p\} \backslash \mathcal{N}$. Since $S_{+}^{*} =S^{*} \cup \{n+1, ... ,n+1+p\}$ and $S_{+}^{*} \subset \mathcal{N}$. Therefore $\mathcal{N}^{c}  = \mathcal{N}_{1}^{c}$, $\mathcal{N}_{1} \cup  \{n+1, ... ,n+1+p\} = \mathcal{N}$ and $\beta^{*} = 0$ on $\mathcal{N}_{1}^{c}$. Similarly as (\ref{Eq: 2.14}),
$$
\begin{aligned}
\hat{\mathcal{E}}+\bar{\lambda}\Vert\hat{\bar{\beta}}_{\mathcal{N}_{1}^{c}}\Vert_1 & \leq \epsilon^*+\bar{\lambda}(\Vert\bar{\beta}^*\Vert_1-\Vert\hat{\bar{\beta}}_{\mathcal{N}_{1}}\Vert_1)+\mathcal{E}^* \\
& \leq \epsilon^*+\bar{\lambda}(\Vert(\bar{\beta}^*-\hat{\bar{\beta}})_{\mathcal{N}_{1}}\Vert_1)+\mathcal{E}^* \\
& \leq \epsilon^*+\bar{\lambda}(\Vert(\bar{\beta}^*-\hat{\bar{\beta}})_{\mathcal{N}_{1}}\Vert_1+\Vert(\mu^*, \gamma^{* T})^T-(\hat{\mu}, \hat{\gamma}^T)^T\Vert_1)+\mathcal{E}^* \\
& =\epsilon^*+\bar{\lambda}\Vert(\hat{\bar{\theta}}-\bar{\theta}^*)_{\mathcal{N}}\Vert_1+\mathcal{E}^* \\
& \leq 2 \epsilon^*+\bar{\lambda}\Vert(\hat{\bar{\theta}}-\bar{\theta}^*)_{\mathcal{N}}\Vert_1\\
& \leq 2 \epsilon^*+ \sqrt{2s_{+}^*}\bar{\lambda}\Vert(\hat{\bar{\theta}}-\bar{\theta}^*)_{\mathcal{N}}\Vert_2.
\end{aligned}
$$
And since $\mathcal{N}^{c}  = \mathcal{N}_{1}^{c}$, $\Vert(\hat{\bar{\theta}}-\bar{\theta}^*)_{\mathcal{N}^{c}}\Vert_1=\Vert \hat{\bar{\beta}}_{\mathcal{N}_{1}^{c}}\Vert_1$, we have
$$
\hat{\mathcal{E}}+\bar{\lambda}\Vert(\hat{\bar{\theta}}-\bar{\theta}^*)_{\mathcal{N}^{c}}\Vert_1 \leq 2 \epsilon^*+ \sqrt{2s_{+}^*}\bar{\lambda}\Vert(\hat{\bar{\theta}}-\bar{\theta}^*)_{\mathcal{N}}\Vert_2.
$$
Then we just replace $\Vert(\hat{\bar{\theta}}-\bar{\theta}^*)_{S_{+}^*}\Vert_1$ by $\sqrt{2s_{+}^*}\Vert(\hat{\bar{\theta}}-\bar{\theta}^*)_{\mathcal{N}}\Vert_2$ in the proof of Theorem \ref{Thm: Consistency covariates} to get
$$
\bar{\lambda}\Vert(\hat{\bar{\theta}}-\bar{\theta}^*)_{\mathcal{N}^{c}}\Vert_1+\bar{\lambda}\sqrt{2s_{+}^*}\Vert(\hat{\bar{\theta}}-\bar{\theta}^*)_{\mathcal{N}}\Vert_2\leq 4e^*,
$$
where
$$
2e^* =3 \mathcal{E}(\theta^*)+2 H_n\left(\frac{8 \sqrt{ s_{+}^*} \bar{\lambda}}{\sqrt{c_{\min }}}\right).
$$
Then following the proof of Lemma 6.11 in \cite{vandegeer2011}, we get 
$$
\Vert(\hat{\bar{\theta}}-\bar{\theta}^*)\Vert^{q}_{q} \leq (4^q+2^{q+1}) (s_{+}^*)^{-(q-1)}\left(e^* / \bar{\lambda}\right)^q .
$$

\begin{proof}[Proof of Proposition \ref{Prop: q error bound}]
We are in the case where $r_n \ge r_{n,0}$ and $\theta^{*} = \theta_0$. Then $\mathcal{E}(\theta^*) = 0$ and $e^*$ simplifies to be 
$$
e^* = H_n\left(\frac{8 \sqrt{ s_{0, +}} \bar{\lambda}}{\sqrt{c_{\min }}}\right)=\frac{K_n}{4} \cdot \left(\frac{8 \sqrt{ s_{0, +}} \bar{\lambda}}{\sqrt{c_{\min }}}\right)^{2} \leq \frac{64 s_{0, +}\bar{\lambda}^{2}}{\rho_nc_{\min }},
$$
where the last inequality follows from (\ref{K_n inequality}).
\end{proof}

\section{Proof of Theorem \ref{Thm: inference}}\label{proof_thm3}

\subsection{Inverting population and sample Gram matrices}\label{Sec: inverting Gram matrices}\label{sample and population matrices}

Our strategy for proving Theorem \ref{Thm: inference} will be inverting the KKT conditions, similar to \cite{vandegeer2014}. Since the estimation in (\ref{Eq: Penalized llhd with covariates}) is a convex optimization problem, by subdifferential calculus, we know $0$ has to be contained in the subdifferential of $\frac{1}{\binom{n}{2}}\mathcal{L}(\theta) + \lambda \Vert \beta \Vert_1$ at $\hat{\theta}$. That is, there exists some $v \in \R^{n+1+p}$ such that
\begin{equation}\label{Eq: subdifferential first order equation}
	0 = \frac{1}{\binom{n}{2}}\nabla \left.\mathcal{L}(\theta)\right|_{\theta = \hat{\theta}} + \lambda v,
\end{equation}
where $\nabla\left.\mathcal{L}(\theta)\right|_{\theta = \hat{\theta}}$ is the gradient of $\mathcal{L}(\theta)$ evaluated at $\hat{\theta}$ and for $i=1, \dots, n, v_i = 1$ if $\hat{\beta}_i > 0$ and $v_i \in [-1,1]$ if $\hat{\beta}_i = 0$, and $v_i = 0$ for $i = n+1, \dots, n+ 1+p$.

Recall that we use $\vartheta = (\mu, \gamma^T)^T$ to refer to the unpenalized parameter subvector of $\theta$. 
Thus, denoting $\nabla_\vartheta \left.\mathcal{L}(\theta)\right|_{\theta = \hat{\theta}} \in \R^{p+1}$ the gradient of $\mathcal{L}$ with respect to the unpenalized parameters $(\mu, \gamma^T)^T$ only, evaluated at $\hat{\theta}$, we have
\begin{equation}\label{Eq: KKT gamma}
	0 = \nabla_\vartheta \left.\mathcal{L}(\theta)\right|_{\theta = \hat{\theta}}.
\end{equation}
Denote by $H(\hat{\theta}) \coloneqq \left.H_{\vartheta \times \vartheta}(\theta)\right|_{\theta = \hat{\theta}}$ the Hessian of $\frac{1}{\binom{n}{2}}\mathcal{L}(\theta)$ with respect to $\vartheta$ only, evaluated at $\hat{\theta}$. Denote $p_{ij}(\theta) = \frac{\exp(\beta_i + \beta_j + \mu + \gamma^T Z_{ij})}{1 + \exp(\beta_i + \beta_j + \mu + \gamma^T Z_{ij})}$. 
Now, consider the entries of $H(\hat{\theta})$.  For all $k,l = 1, \dots, (p+1)$,
\begin{align*}
	H(\hat{\theta})_{k,l} = \frac{1}{\binom{n}{2}}\partial_{\vartheta_k\vartheta_l}\mathcal{L}(\hat{\theta}) =  \frac{1}{\binom{n}{2}} \sum_{i < j} D_{ij,n+k}D_{ij,n+l} p_{ij}(\hat{\theta})(1-p_{ij}(\hat{\theta})),
\end{align*}
where $D_{ij}^T$ is the $(i,j)$-th row of the design matrix $D$, i.e.~in particular $D_{ij,n+k} = 1$ if $k=1$ and $D_{ij,n+k} = Z_{ij,k-1}$ for $k=2, \dots, (p+1)$.
In particular, we have the following matrix representation of $H(\hat{\theta})$. Let $D_\vartheta = [\textbf{1}|Z]$ be the part of the design matrix $D$ corresponding to $\vartheta$ with rows $D_{\vartheta, ij}^T = (1, Z_{ij}^T), i < j$. Also let $\hat{W} = \text{diag}(\sqrt{p_{ij}(\hat{\theta})(1-p_{ij}(\hat{\theta}))}, i < j)$. Then we have
\[
H(\hat{\theta}) = \frac{1}{\binom{n}{2}} D_\vartheta^T \hat{W}^2 D_\vartheta.
\]
Let $W_0 = \text{diag}(\sqrt{p_{ij}(\theta_0)(1-p_{ij}(\theta_0))}, i < j)$ and consider the corresponding population version: 
\[
\mathbb{E}[H(\theta_0)] = \frac{1}{\binom{n}{2}} \mathbb{E}[ D_\vartheta^T W_0^2 D_\vartheta].
\]
Recall that we use the commonly used notation $\hat{\Sigma}_\vartheta = H(\hat \theta) = \frac{1}{\binom{n}{2}} D_\vartheta^T \hat{W}^2D_\vartheta$ and $\Sigma_\vartheta = \mathbb{E}[H(\theta_0)] = \frac{1}{\binom{n}{2}} \mathbb{E}[ D_\vartheta^T W_0^2 D_\vartheta]$ and
$
\hat \Theta_\vartheta \coloneqq \hat \Sigma_\vartheta^{-1}, \Theta_\vartheta \coloneqq \Sigma_\vartheta^{-1}.
$

We will need to invert $\hat \Sigma_\vartheta$ and $\Sigma_\vartheta$ and show that these inverses are close to each other in an appropriate sense. It is commonly assumed in LASSO theory (cf.~\cite{vandegeer2014}) that the minimum eigenvalues of these matrices stay bounded away from zero. In our case, however, such an assumption is invalid.

Indeed, since $\rho_n \le 1/2$, we find that for all $i<j$, $p_{ij}(\theta_0)(1-p_{ij}(\theta_0)) \ge 1/2 \cdot \rho_n$. Also, recall that by Assumption \ref{Assum: minimum EW}, the minimum eigenvalue  $\lambda_{\text{min}}$ of $\mathbb{E}[Z^TZ/\binom{n}{2}]$ stays uniformly bounded away from zero for all $n$. Then, for any $n$ and $v \in \R^{p+1} \backslash \{0\}$ with components $v = (v_1, v_R^T)^T, v_R \in \R^p$, we have 
\begin{align*}
	v^T \Sigma_\vartheta v &\ge \frac{1}{2} \rho_n v^T \frac{1}{\binom{n}{2}} \mathbb{E}[ D_\vartheta^T D_\vartheta] v = \frac{1}{2} \rho_n v^T \begin{pmatrix}
		1 & \textbf{0} \\
		\textbf{0} & \frac{1}{\binom{n}{2}} \E[ Z^TZ]
	\end{pmatrix} v \\
	& = \frac{1}{2} \rho_n \left(v_1^2 + v_R^T \frac{1}{\binom{n}{2}} \E[Z^TZ] v_R  \right) \\
	&\ge  \frac{1}{2} \rho_n (v_1^2 + \lambda_{\text{min}}\Vert v_R \Vert_2^2) \ge \frac{1}{2} \rho_n (1 \wedge \lambda_{\min}) \Vert v \Vert_2^2 > 0.
\end{align*}
Hence, for finite $n$ all eigenvalues of $\Sigma_\vartheta$ are strictly positive and consequently this matrix is invertible. Using similar techniques as in the proof of Proposition \ref{Prop: compatibility condition Sigma} we can now show that with high probability the minimum eigenvalue of $D_\vartheta^TD_\vartheta / \binom{n}{2}$ is also strictly larger than zero and thus for any $v \in \R^{p+1} \backslash\{0\}$ and any finite $n$,
\[
\frac{1}{\binom{n}{2}}v^T D_\vartheta^T \hat{W}^2 D_\vartheta v \ge C \rho_n \lambda_{\min}\left(\frac{1}{\binom{n}{2}}Z^T Z\right) \Vert v \Vert_2^2 >0.
\]
Thus, for every finite $n$, $\hat \Sigma_\vartheta$ is invertible with high probability. However, the lower bound on the right-hand side tends to zero with increasing $n$.

Recall that by Assumption \ref{Assum: minimum EW}, the minimum eigenvalue  $\lambda_{\text{min}}$ of $\frac{1}{\binom{n}{2}} \mathbb{E}[Z^TZ]$ stays uniformly bounded away from zero for all $n$. Consequently the minimum eigenvalue of $\frac{1}{\binom{n}{2}} \mathbb{E}[D_\vartheta^TD_\vartheta]$ is lower bounded by $1 \wedge \lambda_{\min}$ which is bounded away from zero uniformly for all $n$.
We now show that under Assumption \ref{Assum: minimum EW}, with high probability the minimum eigenvalue of $\frac{1}{\binom{n}{2}}D_\vartheta^T D_\vartheta$ is bounded away from zero. 
More precisely, recall the definition of $\kappa(A, m)$ for square matrices $A$ and dimensions $m$. We want to consider the expression $\kappa^2\left(\frac{1}{\binom{n}{2}} \mathbb{E}[D_\vartheta^TD_\vartheta], p+1\right)$ which simplifies to
\[
\kappa^2\left(\frac{1}{\binom{n}{2}} \mathbb{E}[D_\vartheta^TD_\vartheta], p+1 \right) \coloneqq \min_{v \in \R^{p+1} \backslash\{0\}} \frac{v^T\frac{1}{\binom{n}{2}} \mathbb{E}[D_\vartheta^TD_\vartheta] v}{\frac{1}{p+1} \Vert v \Vert_1^2} \ge (1 \wedge \lambda_{\min}).
\]
and compare it to $\kappa^2\left(\frac{1}{\binom{n}{2}}D_\vartheta^TD_\vartheta,p+1\right)$.  By Assumption \ref{Assum: minimum EW} and the argument above, we have
\[
\kappa^2\left(\frac{1}{\binom{n}{2}} \mathbb{E}[D_\vartheta^TD_\vartheta], p+1\right) \ge C > 0
\]
for a constant $C$ independent of $n$.
With $\delta = \max_{kl} \left\vert  \left(\frac{1}{\binom{n}{2}}D_\vartheta^TD_\vartheta\right)_{kl} - \left( \frac{1}{\binom{n}{2}}\E[D_\vartheta^TD_\vartheta]\right)_{kl} \right\vert$, by Lemma \ref{Lem: Lemma 6 in KockTang}, we have
\[
\kappa^2\left(\frac{1}{\binom{n}{2}}D_\vartheta^TD_\vartheta, p+1\right) \ge \kappa^2\left(\frac{1}{\binom{n}{2}}\E[D_\vartheta^TD_\vartheta], p+1\right) - 16\delta (p+1).
\]
By looking at the proof of Lemma \ref{Lem: Lemma 6 in KockTang}, we see that in this particular case we do not even need the factor $16(p+1)$ on the right hand side above, but this does not matter anyways, so we keep it. We notice that

\begin{Lem}\label{Lem: Sigma is invertible whp}
	\[
		\delta = \max_{kl} \left\vert  \left(\frac{1}{\binom{n}{2}}D_\vartheta^TD_\vartheta\right)_{kl} - \left( \frac{1}{\binom{n}{2}}\E[D_\vartheta^TD_\vartheta]\right)_{kl} \right\vert = O_P\left( \binom{n}{2}^{-1/2}  \right).
	\]
\end{Lem}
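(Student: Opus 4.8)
The plan is to write the entrywise difference $\big(\tfrac{1}{\binom{n}{2}}D_\vartheta^TD_\vartheta\big)_{kl} - \big(\tfrac{1}{\binom{n}{2}}\E[D_\vartheta^TD_\vartheta]\big)_{kl}$ explicitly for each of the finitely many index pairs $(k,l)$, recognise it as $\binom{n}{2}^{-1}$ times a sum of $\binom{n}{2}$ independent, mean-zero, uniformly bounded random variables, and then control the maximum over entries by a Hoeffding-type bound together with the fact that $p$ is fixed.

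First I would split $D_\vartheta = [\textbf{1}\,|\,Z]$ into blocks using $D_{\vartheta,ij}^T = (1, Z_{ij}^T)$. The $(1,1)$ entry of $\tfrac{1}{\binom{n}{2}}D_\vartheta^TD_\vartheta$ equals $\tfrac{1}{\binom{n}{2}}\sum_{i<j}1 = 1$, which coincides with the $(1,1)$ entry of $\tfrac{1}{\binom{n}{2}}\E[D_\vartheta^TD_\vartheta]$, so this entry contributes nothing to $\delta$. For $k = 1,\dots,p$ the $(1,k+1)$ and $(k+1,1)$ entries equal $\tfrac{1}{\binom{n}{2}}\sum_{i<j}Z_{ij,k}$, and since the $Z_{ij}$ are centered the corresponding population entry is $0$; hence the deviation there is exactly $\tfrac{1}{\binom{n}{2}}\sum_{i<j}Z_{ij,k}$. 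For $k,l = 1,\dots,p$ the $(k+1,l+1)$ entry equals $\tfrac{1}{\binom{n}{2}}\sum_{i<j}Z_{ij,k}Z_{ij,l}$, so the deviation is $\tfrac{1}{\binom{n}{2}}\sum_{i<j}\big(Z_{ij,k}Z_{ij,l} - \E[Z_{ij,k}Z_{ij,l}]\big)$. In each of these sums the summands are independent across the pairs $i<j$ by the modelling assumption on $\{Z_{ij}\}$, have mean zero, and are bounded in absolute value by $c$ in the first-row/column case and by $2c^2$ in the lower-right block, using $|Z_{ij,k}| \le c$.

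It then suffices to bound $\E[\delta]$ directly via Lemma \ref{Lem: 14.14}, applied to the family of at most $(p+1)^2$ relevant summand-functions with constants $c_{ij} \le 2(1\vee c^2)/\binom{n}{2}$: this gives $\E[\delta] \le [2\log(2(p+1)^2)]^{1/2}\,\big(2(1\vee c^2)\big)\,\binom{n}{2}^{-1/2}$, which is $O(\binom{n}{2}^{-1/2})$ because $p$, and hence $\log(2(p+1)^2)$, is fixed; Markov's inequality then yields $\delta = O_P(\binom{n}{2}^{-1/2})$. Equivalently, one may bound the variance of each individual deviation by $C/\binom{n}{2}$, apply Hoeffding's or Chebyshev's inequality entrywise, and take a union bound over the $(p+1)^2$ entries. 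I do not anticipate a real obstacle: this is a routine concentration estimate, and the only points requiring a little care are keeping track of which entries are deterministic (the $(1,1)$ entry) versus random, and noting that the maximum over entries costs only a constant factor since $p$ is fixed.
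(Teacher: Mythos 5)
Your proposal is correct and follows essentially the same route as the paper: decompose $D_\vartheta=[\textbf{1}\,|\,Z]$ entrywise, note the $(1,1)$ entry is deterministic, use the centering and uniform boundedness of the $Z_{ij}$ to apply a Hoeffding-type concentration bound to each random entry, and finish with a union bound over the fixed number $(p+1)^2$ of entries. The only cosmetic difference is that you also offer the variant of bounding $\E[\delta]$ via Lemma \ref{Lem: 14.14} and invoking Markov, whereas the paper applies Hoeffding's inequality directly entrywise; both yield the same $O_P\bigl(\binom{n}{2}^{-1/2}\bigr)$ rate.
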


\begin{proof}
	To make referencing submatrices of $1/\binom{n}{2}D_\vartheta^TD_\vartheta$ and its expectation easier, write
	\begin{equation*}
		B \coloneqq \frac{1}{\binom{n}{2}}D_\vartheta^TD_\vartheta = \frac{1}{\binom{n}{2}} \begin{bmatrix}
			\underbrace{\textbf{1}^T\textbf{1} }_{\text{\textcircled{5}}}& \underbrace{\textbf{1}^TZ}_{\text{\textcircled{6}}} \\
			\underbrace{ Z^T\textbf{1} }_{\text{\textcircled{8}}}& \underbrace{Z^TZ}_{\text{\textcircled{9}}}
		\end{bmatrix}, \quad A \coloneqq \frac{1}{\binom{n}{2}} \E [D_\vartheta^TD_\vartheta] = \frac{1}{\binom{n}{2}} \begin{bmatrix}
			\underbrace{\textbf{1}^T\textbf{1} }_{\text{\textcircled{5}}}& \underbrace{\textbf{0}}_{\text{\textcircled{6}}} \\
			\underbrace{ \textbf{0} }_{\text{\textcircled{8}}}& \underbrace{\E [Z^TZ]}_{\text{\textcircled{9}}}
		\end{bmatrix}
	\end{equation*}
	where we have chosen our numbering to be consistent with the notation used in the proof of Proposition \ref{Prop: compatibility condition Sigma}. The matrices $A$ and $B$ are equal in block \textcircled{5}.
	For $i,j$ corresponding to the blocks \textcircled{6} and \textcircled{8}, $B_{ij} - A_{ij} = B_{ij}$ is the sum of all the entries of some column $Z_k$ of the matrix $Z$ for an appropriate $k$. That is, there is a $1 \le k \le p$ such that
	\[
	B_{ij} - A_{ij} = \frac{1}{\binom{n}{2}} Z_k^T\textbf{1} = \frac{1}{\binom{n}{2}} \sum_{ s < t} Z_{k,st}.
	\]
	Note, that thus by model assumption $\mathbb{E}[B_{ij} - A_{ij}] = 0$. We know that for each $k,s,t: Z_{k,st} \in [-c,c]$. Hence, by Hoeffding's inequality, for all $\eta > 0$,
	\[
	P\left( \vert 	B_{ij} - A_{ij}  \vert \ge \eta  \right) = P\left( \left\vert \sum_{ s < t} Z_{k,st} \right\vert \ge \binom{n}{2} \eta  \right) \le 2 \exp\left( - \frac{2\binom{n}{2}^2\eta^2}{\sum_{i<j} (2c)^2}  \right) = 2\exp\left( - \binom{n}{2}\frac{\eta^2}{2c^2}  \right).
	\]
	For $i,j$ from block \textcircled{9}, a typical element has the form
	\[
	B_{ij} - A_{ij} = \frac{1}{\binom{n}{2}} \sum_{ s < t} \left\{ Z_{k,st}Z_{l,st} - \mathbb{E}[Z_{k,st}Z_{l,st}] \right\},
	\]
	for appropriate $k,l$. In other words, $B_{ij} - A_{ij}$ is the inner product of two columns of $Z$, minus their expectation, scaled by $1/\binom{n}{2}$. Since $Z_{k,st}Z_{l,st} \in [-c^2,c^2]$ for all $k,l,s,t$, we have that for all $k,l,s,t$: $Z_{k,st}Z_{l,st} - \mathbb{E}[Z_{k,st}Z_{l,st}] \in [-2c^2,2c^2]$. Thus, by Hoeffding's inequality, for all $\eta > 0$,
	\[
	P\left( \vert B_{ij} - A_{ij}  \vert \ge \eta  \right) = P\left( \left\vert \sum_{ s < t} \{ Z_{k,st}Z_{l,st} - \mathbb{E}[Z_{k,st}Z_{l,st}]\}  \right\vert \ge  \binom{n}{2} \eta \right) \le 2 \exp\left( - \binom{n}{2} \frac{\eta^2}{8c^4}  \right).
	\]
	Thus, with $\tilde{ c} = c^2 \vee (2c^4)$, we have for any entry in blocks \textcircled{6}, \textcircled{8}, \textcircled{9}, that for any $\eta > 0$,
	\[
	P\left( \vert B_{ij} - A_{ij}  \vert \ge \eta \right) \le 2 \exp\left( - \binom{n}{2} \frac{\eta^2}{2\tilde{ c}}  \right).
	\]
	The claim will follow from a union bound: Because block \textcircled{6} is the transpose of block \textcircled{8}, it is sufficient to control one of them. By symmetry of block \textcircled{9} it suffices to control the upper triangular half, including the diagonal, of block \textcircled{9}. Thus, we only need to control the entries $B_{ij} - A_{ij}$ for $i,j$ in the following index set
	\begin{align*}
		\mathcal{A} &= \{( i,j ): i,j \text{ belong to block \textcircled{8} or the upper triangular half or the diagonal of block \textcircled{9}} \}.
	\end{align*}
	Keep in mind that block \textcircled{8} has $p$ elements, while the upper triangular part of block \textcircled{9} plus its diagonal has $\binom{p}{2} + p = \binom{p+1}{2}$ elements.
	Thus, for any $\eta  > 0$,
	\begin{align*}
		P\left( \max_{ij} \vert B_{ij} - A_{ij} \vert \ge \eta  \right)
		& \le \sum_{( i,j ) \in \mathcal{A}} P\left(  \vert B_{ij} - A_{ij} \vert \ge \eta  \right) \\
		& \le 2p \exp\left( - \binom{n}{2}\frac{\eta^2}{2c^2}  \right) + 2\binom{p+1}{2}\exp\left( - \binom{n}{2} \frac{\eta^2}{8c^4}  \right) \\
		&\le 2\left(p+\binom{p+1}{2}\right) \exp\left( - \binom{n}{2} \frac{\eta^2}{2\tilde{ c}}  \right) \\
		&= p(p+3) \exp\left( - \binom{n}{2} \frac{\eta^2}{2\tilde{ c}}  \right).
	\end{align*}
	This proves the claim.
\end{proof}

Thus, for $n$ large enough, we have with high probability $\delta \le \frac{(1 \wedge \lambda_{\min})}{32(p+1)}$. Then, by Lemma \ref{Lem: Lemma 6 in KockTang}, with high probability and uniformly in $n$,
\[
\kappa^2\left(\frac{1}{\binom{n}{2}}D_\vartheta^TD_\vartheta, p+1\right) \ge \kappa^2\left(\frac{1}{\binom{n}{2}}\E[D_\vartheta^TD_\vartheta], p+1\right) - 16\delta (p+1) \ge \frac{(1 \wedge \lambda_{\min})}{2} \ge C > 0.
\]
Yet, if $\kappa^2\left(\frac{1}{\binom{n}{2}} D_\vartheta^TD_\vartheta, p+1\right) \ge C > 0$ uniformly in $n$, then for any $v \neq 0, v^T\frac{1}{\binom{n}{2}} D_\vartheta^TD_\vartheta v \ge C \Vert v \Vert_2^2$. But we also know that the minimum eigenvalue of $\frac{1}{\binom{n}{2}} D_\vartheta^TD_\vartheta$ is the largest possible $C$ such that this bound holds (it is actually tight with equality for the eigenvectors corresponding to the minimum eigenvalue). Therefore, with high probability, the minimum eigenvalue of $\frac{1}{\binom{n}{2}}D_\vartheta^TD_\vartheta$ stays uniformly bounded away from zero.
Thus, for any $v \in \R^{p+1}\backslash\{0\}$ and any finite $n$:
\[
\frac{1}{\binom{n}{2}}v^T D_\vartheta^T \hat{W}^2 D_\vartheta v \ge \min_{i<j} \{ p_{ij}(\hat{\theta}) (1 - p_{ij}(\hat{\theta}))\} \left(	v^T \frac{1}{\binom{n}{2}} D_\vartheta^T D_\vartheta v\right) \ge C \rho_n \Vert v \Vert_2^2 >0.
\]
Thus, $\lambda_{\min}\left(\frac{1}{\binom{n}{2}}D_\vartheta^T \hat{W}^2 D_\vartheta \right) \ge C \rho_n\lambda_{\min}
\left(\frac{1}{\binom{n}{2}}D_\vartheta^T D_\vartheta  \right) > 0$. That means, for every finite $n$, $\frac{1}{\binom{n}{2}} D_\vartheta^T \hat{W}^2 D_\vartheta$ is invertible with high probability.

\subsection{Goal and approach}

\textbf{Goal:} We want to show that for $k=1, \dots, p+1$,
\[
\sqrt{\binom{n}{2}}\frac{\hat \vartheta_k - \vartheta_{0,k}}{\sqrt{\hat \Theta_{\vartheta,k,k}}} \rightarrow \mathcal{N}(0,1).
\]

\noindent\textbf{Approach:} Recall the definition of the "one-sample-version" of $\mathcal{L}$, i.e. $l_{\theta}: \{0,1\} \times \R^{n+1 + p} \rightarrow \R$,
for $\theta = (\beta^T, \mu, \gamma^T)^T \in \Theta$,
\begin{equation*}
l_{\theta}(y,x) \coloneqq -y \theta^Tx + \log(1 + \exp(\theta^Tx)).
\end{equation*}
Then, the negative log-likelihood is given by
\[
\mathcal{L}(\theta) = \sum_{ i < j} l_\theta(A_{ij}, (X_{ij}^T, 1, Z_{ij}^T)^T)
\]
and
\begin{align*}
\nabla\mathcal{L}(\theta) = \sum_{ i < j} \nabla l_\theta(A_{ij}, (X_{ij}^T, 1, Z_{ij}^T)^T),  \quad H\mathcal{L}(\theta)= \sum_{ i < j} Hl_\theta(A_{ij}, (X_{ij}^T, 1, Z_{ij}^T)^T),
\end{align*}
where $H$ denotes the Hessian with respect to $\theta$. 
Consider $l_\theta$ as a function in $\theta^T x$ and introduce:
\begin{equation}\label{Eq: Def l (a)}
l(y,a) \coloneqq -ya + \log(1+\exp(a)),
\end{equation}
with second derivative: $\ddot l (y,a) = \partial_{a^2}l(y,a) = \frac{\exp(a)}{(1+\exp(a))^2}$. Note, that $\partial_{a^2}l(y,a)$ is Lipschitz continuous (it has bounded derivative $\vert \partial_{a^3}l(y,a) \vert \le 1/(6\sqrt{3})$; Lipschitz continuity then follows by the Mean Value Theorem). Doing a first order Taylor expansion in $a$ of $\dot l (y,a) = \partial_al(y,a)$ in the point $(A_{ij}, D_{ij}^T\theta_0)$ evaluated at $(A_{ij}, D_{ij}^T\hat \theta)$, we get
\begin{equation}\label{Eq: Taylor of l}
\partial_a l(A_{ij}, D_{ij}^T\hat \theta) = \partial_a l(A_{ij}, D_{ij}^T\theta_0) + \partial_{a^2} l(A_{ij}, \alpha) D_{ij}^T(\hat \theta - \theta_0),
\end{equation}
for an $\alpha$ between $D_{ij}^T\hat \theta$ and $D_{ij}^T \theta_0$. By Lipschitz continuity of $\partial_{a^2}l$, we also find
\begin{align}\label{Eq: Lipschitz l a^2}
\begin{split}
\vert \partial_{a^2}l(A_{ij}, \alpha) D_{ij}^T(\hat \theta - \theta_0) - \partial_{a^2}l(A_{ij}, D_{ij}^T\hat \theta) D_{ij}^T(\hat \theta - \theta_0) \vert &\le \vert \alpha - D_{ij}^T\hat \theta \vert \vert D_{ij}^T(\hat \theta - \theta_0) \vert \\
&\le \vert D_{ij}^T(\hat \theta - \theta_0) \vert^2,
\end{split}
\end{align}
where the last inequality follows, because $\alpha$ is between $D_{ij}^T\hat \theta$ and $D_{ij}^T \theta_0$.

Consider the vector $P_n\nabla l_{\hat \theta}$: By equation (\ref{Eq: Taylor of l}), with $\alpha_{ij}$ between $D_{ij}^T\hat \theta$ and $D_{ij}^T \theta_0$,
\begin{align*}
P_n \nabla l_{\hat \theta} &= \frac{1}{\binom{n}{2}} \sum_{ i < j} \left( \partial_{\theta_k} l (A_{ij}, D_{ij}^T \hat \theta) \right)_{k=1, \dots, n+1+p}, \quad \text{ as a }(n+1+p)\times 1\text{-vector} \\
&= \frac{1}{\binom{n}{2}} \sum_{ i < j} \dot l (A_{ij}, D_{ij}^T \hat \theta) D_{ij} \\
&= \frac{1}{\binom{n}{2}} \sum_{ i < j} (\dot l (A_{ij}, D_{ij}^T \theta_0) + \ddot l(A_{ij}, \alpha_{ij}) D_{ij}^T (\hat \theta - \theta_0)) D_{ij} \\
\shortintertext{which by (\ref{Eq: Lipschitz l a^2}) gives}
& = P_n\nabla l_{\theta_0} + \frac{1}{\binom{n}{2}} \sum_{ i < j} D_{ij} \left\{\ddot l(A_{ij}, D_{ij}^T\hat \theta) D_{ij}^T(\hat \theta - \theta_0) + O(\vert D_{ij}^T(\hat \theta - \theta_0) \vert^2)\right\}. \\
\shortintertext{Noticing that $\ddot l(A_{ij}, D_{ij}^T\hat \theta) = p_{ij}(\hat \theta) (1 - p_{ij}(\hat{\theta}))$ and we thus have $\sum_{ i < j}\ddot l(A_{ij}, D_{ij}^T\hat \theta)D_{ij} D_{ij}^T(\hat \theta - \theta_0)= D^T \hat W^2 D (\hat \theta - \theta_0)$:}
&= P_n\nabla l_{\theta_0} + P_n Hl_{\hat{\theta}}(\hat \theta - \theta_0)  + O\left(\frac{1}{\binom{n}{2}}\sum_{i<j} D_{ij} \vert D_{ij}^T(\hat \theta - \theta_0) \vert^2 \right) \\
&= P_n\nabla l_{\theta_0} + \frac{1}{\binom{n}{2}} D^T \hat W^2 D (\hat{ \theta} - \theta_0) + O\left(\frac{1}{\binom{n}{2}}\sum_{i<j} D_{ij} \vert D_{ij}^T(\hat \theta - \theta_0) \vert^2 \right),
\end{align*}
where the $O$ notation is to be understood componentwise.
Above, we have equality of two $((n+1+p) \times 1)$-vectors. We are only interested in the portion relating to $\vartheta = (\mu, \gamma^T)^T$, that is, in the last $p+1$ entries. Introduce the $((n+1+p) \times (n+1+p))$-matrix
\begin{equation*}
M = \begin{pmatrix}
\textbf{0} & \textbf{0} \\
\textbf{0} & \hat{ \Theta}_\vartheta
\end{pmatrix},
\end{equation*}
where $\textbf{0}$ are zero-matrices of appropriate dimensions. Multiplying the above with $M$ on both sides gives:
\begin{equation}\label{Eq: Taylor P_n}
MP_n \nabla l_{\hat \theta} = M P_n \nabla l_{\theta_0} + M \frac{1}{\binom{n}{2}} D^T \hat W^2 D (\hat{ \theta} - \theta_0) + M O\left(\frac{1}{\binom{n}{2}}\sum_{i<j} D_{ij} \vert D_{ij}^T(\hat \theta - \theta_0) \vert^2 \right).
\end{equation}
Let us consider these terms in turn: Multiplication by $M$ means that the first $n$ entries of any of the vectors above are zero. Hence we only need to consider the last $p+1$ entries.
The left-hand side of (\ref{Eq: Taylor P_n}) is equal to zero by (\ref{Eq: KKT gamma}). The last $p+1$ entries of the first term on the right-hand side are $\hat \Theta_\vartheta P_n \nabla_\vartheta l_{\theta_0}$. For the second term on the right hand side, notice that
\[
\frac{1}{\binom{n}{2}} D^T \hat W^2 D = \frac{1}{\binom{n}{2}}\begin{bmatrix}
X^T\hat W^2X & X^T\hat W^2\textbf{1} & X^T\hat W^2Z \\
\textbf{1}^T\hat W^2X & \textbf{1}^T\hat W^2\textbf{1} & \textbf{1}^T\hat W^2Z \\
Z^T\hat W^2X & Z^T\hat W^2\textbf{1} & Z^T\hat W^2Z
\end{bmatrix}.
\]
$\hat \Theta_\vartheta$ is the exact inverse of $\hat \Sigma_\vartheta$ which is the lower-right $(p+1) \times (p+1)$ block of above matrix. Thus,
\[
M \frac{1}{\binom{n}{2}} D^T \hat W^2 D = \begin{bmatrix}
\textbf{0} &\textbf{0} \\
\hat \Theta_\vartheta \frac{1}{\binom{n}{2}}D_\vartheta^T\hat W^2X & I_{(p+1) \times (p+1)}
\end{bmatrix}.
\]
Then, for the last $p+1$ entries of $M \frac{1}{\binom{n}{2}} D^T \hat W^2 D (\hat{ \theta} - \theta_0)$,
\[
\left(M \frac{1}{\binom{n}{2}} D^T \hat W^2 D (\hat{ \theta} - \theta_0) \right)_{\text{last }p+1\text{ entries}}= \hat{ \Theta}_\vartheta \frac{1}{\binom{n}{2}}D_\vartheta^T\hat W^2X (\hat{ \beta} - \beta_0) + \begin{pmatrix}
	\hat \mu - \mu_0 \\
	\hat \gamma - \gamma_0
\end{pmatrix}.
\]
Thus, (\ref{Eq: Taylor P_n}) implies
\begin{equation*}
0 = \hat \Theta_\vartheta P_n \nabla_\gamma l_{\theta_0} +
\hat{ \Theta}_\vartheta \frac{1}{\binom{n}{2}}D_\vartheta^T\hat W^2X (\hat{ \beta} - \beta_0)
+ \begin{pmatrix}
\hat \mu - \mu_0 \\
\hat \gamma - \gamma_0
\end{pmatrix}
+ O\left( \hat \Theta_\vartheta \frac{1}{\binom{n}{2}}\sum_{i<j} \begin{pmatrix}
	1 \\
	Z_{ij}
\end{pmatrix} \vert D_{ij}^T(\hat \theta - \theta_0) \vert^2 \right),
\end{equation*}
which is equivalent to
\begin{equation}\label{Eq: inference equation}
\begin{pmatrix}
\hat \mu - \mu_0 \\
\hat \gamma - \gamma_0
\end{pmatrix} = - \hat \Theta_\vartheta P_n \nabla_\vartheta l_{\theta_0} -
\hat{ \Theta}_\vartheta \frac{1}{\binom{n}{2}}D_\vartheta^T\hat W^2X (\hat{ \beta} - \beta_0)
+ O\left( \hat \Theta_\vartheta \frac{1}{\binom{n}{2}}\sum_{i<j} \begin{pmatrix}
1 \\
Z_{ij}
\end{pmatrix} \vert D_{ij}^T(\hat \theta - \theta_0) \vert^2 \right).
\end{equation}
Our goal is now to show that for each component $k = 1, \dots, p+1$,
\[
	\sqrt{\binom{n}{2}}\frac{\hat \vartheta_k - \vartheta_{0,k}}{\sqrt{\hat \Theta_{\vartheta,k,k}}} \overset{d}{\longrightarrow} \mathcal{N}(0,1).
\]
as described in the \textbf{Goal} section.
To that end, by equation (\ref{Eq: inference equation}), we now need to solve the following three problems: Writing $\hat{ \Theta}_{\vartheta,k}$ for the $k$th row of $\hat{ \Theta}_\vartheta$,
\begin{enumerate}
	\item $\sqrt{\binom{n}{2}} \frac{ \hat \Theta_{\vartheta,k} P_n \nabla_\vartheta l_{\theta_0}}{\sqrt{\hat \Theta_{\vartheta,k,k}}} \overset{d}{\longrightarrow}  \mathcal{N}(0,1)$,
	\item 
	$
	\frac{1}{\sqrt{\hat \Theta_{\vartheta,k,k}}} \hat{ \Theta}_{\vartheta,k} \frac{1}{\binom{n}{2}}D_\vartheta^T\hat W^2X (\hat{ \beta} - \beta_0) = o_P\left( \binom{n}{2}^{-1/2} \right),
	$
	\item $
	O\left( 	\frac{1}{\sqrt{\hat \Theta_{\vartheta,k,k}}} \hat \Theta_{\vartheta,k} \frac{1}{\binom{n}{2}}\sum_{i<j} \begin{pmatrix}
	1 \\
	Z_{ij}
	\end{pmatrix} \vert D_{ij}^T(\hat \theta - \theta_0) \vert^2 \right) = o_P\left( \binom{n}{2}^{-1/2} \right).
	$
\end{enumerate}

\subsection{Bounding inverses}\label{Sec: Bounding inverses}

The problems (1) - (3) above suggest that it will be essential to bound the norm and the distance of $\hat \Theta_\vartheta$ and $\Theta_\vartheta$ in an appropriate manner. Notice that for any invertible matrices $A, B \in \R^{m \times m}$ we have
\[
A^{-1} - B^{-1} = A^{-1}(B - A)B^{-1}.
\]
Thus, for any sub-multiplicative matrix norm $\Vert \; . \; \Vert$, we get
\begin{equation}\label{Eq: difference between inverse matrices}
\Vert A^{-1} - B^{-1} \Vert \le  \Vert A^{-1} \Vert \Vert B^{-1} \Vert \Vert B - A \Vert.
\end{equation}
We are particularly interested in the matrix $\infty$-norm, defined as
\[
\Vert A \Vert_\infty \coloneqq \sup \left\{ \frac{\Vert Ax \Vert_\infty}{\Vert x \Vert_\infty}, x \neq 0  \right\} =  \sup \left\{ \Vert Ax \Vert_\infty, \Vert x \Vert_\infty = 1 \right\}  = \max_{1 \le i \le m} \sum_{j = 1}^m \vert A_{i,j} \vert,
\]
i.e. $\Vert A \Vert_\infty$ is the maximal row $\ell_1$-norm of $A$. It is well-known, that any such matrix norm induced by a vector norm is sub-multiplicative ($\Vert AB \Vert_\infty \le \Vert A \Vert_\infty \Vert B \Vert_\infty$) and consistent with the inducing vector norm ($\Vert Ax \Vert_\infty \le \Vert A \Vert_\infty \Vert x \Vert_\infty$ for any vector $x$ of appropriate dimension). We first want to bound the matrix $\infty$-norm in terms of the largest eigenvalue.

\begin{Lem}\label{Lem: bound matrix norm by eval}
	For any symmetric, positive semi-definite $(m \times m)$-matrix $A$ with maximal eigenvalue $\lambda > 0$, we have $\Vert A \Vert_\infty \le \sqrt{m} \lambda$.
\end{Lem}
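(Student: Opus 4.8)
The plan is to compare the matrix $\infty$-norm with the spectral (operator $2$-)norm and then exploit symmetry and positive semi-definiteness. First I would recall the elementary norm-equivalence bounds on $\R^m$: for every $x \in \R^m$ one has $\Vert x \Vert_\infty \le \Vert x \Vert_2 \le \sqrt{m}\,\Vert x \Vert_\infty$. Combining these with the fact that $\Vert A \Vert_\infty$ is the operator norm induced by the vector $\infty$-norm, for any $x$ with $\Vert x \Vert_\infty = 1$ we get
\[
\Vert A x \Vert_\infty \le \Vert A x \Vert_2 \le \Vert A \Vert_2 \Vert x \Vert_2 \le \sqrt{m}\,\Vert A \Vert_2,
\]
where $\Vert A \Vert_2 = \sup_{\Vert y \Vert_2 = 1}\Vert A y \Vert_2$ denotes the spectral norm. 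Taking the supremum over all $x$ with $\Vert x \Vert_\infty = 1$ yields $\Vert A \Vert_\infty \le \sqrt{m}\,\Vert A \Vert_2$.

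Second, I would identify $\Vert A \Vert_2$ with $\lambda$. Since $A$ is symmetric and positive semi-definite, I would diagonalise $A = U^\top \Lambda U$ with $U$ orthogonal and $\Lambda = \mathrm{diag}(\lambda_1, \dots, \lambda_m)$, all $\lambda_i \ge 0$, and $\max_i \lambda_i = \lambda$. Then for any $x$,
\[
\Vert A x \Vert_2 = \Vert \Lambda U x \Vert_2 \le \lambda \Vert U x \Vert_2 = \lambda \Vert x \Vert_2,
\]
with equality attained for an eigenvector corresponding to $\lambda$, so $\Vert A \Vert_2 = \lambda$. Chaining the two displays gives $\Vert A \Vert_\infty \le \sqrt{m}\,\lambda$, which is exactly the assertion.

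There is essentially no obstacle here: the statement is a soft consequence of the standard $\ell_1$–$\ell_2$–$\ell_\infty$ norm-equivalence constants on $\R^m$ together with the spectral characterisation of the operator $2$-norm of a symmetric matrix. The only point worth stating carefully is that positive semi-definiteness (rather than mere symmetry) lets us write $\Vert A \Vert_2 = \lambda_{\max}(A) = \lambda$ directly; in the subsequent applications $A$ is a Gram-type matrix, so this hypothesis is automatically met.
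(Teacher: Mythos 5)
Your proof is correct and follows essentially the same route as the paper: both bound $\Vert Ax \Vert_\infty$ by $\Vert Ax \Vert_2$, use $\Vert x \Vert_2 \le \sqrt{m}\,\Vert x \Vert_\infty$ to pass to the spectral norm, and then identify $\Vert A \Vert_2$ with the largest eigenvalue via symmetry (positive semi-definiteness ensuring it equals $\lambda$). No gaps.
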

\begin{proof}
	\begin{align*}
	\Vert A \Vert_\infty &=  \sup \left\{ \Vert Ax \Vert_\infty, \Vert x \Vert_\infty = 1 \right\}  \\
	&\le \sup \left\{ \Vert Ax \Vert_2, \Vert x \Vert_\infty = 1 \right\}, \quad \Vert Ax \Vert_\infty \le \Vert Ax \Vert_2 \\
	&= \sup \left\{ \frac{\Vert Ax \Vert_2}{\Vert x \Vert_2} \Vert x \Vert_2, \Vert x \Vert_\infty = 1   \right\} \\
	&\le \sqrt{m} \sup \left\{ \frac{\Vert Ax \Vert_2}{\Vert x \Vert_2}, \Vert x \Vert_\infty = 1   \right\}, \quad \text{ if } \Vert x \Vert_\infty = 1, \text{ then } \Vert x \Vert_2 \le \sqrt{m}, \\
	&\le  \sqrt{m} \sup \left\{ \frac{\Vert Ax \Vert_2}{\Vert x \Vert_2}, x \neq 0  \right\} \\
	&= \sqrt{m} \Vert A \Vert_2 = \sqrt{m}\lambda,
	\end{align*}
	where $\Vert A \Vert_2$ is the spectral norm of the matrix $A$ and we have used that for symmetric matrices, the spectral norm is equal to the modulus of the largest eigenvalue of $A$.
\end{proof}
Also, recall that the inverse of a symmetric matrix $A$ is itself symmetric:
\[
I = A A^{-1} = A^T A^{-1} \overset{\text{transpose}}{\Rightarrow} I = (A^{-1})^TA^T \overset{\text{symmetry}}{=} (A^{-1})^TA \overset{\text{uniqueness of inverse}}{\Rightarrow} (A^{-1})^T = A^{-1}.
\]
Hence, $\hat \Theta_\vartheta$ and $\Theta_\vartheta$ are symmetric and we may apply Lemma \ref{Lem: bound matrix norm by eval}. Using that $\lambda_{\max}(\Sigma_\vartheta^{-1}) = \frac{1}{\lambda_{\min}(\Sigma_\vartheta)}$, we get
\begin{equation*}
\Vert \Theta_\vartheta \Vert_\infty \le \sqrt{p} \cdot \lambda_{\max}(\Sigma_\vartheta^{-1}) \le C \frac{1}{\rho_n},
\end{equation*}
and with high probability
\begin{equation*}
\Vert \hat \Theta_\vartheta \Vert_\infty \le \sqrt{p}\cdot \lambda_{\max}(\hat \Sigma_\vartheta^{-1}) \le C \frac{1}{\rho_n},
\end{equation*}
with some absolute constant $C$. 
Finally, by (\ref{Eq: difference between inverse matrices}),
\begin{equation*}
\Vert \hat \Theta_\vartheta - \Theta_\vartheta \Vert_\infty \le \Vert \hat \Theta_\vartheta \Vert_\infty \Vert \Theta_\vartheta \Vert_\infty \Vert \hat \Sigma_\vartheta - \Sigma_\vartheta \Vert_\infty \le \frac{C}{\rho_n^2} \Vert \hat \Sigma_\vartheta - \Sigma_\vartheta \Vert_\infty.
\end{equation*}
It remains to control $\Vert \hat \Sigma_\vartheta - \Sigma_\vartheta \Vert_\infty$. We have
\begin{align*}
\hat \Sigma_\vartheta - \Sigma_\vartheta  &= \frac{1}{\binom{n}{2}} \left( D_\vartheta^T\hat W^2 D_\vartheta - \E[D_\vartheta^T W_0^2 D_\vartheta]  \right) \\
&= \underbrace{\frac{1}{\binom{n}{2}} \left( D_\vartheta^T (\hat W^2 - W_0^2) D_\vartheta   \right)}_{({I})} + \underbrace{\frac{1}{\binom{n}{2}} \left( D_\vartheta^TW_0^2 D_\vartheta - \E[D_\vartheta^TW_0^2 D_\vartheta]  \right)}_{(II)}.
\end{align*}
Recall that $\hat w_{ij}^2 = p_{ij}(\hat \theta) (1 - p_{ij}(\hat \theta)) = \frac{\exp(D_{ij}^T\hat \theta)}{(1 + \exp(D_{ij}^T\hat{ \theta}))^2} = \partial_{a^2}l(A_{ij}, D_{ij}^T\hat{ \theta})$, with the function $l$ defined in (\ref{Eq: Def l (a)}). Also recall that $\partial_{a^2}l$ is Lipschitz with constant one, by the Mean Value Theorem and the fact that it has derivative $\partial_{a^3}l$ bounded by one. Thus, considering the $(k,l)$-th element of $(I)$ above, we get:
\begin{align*}
\left| \frac{1}{\binom{n}{2}} \left( D_\vartheta^T (\hat W^2 - W_0^2) D_\vartheta   \right)_{kl} \right| &= \left| \frac{1}{\binom{n}{2}} \sum_{ i < j} D_{ij,n+k}D_{ij,n+l} (\hat w_{ij}^2 - w_{0,ij}^2)\right| \\
&\le C \frac{1}{\binom{n}{2}} \sum_{ i < j} \vert \hat w_{ij}^2 - w_{0,ij}^2\vert, \quad \text{ by unifrom boundedness of } Z_{ij} \\
&\le C \frac{1}{\binom{n}{2}} \sum_{ i < j} \vert D_{ij}^T(\hat \theta - \theta_0) \vert, \quad \text{ by Lipschitz continuity} \\
&\le \frac{C}{\binom{n}{2}} \sum_{ i < j} \left\{  \vert \hat \beta_i - \beta_{0,i} \vert + \vert \hat \beta_j - \beta_{0,j} \vert + \vert \hat \mu - \mu_0 \vert + \vert Z_{ij}^T(\hat \gamma - \gamma_0) \vert \right\} \\
&\le \frac{C}{\binom{n}{2}} \underbrace{ \left\{\sum_{ i < j}  \vert \hat \beta_i - \beta_{0,i} \vert + \vert \hat \beta_j - \beta_{0,j} \vert \right\}}_{= (n-1) \Vert \hat \beta - \beta_0 \Vert_1}  + C \vert \hat \mu - \mu_0 \vert  + C \Vert \hat \gamma - \gamma_0 \Vert_1 \\
&\le C \left\{  \frac{1}{n} \Vert \hat \beta - \beta_0 \Vert_1 + \vert \hat \mu - \mu_0 \vert + \Vert \hat \gamma - \gamma_0 \Vert_1  \right\} \\
&= O_P\left(s_{0,+} \sqrt{\frac{\log(n)}{\binom{n}{2}}} \rho_n^{-1}  \right), \quad \text{ under the conditions of Theorem \ref{Cor: no approximation error}}.
\end{align*} 
Since the dimension of $(I)$ is $(p+1) \times (p+1)$ and thus remains fixed, any row of $(I)$ has $\ell_1$ norm of order $O_P\left(s_{0,+}\sqrt{\frac{\log(n)}{\binom{n}{2}}} \rho_n^{-1}  \right)$ and thus
\[
\Vert(I) \Vert_\infty = O_P\left(s_{0,+} \sqrt{\frac{\log(n)}{\binom{n}{2}}} \rho_n^{-1}  \right).
\]
Taking a look at the $(k,l)$-th element in $(II)$:
\begin{align*}
\left\vert \frac{1}{\binom{n}{2}} \left( D_\vartheta^TW_0^2 D_\vartheta - \E[D_\vartheta^TW_0^2 D_\vartheta]  \right)_{kl} \right\vert = \left\vert \frac{1}{\binom{n}{2}} \sum_{ i < j} \left\{D_{ij,n+k}D_{ij,n+l}w_{0,ij}^2 - \E[D_{ij,n+k}D_{ij,n+l} w_{0,ij}^2] \right\}\right\vert.
\end{align*}
Note that the random variables $D_{ij,n+k}D_{ij,n+l}w_{0,ij}^2$ are bounded uniformly in $i,j,k,l$. Thus, by Hoeffding's inequality, for any $t \ge 0$,
\begin{align*}
P\left( \left\vert \frac{1}{\binom{n}{2}} \sum_{ i < j} \left\{D_{ij,n+k}D_{ij,n+l}w_{0,ij}^2 - \E[D_{ij,n+k}D_{ij,n+l} w_{0,ij}^2] \right\}\right\vert \ge t  \right) \le 2 \exp\left( - C \binom{n}{2} t^2 \right).
\end{align*}
This means, $\left\vert \frac{1}{\binom{n}{2}} \left( D_\vartheta^TW_0^2 D_\vartheta - \E[D_\vartheta^TW_0^2 D_\vartheta]  \right)_{kl} \right\vert = O_P\left( \binom{n}{2}^{-1/2}  \right)$. Again, since the dimension $p+1$ is fixed, we get by a simple union bound
\[
\Vert (II) \Vert_\infty = O_P\left(  \binom{n}{2}^{-1/2} \right).
\]
In total, we thus get
\begin{align*}
\Vert \hat \Sigma_\vartheta - \Sigma_\vartheta \Vert_\infty 
&=O_P\left(s_{0,+}  \sqrt{\frac{\log(n)}{\binom{n}{2}}} \rho_n^{-1}  + \frac{1}{\sqrt{\binom{n}{2}}}  \right) = O_P\left(s_{0,+}  \sqrt{\frac{\log(n)}{\binom{n}{2}}} \rho_n^{-1} \right).
\end{align*}
We can now obtain a rate for $	\Vert \hat \Theta_\vartheta - \Theta_\vartheta \Vert_\infty$.
\begin{align*}
\Vert \hat \Theta_\vartheta - \Theta_\vartheta \Vert_\infty \le \frac{C}{\rho_n^2} \Vert \hat \Sigma_\vartheta - \Sigma_\vartheta \Vert_\infty = O_P\left( s_{0,+}  \sqrt{\frac{\log(n)}{\binom{n}{2}}} \rho_n^{-3} \right).
\end{align*}
By Assumption \ref{Assum: new rate of s and rho_n}, we have $s_{0,+} \frac{\sqrt{\log(n)}}{\sqrt{n}\rho_n^2} \rightarrow 0, n \rightarrow \infty$, which in particular also implies that the above is $o_P(1)$. Notice in particular, that we have now managed to get for $k = 1, \dots, p+1,$
\begin{itemize}
	\item $\Vert \hat \Theta_{\vartheta,k}  - \Theta_{\vartheta,k} \Vert_1 = o_P(1)$,
	\item $\hat \Theta_{\vartheta,k,k} = \Theta_{\vartheta,k,k} + o_p(1)$.
\end{itemize}

\subsection{Problem 1}\label{C.4}

We can now take a look at the problems (1) - (3) outlined above. For problem (1), we want to show:
\[
\sqrt{\binom{n}{2}} \frac{ \hat \Theta_{\vartheta,k} P_n \nabla_\vartheta l_{\theta_0}}{\sqrt{\hat \Theta_{\vartheta,k,k}}} \rightarrow \mathcal{N}(0,1).
\]
\textbf{Step 1:} Show that
\begin{equation}\label{Eq: root n}
\hat{ \Theta}_{\vartheta,k} P_n \nabla_\vartheta l_{\theta_0} = \Theta_{\vartheta,k} P_n \nabla_\vartheta l_{\theta_0}+ o_P\left( \binom{n}{2}^{-1/2}\right).
\end{equation}
We have
\begin{align*}
\vert (\hat \Theta_{\vartheta,k} - \Theta_{\vartheta,k} ) P_n \nabla_\vartheta l_{\theta_0} \vert &\le \Vert \hat \Theta_{\vartheta,k} - \Theta_{\vartheta,k} \Vert_1 \left\Vert \frac{1}{\binom{n}{2}} \sum_{ i < j} \begin{pmatrix}
	1 \\
	Z_{ij}
\end{pmatrix} (p_{ij}(\theta_0) - A_{ij}) \right\Vert_\infty \\
&\le \Vert \hat \Theta_{\vartheta} - \Theta_{\vartheta} \Vert_\infty \left\Vert \frac{1}{\binom{n}{2}} \sum_{ i < j} D_{\vartheta, ij} (p_{ij}(\theta_0) - A_{ij}) \right\Vert_\infty.
\end{align*}
Consider the vector $\sum_{ i < j} D_{\vartheta, ij} (p_{ij}(\theta_0) - A_{ij}) \in \R^{p+1}$. The $k$th component of it has the form $\sum_{ i < j} (p_{ij}(\theta_0) - A_{ij})$ for $k=1$ and $\sum_{ i < j} Z_{ij,k-1} (p_{ij}(\theta_0) - A_{ij}), k = 2, \dots, p+1$. Notice that these components are all centered:
\[
\mathbb{E}[ D_{\vartheta, ij,k} (p_{ij}(\theta_0) - A_{ij})] = \mathbb{E}[D_{\vartheta,ij,k} \mathbb{E}[ (p_{ij}(\theta_0) - A_{ij})| Z_{ij}]] = \mathbb{E}[D_{\vartheta, ij, k} \cdot0] = 0,
\]
as well as $\vert D_{\vartheta, ij,k} (p_{ij}(\theta_0) - A_{ij}) \vert \le c$, where $c > 1$ is a universal constant bounding $\vert Z_{ij,k} \vert$ for all $i,j,k$.
Thus, by Hoeffding's inequality, for any $t > 0$,
\begin{align*}
P\left( \left|   \frac{1}{\binom{n}{2}} \sum_{ i < j} D_{\vartheta, ij, k} (p_{ij}(\theta_0) - A_{ij})  \right| \ge t   \right) \le 2 \exp \left(- 2 \frac{\binom{n}{2} t^2 }{c^2}  \right)
\end{align*}
and thus, 
\[
\frac{1}{\binom{n}{2}} \sum_{ i < j} D_{\vartheta, ij} (p_{ij}(\theta_0) - A_{ij}) = O_P\left( \binom{n}{2}^{-1/2} \right).
\]
Since we have $\Vert \hat \Theta_\vartheta - \Theta_\vartheta \Vert_\infty = o_P(1)$, by Section \ref{Sec: Bounding inverses}, step 1 is now concluded. 

\noindent \textbf{Step 2:} Show that

\begin{equation*}
\hat \Theta_{\vartheta,k,k} = \Theta_{\vartheta,k,k} + o_P(1).
\end{equation*}
Since $\Vert \hat \Theta_\vartheta - \Theta_\vartheta \Vert_\infty = o_P(1)$, by Section \ref{Sec: Bounding inverses}, for all $k$
\[
\vert \hat \Theta_{\vartheta,k,k} - \Theta_{\vartheta,k,k} \vert \le \Vert \hat \Theta_\vartheta - \Theta_\vartheta \Vert_\infty = o_P(1)
\]
and step 2 is concluded.

\noindent\textbf{Step 3:} Show that
\[
\left\vert \frac{1}{\Theta_{\vartheta,k,k}} \right\vert \le C < \infty,
\]
for some universal constant $C > 0$. Then, we may conclude from step 1 and step 2 that
\[
\sqrt{\binom{n}{2}} \frac{ \hat \Theta_{\vartheta,k} P_n \nabla_\vartheta l_{\theta_0}}{\sqrt{\hat \Theta_{\vartheta,k,k}}} = \sqrt{\binom{n}{2}}\frac{\Theta_{\vartheta,k} P_n \nabla_\vartheta l_{\theta_0}}{\sqrt{\Theta_{\vartheta,k,k}}} + o_P(1). 
\]
To prove step 3, notice that $\Theta_\vartheta$ is symmetric and hence has only real eigenvalues. Therefore it is unitarily diagonalizable and for any $x \in \R^{p+1}$, we have $x^T \Theta_\vartheta x \ge \lambda_{\min}( \Theta_\vartheta) \Vert x \Vert_2^2$. We also know that
\[
	\lambda_{\min}({  \Theta_\vartheta}) = \frac{1}{\lambda_{\max}( \Sigma_\vartheta)}.
\]
Under Assumption \ref{Assum: maximum EW} we can now deduce an upper bound on the maximum eigenvalue of $\Sigma_\vartheta$: For any $x \in \R^p$,
\[
x^T\Sigma_\vartheta x = x^T \frac{1}{\binom{n}{2}}E[D_\vartheta^T W_0^2 D_\vartheta] x \le x^T \frac{1}{\binom{n}{2}}E[D_\vartheta^T D_\vartheta] x \le (1 \vee \lambda_{\max}) \Vert x \Vert_2^2,
\]
where used that any entry in $W_0^2$ is bounded above by one. Since $x^T\Sigma_\vartheta x \le \lambda_{\max}(\Sigma_\vartheta) \Vert x \Vert_2^2$ and since this bound is tight, we can conclude by Assumption \ref{Assum: maximum EW} that $\lambda_{\max}(\Sigma_\vartheta) \le (1 \vee \lambda_{\max}) \le C < \infty$ for some universal constant $C > 0$.

In particular, since ${ \Theta}_{\vartheta,k,k} = e_k^T \Theta_\vartheta e_k$, we get
\[
{ \Theta}_{\vartheta,k,k} \ge \lambda_{\min}({ \Theta_\vartheta}) \Vert e_k \Vert_2^2 = \frac{1}{\lambda_{\max} ( \Sigma_\vartheta)} \ge C > 0,
\]
uniformly for all $n$. Consequently,
\[
0 <  \frac{1}{\Theta_{\vartheta,k,k}}  \le C < \infty.
\]
Step 3 is thus concluded.

\noindent \textbf{Step 4:} Finally, show that
\begin{equation*}
\sqrt{\binom{n}{2}}\frac{\Theta_{\vartheta,k} P_n \nabla_\vartheta l_{\theta_0}}{\sqrt{\Theta_{\vartheta,k,k}}} \overset{d}{\longrightarrow} \mathcal{N}(0,1),
\end{equation*}
Such that by all the above
\begin{equation*}
\sqrt{\binom{n}{2}} \frac{ \hat \Theta_{\vartheta,k} P_n \nabla_\vartheta l_{\theta_0}}{\sqrt{\hat \Theta_{\vartheta,k,k}}} \overset{d}{\longrightarrow} \mathcal{N}(0,1).
\end{equation*}
For brevity, we write $p_{ij}$ for the true link probabilities $p_{ij}(\theta_0)$. Also keep in mind that $\Theta_{\vartheta,k}$ denotes the $k$th \textit{row} of $\Theta_\vartheta$, while $D_{\vartheta, ij}$ denote $((p+1) \times 1)$-\textit{column} vectors.
We want to apply the Lindeberg-Feller Central Limit Theorem. The random variables we study are the summands in 
\[
\sqrt{\binom{n}{2}}  \Theta_{\vartheta,k} P_n \nabla_\vartheta l_{\theta_0} = \sum_{ i < j} \left\{ \frac{1}{\sqrt{\binom{n}{2}}} \Theta_{\vartheta,k} D_{\vartheta, ij} (p_{ij} - A_{ij}) \right\}.
\]
First, notice that these random variables are centered:
\begin{align*}
	\mathbb{E}\left[ \frac{1}{\sqrt{\binom{n}{2}}} \Theta_{\vartheta,k}D_{\vartheta, ij} (p_{ij} - A_{ij}) \right] &= \mathbb{E}\left[ \frac{1}{\sqrt{\binom{n}{2}}} \Theta_{\vartheta,k}D_{\vartheta, ij} \mathbb{E}[p_{ij} - A_{ij} | Z_{ij}]  \right] \\
	&= \mathbb{E}\left[ \frac{1}{\sqrt{\binom{n}{2}}} \Theta_{\vartheta,k}D_{\vartheta, ij}  \cdot 0 \right] = 0.
\end{align*}
For the Lindeberg-Feller CLT we need to sum up the variances of these random variables. 
We claim that
\[
\sum_{ i < j} \text{Var}\left( \frac{1}{\sqrt{\binom{n}{2}}} \Theta_{\vartheta,k}D_{\vartheta, ij} (p_{ij} - A_{ij})   \right) = \Theta_{\vartheta,k,k}.
\]
Indeed, consider the vector-valued random variable $\sum_{ i < j} \left\{ \frac{1}{\sqrt{\binom{n}{2}}} D_{\vartheta, ij} (p_{ij} - A_{ij}) \right\} \in \R^{p+1}$. It has covariance matrix
\begin{align*}
\mathbb{E}&\left[ \sum_{ i < j} \left\{ \frac{1}{\sqrt{\binom{n}{2}}} D_{\vartheta, ij} (p_{ij} - A_{ij}) \right\} \sum_{ i < j} \left\{ \frac{1}{\sqrt{\binom{n}{2}}} D_{\vartheta, ij} (p_{ij} - A_{ij}) \right\}^T   \right] \\
&= \mathbb{E}\left[ \sum_{ i < j} \frac{1}{\sqrt{\binom{n}{2}}} D_{\vartheta, ij} (p_{ij} - A_{ij}) \frac{1}{\sqrt{\binom{n}{2}}} D_{\vartheta, ij}^T (p_{ij} - A_{ij})   \right], \quad \text{by independence accross } i,j \\
&= \frac{1}{\binom{n}{2}} \sum_{i<j} \left[  \mathbb{E}[D_{\vartheta, ij,k}D_{\vartheta, ij,l}  (p_{ij} - A_{ij})^2] \right]_{k,l = 1, \dots, p+1}, \quad \text{ as a } ((p+1) \times (p+1))\text{-matrix} \\
&= \frac{1}{\binom{n}{2}} \mathbb{E}[D_\vartheta^TW_0^2D_\vartheta] \\
&= \Sigma_\vartheta.
\end{align*}
Thus, by independence across $i,j$,
\begin{align*}
	\sum_{ i < j} \text{Var}\left( \frac{1}{\sqrt{\binom{n}{2}}} \Theta_{\vartheta,k}D_{\vartheta,ij} (p_{ij} - A_{ij})   \right) &= \text{Var}\left( \Theta_{\vartheta,k}  \sum_{ i < j}  \frac{1}{\sqrt{\binom{n}{2}}} D_{\vartheta, ij} (p_{ij} - A_{ij})  \right) \\
	&= \Theta_{\vartheta,k} \Sigma_\vartheta \Theta_{\vartheta,k}^T = \Theta_{\vartheta,k,k},
\end{align*}
where for the last equality we have used that $\Theta_\vartheta$ is the inverse of $\Sigma_\vartheta$ and thus, $\Sigma_\vartheta\Theta_{\vartheta,k}^T= e_k$.
Now, we need to show that the Lindeberg condition holds. That is, we want that for any $\epsilon > 0$,
\begin{equation}\label{Eq: Lindeberg condition}
\lim_{n\rightarrow \infty} \frac{1}{\Theta_{\vartheta,k,k}} \sum_{ i < j} \mathbb{E}\left[ \left\{  \frac{1}{\sqrt{\binom{n}{2}}} \Theta_{\vartheta,k}D_{\vartheta, ij} (p_{ij} - A_{ij}) \right\}^2 \mathbbm{1}\left( \vert \Theta_{\vartheta,k}D_{\vartheta, ij} (p_{ij} - A_{ij}) \vert > \epsilon \sqrt{\binom{n}{2}\Theta_{\vartheta,k,k} } \right) \right] = 0.
\end{equation}
We have
\[
\vert \Theta_{\vartheta,k}D_{\vartheta, ij} (p_{ij} - A_{ij}) \vert \le p \cdot c \cdot \Vert \Theta_{\vartheta,k} \Vert_1 \le C \Vert \Theta_\vartheta \Vert_\infty \le C \rho_n^{-1}.
\]
At the same time, we know from step 3 that $\Theta_{Z,k,k} \ge C > 0$ for some universal $C$. Then, as long as $\rho_n^{-1}$ goes to infinity at a rate slower than $n$, which is enforced by Assumption \ref{Assum: new rate of s and rho_n}, we must have for $n$ large enough
\[
\vert \Theta_{\vartheta,k}D_{\vartheta, ij} (p_{ij} - A_{ij}) \vert < \epsilon \sqrt{\binom{n}{2}\Theta_{\vartheta,k,k} }
\]
uniformly in $i,j$. Thus, the indicator function and therefore each summand in (\ref{Eq: Lindeberg condition}) is equal to zero for $n$ large enough. Hence, (\ref{Eq: Lindeberg condition}) holds. Then, by the Lindeberg-Feller CLT,
\[
\sqrt{\binom{n}{2}}\frac{\Theta_{\vartheta,k} P_n \nabla_\vartheta l_{\theta_0}}{\sqrt{\Theta_{\vartheta,k,k}}} \overset{d}{\longrightarrow} \mathcal{N}(0,1).
\]
Now, by the steps 1-4 and Slutzky's Theorem
\begin{align*}
	\sqrt{\binom{n}{2}} \frac{ \hat \Theta_{\vartheta,k} P_n \nabla_\vartheta l_{\theta_0}}{\sqrt{\hat \Theta_{\vartheta,k,k}}} &= \sqrt{\binom{n}{2}} \frac{ ( \Theta_{\vartheta,k} + o_P(1))P_n \nabla_\vartheta l_{\theta_0}}{\sqrt{ (\Theta_{\vartheta,k,k} + o_P(1)) }} \\
	&= \sqrt{\binom{n}{2}} \frac{ \Theta_{\vartheta,k} P_n \nabla_\vartheta l_{\theta_0}}{\sqrt{ (\Theta_{\vartheta,k,k} + o_P(1)) }} + \sqrt{\binom{n}{2}} \frac{ o_P(1)P_n \nabla_\vartheta l_{\theta_0}}{\sqrt{ (\Theta_{\vartheta,k,k} + o_P(1)) }} \\
	&\overset{d}{\longrightarrow} \mathcal{N}(0,1).
\end{align*}
This concludes solving problem 1. 

\subsection{Problem 2}\label{Sec: Problem 2, beta}
For problem 2 we must show
\[
	\frac{1}{\sqrt{\hat \Theta_{\vartheta,k,k}}} \hat{ \Theta}_{\vartheta,k} \frac{1}{\binom{n}{2}}D_\vartheta^T\hat W^2X (\hat{ \beta} - \beta_0) = o_P\left( \binom{n}{2}^{-1/2} \right).
\]
Since we have $\Vert \hat \Theta_\vartheta - \Theta_\vartheta \Vert_\infty = o_P(1)$, we do not need to worry about $\frac{1}{\sqrt{\hat \Theta_{Z,k,k}}}$, because $\hat{ \Theta}_{Z,k,k} = \Theta_{Z,k,k} + o_P(1)$ and $\frac{1}{\sqrt{ \Theta_{Z,k,k}}} \le C < \infty$, i.e. $\frac{1}{\sqrt{ \hat \Theta_{Z,k,k}}} = O_P(1)$ . By Theorem \ref{Cor: no approximation error} we also have a high-probability error bound on $\Vert \hat \beta - \beta_0 \Vert_1$. The problem will be bounding the corresponding matrix norms.
\[
\left\vert \hat{ \Theta}_{\vartheta,k} \frac{1}{\binom{n}{2}}D_\vartheta^T\hat W^2X (\hat{ \beta} - \beta_0) \right\vert \le \left\Vert \frac{1}{\binom{n}{2}} X^T\hat W^2 D_\vartheta \hat{ \Theta}_{\vartheta,k}^T\right\Vert_\infty \Vert \hat \beta - \beta_0 \Vert_1.
\]
Notice that in the display above we have the vector $\ell_\infty$-norm.
Also,
\[
\left\Vert\frac{1}{\binom{n}{2}} X^T\hat W^2 D_\vartheta \hat{ \Theta}_{\vartheta,k}^T \right\Vert_\infty \le \Vert \hat{ \Theta}_{\vartheta,k}^T \Vert_\infty \left\Vert \frac{1}{\binom{n}{2}}X^T\hat W^2D_\vartheta \right\Vert_\infty.
\]
Here we used the compatibility of the matrix $\ell_\infty$-norm with the vector $\ell_\infty$-norm. The first term is the vector norm, the second the matrix norm.
We know,
\[
\Vert \hat{ \Theta}_{\vartheta,k}^T \Vert_\infty \le \Vert \hat \Theta_\vartheta \Vert_\infty \le C \rho_n^{-1},
\]
where on the left hand side we have the vector norm and in the middle display the matrix norm.
Finally, $\frac{1}{\binom{n}{2}}X^T\hat W^2D_\vartheta$ is a $(n \times (p+1))$-matrix. The $(k,l)$-th element looks like
\[
\left\vert \frac{1}{\binom{n}{2}} \sum_{ i = 1, i \neq l}^n D_{\vartheta, il,k} \hat{ w}_{il}^2 \right\vert \le \frac{1}{\binom{n}{2}} \cdot (n - 1)  \cdot c = \frac{C}{n}.
\]
Thus, the $\ell_1$-norm of any row of $\frac{1}{\binom{n}{2}}X^T\hat W^2D_\vartheta$ is bounded by $C/n$ and thus
\[
\left\Vert \frac{1}{\binom{n}{2}}X^T\hat W^2D_\vartheta \right\Vert_\infty \le \frac{C}{n}.
\]
Recall that $\Vert \hat \beta - \beta_0 \Vert_1 = O_P\left(s_{0,+}  \frac{\sqrt{\log(n)}}{\sqrt{n}} \rho_n^{-1}  \right)$ by Theorem \ref{Cor: no approximation error}. Then,
\begin{align*}
\left\vert \hat{ \Theta}_{\vartheta,k} \frac{1}{\binom{n}{2}}X^T\hat W^2D_\vartheta (\hat{ \beta} - \beta_0) \right\vert &\le \Vert \hat{ \Theta}_{\vartheta,k}^T \Vert_\infty \left\Vert \frac{1}{\binom{n}{2}}D_\vartheta^T\hat W^2X \right\Vert_\infty \Vert \hat \beta - \beta_0 \Vert_1 \\
&=O_P\left( \frac{s_{0,+} }{\rho_n^2 \cdot n} \cdot  \frac{\sqrt{\log(n)}}{\sqrt{n}}  \right).
\end{align*}
Multiplying by $\sqrt{\binom{n}{2}} = O(n)$, gives
\begin{align*}
\sqrt{\binom{n}{2}}\left\vert \hat{ \Theta}_{\vartheta,k} \frac{1}{\binom{n}{2}}D_\vartheta^T\hat W^2X (\hat{ \beta} - \beta_0) \right\vert &= O_P\left(  \frac{s_{0,+} }{\rho_n^2 } \cdot  \frac{\sqrt{\log(n)}}{\sqrt{n}}   \right),
\end{align*}
which is $o_P(1)$ under Assumption \ref{Assum: new rate of s and rho_n}.

\subsection{Problem 3}

Finally, we must show
\[
	O\left( 	\frac{1}{\sqrt{\hat \Theta_{\vartheta,k,k}}} \hat \Theta_{\vartheta,k} \frac{1}{\binom{n}{2}}\sum_{i<j} \begin{pmatrix}
	1 \\
	Z_{ij}
	\end{pmatrix} \vert D_{ij}^T(\hat \theta - \theta_0) \vert^2 \right) = o_P\left( \binom{n}{2}^{-1/2} \right).
\]
Again, since $\hat{ \Theta}_{\vartheta,k,k} = \Theta_{\vartheta,k,k} + o_P(1)$ and $\Theta_{\vartheta,k,k} \ge C > 0$ uniformly in $n$, we do not need to worry about the factor $	\frac{1}{\sqrt{\hat \Theta_{\vartheta,k,k}}}$ and it remains to show
\[
O\left( \hat \Theta_{\vartheta,k} \frac{1}{\binom{n}{2}}\sum_{i<j} D_{\vartheta, ij} \vert D_{ij}^{T}(\hat \theta - \theta_0) \vert^2 \right) =  o_P\left( \binom{n}{2}^{-1/2} \right).
\]
We have
\begin{align*}
\left\vert  \hat \Theta_{\vartheta,k} \frac{1}{\binom{n}{2}}\sum_{i<j} D_{\vartheta, ij} \vert D_{ij}^{T}(\hat \theta - \theta_0) \vert^2  \right\vert &\le   \frac{1}{\binom{n}{2}}\sum_{i<j}  \vert \hat \Theta_{\vartheta,k}D_{\vartheta, ij} \vert \vert D_{ij}^T(\hat \theta - \theta_0) \vert^2 \\
&\le c \Vert \hat \Theta_{\vartheta,k} \Vert_1 \frac{1}{\binom{n}{2}}\sum_{i<j} \vert D_{ij}^T(\hat \theta - \theta_0) \vert^2 \\
&\le C \frac{1}{\rho_n} \frac{1}{\binom{n}{2}}\sum_{i<j} \vert D_{ij}^T(\hat \theta - \theta_0) \vert^2,
\end{align*}
where for the last inequality we have used that $\Vert \hat \Theta_{\vartheta,k} \Vert_1 \le \Vert \hat \Theta_\vartheta \Vert_\infty \le C \frac{1}{\rho_n}$. 
Now remember from \eqref{Eq: bound on sum D_ij theta -theta*} that
\[
	\frac{1}{\binom{n}{2}}\sum_{i<j} \vert D_{ij}^T(\hat \theta - \theta_0) \vert^2 \le C \Vert \hat{ \bar \theta} - \bar \theta_0 \Vert_1^2,
\]
where we make use of the fact that $\theta^* = \theta_0$ if there is no approximation error (as assumed by Theorem \ref{Thm: inference}) and that $\bar D \bar \theta = D \theta$.
From Theorem \ref{Cor: no approximation error} we know that under the assumptions of Theorem \ref{Thm: inference}, $\Vert \hat{ \bar \theta} - \bar \theta_0 \Vert_1 = O_P\left(s_{0,+}  \sqrt{\frac{\log(n)}{\binom{n}{2}}} \rho_n^{-1} \right)$.
Thus,
\[
\sqrt{\binom{n}{2}}\left\vert  \hat \Theta_{\vartheta, k} \frac{1}{\binom{n}{2}}\sum_{i<j} D_{\vartheta, ij} \vert D_{ij}^T(\hat \theta - \theta_0) \vert^2  \right\vert = O_P\left( (s_{0,+} )^2 \frac{\log(n)}{\sqrt{\binom{n}{2}}} \rho_n^{-3} \right).
\]
We see that this is $o_P(1)$ by applying Assumption \ref{Assum: new rate of s and rho_n} twice. Problem 3 is solved.

\begin{proof}[Proof of Theorem \ref{Thm: inference}]
	Theorem \ref{Thm: inference} now follows from the solved problems (1) - (3).
\end{proof}

\section{Proof of Theorem \ref{beta_AN}}

To be consistent with the notation used in Section \ref{proof_thm3}, let $\hat{S}_\beta = \binom{n}{2}\hat{U}_\beta$ and recall that $P_nl_{\theta} =  \mathcal{L}(\theta)/\binom{n}{2}$. Then $\hat{b} =\hat{\beta}-\hat{U}_\beta \nabla_{\beta} \mathcal{L}(\hat{\theta})= \hat{\beta}-\hat{S}_\beta P_n \nabla_{\beta} l_{\hat{\theta}}$.

\begin{proof} Using Taylor expansion, we have
\begin{align*}
P_n \nabla_{\beta} l_{\hat{\theta}} &=P_n \nabla_{\beta} l_{\theta_0}+\frac{1}{\binom{n}{2}} X^T \hat{W}^2 D\big(\hat{\theta}-\theta_0\big)+O\left(\frac{1}{\binom{n}{2}} \sum_{i<j} X_{i j}l^{\prime\prime\prime} (A_{ij},D_{i j}\tilde{\theta})\big|D_{i j}^T\big(\hat{\theta}-\theta_0\big)\big|^2\right).
\end{align*}
Noticing that 
$$
X^T \hat{W}^2 D\big(\hat{\theta}-\theta_0\big) = X^{T} \hat{W}^2X(\hat{\beta}-\beta_0)+X^{T} \hat{W}^2(\mathbf{1} \text{ } Z)(\hat{\vartheta}-\vartheta_0),
$$
 we get
\begin{align*}
\hat{b} - \beta_0 =& \hat{\beta}-\hat{S}_\beta P_n \nabla_{\beta} l_{\hat{\theta}} -\beta_0 \\
=& \hat{\beta}-\beta_0 -\hat{S}_\beta P_n \nabla_{\beta} l_{\theta_0} -\hat{S}_\beta\left(P_n \nabla_{\beta} l_{\hat{\theta}}- P_n \nabla_{\beta} l_{\theta_0}\right) \\
=&-\hat{S}_\beta P_n \nabla_{\beta} l_{\theta_0}+ \bigg(I_{n\times n}- \frac{1}{\binom{n}{2}}\hat{S}_\beta \hat{V}_{\beta}\bigg) (\hat{\beta}-\beta_0) -\frac{1}{\binom{n}{2}}\hat{S}_\beta X^{T} \hat{W}^2(\mathbf{1} \text{ } Z)(\hat{\vartheta}-\vartheta_0)\\
&+O\left(\frac{1}{\binom{n}{2}} \hat{S}_\beta\sum_{i<j} X_{i j}l^{\prime\prime\prime} (A_{ij},D_{i j}\tilde{\theta}) \left|D_{i j}^T\left(\hat{\theta}-\theta_0\right)\right|^2\right).
\end{align*}

Our goal is now to solve the following four problems: 

\begin{enumerate}
	\item 
$\sqrt{\hat{V}_{\beta,k,k}}\hat{S}_{\beta,k} P_n \nabla_{\beta} l_{\theta_0}  \overset{d}{\longrightarrow} N(0,1)$, where $\hat{S}_{\beta,k}$ is the $k$-th row of $\hat{S}_{\beta}$;
	\item 
	$\sqrt{\hat{V}_{\beta,k,k}}\big(e_{k}- \binom{n}{2}^{-1}\hat{S}_{\beta,k} \hat{V}_{\beta}\big)(\hat{\beta}-\beta_0) = o_{P}(1)$;
	\item 
	$\sqrt{\hat{V}_{\beta,k,k}}\binom{n}{2}^{-1}\hat{S}_{\beta,k} X^{T} \hat{W}^2(\mathbf{1} \text{ } Z)(\hat{\vartheta}-\vartheta_0)= o_{P}(1)$;
	\item $
O\bigg(\sqrt{\hat{V}_{\beta,k,k}}\binom{n}{2}^{-1} \hat{S}_{\beta,k}\sum_{i<j} X_{i j} l^{\prime\prime\prime} (A_{ij},D_{i j}\tilde{\theta})\big|D_{i j}^T(\hat{\theta}-\theta_0)\big|^2\bigg)= o_{P}(1)
	$.
\end{enumerate}

For Problem 1, 
$$
\sqrt{\hat{V}_{\beta,k,k}}\hat{S}_{\beta,k} P_n \nabla_{\beta} l_{\theta_0} = \frac{1}{\sqrt{\hat{V}_{\beta,k,k}}} \sum_{i\neq k} (A_{ik} -p_{ik}(\theta_0)),
$$ 
which is asymptotically normal using a standard argument of the central limit theorem as in Section \ref{C.4}.

For Problem 2,
\begin{align*}
\sqrt{\hat{V}_{\beta,k,k}}\bigg(e_{k}- \frac{1}{\binom{n}{2}}\hat{S}_{\beta,k} \hat{V}_{\beta}\bigg)(\hat{\beta}-\beta_0)  &\leq \sqrt{\hat{V}_{\beta,k,k}}\bigg\Vert e_{k}- \frac{1}{\binom{n}{2}}\hat{S}_{\beta,k} \hat{V}_{\beta}\bigg\Vert_{\infty}\Vert \hat{\beta}-\beta_0 \Vert_1 \\
&\leq  \sqrt{\hat{V}_{\beta,k,k}} \frac{1}{\hat{V}_{\beta,k,k}}\Vert \hat{\beta}-\beta_0 \Vert_1\\
&\leq  \frac{1}{\sqrt{C(n-1)\rho_n}}\frac{s_{0,+}\sqrt{\log n}}{\sqrt{n}\rho_n}\\
& = o_{P}(1),
\end{align*}
where for the last inequality, we use $\hat{V}_{\beta,k,k} = \sum_{i\neq k}p_{ik}(\hat{\theta})(1-p_{ik}(\hat{\theta})) \geq C(n-1)\rho_n$ and $\Vert \hat{\beta}-\beta_0 \Vert_1 =O_{P}\bigg(\frac{s_{0,+}\sqrt{\log n}}{\sqrt{n}\rho_n}\bigg)$ by Theorem \ref{Cor: no approximation error}.

For Problem 3,
\begin{align*}
\sqrt{\hat{V}_{\beta,k,k}}\frac{1}{\binom{n}{2}}\hat{S}_{\beta,k} X^{T} \hat{W}^2(\mathbf{1} \text{ } Z)(\hat{\vartheta}-\vartheta_0) &= \sqrt{\hat{V}_{\beta,k,k}}\frac{1}{\hat{V}_{\beta,k,k}} \sum_{i\neq k}\hat{w}_{ik}^2 (1 \text{ } Z^{T}_{ik})(\hat{\vartheta}-\vartheta_0)\\
&\leq C\sqrt{\hat{V}_{\beta,k,k}} \Vert \hat{\vartheta}-\vartheta_0 \Vert_1 \\
&\leq  C\sqrt{n}\frac{s_{0,+}\sqrt{\log n}}{n\rho_n}\\
&= o_{P}(1).
\end{align*}

For Problem 4, notice that
\begin{align*}
\vert l^{\prime\prime\prime} (A_{ik},D_{i k}\tilde{\theta})\vert = \bigg\vert\frac{\exp (D_{ik}^T\tilde{\theta})(1-\exp(D_{ik}^T\tilde{\theta}))}{(1+\exp (D_{ik}^T\tilde{\theta}))^3} \bigg\vert \leq \bigg\vert\frac{\exp (D_{ik}^T\tilde{\theta})}{(1+\exp (D_{ik}^T\tilde{\theta}))^2}\bigg\vert  \leq C\hat{V}_{\beta,i,k}
\end{align*}
for any $i, k,$ since $\tilde{\theta}$ lies between $\hat{\theta}$ and $\theta_0$ and $\hat{\theta}\overset{P}{\longrightarrow} \theta_0$. Then we have
\begin{align*}
\sqrt{\hat{V}_{\beta,k,k}}\binom{n}{2}^{-1} \hat{S}_{\beta,k}\sum_{i<j} X_{i j}l^{\prime\prime\prime} (A_{ij},D_{i j}\tilde{\theta}) \big|D_{i j}^T(\hat{\theta}-\theta_0)\big|^2 & \leq \sqrt{\hat{V}_{\beta,k,k}}\frac{1}{\hat{V}_{\beta,k,k}}\sum_{i\neq k} C\hat{V}_{\beta,i,k} \frac{s_{0,+}\log n}{n\rho^{2}_n} \\
&\leq\frac{s_{0,+}\log n}{\sqrt{n}\rho^{2}_n} \\
& =   o_{P}(1),
\end{align*}
where for the first inequality, we use Proposition \ref{Prop: q error bound}. For the second inequality, we use $\hat{V}_{\beta,k,k} = \sum_{i \neq k}\hat{V}_{\beta, i,k}$ and $\hat{V}_{\beta,k,k} \leq n$. Assumption \ref{inference_beta} implies the last equation directly.  
\end{proof}

\section{Proofs of Section \ref{subsection: generalized ER model}}\label{Sec: Proofs for ER-C}

We first prove the consistency of the MLE $\hat \theta = (\hat \mu^\dagger, \hat \gamma^T)^T$ and then its asymptotic normality.

\subsection{Consistency of \texorpdfstring{$(\hat \mu^\dagger, \hat \gamma)$}{(mu, gamma)}}

We want to find a limit for an appropriately scaled version of $\mathcal{L}^\dagger$. To that end, we first prove a concentration result of $d_+$ around its expectation. Consider
\begin{align*}
\E[d_+] &= \E[\E[d_+ | Z]] = \sum_{ i < j} \E\left[ \frac{n^{-\xi} \exp(\mu_0^\dagger)  \exp(\gamma_0^TZ_{ij})}{1 + n^{-\xi} \exp(\mu_0^\dagger)  \exp(\gamma_0^TZ_{ij}) } \right] \\
&= n^{-\xi}\exp(\mu_0^\dagger) \sum_{ i < j} \E\left[ \frac{ \exp(\gamma_0^TZ_{ij})}{1 + n^{-\xi} \exp(\mu_0^\dagger)  \exp(\gamma_0^TZ_{ij}) } \right] \\
&= 	n^{-\xi}\exp(\mu_0^\dagger) \binom{n}{2} \E\left[ \frac{ \exp(\gamma_0^TZ_{12})}{1 + n^{-\xi} \exp(\mu_0^\dagger)  \exp(\gamma_0^TZ_{12}) } \right], \quad \text{since $Z_{ij}$ are i.i.d.} \\
&=  \frac{n^{2 - \xi}}{2} \exp(\mu_0^\dagger) \E\left[ \frac{ \exp(\gamma_0^TZ_{12})}{1 + n^{-\xi} \exp(\mu_0^\dagger)  \exp(\gamma_0^TZ_{12}) } \right] + o(n^{2-\xi}).
\end{align*}
By the law of total variance, we may write the variance of $d_+$ as
\[
	\text{Var}(d_+) = \E[\text{Var}(d_+ \vert Z)] + \text{Var}(\E[d_+ \vert Z]).
\]
We have,
\begin{align*}
	\text{Var}(\E[d_+ \vert Z]) = \text{Var}\left(\sum_{i < j} p_{ij} \right) = \sum_{ i < j} n^{-2\xi} \text{Var}\left( \frac{\exp(\mu_0^\dagger + \gamma_0^TZ_{ij})}{1 + n^{-\xi} \exp(\mu_0^\dagger)  \exp(\gamma_0^TZ_{ij})}\right) = O\left( n^{2 - 2 \xi} \right).
\end{align*}
Also, by independence of the $A_{ij}$ given $Z$,
\begin{align*}
	\text{Var}(d_+ \vert Z)&= \sum_{ i < j}\text{Var}(A_{ij} \vert Z) = \sum_{ i < j} p_{ij}(1-p_{ij}) = O(n^{2-\xi}).
\end{align*}
Therefore,
\[
	\text{Var}(d_+) = O\left( n^{2 - 2 \xi} \right) + O(n^{2-\xi}) = O(n^{2-\xi}).
\]
By Chebychev's inequality, for any $t > 0$,
\[
	P(\vert d_+ - \E[d_+] \vert \ge t) \le \frac{\text{Var}(d_+)}{t^2}.
\]
Letting $\epsilon > 0$ and picking $t = n^{2-\xi}\epsilon$, we obtain
\[
	P( n^{-2+\xi} \vert d_+ - \E[d_+] \vert \ge \epsilon) \le \frac{O(n^{2-\xi})}{n^{4-2\xi}} = \frac{O(1)}{n^{2-\xi}} \rightarrow 0, \quad n \rightarrow \infty,
\]
since $\xi \in [0,2)$. This implies
\[
	d_+ = \E[d_+] + o_P(n^{2-\xi}) = \frac{n^{2 - \xi}}{2} \exp(\mu_0^\dagger) \E\left[ \frac{ \exp(\gamma_0^TZ_{12})}{1 + n^{-\xi} \exp(\mu_0^\dagger)  \exp(\gamma_0^TZ_{12}) } \right] + o_P(n^{2 - \xi} ).
\]
In particular, this implies
\begin{equation}\label{Eq: limit of d_+}
2 n^{-2+\xi} d_+ \overset{P}{\rightarrow} \exp(\mu_0^\dagger) \E\left[ \exp(\gamma_0^TZ_{12})\right], \quad n \rightarrow \infty.
\end{equation}

Next, we deal with the second term in $\mathcal{L}^\dagger$: 
\begin{align*}
\E\left[ \sum_{i < j} (\gamma^TZ_{ij})A_{ij}\right] &= \sum_{i < j} \E\left[ (\gamma^TZ_{ij})\E [A_{ij} | Z_{ij}]\right] = \sum_{i < j} \E\left[ (\gamma^TZ_{ij}) p_{ij}\right] \\
&= \sum_{ i < j} n^{-\xi} \exp(\mu_0^\dagger)  \E\left[(\gamma^TZ_{ij}) \frac{ \exp(\gamma_0^TZ_{ij})}{1 + n^{-\xi} \exp(\mu_0^\dagger)  \exp(\gamma_0^TZ_{ij}) } \right] \\
&= n^{-\xi} \exp(\mu_0^\dagger) \binom{n}{2} \E\left[(\gamma^TZ_{12}) \frac{ \exp(\gamma_0^TZ_{12})}{1 + n^{-\xi} \exp(\mu_0^\dagger)  \exp(\gamma_0^TZ_{12}) } \right], \quad \text{$Z_{ij}$ \text{are i.i.d.}} \\
&\eqqcolon n^{-\xi} \exp(\mu_0^\dagger) \binom{n}{2} \bar{\alpha}_n,
\end{align*}
where we suppress the dependence of $\bar \alpha_n$ on $\gamma$ in our notation. Pay special attention to the distinction between the generic $\gamma$ and the true parameter $\gamma_0$ here. The last equality in the previous display can be written as
\[
\E\left[ \sum_{i < j} (\gamma^TZ_{ij})A_{ij}\right] = \frac{n^{2 -\xi}}{2} \exp(\mu_0^\dagger) \bar{\alpha}_n + o(n^{2-\xi}).
\]
We use the law of total variance once more to bound $\text{Var}(\sum_{i < j} (\gamma^TZ_{ij})A_{ij})$. For any $i,j$,
\[
	\text{Var}((\gamma^TZ_{ij})A_{ij}) = \E[\text{Var}((\gamma^TZ_{ij})A_{ij} \vert Z) ] + \text{Var}(\E[(\gamma^TZ_{ij})A_{ij}\vert Z]).
\]
We have,
\[
	\text{Var}(\E[(\gamma^TZ_{ij})A_{ij}\vert Z]) = \text{Var}((\gamma^TZ_{ij}) p_{ij}  ) \le \E[\left( (\gamma^TZ_{ij}) p_{ij} \right)^2] \le C n^{-2\xi}
\]
and
\[
	\text{Var}((\gamma^TZ_{ij})A_{ij} \vert Z) = (\gamma^TZ_{ij})^2 p_{ij} (1 - p_{ij}) \le C n^{-\xi},
\]
where in both instances we may choose some constant $C > 0$ independent of $i,j$ and $n$.
Thus,
\[
	\text{Var}\left(\sum_{i < j} (\gamma^TZ_{ij})A_{ij}\right) \le \sum_{i < j} C (n^{-2\xi} + n^{-\xi}) = O(n^{2-\xi}).
\]
Using Chebyshev's inequality, we obtain for any $t > 0$,
\[
	P\left( \left\vert \sum_{i < j} (\gamma^TZ_{ij})A_{ij} -  \E\left[ \sum_{i < j} (\gamma^TZ_{ij})A_{ij}\right] \right\vert \ge t  \right) \le \frac{\text{Var}\left(\sum_{i < j} (\gamma^TZ_{ij})A_{ij}\right)}{t^2}.
\]
Letting $\epsilon > 0$ and picking $t = n^{2-\xi}\epsilon$, we obtain
\[
P\left( n^{-2 + \xi} \left\vert \sum_{i < j} (\gamma^TZ_{ij})A_{ij} -  \E\left[ \sum_{i < j} (\gamma^TZ_{ij})A_{ij}\right] \right\vert \ge \epsilon  \right) \le \frac{O(n^{2-\xi})}{n^{2-\xi} \cdot n^{2-\xi}} \rightarrow 0.
\]
This implies
\[
\sum_{i < j} (\gamma^TZ_{ij})A_{ij} = \E\left[ \sum_{i < j} (\gamma^TZ_{ij})A_{ij}\right] + o_P(n^{2-\xi}) = \frac{n^{2 -\xi}}{2} \exp(\mu_0^\dagger) \bar{\alpha}_n + o_P(n^{2-\xi}).
\]
Since $\bar{\alpha}_n \rightarrow \E[(\gamma^TZ_{12}) \exp(\gamma_0^TZ_{12})]$ almost surely, we end up with
\begin{equation}\label{Eq: limit gamma term}
2n^{-2+\xi} \sum_{i < j} (\gamma^TZ_{ij})A_{ij} \overset{P}{\rightarrow} \exp(\mu_0^\dagger) \E[(\gamma^TZ_{12}) \exp(\gamma_0^TZ_{12})], \quad n \rightarrow \infty.
\end{equation}

It remains to analyze the last term in $\mathcal{L}^\dagger$, i.e.~term $\sum_{i < j} \log\left( 1 + n^{-\xi} \exp(\mu^\dagger + \gamma^TZ_{ij}) \right)$. Since $\log(1 + x) \le x$ for $x > -1$:
\begin{align*}
\sum_{i < j} \log\left( 1 + n^{-\xi} \exp(\mu^\dagger + \gamma^TZ_{ij}) \right) &\le n^{-\xi} \exp(\mu^\dagger) \sum_{i<j} \exp(Z_{ij}^T\gamma) \\
&= n^{-\xi} \exp(\mu^\dagger)\binom{n}{2} \underbrace{ \frac{1}{\binom{n}{2}} \sum_{i<j} \exp(Z_{ij}^T\gamma)}_{\eqqcolon \alpha_n} \\
&= \frac{n^{2-\xi}}{2} \exp(\mu^\dagger) \alpha_n + o(n^{2-\xi}).
\end{align*}
On the other hand, we also have $x/(1+x) \le \log(1+x)$ for all $x > -1$. Also recall that $\vert \gamma^TZ_{ij} \vert \le \kappa$ almost surely. Thus,
\begin{align*}
\sum_{i < j} \log\left( 1 + n^{-\xi} \exp(\mu^\dagger + \gamma^TZ_{ij}) \right) &\ge n^{-\xi}\exp(\mu^\dagger) \sum_{ i < j}  \frac{ \exp(\gamma^TZ_{ij})}{1 + n^{-\xi} \exp(\mu^\dagger)  \exp(\gamma^TZ_{ij}) } \\
&\ge n^{-\xi}\exp(\mu^\dagger) \frac{1}{1 + n^{-\xi} \exp(\mu^\dagger + \kappa)} \sum_{ i < j} \exp(\gamma^TZ_{ij}) \\
&= n^{-\xi}\exp(\mu^\dagger) \frac{1}{1 + n^{-\xi} \exp(\mu^\dagger + \kappa)} \binom{n}{2}\alpha_n \\
&= \frac{n^{2 -\xi}}{2}\exp(\mu^\dagger) \frac{1}{1 + n^{-\xi} \exp(\mu^\dagger + \kappa)}\alpha_n + o(n^{2-\xi}).
\end{align*}
Notice that since the $Z_{ij}$ are i.i.d. and since $\gamma^TZ_{ij}$ is uniformly bounded,
\[
\alpha_n \overset{a.s.}{\rightarrow} \E[\exp(\gamma^TZ_{12})].
\]
We now have found an upper and a lower bound on $\sum_{i < j} \log\left( 1 + n^{-\xi} \exp(\mu^\dagger + \gamma^TZ_{ij}) \right)$. Multiplying both sides with $2 n^{-2+\xi}$ and taking the limit $n \rightarrow \infty$, we see that both the lower as well as the upper bound converge to $\exp(\mu^\dagger) \E[\exp(\gamma^TZ_{12})]$. But then this already must be the limit for $2 n^{-2+\xi} \sum_{i < j} \log\left( 1 + n^{-\xi} \exp(\mu^\dagger + \gamma^TZ_{ij}) \right)$:
\begin{equation}\label{Eq: limit of log term}
2 n^{-2+\xi} \sum_{i < j} \log\left( 1 + n^{-\xi} \exp(\mu^\dagger + \gamma^TZ_{ij}) \right) \overset{P}{\rightarrow} \exp(\mu^\dagger) \E[\exp(\gamma^TZ_{12})], \quad n \rightarrow \infty.
\end{equation}
Putting equations \eqref{Eq: limit of d_+}, \eqref{Eq: limit gamma term} and \eqref{Eq: limit of log term} together, we obtain that for any $(\mu^\dagger, \gamma) \in [-M,M] \times \Gamma$:
\begin{align}\label{Eq: limit of neg llhd}
	\begin{split}
		2&n^{-2+\xi}\mathcal{L}^\dagger(\mu^\dagger, \gamma) \\
		&\overset{P}{\rightarrow} - \mu^\dagger \exp(\mu_0^\dagger) \E[\exp(\gamma_0^TZ_{12})] - \exp(\mu_0^\dagger)\E[\gamma^TZ_{12} \exp(\gamma_0^TZ_{12})] + \exp(\mu^\dagger) \E[\exp(\gamma^TZ_{12})],
	\end{split}
\end{align}
as $n\rightarrow \infty$. We thus define this limiting function as $M: \R^{p+1} \rightarrow \R$,
\[
M(\mu^\dagger, \gamma) \coloneqq - \mu^\dagger \exp(\mu_0^\dagger) \E[\exp(\gamma_0^TZ_{12})] - \exp(\mu_0^\dagger)\E[\gamma^TZ_{12}\cdot \exp(\gamma_0^TZ_{12})] + \exp(\mu^\dagger) \E[\exp(\gamma^TZ_{12})].
\]
We want to employ Theorem 5.7 in \cite{Vaart:1998}. To that end, we must show that this convergence is uniform in probability, that is, we must show that
\begin{equation}\label{Eq: uniform consistency}
\sup_{\theta} \vert 2n^{-2+\xi}\mathcal{L}^\dagger(\theta) - M(\theta) \vert = o_P(1),
\end{equation}
with the supremum taken over all $\theta \in [-M,M] \times \Gamma$.

To shorten notation, introduce $M_n(\theta) \coloneqq 2n^{-2+\xi}\mathcal{L}^\dagger(\theta)$.
Since we already have pointwise convergence in probability of $M_n$ to $M$, it will be suffice to show that for any $\epsilon > 0$
\begin{equation}\label{Eq: lim delta limsup P(Delta)}
\lim_{\delta \downarrow 0} \limsup_{n\rightarrow \infty} P \left( \sup_{\Vert \theta_1 - \theta_2 \Vert_2 \le \delta} \vert M_n(\theta_1) - M_n(\theta_2) \vert \ge \epsilon \right) = 0.
\end{equation}
Property \eqref{Eq: uniform consistency} then follows from the pointwise convergence, the continuity of $M$ and the compactness of the parameter space $[-M,M] \times \Gamma$.
To ease notation further, define
\[
\Delta_\delta^n \coloneqq \sup_{\Vert \theta_1 - \theta_2 \Vert_2 \le \delta} \vert M_n(\theta_1) - M_n(\theta_2) \vert.
\]
Let $\epsilon, \eta > 0$. We have to show that there exists a $\delta > 0$ such that
\begin{equation}\label{Eq: limsup of P(Delta)}
\limsup_{n\rightarrow \infty}P(\Delta_\delta^{n} \ge \epsilon) \le \eta.
\end{equation}
Consider the following representation of $\mathcal{L}^\dagger(\theta)$:
\begin{align*}
\mathcal{L}^\dagger (\theta) &= -d_+\mu^\dagger - \sum_{i < j} (\gamma^TZ_{ij})A_{ij} + \sum_{i < j} \log\left( 1 + n^{-\xi} \exp(\mu^\dagger + \gamma^TZ_{ij}) \right) \\
&= \sum_{ i < j} -(\mu^\dagger + \gamma^TZ_{ij}) A_{ij} + \log\left( 1 + n^{-\xi} \exp(\mu^\dagger + \gamma^TZ_{ij}) \right) \\
&= \sum_{ i < j} \underbrace{-D_{ij}^T\theta A_{ij} + \log\left( 1 + n^{-\xi} \exp(D_{ij}^T\theta) \right) }_{\eqqcolon l_{ij}(\theta)}.
\end{align*}
Now, for any $\delta > 0$ and any $\theta_1, \theta_2$ with $\Vert \theta_1 - \theta_2 \Vert_2 < \delta$ and any $i < j$, we obtain:
\begin{align*}
\E \vert l_{ij}(\theta_1) - l_{ij}(\theta_2) \vert &= \E \left\vert -D_{ij}^T(\theta_1 - \theta_2) A_{ij} + \log\left( 1 + n^{-\xi} \exp(D_{ij}^T\theta_1) \right) - \log\left( 1 + n^{-\xi} \exp(D_{ij}^T\theta_2) \right) \right\vert. \\
\shortintertext{Hence, by the Mean Value Theorem with $\alpha$ between $D_{ij}\theta_1$ and $D_{ij}\theta_2$:}
\E \vert l_{ij}(\theta_1) - l_{ij}(\theta_2) \vert &\le \E  \left[ \vert D_{ij}^T(\theta_1 - \theta_2) \vert A_{ij} \right] + \frac{n^{-\xi} \exp(\alpha)}{1 + n^{-\xi}\exp(\alpha)} \E \left\vert D_{ij}^T(\theta_1 - \theta_2) \right\vert \\
&\le C \Vert \theta_1 - \theta_2 \Vert_2 \E[p_{ij}]  + C n^{-\xi} \Vert \theta_1 - \theta_2 \Vert_2\\
&\le C \Vert \theta_1 - \theta_2 \Vert_2 \left(\E\left[ n^{-\xi} \frac{\exp(D_{ij}^T\theta_0)}{1 + n^{-\xi}\exp(D_{ij}^T\theta_0)}\right] + n ^{-\xi} \right) \\
&\le C n^{-\xi} \Vert \theta_1 - \theta_2 \Vert_2 \\
&\le C n^{-\xi}\delta,
\end{align*}
where $C > 0$ denotes some generic constant that may change between displays. By the compactness of our parameter space and the resulting uniform boundedness of $\vert D_{ij}(\theta_1 - \theta_2)\vert $, we may in particular choose this $C$ independent of $n, i$ and $j$. Then, almost surely,
\[
\E \vert \mathcal{L}^\dagger(\theta_1) - \mathcal{L}^\dagger(\theta_2) \vert \le C \binom{n}{2} n^{-\xi} \delta 
\]
and thus, almost surely,
\[
\E \Delta_\delta^n \le C n^{-2+ \xi} n^{-\xi} \binom{n}{2} \delta \le C \delta.
\]
Thus, we can choose a $\delta > 0$ independent of $n$, such that $\E \Delta_\delta^n \le \epsilon \eta$. But then an application of Markov's inequality yields for all $n$ large enough
\[
P(\Delta_\delta^n \ge \epsilon) \le \eta.
\]
It follows \eqref{Eq: limsup of P(Delta)}, which implies \eqref{Eq: lim delta limsup P(Delta)}, which yields \eqref{Eq: uniform consistency}.

The second condition of Theorem 5.7 in \cite{Vaart:1998} requires that the true parameter be a well-separated extrema of $M$. That is, we must show: For any fixed $\epsilon > 0$,
\begin{equation}\label{Eq: identifiability}
\sup_{\theta: d(\theta, \theta_0) \ge \epsilon} M(\theta) > M(\theta_0).
\end{equation}
Consider the first partial derivatives of $M$:
\begin{align*}
\partial_{\mu^\dagger} M(\mu^\dagger, \gamma) &= - \exp(\mu_0^\dagger) \E[\exp(\gamma_0^TZ_{12})] + \exp(\mu^\dagger) \E[\exp(\gamma^TZ_{12})], \\
\partial_{\gamma_k} M(\mu^\dagger, \gamma) &= -\exp(\mu_0^\dagger) \E[Z_{12,k} \exp(\gamma_0^TZ_{12})] + \exp(\mu^\dagger) \E[Z_{12,k}\exp(\gamma^TZ_{12})].
\end{align*}
Clearly, by Assumption \ref{Assum: MLE in interior} the true parameter is a critical point of $M$, i.e. the first partial derivatives of $M$ evaluated at $\theta_0 = (\mu_0^\dagger, \gamma_0^T)^T$ are zero:
\[
\nabla M (\theta_0) = 0.
\]
Consider the Hessian $HM(\mu^\dagger, \gamma)$ of $M$ at the point $(\mu^\dagger, \gamma)$:
\begin{align*}
\frac{\partial^2}{\partial (\mu^\dagger)^2} M(\mu^\dagger, \gamma) &= \exp(\mu^\dagger) \E[\exp(\gamma^TZ_{12})], \\
\frac{\partial^2}{\partial \mu^\dagger \gamma_k} M(\mu^\dagger, \gamma) &= \exp(\mu^\dagger) \E[Z_{12,k}\exp(\gamma^TZ_{12})], \\
\frac{\partial^2}{\partial \gamma_k^2} M(\mu^\dagger, \gamma) &= \exp(\mu^\dagger) \E[Z_{12,k}^2\exp(\gamma^TZ_{12})], \\
\frac{\partial^2}{\partial \gamma_k \gamma_l} M(\mu^\dagger, \gamma) &= \exp(\mu^\dagger) \E[Z_{12,k}Z_{12,l}\exp(\gamma^TZ_{12})].
\end{align*}
We thus see that $HM(\mu^\dagger, \gamma)$ allows a matrix representation as
\[
HM(\mu^\dagger, \gamma) = \exp(\mu^\dagger) \E \left[ \exp(\gamma^TZ_{12}) \begin{bmatrix}
1 & Z_{12}^T \\
Z_{12} & Z_{12}Z_{12}^T
\end{bmatrix} \right] \in \R^{(p+1) \times (p+1)}.
\]
By the compactness of our parameter space and the boundedness of $Z_{12}$, we now obtain for any $v \in  \R^{p+1}$:
\begin{align*}
v^THM(\mu^\dagger, \gamma) v &= \exp(\mu^\dagger) \E \left[ \exp(\gamma^TZ_{12}) v^TD_{12}D_{12}^Tv  \right] 
\ge C \E\left[  v^T D_{12}D_{12}^T v  \right] \\
&= C v^T \E\left[  \begin{bmatrix}
1 & \textbf{0} \\
\textbf{0} & Z_{12}Z_{12}^T
\end{bmatrix}  \right] v 
\ge C \Vert v \Vert_2^2,
\end{align*}
where for the last inequality we have used that the matrix is strictly positive definite by Assumption \ref{Assum: min eval}. That means, $HM(\mu^\dagger, \gamma)$ is strictly positive definite on the entire parameter space $[-M,M] \times \Gamma$. Hence, $M$ is strictly convex and its minimum $\theta_0$ already must be a global minimum. Now, since our parameter space is compact, $M$ is continuous and $\theta_0$ is a global maximum, it is easy to see that \eqref{Eq: identifiability} must hold.

Finally, since \eqref{Eq: uniform consistency} and \eqref{Eq: identifiability} hold, we have consistency as
$
\hat \theta \overset{P}{\rightarrow} \theta_0
$ \citep[Theoem 5.7]{Vaart:1998}.

\subsection{Asymptotic normality}

The proof of asymptotic normality in spirit follows to some extent the proof of Theorem \ref{Thm: inference}.
By Assumption \ref{Assum: MLE in interior}, the MLE $\hat \theta$ fulfills the first order estimating equations:
\begin{equation*}
0 = \nabla \mathcal{L}^\dagger(\hat \theta),
\end{equation*}
which, when looking at the individual components, means that
\begin{align*}
0 &= \partial_{\mu^\dagger}\mathcal{L}^\dagger(\hat \theta) = - d_+ + n^{-\xi} \exp(\hat \mu^\dagger) \sum_{i<j}  \frac{\exp(\hat \gamma^TZ_{ij})}{1 + n^{-\xi}\exp(\hat \mu^\dagger + \hat\gamma^TZ_{ij})}, \\
0 &= \partial_{\gamma_k}\mathcal{L}^\dagger(\hat \theta) = \sum_{i < j} Z_{ij,k}A_{ij} + n^{-\xi} \exp(\hat \mu^\dagger) \sum_{i<j}  \frac{Z_{ij,k}\exp(\hat \gamma^TZ_{ij})}{1 + n^{-\xi}\exp(\hat \mu^\dagger + \hat\gamma^TZ_{ij})}, \quad k = 1, \dots, p.
\end{align*}

We want to make use of a Taylor expansion. Define the functions $l_n(y,a): \{0,1\} \times \R \rightarrow \R$,
\[
	l_n(y, a) = -ya + \log(1 + n^{-\xi} \exp(a)).
\]
In particular,
\[
\mathcal{L}^\dagger(\mu^\dagger, \gamma) = \sum_{ i < j} l_n(A_{ij}, (\mu^\dagger, \gamma^T)^TD_{ij}).
\]
The $l_n$ have the following derivatives:
\begin{align*}
\dot l_n(y,a) \coloneqq \partial_a l_n(y,a) = -y + n^{-\xi} \frac{\exp(a)}{1 + n^{-\xi}\exp(a)}, \\
\ddot l_n(y,a) \coloneqq \partial^2_{a^2} l_n(y,a) = n^{-\xi} \frac{\exp(a)}{(1 + n^{-\xi}\exp(a))^2}, \\
\partial^3_{a^3} l_n(y,a) = n^{-\xi} \frac{\exp(a)}{(1 + n^{-\xi}\exp(a))^2} \cdot \frac{1 - n^{-\xi}\exp(a)}{1 + n^{-\xi}\exp(a)}.
\end{align*}
Note that $\vert \partial^3_{a^3} l_n(y,a) \vert \le C n^{-\xi}$ and hence $\ddot l_n(y,a)$ is Lipschitz continuous in $a$ with constant $Cn^{-\xi}$ by the Mean-Value Theorem. Doing a first order Taylor expansion in $a$ of $\dot l_n (y,a) = \partial_al(y,a)$ in the point $a_0 = (A_{ij}, D_{ij}^T\theta_0)$ evaluated at $a = (A_{ij}, D_{ij}^T\hat \theta)$, we get
\begin{equation}\label{Eq: Taylor of l ER-C}
\partial_a l(A_{ij}, D_{ij}\hat \theta) = \partial_a l(A_{ij}, D_{ij}^T\theta_0) + \partial_{a^2} l(A_{ij}, \alpha) D_{ij}^T(\hat \theta - \theta_0),
\end{equation}
for an $\alpha$ between $D_{ij}^T\hat \theta$ and $D_{ij}^T \theta_0$.

Consider the vector $1/\binom{n}{2}\nabla \mathcal{L}^\dagger(\hat \theta)$: By equation (\ref{Eq: Taylor of l ER-C}), with $\alpha_{ij}$ between $D_{ij}^T\hat \theta$ and $D_{ij}^T \theta_0$,
\begin{align*}
0 = \frac{1}{\binom{n}{2}} \nabla \mathcal{L}^\dagger(\hat \theta)&= \frac{1}{\binom{n}{2}} \sum_{ i < j} \left( \partial_{\theta_k} l (A_{ij}, D_{ij}^T \hat \theta) \right)_{k=1,\dots, p+1}, \quad \text{ as a }(p+1)\times 1\text{-vector} \\
&= \frac{1}{\binom{n}{2}} \sum_{ i < j} \dot l (A_{ij}, D_{ij}^T \hat \theta) D_{ij}, \quad \text{by the chain rule} \\
&= \frac{1}{\binom{n}{2}} \sum_{ i < j} (\dot l (A_{ij}, D_{ij}^T \theta_0) + \ddot l(A_{ij}, \alpha_{ij}) D_{ij}^T (\hat \theta - \theta_0)) D_{ij}, \quad \text{by \eqref{Eq: Taylor of l ER-C}} \\
&= \frac{1}{\binom{n}{2}}  \nabla \mathcal{L}^\dagger(\theta_0) + \frac{1}{\binom{n}{2}} \sum_{ i < j} \ddot l(A_{ij}, \alpha_{ij}) D_{ij}D_{ij}^T (\hat \theta - \theta_0).
\end{align*}
Proving Theorem \ref{Thm: asymptotic normality ER-C} now breaks down into three problems.

\subsubsection{Problem 1}
First, we show that under appropriate scaling $\frac{1}{\binom{n}{2}}  \nabla \mathcal{L}^\dagger(\theta_0)$ is asymptotically normal. We may write the components of $\nabla \mathcal{L}^\dagger(\theta_0)$ more compactly as
\[
\nabla \mathcal{L}^\dagger(\theta_0)_k = \sum_{i < j} D_{ij,k} (p_{ij} - A_{ij}),
\]
where $D_{ij,k}$ is the $k$th component of the $(i,j)$-th row of $D$, i.e. $D_{ij,k} = 1$, if $k = 1$ and $D_{ij,k} = Z_{ij,k-1}$, if $k=2, \dots, p+1$ and 
\[
p_{ij} = \E[A_{ij} | Z_{ij}] = n^{-\xi} \cdot \frac{\exp(\mu_0^\dagger +  \gamma_0^TZ_{ij})}{1 + n^{-\xi}\exp(\mu_0^\dagger + \gamma_0^TZ_{ij})}.
\]
Notice that all components of $\nabla \mathcal{L}^\dagger(\theta_0)$ are centered Indeed,
\[
\E[\nabla \mathcal{L}^\dagger(\theta_0)_k] = \sum_{i < j} \E [D_{ij,k} (p_{ij} - A_{ij})] = \sum_{i < j} \E [D_{ij,k} \E[(p_{ij} - A_{ij}) | Z_{ij}]] = \sum_{i < j} \E [D_{ij,k} \cdot 0] = 0.
\]
We want to apply the Lindeberg-Feller Central Limit Theorem to the term
\[
\sqrt{\binom{n}{2}} n^{\xi/2} \cdot \frac{1}{\binom{n}{2}} \nabla \mathcal{L}^\dagger(\theta_0) = \sum_{i < j} D_{ij} (p_{ij} - A_{ij}) \cdot \sqrt{\frac{n^\xi}{\binom{n}{2}}}.
\]
To that end, define the triangular array $Y_{n,ij} = D_{ij} (p_{ij} - A_{ij}) \cdot \sqrt{\frac{n^\xi}{\binom{n}{2}}}, 1 \le i < j \le n, n \in \mathbb{N}$. Since the $Y_{n,ij}$ are centered, their covariance matrix is given by
\begin{align*}
\text{Cov}(Y_{n,ij}) &= \E[Y_{n,ij}Y_{n,ij}^T] = \E\left[ D_{ij}D_{ij}^T (p_{ij} - A_{ij})^2 \cdot \frac{n^\xi}{\binom{n}{2}}  \right] = \E\left[ D_{ij}D_{ij}^T p_{ij} (1 - p_{ij})\cdot \frac{n^\xi}{\binom{n}{2}}  \right],
\end{align*}
where for the last equality we have used that $\E[(p_{ij} - A_{ij})^2 | Z_{ij}] = p_{ij}(1-p_{ij})$. In analogy to the case with non-zero $\beta$, we write $W_0^2 = \text{diag}(p_{ij}(1-p_{ij}), i < j) \in \R^{\binom{n}{2} \times \binom{n}{2}}$. Then, we get for the sum of covariance matrices
\[
\sum_{ i < j} \text{Cov}(Y_{n,ij}) = \sum_{ i < j} \E\left[ D_{ij}D_{ij}^T p_{ij} (1 - p_{ij})\cdot \frac{n^\xi}{\binom{n}{2}}  \right] =  \frac{n^\xi}{\binom{n}{2}} \E[D^TW_0^2D] \eqqcolon \Sigma^{(n)}.
\]
For any pair $i < j$, we have $p_{ij}(1-p_{ij}) = n^{-\xi} \exp(\mu_0^\dagger) \frac{\exp(\gamma_0^TZ_{ij})}{(1 + n^{-\xi} \exp(\mu_0^\dagger + \gamma_0^TZ_{ij}))^2}$. Hence, $n^{\xi}p_{ij}(1-p_{ij}) \rightarrow \exp(\mu_0^\dagger + \gamma_0^TZ_{ij})$ as $n \rightarrow \infty$. Consider the $(k,l)$-th entry of $\Sigma^{(n)}$:
\begin{align*}
\Sigma^{(n)}_{k,l} &= \frac{1}{\binom{n}{2}}\sum_{ i < j} \E\left[(D_{ij}D_{ij}^T)_{k,l} \exp(\mu_0^\dagger) \frac{\exp(\gamma_0^TZ_{ij})}{(1 + n^{-\xi} \exp(\mu_0^\dagger + \gamma_0^TZ_{ij}))^2}\right] \\
&= \E\left[  (D_{12}D_{12}^T)_{k,l} \exp(\mu_0^\dagger) \frac{\exp(\gamma_0^TZ_{12})}{(1 + n^{-\xi} \exp(\mu_0^\dagger + \gamma_0^TZ_{12}))^2} \right], \quad Z_{ij} \text{ i.i.d.} \\
&\overset{n \rightarrow \infty}{\longrightarrow} \E\left[  (D_{12}D_{12}^T)_{k,l} \exp(\mu_0^\dagger) \exp(\gamma_0^TZ_{12}) \right] \eqqcolon \Sigma_{kl},
\end{align*}
by dominated convergence.
Hence, with $\Sigma = (\Sigma_{kl})_{k,l} \in \R^{(p+1) \times p+1}$, as $n \rightarrow \infty$,
\[
\sum_{ i < j} \text{Cov}(Y_{n,ij}) \rightarrow \Sigma,
\]
where convergence is to be understood componentwise.
We claim that $\Sigma$ is strictly positive definite. Indeed, since $\mu_0^\dagger + \gamma_0^TZ_{12}$ lies in some compact set there is a constant $C > 0$ such that $\exp(\mu_0^\dagger) \exp(\gamma_0^TZ_{12}) > C > 0$ almost surely. Then, for any vector $v = (v_1, v_R^T)^T \in \R^{p+1}, v_1 \in \R$,
\[
v^T\Sigma v = \E[(D_{12}^Tv)^2 \exp(\mu_0^\dagger) \exp(\gamma_0^TZ_{12})] > C v^T \E[D_{12}D_{12}^T] v.
\]
Yet, by Assumption \ref{Assum: min eval},
\begin{align*}
	v^T \E[D_{12}D_{12}^T] v &= v^T \E \begin{bmatrix}
		1 & Z_{12}^T \\
		Z_{12} & Z_{12}Z_{12}^T
	\end{bmatrix} v = 
	v^T \begin{bmatrix}
		1 & \textbf{0}^T \\
		\textbf{0} & \E[Z_{12}Z_{12}^T]
	\end{bmatrix} v \\
	&= v_1^2 + v_R^T  \E[Z_{12}Z_{12}^T] v_R \ge (1 \wedge \lambda_{\min}) \Vert v \Vert_2^2.
\end{align*}
Thus, for any $v \neq 0$,
\[
v^T\Sigma v \ge C \Vert v \Vert_2^2 > 0
\]
and therefore $\Sigma$ is positive definite.

Furthermore, we clearly have $\E[\Vert Y_{n,ij} \Vert^2] < C < \infty$ for any $i,j,n$. Finally, let $\epsilon > 0$. Since $\Vert D_{ij}(p_{ij} - A_{ij}) \Vert_2$ is uniformly bounded for all $i < j$, we we may find an $n_0 \in \mathbb{N}$ such that for all $n > n_0$ we have $\Vert  Y_{n,ij} \Vert < \epsilon$ for all $i < j$. This gives us that, as $n \rightarrow \infty$,
\begin{align*}
\sum_{ i < j} \E[ \Vert Y_{n,ij} \Vert^2 \mathbbm{1}( \Vert Y_{n,ij} \Vert > \epsilon) ] \rightarrow 0.
\end{align*}
Then, by the vector-valued Lindeberg-Feller Central Limit Theorem, we obtain
\begin{equation}\label{Eq: CLT gradient of L}
\sqrt{\binom{n}{2}} n^{\xi/2} \cdot \frac{1}{\binom{n}{2}} \nabla \mathcal{L}^\dagger(\theta_0) = \sum_{ i < j} Y_{n,ij} \overset{d}{\longrightarrow} \mathcal{N}(0, \Sigma).
\end{equation}

\subsubsection{Problem 2}
Next, we must find a bound on the speed of convergence of $\hat \theta - \theta_0$. Recall that we obtained the equality
\begin{equation}\label{Eq: Taylor}
0 = \frac{1}{\binom{n}{2}}  \nabla \mathcal{L}^\dagger(\theta_0) + \frac{1}{\binom{n}{2}} \sum_{ i < j} \ddot l(A_{ij}, \alpha_{ij}) D_{ij}D_{ij}^T (\hat \theta - \theta_0). 
\end{equation}
Consider the matrix
\[
\Sigma_\alpha \coloneqq \frac{1}{\binom{n}{2}} \sum_{ i < j} \ddot l(A_{ij}, \alpha_{ij}) D_{ij}D_{ij}^T = \frac{1}{\binom{n}{2}} D^T\text{diag}(\ddot l(A_{ij}, \alpha_{ij}), i < j) D.
\]
Since $\alpha_{ij}$ lies between $D_{ij}^T\hat \theta$ and $D_{ij}^T \theta_0$ and both of these points lie in some compact set, we have for some universal constant $C > 0$, independent of $i,j$,
\[
\ddot l(A_{ij}, \alpha_{ij}) \ge C n^{-\xi}.
\]
Thus, for any $v \in \R^{p+1}$,
\[
v^T \Sigma_\alpha v \ge C n^{-\xi} v^T \left( \frac{1}{\binom{n}{2}} D^TD \right) v.
\]
Completely analogously to the case with non-zero $\beta$, we can show that $\frac{1}{\binom{n}{2}} D^TD$ is positive definite with high probability by using Lemma 6 in \cite{kock_tang_2019} (cf. section \ref{Sec: inverting Gram matrices}). Therefore, with high probability, $\lambda_{\min}(\Sigma_\alpha) \ge Cn^{-\xi} > 0$. Thus,
\[
\lambda_{\max}(\Sigma_\alpha^{-1}) = \frac{1}{\lambda_{\min}(\Sigma_\alpha)} \le C n^{\xi}.
\]
From \eqref{Eq: Taylor} we now obtain
\begin{align*}
\Sigma_\alpha (\hat{\theta} - \theta_0) &= - \frac{1}{\binom{n}{2}}  \nabla \mathcal{L}^\dagger(\theta_0) \\
\shortintertext{which is equivalent to}
\hat{\theta} - \theta_0 &= - \Sigma_\alpha^{-1} \frac{1}{\binom{n}{2}}  \nabla \mathcal{L}^\dagger(\theta_0) \\
\shortintertext{which after rescaling gives}
\sqrt{\frac{\binom{n}{2}}{n^{\xi}}} (\hat{\theta} - \theta_0) &= - \sqrt{\frac{\binom{n}{2}}{n^{\xi}}} \Sigma_\alpha^{-1} \frac{1}{\binom{n}{2}}  \nabla \mathcal{L}^\dagger(\theta_0) 
= - n^{-\xi} \Sigma_\alpha^{-1} \cdot \sqrt{\binom{n}{2}} n^{\xi/2} \frac{1}{\binom{n}{2}}  \nabla \mathcal{L}^\dagger(\theta_0).
\end{align*}

From the previous section we know $\sqrt{\binom{n}{2}} n^{\xi/2} \frac{1}{\binom{n}{2}}  \nabla \mathcal{L}^\dagger(\theta_0) \overset{d}{\rightarrow} \mathcal{N}(0,\Sigma)$. Also, the maximum eigenvalue of $n^{-\xi} \Sigma_\alpha^{-1}$ is uniformly bounded by some universal constant $C < \infty$, making the right-hand side above $O_P(1)$. This means
\[
	\hat{\theta} - \theta_0 = O_P\left( \sqrt{\frac{n^{\xi}}{\binom{n}{2}}}  \right).
\]

\subsubsection{Problem 3}
Finally, we derive the desired central limit theorem for our estimator.
We claim that $n^{\xi}\Sigma_\alpha = \Sigma + o_P(1)$. To prove this, first consider the functions
\[
f_n(x) = \frac{\exp(x)}{(1 + n^{-\xi} \exp(x))^2}.
\]
For every $x$, we have pointwise convergence $f_n(x) \rightarrow f(x) \coloneqq \exp(x)$ as $n \rightarrow \infty$. Since $\hat \theta$ and $\theta_0$ lie in some compact set and since $Z_{ij}$ is uniformly bounded, the values $\alpha_{ij}$ in \eqref{Eq: Taylor} and $\mu_0^\dagger + \gamma_0^TZ_{ij}, i < j$ all lie in some compact interval $I \subset \R$ independent of $i, j$ and $n$. Also notice that $f_n(x) \le f_{n+1}(x)$ for all $n \in \mathbb{N}$ and $x \in I$. Recall that by Dini's theorem a sequence of monotonically increasing, continuous, real-valued functions that converges pointwise to some continuous limit function on a compact topological space, must already converge uniformly. Hence, $f_n$ converges uniformly to $f$ on $I$:
$
\lim_{n\rightarrow \infty} \sup_{x \in I} \vert f_n(x) - f(x) \vert = 0.
$
Furthermore, since $I$ is compact and hence bounded, $f$ has bounded derivative on $I$ and thus is Lipschitz continuous on $I$ with some finite constant $C$ by the Mean-Value Theorem:
\[
\vert f(x) - f(y) \vert \le C \vert x - y \vert, \quad \text{for all } x,y \in I.
\]
Now consider the $(k,l)$-th entry of $n^{\xi}\Sigma_\alpha - \Sigma$:
\begin{align*}
\vert (n^{\xi}\Sigma_\alpha - \Sigma)_{kl} \vert &= \left\vert \frac{1}{\binom{n}{2}} \sum_{ i < j} D_{ij,k}D_{ij,l} \frac{\exp(\alpha_{ij}) }{(1 + n^{-\xi}\exp(\alpha_{ij}))^2 }  - \E[D_{12,k}D_{12,l} \exp(\mu_0^\dagger + \gamma_0^TZ_{12})] \right\vert \\
&\le \underbrace{\left\vert \frac{1}{\binom{n}{2}} \sum_{ i < j} D_{ij,k}D_{ij,l} \left\{ \frac{\exp(\alpha_{ij}) }{(1 + n^{-\xi}\exp(\alpha_{ij}))^2 } - \exp(\mu_0^\dagger + \gamma_0^TZ_{ij}) \right\} \right\vert}_{(I)} \\
&\quad \quad + \underbrace{\left\vert \frac{1}{\binom{n}{2}} \sum_{ i < j} D_{ij,k}D_{ij,l} \exp(\mu_0^\dagger + \gamma_0^TZ_{ij}) -  \E[D_{12,k}D_{12,l} \exp(\mu_0^\dagger + \gamma_0^TZ_{12})] \right\vert}_{(II)}.
\end{align*}
By the strong law of large numbers, $(II)$ goes to zero almost surely. Let us consider $(I)$.
\begin{align*}
(I) &\le \frac{1}{\binom{n}{2}} \sum_{ i < j} \vert D_{ij,k}D_{ij,l} \vert \left\vert \frac{\exp(\alpha_{ij}) }{(1 + n^{-\xi}\exp(\alpha_{ij}))^2 } - \exp(\mu_0^\dagger + \gamma_0^TZ_{ij}) \right\vert \\
&\le C \cdot \max_{i<j}  \left\vert \frac{\exp(\alpha_{ij}) }{(1 + n^{-\xi}\exp(\alpha_{ij}))^2 } - \exp(\mu_0^\dagger + \gamma_0^TZ_{ij}) \right\vert \\
&= C \cdot \max_{i<j} \vert f_n(\alpha_{ij}) - f(\mu_0^\dagger + \gamma_0^TZ_{ij}) \vert \\
&\le C \cdot \left\{ \max_{i<j} \vert f_n(\alpha_{ij}) - f(\alpha_{ij}) \vert + \max_{i<j} \vert f(\alpha_{ij}) - f(\mu_0^\dagger + \gamma_0^TZ_{ij}) \vert  \right\} \\
&\le C \cdot \left\{ \sup_{x \in I} \vert f_n(x) - f(x) \vert + \max_{i<j} \vert \alpha_{ij} - \mu_0^\dagger + \gamma_0^TZ_{ij} \vert \right\},
\end{align*}
where we have used the Lipschitz continuity of $f$ on $I$ for the last inequality. By the uniform convergence of $f_n$ to $f$ on $I$, we know that the first term in the last line goes to zero. For the second term, recall that $\alpha_{ij}$ is a point between $D_{ij}^T\hat \theta$ and $D_{ij}^T\theta_0 = \mu_0^\dagger + \gamma_0^TZ_{ij}$. Hence,
\[
\max_{i<j} \vert \alpha_{ij} - \mu_0^\dagger + \gamma_0^TZ_{ij} \vert \le \max_{i< j} \vert (\hat \mu^\dagger - \mu_0^\dagger) + (\hat \gamma - \gamma_0)^TZ_{ij} \vert \le C \Vert \hat \theta - \theta_0 \Vert_1 \overset{P}{\rightarrow} 0,
\]
by the consistency of $\hat \theta$. Thus, $(I) \overset{P}{\rightarrow} 0$ as $n \rightarrow \infty$.

In conclusion, $\vert (n^{\xi}\Sigma_\alpha - \Sigma)_{kl} \vert \overset{P}{\rightarrow} 0$ and therefore,
\[
n^{\xi}\Sigma_\alpha = \Sigma + o_P(1),
\]
where $o_P(1)$ is to be understood as a matrix in which each component is $o_P(1)$.
Now, we get from \eqref{Eq: Taylor},
\begin{align*}
0 &= \frac{1}{\binom{n}{2}}  \nabla \mathcal{L}^\dagger(\theta_0) + \Sigma_\alpha (\hat \theta - \theta_0) \\
\shortintertext{which after multiplying with $n^{\xi}$ is equivalent to}
0 &= n^{\xi} \frac{1}{\binom{n}{2}}  \nabla \mathcal{L}^\dagger(\theta_0) + \left( \Sigma + o_P(1)\right) (\hat \theta - \theta_0). \\
\shortintertext{Rearranging gives}
\Sigma (\hat \theta - \theta_0) &= - n^{\xi} \frac{1}{\binom{n}{2}}  \nabla \mathcal{L}^\dagger(\theta_0) + o_P(1) (\hat \theta - \theta_0). \\
\shortintertext{Now, remember that $\Sigma$ is positive definite and thus invertible, to get}
(\hat \theta - \theta_0) &= - \Sigma^{-1} n^{\xi} \frac{1}{\binom{n}{2}}  \nabla \mathcal{L}^\dagger(\theta_0) + \Sigma^{-1}o_P(1) (\hat{\theta} - \theta_0).
\shortintertext{Observe that $\Sigma^{-1}$ has bounded maximum eigenvalue due to Assumption \ref{Assum: min eval} and thus $\Sigma^{-1}o_P(1) = o_P(1)$:}
(\hat \theta - \theta_0) &= - \Sigma^{-1} n^{\xi} \frac{1}{\binom{n}{2}}  \nabla \mathcal{L}^\dagger(\theta_0) + o_P(1) (\hat{\theta} - \theta_0).
\shortintertext{Finally, multiply by $\sqrt{\frac{\binom{n}{2}}{n^{\xi}}}$ and remember that 	$\hat{\theta} - \theta_0 = O_P\left( \sqrt{\frac{n^{\xi}}{\binom{n}{2}}}  \right)$}
\sqrt{\frac{\binom{n}{2}}{n^{\xi}}} (\hat \theta - \theta_0) &= - \Sigma^{-1} \sqrt{\binom{n}{2}} n^{\xi /2} \frac{1}{\binom{n}{2}}  \nabla \mathcal{L}^\dagger(\theta_0) + o_P(1).
\end{align*}
With this, due to \eqref{Eq: CLT gradient of L}, we have proven
\begin{equation}\label{Eq: CLT ER-C}
\sqrt{\frac{\binom{n}{2}}{n^{\xi}}} (\hat \theta - \theta_0) \overset{d}{\longrightarrow} \mathcal{N}(0, \Sigma^{-1}).
\end{equation}
\begin{proof}[Proof of Theorem \ref{Thm: asymptotic normality ER-C}]
		Theorem \ref{Thm: asymptotic normality ER-C} then follows from the solved problems 1 - 3 above.
\end{proof}
It remains to prove Corollary \ref{Cor: Corollary ER-C}.
\begin{proof}[Proof of Corollary \ref{Cor: Corollary ER-C}]
Notice that from \eqref{Eq: CLT ER-C} we get: For any $k = 1, \dots, (p+1)$,
\begin{equation}\label{Eq: CLT ER-C one component}
\sqrt{\frac{\binom{n}{2}}{n^{\xi}}} \cdot \frac{\hat \theta_k - \theta_{0,k}}{\sqrt{\Sigma^{-1}_{k,k}}} \overset{d}{\longrightarrow} \mathcal{N}(0,1).
\end{equation}
By the exact same arguments that we have used to show that $n^{\xi}\Sigma_\alpha = \Sigma + o_P(1)$, we can also show that
\[
n^{\xi} \hat \Sigma = \Sigma + o_P(1),
\]
where $\hat \Sigma$ is the same matrix as $\Sigma_\alpha$ with $\alpha_{ij}$ replaced by $\hat \mu_0^\dagger + \hat \gamma^TZ_{ij}$:
\[
\hat \Sigma = \frac{1}{\binom{n}{2}} D^T\text{diag}\left( \frac{n^{-\xi}\exp(\hat \mu^\dagger + \hat \gamma^TZ_{ij})}{(1 + n^{-\xi}\exp(\hat \mu^\dagger + \hat \gamma^TZ_{ij}))^2}, i < j  \right) D.
\]
By the same arguments as before, we can show that the minimum eigenvalue of $n^{\xi}\hat\Sigma$ is bounded away from zero, uniformly in $n$. This implies that the maximum eigenvalue of $(n^{\xi}\hat \Sigma)^{-1}$ is bounded by some finite constant $C$. We already know that the same property holds for $\Sigma$ and $\Sigma^{-1}$. Therefore, we have for the matrix $\infty$-norm:
\[
\Vert (n^{\xi}\hat \Sigma)^{-1} - \Sigma^{-1} \Vert_{\infty} \le \Vert (n^{\xi}\hat \Sigma)^{-1}\Vert_{\infty} \Vert \Sigma^{-1} \Vert_{\infty} \Vert n^{\xi}\hat \Sigma - \Sigma \Vert_\infty \le C  \Vert n^{\xi}\hat \Sigma - \Sigma \Vert_\infty = o_P(1).
\]
This means in particular for the diagonal elements:
\[
(n^{\xi}\hat \Sigma)^{-1}_{k,k} = n^{-\xi}\hat \Sigma^{-1}_{k,k} = \Sigma^{-1}_{k,k} + o_P(1).
\]
But then, from \eqref{Eq: CLT ER-C one component} and by Slutzky's Theorem,
\[
\sqrt{\binom{n}{2}} \cdot \frac{\hat \theta_k - \theta_{0,k}}{\sqrt{\hat\Sigma^{-1}_{k,k}}} =
\sqrt{\frac{\binom{n}{2}}{n^{\xi}}} \cdot \frac{\hat \theta_k - \theta_{0,k}}{\sqrt{n^{-\xi}\hat \Sigma^{-1}_{k,k}}} =
\sqrt{\frac{\binom{n}{2}}{n^{\xi}}} \cdot \frac{\hat \theta_k - \theta_{0,k}}{\sqrt{\Sigma^{-1}_{k,k} + o_P(1)}} \overset{d}{\longrightarrow} \mathcal{N}(0,1).
\]
\end{proof}

\subsection{Erd\H{o}s-R\'{e}nyi with diverging number of covariates}

We now extend our theoretical results in Section \ref{subsection: generalized ER model} by allowing the dimension of the covariates $p$ to go to infinity and we don't assume a  sparsity condition. We use $p_n$ instead of $p$ in the following. The asymptotic theory for a diverging number of covariates is quite different from the case where the dimension is fixed. See \cite{portnoy1984asymptotic, portnoy1985asymptotic, portnoy1988asymptotic}, \cite{fan2004nonconcave}, \cite{wang2011gee} and the references therein for more discussions. To establish consistency and asymptotic normality, we make the following assumptions.

\begin{Assum*}\label{diverging p: true theta}
The unknown parameter $\theta = (\mu^{\dagger}, \gamma^T)^T$ belongs to $\Theta := \{\theta \colon \|\theta\|_{\infty} \leq M \}$ and the true parameter $\theta_0$ lies in the interior of $\Theta$.
\end{Assum*}

\begin{Assum*}\label{diverging p: uniform bounded}
The $Z_{ij}$ are i.i.d. realizations of the same centered random variable and there exist constants $\kappa, c > 0$ such that for $D_{ij}=(1, Z_{ij}^{T})^{T}$, $\vert D_{ij}^T\theta_0 \vert \le \kappa$ for all $1 \le i < j \le n$, $\vert D_{ij,k} \vert \le c$ for all $1 \le i < j \le n, k = 1, \dots, p_n$.
\end{Assum*}

\begin{Assum*}\label{diverging p: eigenvalue}
There is a universal constant $c_{\min}> 0$ such that for all $n \in \mathbb{N}$, the minimum eigenvalue $\lambda_{\text{min}}$ and the maximum eigenvalue $\lambda_{\max}$ of $\E[D_{12}D_{12}^T]$ fulfill $0<c_{\min} \le \lambda_{\text{min}} \le \lambda_{\max} \le 1/c_{\min} < \infty$.
\end{Assum*}\label{diverging p: eigenvalue}

\begin{Satz}\label{Thm: Consistency for diverging p}
Under Assumptions \ref{diverging p: true theta}, \ref{diverging p: uniform bounded} and \ref{diverging p: eigenvalue}, if $n^{-1}p_{n}\log p_{n} = o(1)$ when $\xi = 0$; $n^{-1+\xi/2}p_{n} = o(1)$ when $\xi > 0$, then the score function $S_n(\theta) = 0$ has a root $\hat{\theta}_n$ such that
$$
\Vert \hat{\theta}_n-\theta_0 \Vert_2 =O_{P}\left(\sqrt{\frac{p_{n}n^{\xi}}{n^2}}\right).
$$
\end{Satz}

\begin{proof}
It suffices to verify the following condition as in \cite{wang2011gee}: for any $\varepsilon >0$, there exists a constant $\Delta>0$ such that for all $n$ sufficiently large,
$$
P\left(\sup_{\left\|\theta_n-\theta_0\right\|_2=\Delta \sqrt{p_{n} n^{\xi}/n^2}}(\theta_n-\theta_0)^{T} S_n(\theta_n)>0\right) \geq 1-\varepsilon .
$$
We have the following decomposition of $\left(\theta_n-\theta_0\right)^{T} S_n\left(\theta_n\right)$
$$
\begin{aligned}
(\theta_n-\theta_0)^{T} S_n(\theta_n)=& (\theta_n-\theta_0)^{T} S_n(\theta_0)+(\theta_n-\theta_0)^{T} \frac{\partial}{\partial \theta_n^{T}} S_n(\theta^*)(\theta_n-\theta_0) \\
= & (\theta_n-\theta_0)^{T} S_n(\theta_0)+(\theta_n-\theta_0)^{T} \frac{\partial}{\partial \theta_n^{T}} S_n(\theta_0)(\theta_n-\theta_0) \\
& +(\theta_n-\theta_0)^{T}\left(\frac{\partial}{\partial \theta_n^{T}} S_n(\theta^*)-\frac{\partial}{\partial \theta_n^{T}} S_n(\theta_0)\right)(\theta_n-\theta_0) \\
:= & (A_1)+(A_2)+(A_3).
\end{aligned}
$$
For $(A_1)$, since
$$
\begin{aligned}
\E\left(\| S_n(\theta_0)\|_2^2\right) & =\E\left(\sum_{i<j} D_{i j}^{T} D_{i j}\left(\frac{n^{-\xi} \exp(D_{i j}^{T} \theta_0)}{1+n^{-\xi} \exp(D_{i j}^{T} \theta_0)}-A_{i j}\right)^{2}\right) \\
& \leq  \binom{n}{2} \cdot(p_n+1)c^{2} \cdot \E\left(\frac{n^{-\xi} \exp(D_{i j}^{T} \theta_0)}{(1+n^{-\xi} \exp(D_{i j}^{T} \theta_0))^2}\right) \\
& \leq \binom{n}{2} \cdot (p_n+1)c^{2} \cdot n^{-\xi} \exp(\kappa),
\end{aligned}
$$
where we use $\vert D_{i j, k}\vert  \le c$, $\vert D_{i j}^{T} \theta_0\vert \leq \kappa$ and $1+n^{-\xi} \exp(D_{i j}^{T} \theta_0) \geq 1$.
Therefore
$$
|(A_1)| \leq \Delta O_{P}\left( \sqrt{\frac{p_n n^{\xi}}{n^{2}}}\sqrt{\binom{n}{2} \cdot (p_n+1)c^{2} \cdot n^{-\xi} \exp(\kappa)}\right)=\Delta O_{P}(p_n).
$$
For $\left(A_2\right)$,
$$
\begin{aligned}
\left(A_2\right) & =\left(\theta_n-\theta_0\right)^{T}\left(\sum_{i < j} \frac{D_{i j} n^{-\xi} \exp(D_{i j}^{T} \theta_0) D_{i j}^{T}}{(1+n^{-\xi} \exp(D_{i j}^{T} \theta_0))^2}\right)\left(\theta_n-\theta_0\right)^{T} \\
& \geq \left\|\theta_n-\theta_0\right\|_2^2 \cdot  \frac{n^{-\xi}\exp(-\kappa)}{4}\cdot  \lambda_{\min}\Bigl(\sum_{i < j} D_{i j} D_{i j}^{T}\Bigl)\\
& \geq C \cdot \left(\Delta \sqrt{\frac{p_n n^{\xi}}{n^2}}\right)^2 \cdot n^{-\xi}\cdot \binom{n}{2}\\
& =C\Delta^2 p_n,
\end{aligned}
$$
where we use $D_{i j}^{T} \theta_0 > -\kappa$, $1+n^{-\xi} \exp \left(D_{i j}^{T} \theta_0\right) \leq 2$ when $n$ is sufficiently large and 
$$
0 < c_1\le \lambda_{\min}\left(\frac{1}{\binom{n}{2}}\sum_{i < j} D_{i j} D_{i j}^{T}\right) \le \lambda_{\max}\left(\frac{1}{\binom{n}{2}}\sum_{i < j} D_{i j} D_{i j}^{T}\right) \le c_2 < \infty,
$$
when
$$
\frac{p_n\log p_n}{n} = o(1).
$$
This can be seen from a very similar discussion as in Subsection \ref{sample and population matrices} and notice that when $p_n$ is diverging, $\delta$ in Lemma \ref{Lem: Sigma is invertible whp} should be of the order $O_{P}(n^{-1}\log p_n)$ rather than $O_{P}(n^{-1})$.

\noindent Finally, for $(A_3)$,
$$
\begin{aligned}
\left|\left(A_3\right)\right|&=\left|\left(\theta_n-\theta_0\right)^{T}\left(\sum_{i<j} \frac{D_{i j} n^{-\xi} \exp (D_{i j}^{T} \theta^*) D_{i j}^{T}}{(1+n^{-\xi} \exp (D_{i j}^{T} \theta^*))^2}-\sum_{i<j} \frac{D_{i j} n^{-\xi} \exp (D_{i j}^{T} \theta_0) D_{i j}^{T}}{(1+n^{-\xi} \exp(D_{i j}^{T} \theta_0))^2}\right)\left(\theta_n-\theta_0\right)\right|\\
& =\left|\left(\theta_n-\theta_0\right)^{T}\left(\sum_{i < j} D_{i j} \cdot\left(\frac{D_{i j}^{T} n^{-\xi} \exp(D_{i j}^{T} \tilde{\theta})(n^{-\xi} \exp (D_{i j}^{T} \tilde{\theta})-1)}{(1+n^{-\xi} \exp (D_{i j}^{T} \tilde{\theta}))^3} \cdot\left(\theta^*-\theta_0\right)\right) D_{i j}^{T}\right)\left(\theta_n-\theta_0\right)\right| \\
& \leq C\left\|\theta_n-\theta_0\right\|_2^2 \cdot \lambda_{\max}\Bigl(\sum_{i < j} D_{i j} D_{i j}^{T}\Bigl) \cdot \sqrt{(p_n+1)c^2} \cdot\left\|\theta^*-\theta_0\right\|_2 \cdot n^{-\xi}  \\
& \leq C \left(\Delta\sqrt{\frac{p_n n^{\xi}}{n^2}}\right)^{2} \cdot \binom{n}{2} \cdot \sqrt{p_n} \cdot \Delta\sqrt{\frac{p_n n^{\xi}}{n^2}} \cdot n^{-\xi} \\
& =C \Delta^3 p_n \sqrt{\frac{p_n^2 n^{\xi}}{n^2}}\\
& =o_{P}(p_n),
\end{aligned}
$$
where for the inequality we use
$$
\left|\frac{n^{-\xi} \exp(D_{i j}^{T} \tilde{\theta})(n^{-\xi} \exp (D_{i j}^{T} \tilde{\theta})-1)}{(1+n^{-\xi} \exp(D_{i j}^{T} \tilde{\theta}))^3}\right| \leq Cn^{\xi}.
$$
This can be seen from the fact that
$$
\exp (D_{i j}^{T} \tilde{\theta}) = \exp(D_{i j}^{T} \theta_0) \exp (D_{i j}^{T} (\tilde{\theta}-\theta_0)) \leq  \exp (D_{i j}^{T} \theta_0) \exp \biggl(\Delta\sqrt{p_{n}}\sqrt{\frac{p_{n}n^{\xi}}{\binom{n}{2}}}\biggl)=O_{P}(1)
$$
and
$$
\left|\frac{n^{-\xi} \exp(D_{i j}^{T} \tilde{\theta})-1}{(1+n^{-\xi} \exp(D_{i j}^{T} \tilde{\theta}))^3}\right| \leq 1.
$$
Therefore, 
$$
(\theta_n-\theta_0)^{T} S_n(\theta_n)\geq (A_2) -|(A_1)|-|(A_3)|\geq C\Delta^2 p-\Delta O_{P}(p)- o_{P}(p).
$$
Hence for each $\varepsilon$, we can choose a sufficiently large $\Delta$ such that when $n$ is large enough,
$$
P\left(\sup_{\Vert\theta_n-\theta_0\Vert_2=\Delta \sqrt{p_{n} n^{\xi}/n^2}}(\theta_n-\theta_0)^{T} S_n(\theta_n)>0\right) \geq 1-\varepsilon .
$$
\end{proof}

\begin{Satz}\label{Thm: AN for diverging p}
Under Assumptions \ref{diverging p: true theta}, \ref{diverging p: uniform bounded} and \ref{diverging p: eigenvalue}, if $n^{-2}p_{n}^3(\log p_{n})^2 = o(1)$ when $\xi = 0$; $n^{-2+\xi}p_{n}^3 = o(1)$ when $\xi > 0$, then for any $u_{n} \in \R^{p_{n}+1}$ such that $\Vert u_{n} \Vert_2 = 1$, we have
$$
\sqrt{\frac{\binom{n}{2}}{n^{\xi}}}u_n^{T}\Sigma_{n}^{1/2}(\hat{\theta}_n-\theta_0) \stackrel{d}{\longrightarrow} N(0,1),
$$
where
$$
\Sigma_n = \E[D_{i j} \exp \left(D_{i j}^{T} \theta_{0}\right) D_{i j}^{T}].
$$
\end{Satz}

\begin{proof}
Denote the derivatives of $S_n(\theta)$ as
$$
H_n(\theta) = \frac{\partial S_n(\theta)}{\partial \theta^{T}} = \sum_{i<j} D_{i j}\frac{n^{-\xi} \exp (D_{i j}^{T} \theta)}{(1+n^{-\xi} \exp(D_{i j}^{T} \theta))^{2}}D_{i j}^{T}
$$
and let
$$
H(\theta_0)=\binom{n}{2} n^{-\xi} \Sigma_{n}.
$$
Then the goal is to prove
$$
u_n^{T}\left(H(\theta_0)\right)^{\frac{1}{2}}(\hat{\theta}_n-\theta_0) \stackrel{d}{\longrightarrow} N(0,1).
$$
We have
$$
\begin{aligned}
u_n^{T}\left(H(\theta_0)\right)^{-\frac{1}{2}} S(\theta_0)= & u_n^{T}\left(H(\theta_0)\right)^{-\frac{1}{2}}\left[S_n(\theta_0)-S_n(\hat{\theta}_n)\right] \\
= & u_n^{T}\left(H(\theta_0)\right)^{-\frac{1}{2}}\left[H_n(\theta^*)(\theta_0-\hat{\theta}_n)\right] \\
=& u_n^{T}\left(H(\theta_0)\right)^{-\frac{1}{2}} H(\theta_0)(\theta_0-\hat{\theta}_n) \\
&+ u_n^{T}\left(H(\theta_0)\right)^{-\frac{1}{2}}\left(H_n(\theta_0)-H(\theta_0)\right)(\theta_0-\hat{\theta}_n) \\
&+ u_n^{T}\left(H(\theta_0)\right)^{-\frac{1}{2}}\left(H_n(\theta^*)-H_n(\theta_0)\right)(\theta_0-\hat{\theta}_n),
\end{aligned}
$$
where $\theta^{*}$ lies between $\hat{\theta}_n$ and $\theta_0$. Therefore, it's sufficient to prove

\noindent \textbf{Step 1:}
$$
u_n^{T}\left(H(\theta_0)\right)^{-\frac{1}{2}} S(\theta_0) \stackrel{d}{\longrightarrow}  N(0,1),
$$

\noindent \textbf{Step 2:}
$$
u_n^{T}\left(H(\theta_0)\right)^{-\frac{1}{2}}\left(H_n(\theta_0)-H(\theta_0)\right)(\theta_0-\hat{\theta}_n) = o_{P}(1),
$$

\noindent \textbf{Step 3:}
$$
u_n^{T}\left(H(\theta_0)\right)^{-\frac{1}{2}}\left(H_n(\theta^*)-H_n(\theta_0)\right)(\theta_0-\hat{\theta}_n)= o_{P}(1).
$$

\noindent For step 1, we first calculate the asymptotic variance of $u_n^{T}\left(H(\theta_0)\right)^{-1/2}S_n(\theta_0)$.
$$
\begin{aligned}
\text{Var}(u_n^{T}\left(H(\theta_0)\right)^{-\frac{1}{2}}S_n(\theta_0))=&u_n^{T}\left(H(\theta_0)\right)^{-\frac{1}{2}}\text{Cov}(S_n(\theta_0))\left(H(\theta_0)\right)^{-\frac{1}{2}}u_n\\
=&u_n^{T}\left(\Sigma_n\right)^{-\frac{1}{2}}E\left(D_{ij}\frac{ \exp (D_{i j}^{T} \theta_0)}{(1+n^{-\xi} \exp(D_{i j}^{T} \theta_0))^{2}}D_{ij}^{T}\right)\left(\Sigma_n\right)^{-\frac{1}{2}}u_n\\
=&u_n^{T}\left(\Sigma_n\right)^{-\frac{1}{2}}\Sigma_n\left(\Sigma_n\right)^{-\frac{1}{2}}u_n\\
&+u_n^{T}\left(\Sigma_n\right)^{-\frac{1}{2}}\left(E\left(D_{ij}\frac{n^{-\xi} \exp (D_{i j}^{T} \theta_0)}{(1+n^{-\xi} \exp(D_{i j}^{T} \theta_0))^{2}}D_{ij}^{T}\right)-\Sigma_n\right)\left(\Sigma_n\right)^{-\frac{1}{2}}u_n\\
=&1+u_n^{T}\left(\Sigma_n\right)^{-\frac{1}{2}}E\left(D_{12}\left(\frac{\exp (D_{12}^{T} \theta_0)}{(1+n^{-\xi} \exp \left(D_{12}^{T} \theta_0\right))^{2}}-\exp (D_{12}^{T} \theta_0)\right)D_{12}^{T}\right)\left(\Sigma_n\right)^{-\frac{1}{2}}u_n.
\end{aligned}
$$
We have
$$
\lambda_{\min}(\Sigma_n) \geq \exp(-\kappa)c_{\min},
$$
$$
\frac{1}{(1+n^{-\xi} \exp \left(D_{12}^{T} \theta_0\right))^{2}}-1 = o(1)
$$
and
$$
\lambda_{\max}(\E(D_{12}D_{12}^{T}))\leq c_{\max}.
$$
Then
$$
u_n^{T}\left(\Sigma_n\right)^{-\frac{1}{2}}E\left(D_{12}\left(\frac{\exp (D_{12}^{T} \theta_0)}{(1+n^{-\xi} \exp \left(D_{12}^{T} \theta_0\right))^{2}}-\exp (D_{12}^{T} \theta_0)\right)D_{12}^{T}\right)\left(\Sigma_n\right)^{-\frac{1}{2}}u_n= o(1).
$$
Therefore
$$
\text{Var}(u_n^{T}\left(H(\theta_0)\right)^{-\frac{1}{2}}S_n(\theta_0)) \rightarrow 1.
$$
Next, we will show that the Lindeberg condition holds.
$$
\begin{aligned}
u_n^{T}\left(H(\theta_0)\right)^{-\frac{1}{2}}S_n(\theta_0)=&u_n^T \sqrt{\frac{n^\xi}{\binom{n}{2}}}\left(\Sigma_{n}\right)^{-\frac{1}{2}} \sum_{i < j} D_{i j}\left(\frac{n^{-\xi} \exp(D_{i j}^{T} \theta_0)}{1+n^{-\xi} \exp(D_{i j}^{T} \theta_0)}-A_{i j}\right)\\
=& \sum_{i < j} Y_{n,ij}\varepsilon_{n,ij},
\end{aligned}
$$
where 
$$
Y_{n,ij} = u_n^T \sqrt{\frac{n^\xi}{\binom{n}{2}}}\left(\Sigma_{n}\right)^{-\frac{1}{2}}D_{i j}
$$
and 
$$
\varepsilon_{n,ij} = \frac{n^{-\xi} \exp(D_{i j}^{T} \theta_0)}{1+n^{-\xi} \exp(D_{i j}^{T} \theta_0)}-A_{i j}.
$$
For $Y_{n,ij}$,
$$
\begin{aligned}
\max_{i, j}\left|Y_{n, i j}\right| &\leq \sqrt{\frac{n^\xi}{\binom{n}{2}}}\Vert u_n^{T}\Vert_2 \left(\lambda_{\min}(\Sigma_n)\right)^{-1/2}\max _{i, j}\Vert D_{i j}\Vert _2\\
& \leq  C\sqrt{\frac{n^\xi}{\binom{n}{2}}} \sqrt{(p_n+1)c} =C\sqrt{\frac{p_n n^\xi}{n^2}} =o(1).
\end{aligned}
$$
Also, note that $\varepsilon_{n,ij}$ is uniformly bounded. Then for any $\varepsilon$, there exists $n_0$ large enough such that for all $n> n_0$, $|Y_{n,ij}\varepsilon_{n,ij}|\leq \varepsilon $, therefore
$$
\sum_{i < j}  E\left((Y_{n,ij}\varepsilon_{n,ij})^{2}\mathbb{I}\{|Y_{n,ij}\varepsilon_{n,ij}|\geq \varepsilon \}\right)\longrightarrow 0.
$$ 
This gives the Lindeberg condition. Now, applying the Lindeberg-Feller central limit theorem, we complete step 1.

\noindent For step 2, denote 
$$
H_{ij, kl} = D_{i j, k} D_{i j, l} \frac{n^{-\xi} \exp(D_{i j}^{T} \theta_0)}{(1+n^{-\xi
} \exp (D_{i j}^{T} \theta_0))^2}.
$$
Since $-n^{-\xi} C \leq H_{ij, kl}  \leq n^{-\xi} C$, by Hoeffding's inequality,
$$
\begin{aligned}
 P\left(\vert H_n\left(\theta_0\right)-H\left(\theta_0\right)\vert_{k l} \geq \eta\right)  &=P\left(\left|\sum_{i< j} H_{ij, kl}-\E\left(H_{ij, kl}\right)\right| \geq \eta\right) \leq 2 \exp \left(-\frac{C \eta^2 n^{2 \xi}}{n^2}\right) 
\end{aligned}.
$$
By a union bound argument,
$$
P\left(\max _{k, l}\left|H_n\left(\theta_0\right)-H\left(\theta_0\right)\right|_{k l} \geq \eta \right) \leq 2(p_n+1)^2 \exp \left(-\frac{C \eta^2 n^{2 \xi}}{n^2}\right)
$$
and
$$
\max _{k, l}\left|H_n\left(\theta_0\right)-H\left(\theta_0\right)\right|_{k l}=O_{P}(n^{1-\xi}\log p_n).
$$
Therefore,
$$
\begin{aligned}
& u_n^{T}\left(H(\theta_0)\right)^{-\frac{1}{2}}\left(H_n(\theta_0)-H(\theta_0)\right)(\theta_0-\hat{\theta}_n) \\
\leq & \left(\binom{n}{2} n^{-\xi} \lambda_{\min}\left(\Sigma_n\right)\right)^{-\frac{1}{2}}\cdot \Vert H_n(\theta_0)-H(\theta_0)\Vert_2 \cdot  \Vert \theta_0-\hat{\theta}_n\Vert_2 \\
\leq & C\sqrt{\frac{n^{\xi}}{\binom{n}{2}}} \cdot (p_n+1)\max _{k , l}\left(\left|H_n(\theta_0)-H(\theta_0)\right|\right)_{kl}\cdot \sqrt{\frac{p_n n^{\xi}}{\binom{n}{2}}}\\
= & O_{P}\left(\sqrt{\frac{p_{n}^3}{n^{2}}}\log p_n\right)\\
= & o_{P}(1).
\end{aligned}
$$
This concludes step 2.

\noindent For step 3, we have
$$
\begin{aligned}
& u_n^{T}\left(H(\theta_0)\right)^{-\frac{1}{2}}\left(H_n(\theta^*)-H_n(\theta_0)\right)(\theta_0-\hat{\theta}_n)\\
=& u_n^{T}\left(H(\theta_0)\right)^{-\frac{1}{2}} \left(\sum_{i < j} D_{i j} \cdot\left(\frac{D_{i j}^{T} n^{-\xi} \exp(D_{i j}^{T} \tilde{\theta})(n^{-\xi} \exp(D_{i j}^{T} \tilde{\theta})-1)}{(1+n^{-\xi} \exp (D_{i j}^{T} \tilde{\theta}))^3} \cdot(\theta^*-\theta_0)\right) D_{i j}^{T}\right)(\theta_0-\hat{\theta}_n)\\
\leq &C\sqrt{\frac{n^{\xi}}{\binom{n}{2}}} \cdot  \lambda_{\max}\Bigl(\sum_{i < j} D_{i j}D_{i j}^{T}\Bigl)n^{-\xi}\sqrt{(p_n+1)c^2} \cdot \Vert \theta^{*}-\theta_0\Vert_2  \cdot \Vert \theta_0-\hat{\theta}_n\Vert_2\\
=& O_{P}\left(\sqrt{\frac{n^{\xi}}{\binom{n}{2}}}\cdot \binom{n}{2} \cdot n^{-\xi} \cdot \sqrt{p_n} \cdot \sqrt{\frac{p_n n^{\xi}}{\binom{n}{2}}} \cdot \sqrt{\frac{p_n n^{\xi}}{\binom{n}{2}}} \right)\\
=& O_{P}\left(\sqrt{\frac{p_{n}^3 n^{\xi}}{n^2}}\right)\\
= & o_{P}(1).
\end{aligned}
$$
Theorem \ref{Thm: AN for diverging p} now follows from steps 1-3 above.

\end{proof}

\begin{Prop}
Define the matrix 
$$
\hat{\Sigma}_n = \frac{1}{\binom{n}{2}} \sum_{i<j} D_{i j} \exp(D_{i j}^{T} \hat{\theta}) D_{i j}^{T}.
$$
Then under conditions of Theorem \ref{Thm: AN for diverging p} we have
$$
W_n \hat{\Sigma}_n^{-1} W_n^T-W_n \Sigma_{n}^{-1} W_n^T \stackrel{p}{\rightarrow} 0 \text {  as } n \rightarrow \infty
$$
for any $q \times p_n$ matrix $W_n$ where $q$ is any fixed integer. 
\end{Prop}

\begin{proof}
Notice that both $\Vert \hat{\Sigma}_n \Vert_2$ and $\Vert \Sigma_n \Vert_2$ are uniformly bounded away from $0$ and infinity and we have
$$
\hat{\Sigma}_n^{-1}- \Sigma_n^{-1}=\hat{\Sigma}_n^{-1}(\Sigma_n-\hat{\Sigma}_n)\Sigma_n^{-1}.
$$
Then it suffices to prove that 
$$
\Vert\hat{\Sigma}_n-\Sigma_n\Vert_2=o_{P}(1).
$$
Denote 
$$
\bar{\Sigma}_n = \frac{1}{\binom{n}{2}} \sum_{i<j} D_{i j} \exp(D_{i j}^{T} \theta_0) D_{i j}^{T}.
$$
Next, we will prove
$$
\Vert \hat{\Sigma}_n-\bar{\Sigma}_n\Vert_2=o_{P}(1)
$$
and
$$
\Vert\bar{\Sigma}_n-\Sigma_n\Vert_2=o_{P}(1).
$$
For $\Vert\hat{\Sigma}_n-\bar{\Sigma}_n\Vert_2$,
$$
\hat{\Sigma}_n-\bar{\Sigma}_n=\frac{1}{\binom{n}{2}} \sum_{i<j} D_{i j}(\exp(D_{i j}^T \hat{\theta}_n)-\exp(D_{i j}^{T} \theta_0)) D_{i j}^T.
$$
By the Mean Value Theorem,
$$
|\exp(D_{i j}^{T} \hat{\theta}_n)-\exp (D_{i j}^{T} \theta_0)|  =D_{i j}^{T} \exp (D_{i j}^{T} \tilde{\theta})(\hat{\theta}_n-\theta_0) = O_{P}\left(\sqrt{\frac{p_n^{2}n^{\xi}}{n^{2}}}\right).
$$
Therefore
$$
\Vert\hat{\Sigma}_n-\bar{\Sigma}_n\Vert_2 \leq C \cdot \frac{1}{\binom{n}{2}}\cdot \binom{n}{2} \cdot  \sqrt{\frac{p_n^2 n^{\xi}}{\binom{n}{2}}} = o_{P}(1).
$$
by noticing that when $n$ is large enough, $\lambda_{\max}(\sum_{i<j} D_{i j}D_{ij}^{T}/\binom{n}{2})\leq c_2 < \infty$.

\noindent For $\left\|\bar{\Sigma}_n-\Sigma_n\right\|_2$,
$$
\begin{aligned}
\left\|\bar{\Sigma}_n-\Sigma_n\right\|_2 & \leq (p_n+1)\max _{k, l}\left|\bar{\Sigma}_n-\Sigma_n\right|_{k l} =O_{P}\left(\sqrt{\frac{p_n^2}{n^2}} \log p_n\right) = o_{P}(1)
\end{aligned}
$$
by Hoeffding's inequality and a union bound argument. 
\end{proof}

\section{Sparse \texorpdfstring{$\beta$}{beta}-model without covariates}\label{Sec: SBetaM no covariates}

By letting $p = 0, \gamma = 0$, the results for \sbmc derived have implications for the S$\beta$M without covariates in \cite{Chen:etal:19}. In the case without covariates, the negative log-likelihood is given by
\begin{equation*}
\mathcal{L}(\beta, \mu) = - \sum_i \beta_i d_i - d_+ \mu + \sum_{i < j}\log (1 + e^{\beta_i + \beta_j + \mu})
\end{equation*}
and our design matrix is simply
$
D = \begin{bmatrix}
X | \textbf{1}
\end{bmatrix}
\in \R^{\binom{n}{2} \times (n+1)}.
$
The definitions of $\rho_{n,0}$ and $r_{n,0}$ do not change, as we can simply set $\gamma = 0$ in their original definitions. In this section we will abuse notation slightly by reusing the names from S$\beta$RM, but redefining them to have the components corresponding to $\gamma$ removed. For example, we will use $\theta = (\beta^T, \mu)^T$ for a generic parameter, $\theta_0 = (\beta_0^T, \mu_0)^T$ to denote the truth, $S^*_+ = S^* \cup \{n+1\}$ to denote the sparsity including the $\mu$ component etc. We think this is justified as it makes the connection to the respective objects in the model with covariates clearer.
Our estimator reduces to
\begin{equation*}
\hat{\theta} = (\hat{\beta}^T, \hat{\mu})^T = \argmin_{(\beta^T, \mu)^T \in \Theta_{\text{loc}}} \frac{1}{\binom{n}{2}} \mathcal{L}(\beta, \mu) + \lambda \Vert \beta \Vert_1,
\end{equation*}
where by slight abuse of notation, for this section only, we define $\Theta_{\text{loc}} = \Theta_{\text{loc}} (r_n)  \coloneqq \{ \theta = (\beta^T, \mu)^T\in \R_+^n \times \R
: \Vert D\theta \Vert_\infty \le r_n \}$, for the reduced design matrix $D$ defined above and a rate $r_n$.

We make definitions completely analogous to the case in which we observe covariates. We adapt the definitions of the excess risk $\mathcal{E}(\theta)$ in the canonical way by letting the components corresponding to $\gamma$ and  $Z_{ij}$ equal zero.
We define the best local approximation $\theta^*$ as
\[
\theta^* = \argmin_{\theta \in \Theta_{\text{loc}}} \mathcal{E}(\theta)
\]
and as before, we assume that all unpenalized parameters, i.e.~$\mu^*$ in this case, are active. Since the sparsity assumptions of our parameter only concern $\beta$, it is natural that we should need the same assumptions on $s^*_+$ as before, most notably Assumption \ref{Assum: rate of s^*}. We have the analog to Theorem \ref{Thm: consistency}.

\begin{Satz}\label{Thm: consistency no covariates}
	Assume Assumption \ref{Assum: rate of s^*}. Fix a confidence level $t$ and let
	\[
		a_n = \sqrt{\frac{\log(2(n+1))}{\binom{n}{2}}}
	\]
	and 
	\[
		\lambda_0 = 8a_n+ 2 \sqrt{\frac{t}{\binom{n}{2}}  (9  +  8\sqrt{2n} a_n  ) } + \frac{2\sqrt{2}t \sqrt{n}}{3\binom{n}{2}}.
	\]
	Let $\bar{ \lambda} = \frac{\sqrt{n}}{\sqrt{2}} \lambda \ge 8 \lambda_0$ and define $K_n$ as in \eqref{Eq: Def K_n}. Then, with probability at least $1 - \exp(-t)$ we have
	\begin{align}\label{Eq: convergence rate no covariates}
	\mathcal{E}(\hat{\theta}) + \bar{ \lambda} \left( \frac{\sqrt{2}}{\sqrt{n}} \Vert \hat{\beta} - \beta^*  \Vert_1 + \vert \hat{\mu} - \mu^* \vert \right) &\le 6 \mathcal{E}(\theta^*) + 64 s^*_+K_n\bar{ \lambda}^2.
	\end{align}
\end{Satz}
It is interesting to put this result into context by comparing it with Theorem 2 in \cite{Chen:etal:19}. The parameter space over which \cite{Chen:etal:19} are optimizing is not convex and the analogous notion of best local approximation we are using need not be well-defined in their setting. Thus, it is not possible to derive $\ell_1$-error bounds for their estimator, as we do in Theorem \ref{Thm: consistency no covariates}. Nonetheless and quite remarkably, they are able to prove an existence criterion for their $\ell_0$-constrained estimator and a high-probability, finite sample bound on its excess risk.
To compare their results to ours, we consider a special case that they discuss at length. In particular, they consider the situation in which $\mu_0 = - \xi \cdot \log(n) + O(1)$ for some $\xi \in [0,2)$ and $\beta_{0,i} = \alpha \cdot \log(n) + O(1)$ for some $\alpha \in [0,1)$ and all $i \in S_0$, where $\alpha$ and $\xi$ are such that $0 \le \xi - \alpha < 1$. It is easy to see that under these assumptions we have $\rho_{n,0} \sim n^{- \xi }$. Consider the regime in which no approximation error is committed and $\rho_n \sim \rho_{n,0}$. Then, using an analogous argument as in the proof of Theorem \ref{Cor: no approximation error}, $K_n$ is of order $\rho_{n,0}^{-1}$. Recalling Assumption \ref{Assum: rate of s^*}, we see that to obtain $\ell_1$-consistency of our estimator, we need $\xi < 1/2$, which restricts the degree of network sparsity that our estimator can handle. \cite{Chen:etal:19} need no such condition and only need to balance the global sparsity parameter $\xi$ with the local density parameter $\alpha$ to have convergence of their excess risk to zero. This illustrates that to obtain our more refined consistency result in terms of $\ell_1$-error, we understandably need to impose stricter assumptions on the permissible sparsity. We now compare the bounds on the excess risk. Note that \cite{Chen:etal:19} scale their excess risk by $\E[d_+]^{-1} \sim n^{-2 + \xi}$, rather than $\binom{n}{2}^{-1} \sim n^{-2}$ as we do. To put the excess risk on the same scale, we denote by
$
	\mathcal{E}^{\text{(r)}}(\hat \theta) = n^{\xi} \mathcal{E}(\hat \theta)
$
the excess risk rescaled to their setting.
With this notation, we see that by Theorem \ref{Thm: consistency no covariates} the error rate for the rescaled excess risk of our $\ell_1$ constrained estimator becomes
\[
	\mathcal{E}^{(r)}(\hat \theta) = O_P(s^*_+ \cdot \log(n) \cdot n^{-2 + 2\xi}),
\]
which by Assumption \ref{Assum: rate of s^*} is $o_P(\sqrt{\log(n)}\cdot n^{-3/2 + \xi})$. From \cite{Chen:etal:19}, Theorem 2, it is seen that the rate for the excess risk of their $\ell_0$ constrained estimator is
\[
O_P(\log(n) \cdot n^{-1+\xi /2}).
\]
This shows that in the regime $\xi \in [0,1/2)$ necessary for $\ell_1$-consistent parameter estimation, our estimator will always achieve a rate faster than the one in \cite{Chen:etal:19}. When we leave this regime, however, consistent estimation with respect to the $\ell_1$-norm may no longer be possible and the estimator in \cite{Chen:etal:19} can outperform our estimator.

\section{Additional Simulations}

We now provide additional simulation results for sparser networks. Specifically, 

\noindent
\textbf{Model 4}: We pick \(\beta_0 = \log(\log(n)) \cdot (2, 0.8, 1, \dots, 1, 0, \dots, 0)^T\) and set $\mu_0 = -0.5 \cdot \log(n)$;

\noindent
\textbf{Model 5}: We pick \(\beta_0 = \log(\log(n)) \cdot (2, 0.8, 1, \dots, 1, 0, \dots, 0)^T\) and set $\mu_0 = -0.75\cdot \log(n)$;

\noindent
\textbf{Model 6}: We pick \(\beta_0 = \log(\log(n)) \cdot (2, 0.8, 1, \dots, 1, 0, \dots, 0)^T\) and set $\mu_0 = -\log(n)$.

For Model 4--6, the errors for parameter estimation are shown in Figures \ref{Fig: Model4}, \ref{Fig: Model5} and \ref{Fig: Model6}. The error values are generally higher than that in Model 1--3, which is to be expected due to the much higher sparsity of the network. Also, for these very sparse cases, BIC is performing better than the heuristic. The heuristic consistently selects higher penalty values than BIC and we can see how this results in worse estimates for very sparse networks. Also, for the heuristic we choose one predefined penalty value for any network of a given size $n$, while BIC can adapt to the observed sparsity. This illustrates the point made by \cite{Yu:etal:2018}, that the penalty prescribed by mathematical theory tends to over-penalize the model. Table \ref{TableS1} presents the empirical coverage of the approximate $95\%$ confidence intervals and their median length for $\gamma_{0,1}$ and $\beta_0$ in Model 4--6 across different network sizes. It is to be noted, though, that even in this very sparse regime, the coverage is also very close to the $95\%$-level across all network sizes and all models. However, the median length of these confidence intervals increases a lot compared with Model 1--3.

\begin{figure}[!htbp]
	\centering
	\begin{subfigure}{0.32\textwidth}
		\centering
		\includegraphics[scale=0.22]{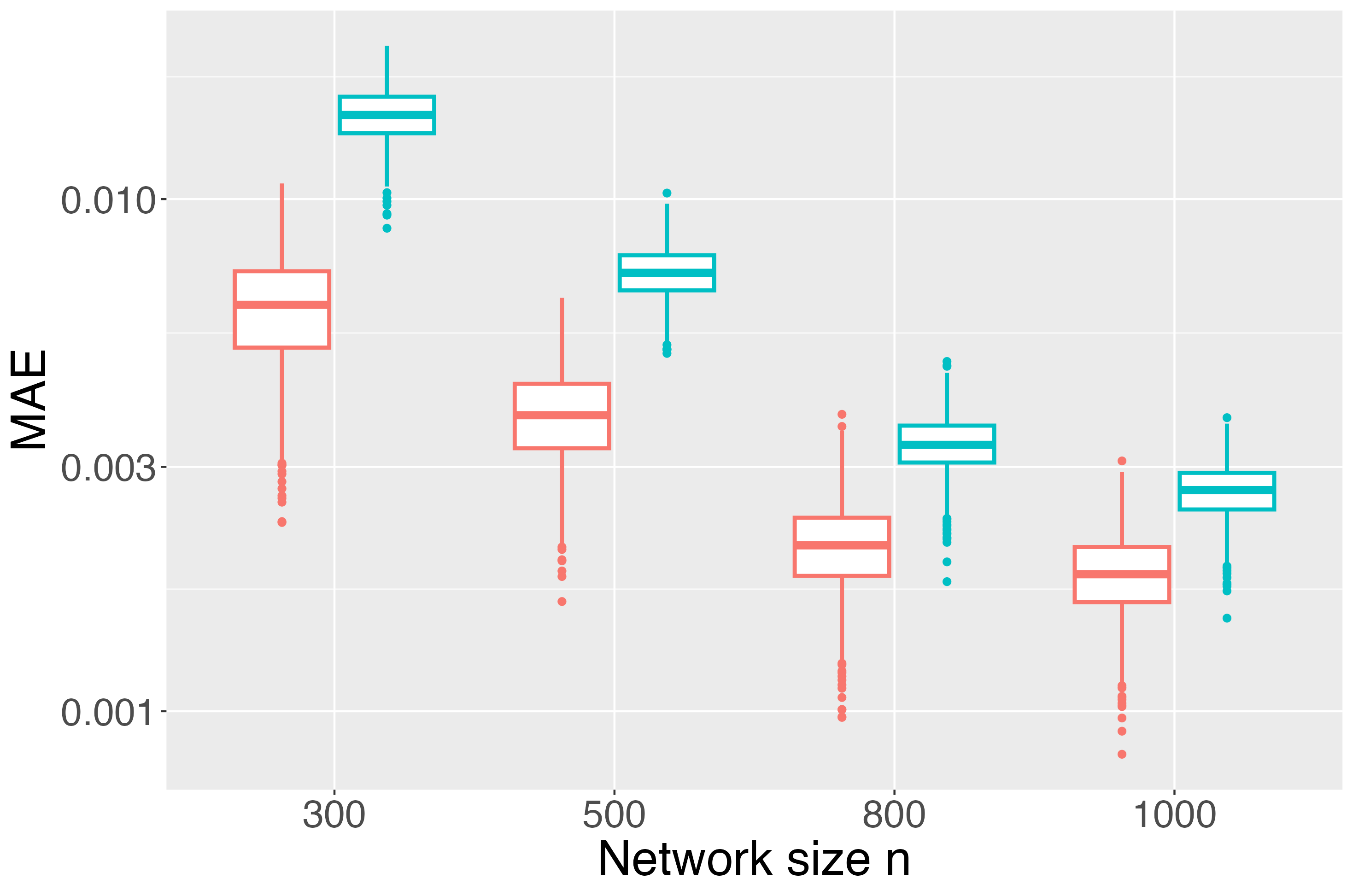}
		\caption{MAE for $\beta_0$}
		\label{Fig: beta MAE model 4}
	\end{subfigure}%
	\begin{subfigure}{0.32\textwidth}
		\centering
		\includegraphics[scale=0.22]{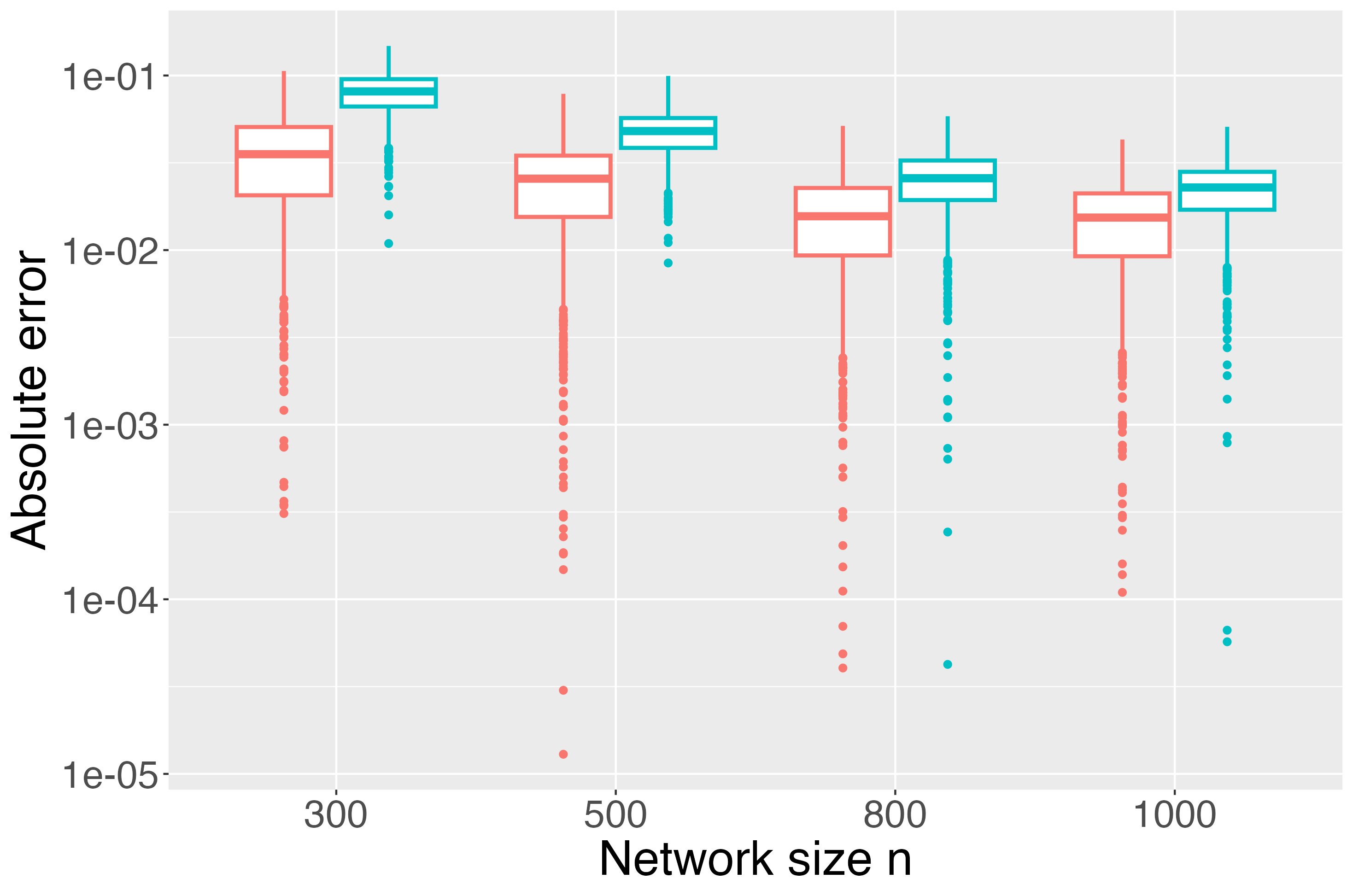}
		\caption{Absolute error for $\mu_0$.}
		\label{Fig: abs mu model 4}
	\end{subfigure}
	\begin{subfigure}{0.32\textwidth}
		\centering
		\includegraphics[scale=0.22]{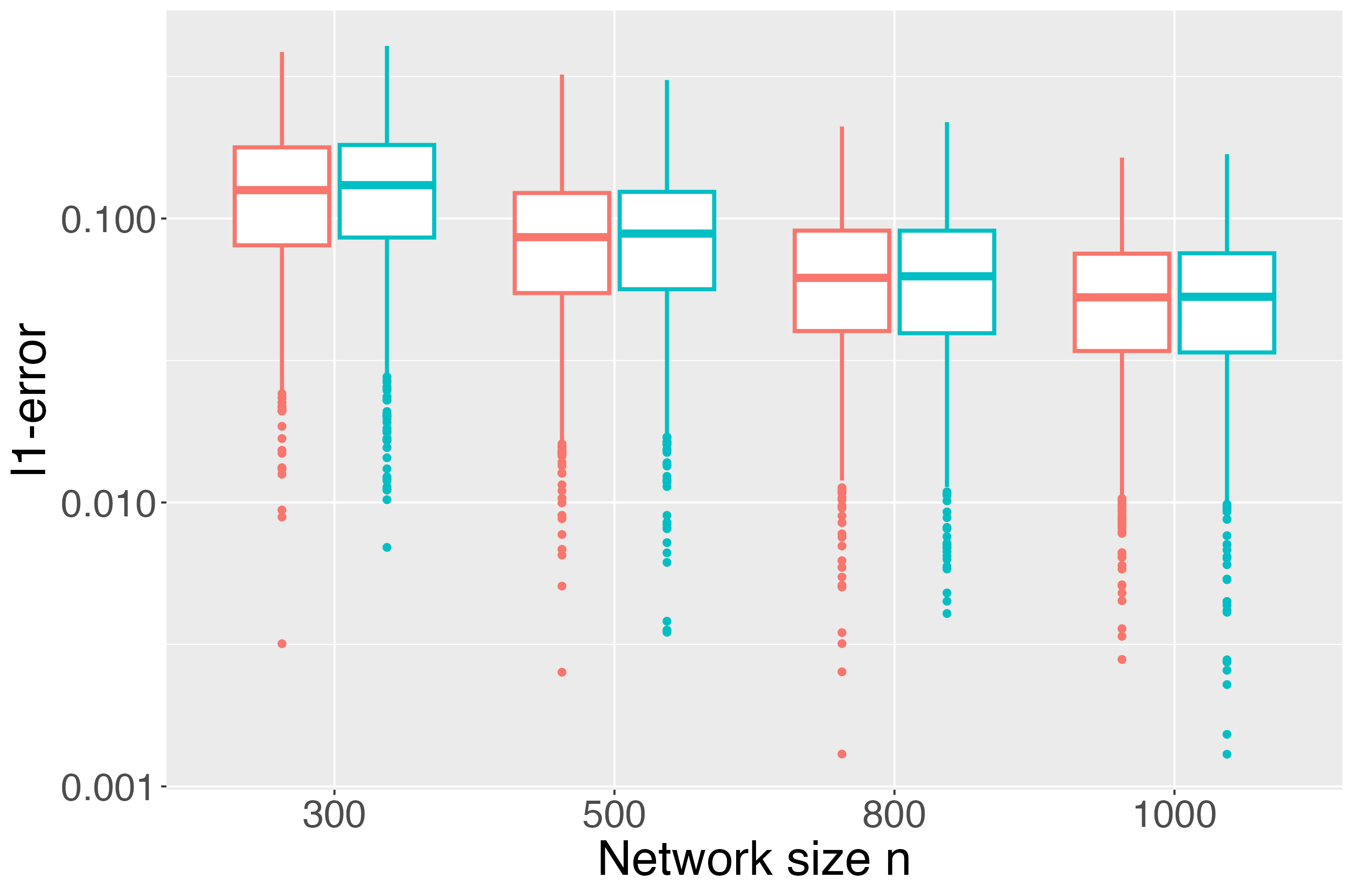}
		\caption{$\ell_1$-error for $\gamma_0$.}
		\label{Fig: l1 gamma model 4}
	\end{subfigure}
	\caption{Errors for estimating the true parameter $\theta_0$ in Model 4.}
	\label{Fig: Model4}
\end{figure}

\begin{figure}[!htbp]
	\centering
	\begin{subfigure}{0.32\textwidth}
		\centering
		\includegraphics[scale=0.22]{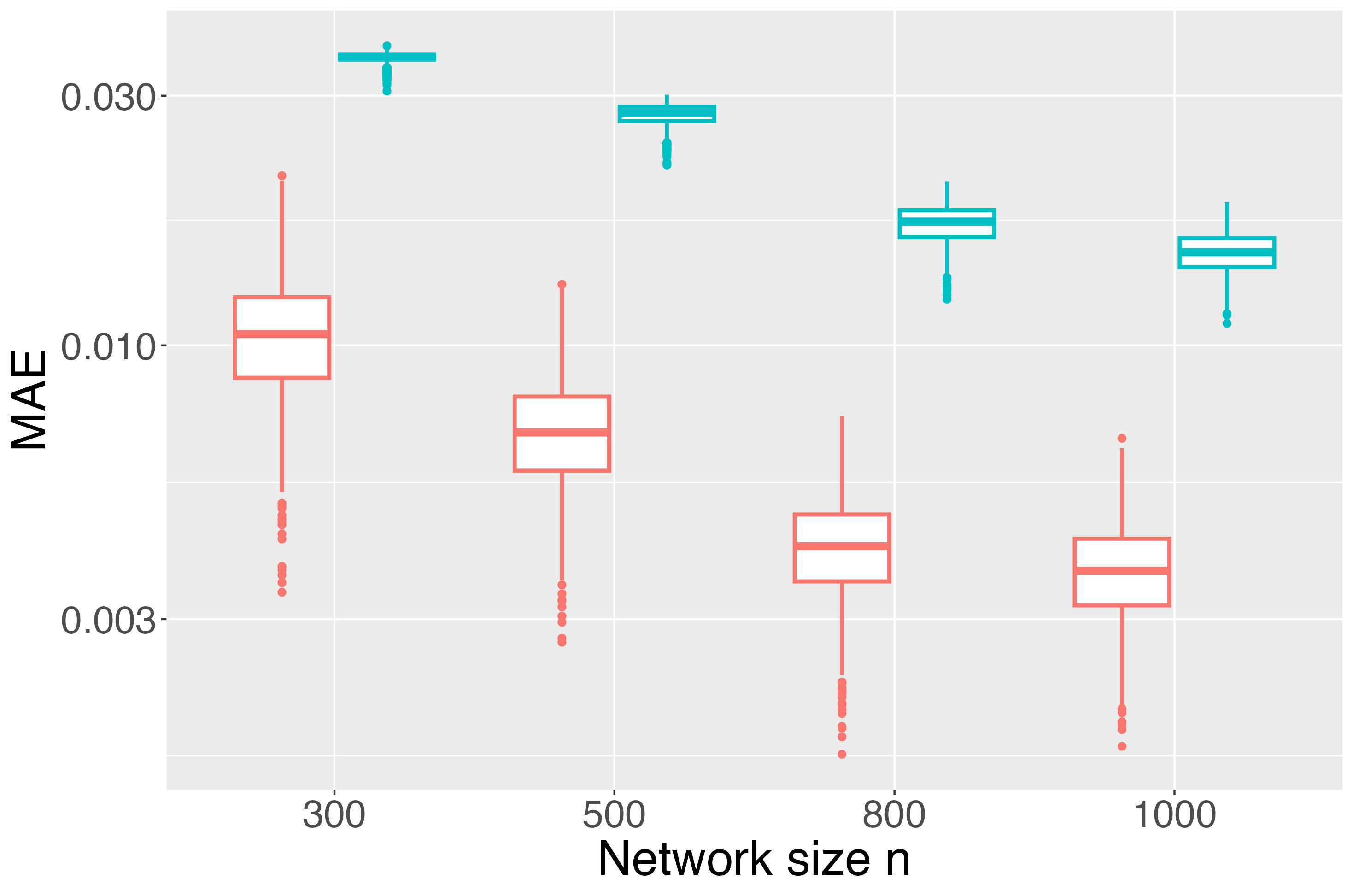}
		\caption{MAE for $\beta_0$}
		\label{Fig: beta MAE model 5}
	\end{subfigure}%
	\begin{subfigure}{0.32\textwidth}
		\centering
		\includegraphics[scale=0.22]{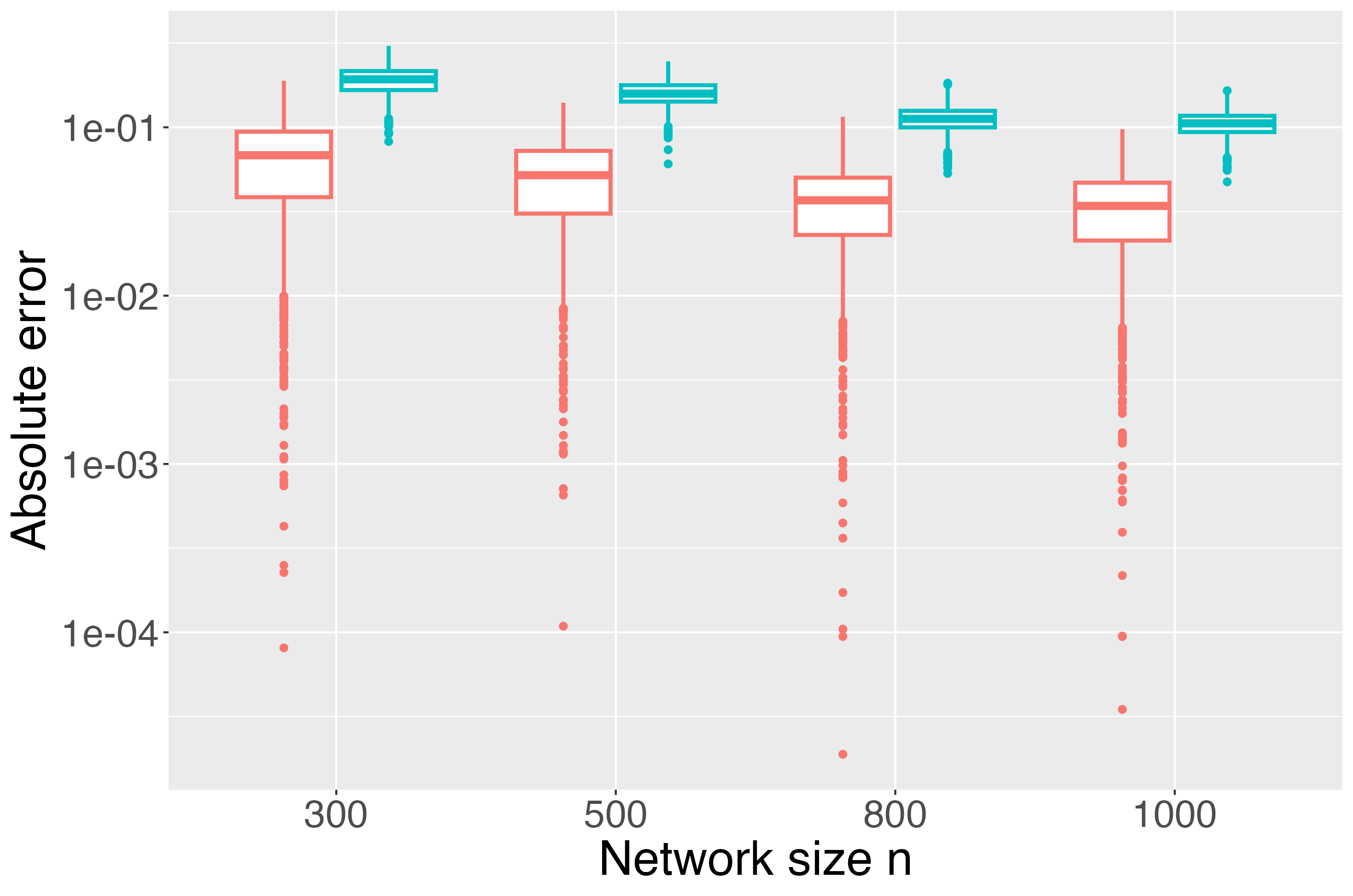}
		\caption{Absolute error for $\mu_0$.}
		\label{Fig: abs mu model 5}
	\end{subfigure}
	\begin{subfigure}{0.32\textwidth}
		\centering
		\includegraphics[scale=0.22]{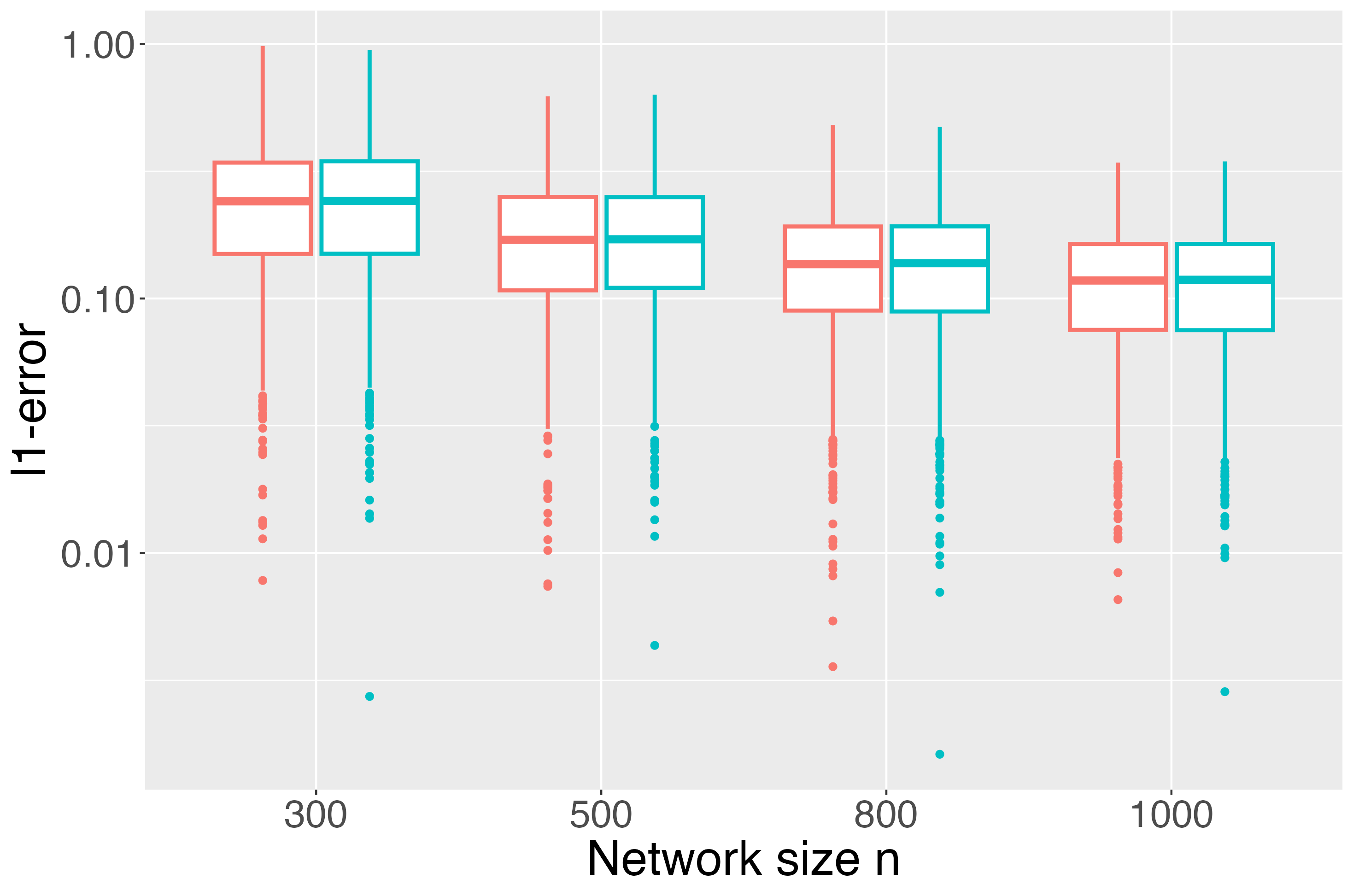}
		\caption{$\ell_1$-error for $\gamma_0$.}
		\label{Fig: l1 gamma model 5}
	\end{subfigure}
	\caption{Errors for estimating the true parameter $\theta_0$ in Model 5.}
	\label{Fig: Model5}
\end{figure}

\begin{figure}[H]
	\centering
	\begin{subfigure}{0.32\textwidth}
		\centering
		\includegraphics[scale=0.22]{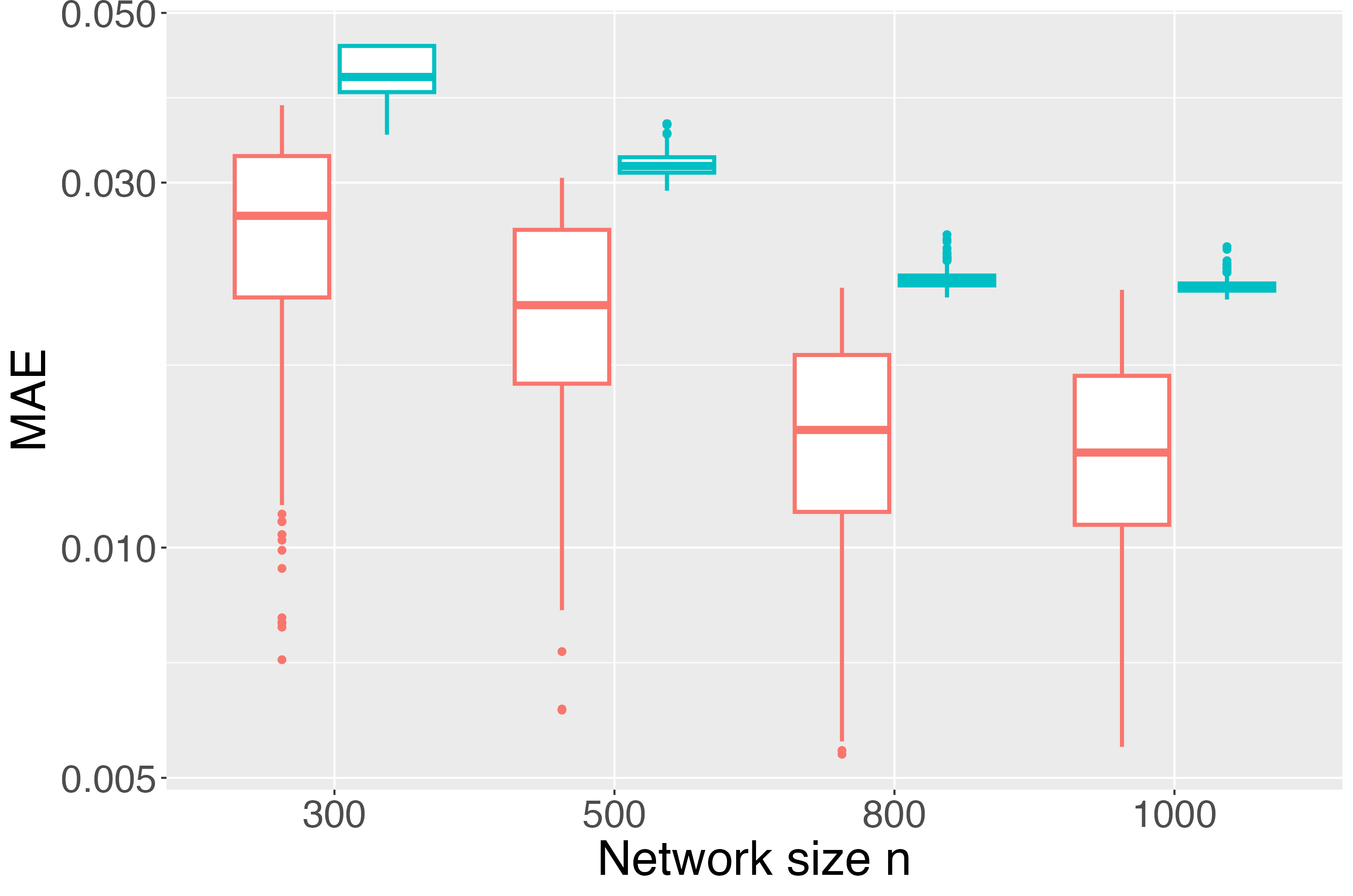}
		\caption{MAE for $\beta_0$}
		\label{Fig: beta MAE model 6}
	\end{subfigure}%
	\begin{subfigure}{0.32\textwidth}
		\centering
		\includegraphics[scale=0.22]{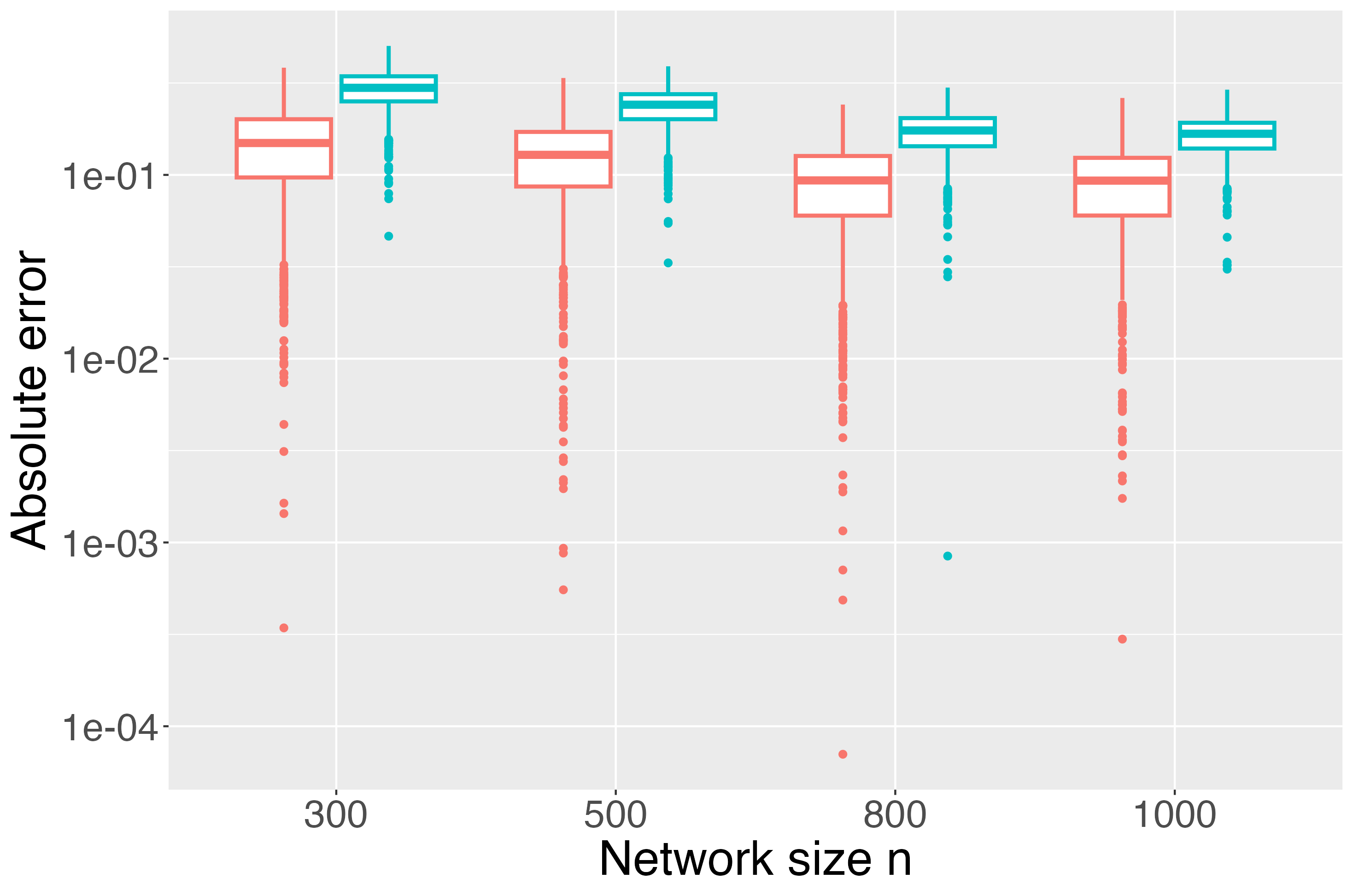}
		\caption{Absolute error for $\mu_0$.}
		\label{Fig: abs mu model 6}
	\end{subfigure}
	\begin{subfigure}{0.32\textwidth}
		\centering
		\includegraphics[scale=0.22]{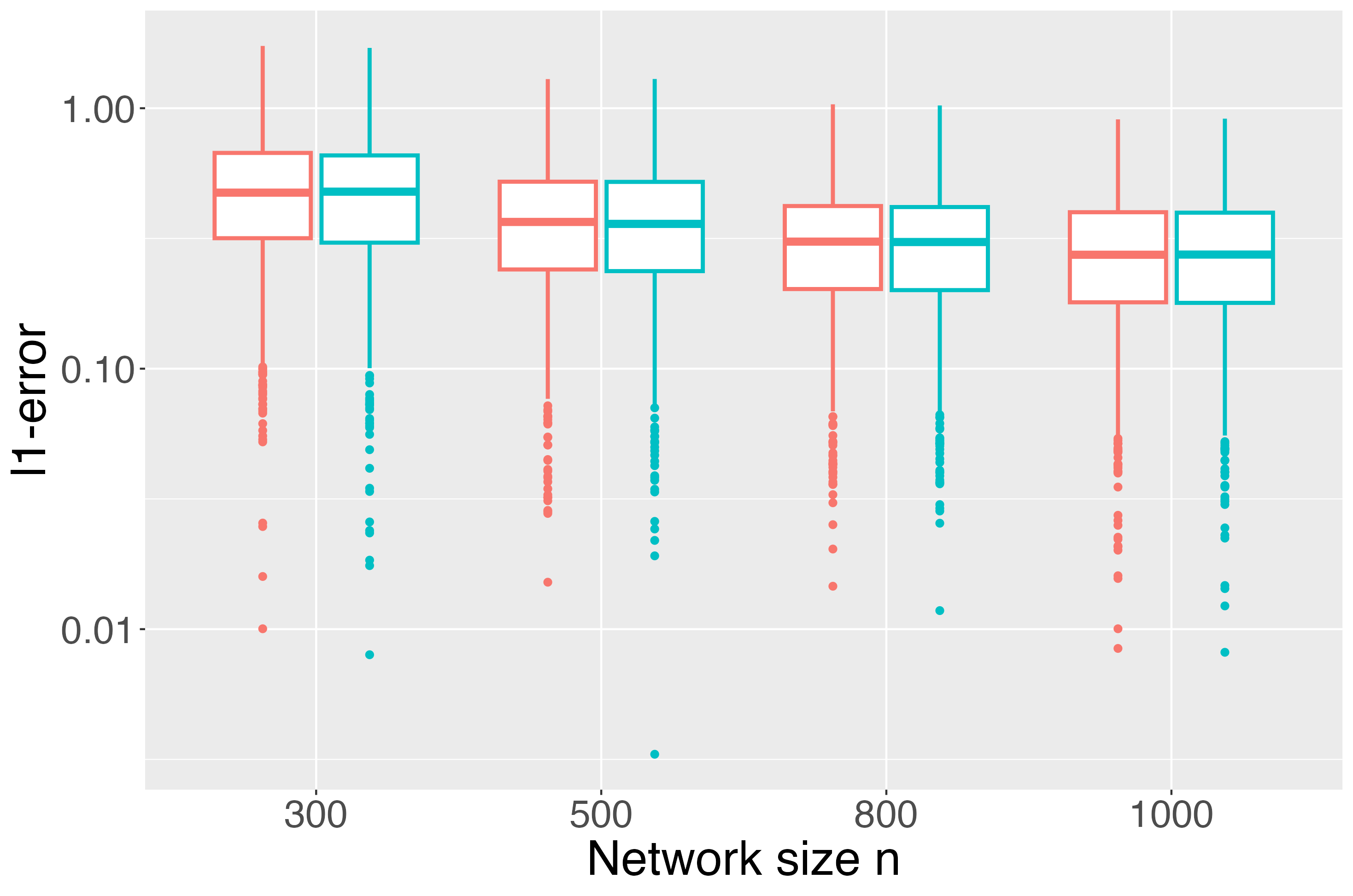}
		\caption{$\ell_1$-error for $\gamma_0$.}
		\label{Fig: l1 gamma model 6}
	\end{subfigure}
	\caption{Errors for estimating the true parameter $\theta_0$ in Model 6.}
	\label{Fig: Model6}
\end{figure}

\begin{table}[H]
\begin{center}
\begin{tabular}{crrrrrrrrr}
			\toprule
			&& \multicolumn{2}{c}{{Pre-determined $\lambda$}} & \multicolumn{2}{c}{{BIC}}& \multicolumn{2}{c}{{Pre-determined $\lambda$}} & \multicolumn{2}{c}{{BIC}}\\
			\midrule
			&\multicolumn{1}{c}{$n$} & Coverage & Width & Coverage & Width& Coverage & Width & Coverage & Width\\
			\midrule
			Model 4\\
			&300 & 0.935 & 0.333  & 0.943 & 0.335& 0.956 & 0.927 & 0.955 & 0.937 \\
			&500 & 0.950 & 0.225 & 0.959& 0.226& 0.954 & 0.813 & 0.953 & 0.818\\
			&800 & 0.946 & 0.159&0.945 & 0.159& 0.953 & 0.724 & 0.953 & 0.726\\
			&1000 & 0.946 & 0.133 & 0.947 & 0.134& 0.952 & 0.683& 0.952& 0.684\\
			\addlinespace[0.3em]
			Model 5\\
			&300 & 0.942 & 0.611 & 0.946 & 0.617& 0.952 & 1.715 & 0.965 & 1.758\\
		&500 & 0.931 & 0.449 & 0.934 & 0.451& 0.939 & 1.625 & 0.956 & 1.659\\
		&800 & 0.932 & 0.341 & 0.930 & 0.342& 0.952 & 1.561 & 0.952 & 1.585\\
		&1000 & 0.937 & 0.296 & 0.940 & 0.297& 0.950 & 1.519& 0.959& 1.541\\
		\addlinespace[0.3em]
			Model 6\\
			&300 & 0.948 &1.188  & 0.948 & 1.195& 0.955 & 3.306 & 0.961 & 3.420\\
		&500 & 0.944 & 0.941 & 0.943 & 0.944 & 0.957 & 3.394 & 0.964 & 3.479\\
		&800 & 0.947&0.766 & 0.944 & 0.767 & 0.962 & 3.497 & 0.967 & 3.560\\
		&1000 & 0.944 & 0.688 & 0.945 & 0.689& 0.962 & 3.509& 0.968& 3.565\\
			\bottomrule

\end{tabular}
\end{center}
\caption{Empirical coverage under nominal 95\% coverage and median lengths of confidence intervals for $\gamma_{0,1}$ (columns 3-6) and $\beta_0$ (last 4 columns) in sparser networks. }
\label{TableS1}
\end{table}

\bibliographystyle{agsm}
\bibliography{bib}

\end{document}